\DeclareMathOperator{\diam}{diam}
\DeclareMathOperator{\Leb}{Leb}
\DeclareMathOperator{\dist}{dist}
\DeclareMathOperator{\Proj}{Proj}
\numberwithin{equation}{section}
\newtheorem{convention}[theorem]{Convention}
\begin{document}
\pagenumbering{arabic}
\setlength{\belowdisplayskip}{0pt}

\newpage

\title{Which subsets and when orbits of non-uniformly hyperbolic systems prefer to visit: operator renewal theory approach}


\author{Leonid A. Bunimovich  \and
        Yaofeng Su
}


\institute{Leonid A. Bunimovich \at
              School of Mathematics, Georgia Institute of Technology,  Atlanta, USA\\
             \email{leonid.bunimovich@math.gatech.edu}           
           \and
            Yaofeng Su   \at
              Department of Mathematics, Universit\`a degli Studi di Roma Tor Vergata, Rome, Italy\\
              \email{su@mat.uniroma2.it} 
              \and
 2020 Mathematics Subject Classification. Primary 37A50, 60F15.
}


\titlerunning{Operator renewal theory and Finite time dynamics}
\maketitle
\begin{abstract}
The paper addresses for the first time some basic questions in the theory of finite time dynamics and finite time predictions for slowly mixing non-uniformly hyperbolic dynamical systems.  It is concerned with transport in phase spaces of such systems, and analyzes which subsets and when the orbits prefer to visit. An asymptotic expansion of the decay of polynomial escape rates is obtained, which also allows for finding asymptotics of the first hitting probabilities. Our approach is based on the construction of new operator renewal equations for open dynamical systems and on their spectral analysis. In order to do this, we generalize the Keller-Liverani perturbation technique. Applications to a large class of one-dimensional non-uniformly expanding systems are considered.

\keywords{operator renewal equations  \and polynomial escape rates \and first hitting probabilities \and open dynamical systems \and intermittent maps}

\tableofcontents

\end{abstract}

\section{Introduction}
Traditionally, only asymptotic properties (when time tends to infinity) were considered for random processes and chaotic (stochastic) dynamical systems. This long list includes ergodic theorems (or strong laws of large numbers) and various limit theorems (usually starting with the central limit theorem). However, for dealing with real systems in sciences and applications it is necessary to know the evolution of the system under study in a finite time. ``In a long run we all are dead" as John Maynard Keynes used to say. Therefore, scientists (first of all in physics) have been trying for a long time to develop perhaps non-rigorous but useful theory for applications. Numerous computer experiments were conducted, mostly trying to analyze the structure in the phase space of finite-time Lyapunov exponents. Unfortunately, no conclusive or consisting results have been obtained.
A rigorous mathematical approach to this problem was started in \cite{bunimovichijm}.  In this paper a new question was raised concerning dependence of the process of escape on the position of a hole in the phase space. It was inspired by the breakthrough in experiments with atomic billiards conducted in the Davidson and Raizen groups \cite{Raizen, Davidson}. Initially, the theory of open dynamical systems studied only escape rates and conditionally invariant measures for open systems (see a beautiful review \cite{DemersLSY}).
The new question essentially was about transport in a phase space. Observe that in equilibrium statistical mechanics the main problem is about phase transitions, i.e., the existence of several equilibrium states (reasonable invariant measures) of the system. We consider instead a system in an equilibrium state with an invariant SRB measure and study how orbits move (finite-time transport) in the phase space (without opening any holes there).
Another popular and powerful topic in analysis of chaotic dynamics deals with recurrences to some fixed subset in the phase space. However, consideration of recurrences does not give information about transport in phase space, i.e., how orbits move from one subset to another.
It was shown \cite{boldingbun} that the first hitting probabilities provide important information about transport in a phase space. In other words, we consider equilibrium state of the system and study where most likely the orbits will move in a finite time. 
The paper \cite{boldingbun} demonstrated that finite-time predictions of dynamics are indeed possible. However, in this paper only ``extremely" uniformly hyperbolic dynamical systems were studied, called fair dice-like (FDL) systems \cite{bunimovich2012fair}.
In order to study the finite-time evolution of slowly mixing non-uniformly hyperbolic dynamical systems, more sophisticated approaches and techniques are needed. The approach in the present paper employs the derivation of new operator renewal equations and their spectral analysis, which is based on a generalization of Keller-Liverani perturbation theory. 

Operator renewal equations proved to be a powerful tool in ergodic theory and in the theory of dynamical systems. Such equations were first used by Sarig \cite{sarig} (then improved by Gou\"ezel \cite{gouezel}), who developed a spectral analysis for operator renewal equations with the goal of deriving an asymptotic expansion of the decay of correlations in finite ergodic theory. Later, Melbourne and Terhesiu \cite{melbourneinvention} used the same operator renewal equations and developed a different spectral analysis of these equations relevant to studying mixing rates problems in infinite ergodic theory. In the present paper, we introduce a new type of operator renewal equations and develop a new type of spectral analysis for these equations with the goal to obtain the \textbf{asymptotic expansion for the decay of escape rates/first-hitting probabilities} in open dynamical systems. It allows us to make finite-time predictions for slow mixing systems and which subsets of the phase space and when orbits of slowly mixing dynamical systems prefer to visit. Our operator renewal theory for open systems differs from the previously developed one \cite{melbourneinvention, sarig}, which considered analytic perturbations of transfer operators in strong norms. Our new theory works for both analytic perturbations of transfer operators in strong norms and for non-analytic perturbations in weak norms.

In addition to the presentation of our new operator renewal theory, we list below some improvements which are obtained in the present paper in comparison to previous results in this area.

\begin{enumerate}
    \item  A problem on the dependence of the escape rate in open dynamical systems on the position of a hole in the phase space was first formulated in \cite{bunimovichijm}. Since then, only uniformly expanding exponential mixing systems were considered in this respect \cite{Demersexp, bunimovichijm, boldingbun, haydn, pollicot, fresta}. These papers used combinatorial, probabilistic, and spectral gap techniques. For the first time, this question is addressed here for slowly mixing non-uniformly hyperbolic dynamical systems.
    \item It is also worth mentioning that, when $\mu$ is an SBR measure, Theorem \ref{thm} is stronger than the polynomial escape rates obtained in \cite{Demers}. It gives an asymptotic expansion of the decay of escape rates/first-hitting probabilities. Moreover, this theorem also does not require that the holes must be Markov, while the paper \cite{Demers} considers Markov holes and obtains upper and lower bounds for the polynomial escape rates rather than the asymptotic expansion.
     \item It turns out (somewhat unexpectedly) that operator renewal equations with higher regularities induce a more complicated structure of polynomial escape rates than those with lower regularities (see more details in Theorem \ref{thm}). This differs from the situation for exponential escape rates obtained in \cite{Demersexp, bunimovichijm, haydn, pollicot, fresta}.
    \item  The developed generalized Keller-Liverani operator perturbation theory (Proposition \ref{extendliverani1} and the method employed in Lemma \ref{extendliverani2}), is applicable to operator renewal equations for open dynamical systems. It allows Lemma \ref{extendliverani2} to obtain a sharper estimate for open Gibbs-Markov maps than by the Keller-Liverani theory \cite{liveranikeller1}, which only gives an estimation of $L^1$-norm for general systems (see more details in Lemma \ref{extendliverani2}). The Proposition \ref{extendliverani1} allows Lemma \ref{localspectrumpic} to give a clearer picture of the spectrum for the operators we study, while the Keller-Liverani theory \cite{liveranikeller1} does not satisfy the conditions of Lemma \ref{localspectrumpic}.
  
\end{enumerate}

\textbf{Structure of the paper:} In Section \ref{defsection}, we introduce definitions of the objects under study throughout the whole paper, as well as some conventions/notations. Section \ref{resultsection} presents the main results, which is Theorem \ref{thm}. In Section \ref{prelimsection}, some preparations, new definitions, and necessary lemmas are given. Before proving Theorem \ref{thm}, the proof scheme for this Theorem \ref{thm} is presented at the end of Section \ref{prelimsection}.  Section \ref{proof} gives a proof of Theorem \ref{thm}, and Section \ref{appsection} presents its applications. The last Section \ref{appendix} contains a useful Proposition \ref{extendliverani1} of an independent general interest, which extends the Keller-Liverani perturbation theory \cite{liveranikeller1}.

\section{Definitions and notations}\label{defsection}

Throughout this paper, we use the following notations
\begin{enumerate}
 \item $C_z$ denotes a constant depending on $z$.
    \item The notation $``a_n \precsim_z b_n"$  (or $``a_n=O_{z}(b_n)"$) means that there is a constant $C_z \ge 1$ such that $ a_n \le C_z  b_n$ for all $n \ge 1$, while the notation $``a_n \precsim b_n"$ (or $``a_n=O(b_n)"$) means that there is a constant $C \ge 1$ such that $ a_n \le C  b_n$ for all $n \ge 1$. Next, $``a_n \approx_z b_n"$ and $``a_n=C_z^{\pm 1}b_n"$ mean that there is a constant $C_z \ge 1$ such that  $ C_z^{-1}  b_n \le a_n \le C_z b_n$ for all $n \ge 1$. In addition, the notations $``a_n=C^{\pm1} b_n"$ and $``a_n \approx b_n"$ mean that there is a constant $C \ge 1$ such that $ C^{-1}  b_n \le a_n \le C b_n$ for all $n \ge 1$. Finally, $``a_n =o(b_n)"$ means that  $\lim_{n \to \infty}|a_n/b_n|=0$. 
    We will use the following notations for the operators $``P_n\precsim_z b_n"$ and $``P_n=O_z(b_n)"$ to indicate that there is $C_z \ge 1$  such that $||P_n||\le C_z b_n$ for all $n \ge 1$. Similarly, $``P_n\precsim b_n"$ means that there is $C \ge 1$  such that $||P_n||\le C b_n$ for all $n \ge 1$.
      \item A permutation $P_n^k:=n (n-1)(n-2)\cdots (n-k+1)$.
    \item $\mu_A$ (resp. $\Leb_A$) denotes a normalized measure (resp. a normalized Lebesgue measure) of a measurable set $A$, unless it is specially defined. $\mathbbm{1}_A$ is a characteristic function of $A$.
    \item An operator $I$ refers to the identity operator $Id$ for any (complex) Banach space. For any $z\in \mathbb{C}$, $zI$ is usually abbreviated as $z$ if the context is clear.
    \item $\mathbb{N}=\{1,2,3,\cdots\}$, $\mathbb{N}_0=\{0,1,2,3,\cdots\}$. 
    \item $\overline{\mathbb{D}}:=\{z\in \mathbb{C}: |z|\le 1\}$, $\mathbb{D}:=\{z\in \mathbb{C}: |z|<1\}$, $S^1:=\{z\in \mathbb{C}: |z|=1\}$. $B_r(z)\subsetneq \mathbb{C}$ means an open disk with a center $z$ and a radius $r$.
\end{enumerate}

In order to study the statistical properties of weakly chaotic dynamical systems, a polynomial Young tower was introduced in \cite{Young}. We rewrite it in our one-dimensional setting as follows, which will be considered throughout this paper.
\begin{definition}[Polynomial Young towers]\label{defyoung} \par
    Let $X \subsetneq \mathbb{R}$ be a finite union of intervals, a polynomial Young tower $\Delta$ is built over $X$ by defining $\Delta:=\{(x,n) \in X \times \mathbb{N}_0: n<R(x)\}$ where $X$ is called a base (here we identify $X\times \{0\}$ with $X$) and $R$ is called a return time defined on  $\Delta_0$ such that they satisfy the following
\begin{enumerate}
\item Partition: $X$ has a countable measurable partition $\{X_{i}\}_{i \ge 1}$, where $X_{i}$ is a subinterval.
    \item Locally constant: $R|_{X_{i}}\equiv R_i \in \mathbb{N}$.
    \item Aperiodic: $\gcd\{R_i, i \ge 1\}=1$.
    \item Polynomial tails: $\Leb_X(R>n)\approx n^{-k-\beta}$ for some $k\in \mathbb{N}$ and $\beta\in [0,1)$ with $k+\beta>1$. 
    \item Dynamics: $F:\Delta \to \Delta$ is defined so that it sends $(x,n)$ to $(x,n+1)$ if $n+1<R(x)$ and maps $X_{i}\times \{R_i-1\}$ bijectively onto $\bigcup_{\text{some }i}X_i \times \{0\}$. Since $X$ is identified as $X\times \{0\}$, $F^R:X \to X$ is a first return map and is called a Gibbs-Markov map.
    \item Big images: $\inf_i \Leb(F^R X_i)>0$
    \item Expansion: there is $\theta \in (0,1)$ such that $\inf_i |DF^R|_{X_i}|> \theta^{-1}$.
    \item  Separation: the partition $\mathcal{Z}_n:=\bigvee^{n}_{j=0} (F^R)^{-j}\{X_1, X_2,  \cdots\}$ is a generating partition for $F^R$ in the sense that $\mathcal{Z}_{+\infty}$ is a trivial partition into points.
    \item Lipschitz distortions: there is a constant $C>0$ such that for any $i \ge 1$, $x,y\in X_{i}$, 
\begin{equation}\label{distor}
   \Big|\frac{|DF^R(x)|}{|DF^R(y)|}-1\Big|\le C |F^R(x)-F^R(y)|.
\end{equation} If the distortion (\ref{distor}) is H\"older instead of  Lipschitz, we additionally require that  $D(F^R)^n$ is monotonous on each element in $\mathcal{Z}_{n-1}$ for any $n\ge 1$. 
\item Metric: a useful non-euclidean metric on $X$ for the Gibbs-Markov map $F^R$ is $d_{\theta}(x,y)=\theta^{s(x,y)}$, where $s(x,y)$ is the separation time $s(x,y):=\min \{n \ge 0: (F^R)^nx, (F^R)^ny \text{ lie in distinct } X_i \}$.

It is proved in \cite{Young} that a polynomial Young tower $(X, F^R)$ admits an invariant SRB probability measure $\mu_{X}$ with the density function $h:=\frac{d\mu_X}{d\Leb_X}=C^{\pm 1}$. An SRB probability measure  $\mu_{\Delta}$ on $\Delta$  can be obtained by normalizing the measure $\sum_{i\ge 0}F_*^i(\mu_X|_{R>i})$.
\end{enumerate}  
\end{definition}
\begin{remark}\label{remarkondistor}
We want to comment on the conditions imposed on distortions. Usually only H\"older distortions are required for Young towers. With H\"older distortions only, a suitable Banach space called generalized bounded variations (generalized BV) was used for uniformly expanding open systems (see \cite{pollicot}) to answer the questions on where orbits prefer to visit. However, technically this space is not adapted to the methods of operator renewal equations for slowly mixing open dynamical systems (see the Remark \ref{gbvdoesnotwork} for more details). Hence. we additionally require monotonicity of the derivatives.
\end{remark}

In order to better understand the behavior of chaotic dynamical systems, it was suggested in \cite{Bunimovich_2007} that by making one or more holes in the phase space and by analyzing leaky dynamics and leakage speed (called the escape rate) it is possible to obtain useful information about the dynamics of the closed system (i.e., without a hole in the phase space) and transport in the phase space of the closed system. In our abstract model of Young towers, a hole is opened in the base of the tower as follows. 

\begin{definition}[Young towers with holes, escape times and hitting times]\label{defopen} \par
    Define a hole $H$ as a subinterval in $X$, $\diam_{\theta}H:=\sup\{d_{\theta}(x,y): x,y\in H\}$. It can be easily seen that $\mu_{\Delta}(H) \precsim \diam_{\theta}H$. Given $z_0 \in X$, we say that $z_0 $ is the center of a hole $H$ if $z_0 \in H$. This means that the hole $H$ can be very small and placed anywhere but it must contain the point $z_0$.  Define an escape time $e_H:=\inf\{n \ge 0: F^n \in H\}$ and a first hitting time $\tau_H:=\inf\{n \ge 1: F^n \in H\}$. Since $\mu_{\Delta}(e_H>t)=\mu_{\Delta}(\tau_H>t-1)$, they share the same asymptotics. 
    
    Thus, in order to study the escape and the first passage probabilities in the Young tower $\Delta$ with a hole $H\subseteq X$, we consider throughout this paper only the decay of $\mu_{\Delta}(\tau_H>t)$, and restrict the center $z_0$ to $S^c$, where $S^c$ is the complement of \[S=\bigcup_{n\ge 0} (F^R)^{-n} \big\{\text{singularities of }  F^R\text{ and their accumulating points}\big\}.\]

It implies that for each $n$, there is a sufficiently small $\epsilon_n>0$ such that for any $\mu_{X}(H)\le \epsilon_n$ and any $i \le n$, $H$ is completely contained in one element of $\mathcal{Z}_i$.
\end{definition}
\section{Main results}\label{resultsection}
In this section, we present our main results about the asymptotic expansion of the decay of polynomial escape rates. They show that the escape rates vary depending on the positions of the holes, which can be precisely located by our formula. 
\begin{theorem}[Asymptotic expansions for polynomial escape rates]\ \label{thm}\ \par
Consider the Young tower ($\Delta, F, \mu_{\Delta}$) in Definition \ref{defyoung} with decay of the tail of order $ n^{-k-\beta}$. Let the center of the hole H be $z_0\in S^c \bigcap X $ (see Definition \ref{defopen})  and $\mu_X=\frac{\mu_{\Delta}|_X}{\mu_{\Delta}(X)}$, then there is a small $\sigma>0$ such that for any $n\in \mathbb{N}$ and any $H$ satisfying $\mu_X(H)\in (0,\sigma)$, 
\[\frac{\mu_{\Delta}(\tau_H>n)}{\mu_{\Delta}(X)}=\sum_{i \ge n}\mu_X(R\ge i)+\frac{1+O(\mu_X(H)+\diam_{\theta}^{\epsilon}H)}{c_H \cdot \mu_X(H)+o(\mu_X(H))}\cdot b_n +O_H(n^{-k-1})\]
where the constants in $o(\cdot),O(\cdot)$ depend on $\sigma, z_0$ but do not depend on $\mu_X(H),\diam_{\theta}H$. The exact value of $\epsilon>0$ is independent of $H$ and can be found in subsection \ref{18}, and 
\begin{eqnarray*}
b_n=\begin{cases}
\sum_{a+b=n, b>0}\mu_X(R> a)\mu_X(R\ge b)  & k=1\\
(P_n^{k-1})^{-1}\Big[\sum_{a+b=n-k+1, b>0}\mu_X(R> a+k-1)\mu_X(R\ge b)P_{a+k-1}^{k-1}+& k\ge 2 \\
\quad \quad\quad\quad\quad\quad\quad \sum_{a+b=n-k+1}\mu_X(R> a)\mu_X(R\ge b+k-1)P_{b+k-1}^{k-1}\Big]   \\
\end{cases}.
\end{eqnarray*} 
\begin{eqnarray*}
 b_n \approx n^{-k-\beta}, \quad \sum_{i \ge n}\mu_X(R\ge i)\approx n^{-k-\beta+1} \text{ do not depend on }H.
\end{eqnarray*}
\begin{eqnarray*}
c_H=\begin{cases}
1 & z_0 \text{ is not periodic}\\
1-\prod_{i=0}^{p-1}|DF(F^i(z_0))|^{-1}  & z_0 \text{ is } p\text{-periodic} \\
\end{cases}.
\end{eqnarray*}

When $z_0$ is a $p$-periodic point, we additionally require that $(F^R)^{q}$ is monotonous at $z_0$ where $p=\sum_{i\le q-1}R\circ (F^R)^i(z_0)$.
 \end{theorem}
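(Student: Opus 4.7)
The plan is to reduce the computation of $\mu_\Delta(\tau_H>n)$ to a spectral perturbation problem for an operator renewal equation on the base $X$, and then extract the asymptotics by a Tauberian-type coefficient analysis. The first term $\mu_\Delta(X)\sum_{i\ge n}\mu_X(R\ge i)$ in the theorem will come for free from the tower geometry: orbits starting at levels strictly above $n$ in $\Delta$ never descend to the base in $n$ steps, hence automatically avoid the hole $H\subseteq X$, and contribute exactly this amount. The bulk of the argument must therefore address the complementary contribution, namely orbits that return to $X$ at least once in time $n$ while avoiding $H$.

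For this complementary part I would work on $X$ with the sub-Markov transfer operator of $F^R$ restricted to $X\setminus H$, and introduce block operators $T_{H,n}$ that encode one return to the base along an orbit of exact length $n$ in the tower which avoids $H$ throughout. Summing over the number of returns yields a renewal identity of the schematic form $\sum_{n\ge 0}z^n\mu_\Delta(\tau_H>n)=\bigl\langle \psi_H(z),(I-T_H(z))^{-1}h\bigr\rangle+E(z)$, where $T_H(z):=\sum_{n\ge 1}z^nT_{H,n}$, $h$ is the invariant density on $X$, $\psi_H(z)$ is an explicit density vector, and $E(z)$ accounts for the trivial tail term already identified. All further work will concern the behavior of $(I-T_H(z))^{-1}$ near $z=1$.

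To handle the resolvent I would apply the generalized Keller-Liverani theory (Proposition \ref{extendliverani1} and Lemma \ref{extendliverani2}) on a Lipschitz-type Banach space adapted to the metric $d_\theta$. For $H=\emptyset$, $T_0(1)$ is the classical transfer operator of the Gibbs-Markov map $F^R$ with leading eigenvalue $1$ and spectral projector $f\mapsto h\cdot\int f\,d\Leb_X$. As $H$ opens, first-order perturbation theory gives $\lambda_H(1)=1-c_H\mu_X(H)+o(\mu_X(H))$; when $z_0$ is non-periodic only the direct contribution $\int\mathbbm{1}_H\,d\mu_X$ survives and $c_H=1$, while for $p$-periodic $z_0$ the same orbit revisits $H$ every $p$ base-iterations, and summing the resulting geometric series in the composed derivatives produces $c_H=1-\prod_{i=0}^{p-1}|DF(F^iz_0)|^{-1}$; the monotonicity hypothesis on $(F^R)^q$ at $z_0$ is precisely what guarantees that these recurrent contributions add coherently in one direction rather than cancelling.

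Finally, I would extract the coefficients. The tail estimate $\mu_X(R>n)\approx n^{-k-\beta}$ renders $T_H(z)$ exactly $(k-1)$-times differentiable at $z=1$ with a leading non-smooth term of order $(1-z)^{k-1+\beta}$. Expanding $(1-\lambda_H(z))^{-1}$ around $z=1$ and extracting Taylor coefficients via a contour integration in the spirit of Gou\"ezel and Melbourne-Terhesiu produces the main term $b_n/[c_H\mu_X(H)]$ together with a remainder of order $O_H(n^{-k-1})$; in the case $k\ge 2$ the existence of $k-1$ usable derivatives forces a repeated summation by parts of the discrete convolution $\mu_X(R>\cdot)\ast\mu_X(R\ge\cdot)$, which is exactly what generates the permutation weights $P_n^{k-1}$ in the stated formula and explains the qualitatively different form of $b_n$ for $k=1$ versus $k\ge 2$. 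The principal obstacle is the spectral perturbation: opening a hole is a discontinuous perturbation in the strong H\"older norm, and the classical Keller-Liverani theory only yields $L^1$-type control insufficient to isolate the leading $b_n$ term; the sharper weak-norm estimate of Proposition \ref{extendliverani1} and Lemma \ref{extendliverani2} is essential, and the uniform control of the $o(\mu_X(H))$ and $O(\diam_\theta^\epsilon H)$ errors in $\lambda_H(1)$ relies on the assumption $z_0\in S^c$, which guarantees that $H$ sits inside a single element of $\mathcal Z_n$ for each fixed $n$ once $\mu_X(H)$ is small enough.
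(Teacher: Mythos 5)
Your overall scheme lines up with the paper's: the tail term $\sum_{i\ge n}\mu_X(R>i)$ from orbits that never return to the base, a renewal identity built from an open transfer operator on $X$, the generalized Keller--Liverani machinery (Proposition \ref{extendliverani1}, Lemma \ref{extendliverani2}), the eigenvalue perturbation $\mathring{\lambda}(1)=1-c_H\mu_X(H)+o(\mu_X(H))$ giving $c_H$, and coefficient extraction on $S^1$. You also correctly identify that the classical Keller--Liverani bound is too weak and that the sharper weak-norm estimate of Lemma \ref{extendliverani2} is the crucial ingredient. There is, however, a genuine gap in the functional-analytic setup.

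You propose carrying out the spectral perturbation on a ``Lipschitz-type Banach space adapted to the metric $d_\theta$.'' This space is natural for the closed Gibbs--Markov map, but it is not invariant under the open transfer operator once the hole is not a union of $d_\theta$-cylinders: writing $\mathring{T}\phi=\mathbbm{1}_{H^c}\cdot T\phi$, the output carries jump discontinuities at the two endpoints of the interval $H$, and these endpoints lie strictly inside a single $\mathcal{Z}_0$-cylinder (the hypothesis $z_0\in S^c$ guarantees exactly this for small $H$), so the image is not $d_\theta$-Lipschitz. This is why the paper instead works with bounded variation on the one-dimensional interval $X$, a space that absorbs such jumps, and why Definition \ref{defyoung} upgrades the usual H\"older distortion of Young towers to Lipschitz distortion together with a monotonicity assumption on $D(F^R)^n$ --- without those strengthenings the uniform Lasota--Yorke inequality for $\mathring{R}(z)$ (Lemma \ref{LYinequality}) and the bound $\|\mathring{R}_{\ge n}\|=O(n^{-k-\beta})$ (Lemma \ref{Rnbound}) on BV fail. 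The paper also records (Remark \ref{gbvdoesnotwork}) that the $d_\theta$-adapted generalized BV spaces only yield $\|\mathring{R}_{\ge n}\|=O(n^{-\alpha k-\alpha\beta})$, which loses too much regularity for the Fourier-coefficient extraction. So the Banach space you name would either force $H$ to be Markov (negating one of the theorem's advertised improvements over Demers) or break the Lasota--Yorke step; you need BV on $X$ and then the uniformity-in-$H$ input from Lemma \ref{bigconnectedcomponent}. A secondary point: the paper's open operator is $\mathring{T}(\cdot)=T(\mathbbm{1}_{H^c}\circ F^R\cdot)$, not the more naive restriction $T(\mathbbm{1}_{H^c}\cdot)$ that your phrase ``restricted to $X\setminus H$'' suggests; only the former makes the composition of block operators match the event $\bigcap_{1\le i\le n}(F^R)^{-i}H^c$ and yields the clean three-factor form $R_>(z)\circ[I-\mathring{R}(z)]^{-1}\circ\mathring{R}_{\ge}(z)$, with the left factor deliberately \emph{closed} and the right one open.
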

Theorem \ref{thm} can easily imply the following corollary, so we skip the proofs.
\begin{corollary}[Where orbits prefer to visit in $\Delta$]\label{where}\ \par

Given two sufficiently small holes $H_1, H_2$ with $\mu_X(H_1)=\mu_X(H_2)$ centering at $z_1,z_2 \in X\setminus S$ respectively, there is a sufficiently large $n_0\in \mathbb{N}$, if $n\ge n_0$, then $\mu_{\Delta}(\tau_{H_1}>n)>\mu_{\Delta}(\tau_{H_2}>n)$ if $c_{H_1}^{-1}>c_{H_2}^{-1}$, and vice versa. The larger $\mu_{\Delta}(\tau_{H_1}>n)$, then it is less likely that the orbits in $\Delta$ visit $H_1$ in the time window $[1,n]$, and is more likely to visit $H_1$ in the time interval $[n, \infty)$. This result characterizes for the first time the phenomenon of where the orbits prefer to go in phase spaces of slowly mixing non-uniformly expanding dynamical systems. (It was proved before in \cite{bunimovichijm} for the most uniformly expanding
dynamical systems).
\end{corollary}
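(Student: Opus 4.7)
The plan is to apply Theorem \ref{thm} separately to $H_1$ and $H_2$ and compare the two asymptotic expansions term by term. Writing
\[
\frac{\mu_{\Delta}(\tau_{H_j}>n)}{\mu_{\Delta}(X)}=\sum_{i\ge n}\mu_X(R\ge i)+\frac{1+O(\mu_X(H_j)+\diam_\theta^\epsilon H_j)}{c_{H_j}\mu_X(H_j)+o(\mu_X(H_j))}\,b_n+O_{H_j}(n^{-k-1})
\]
for $j=1,2$ and taking the difference, the first summand $\sum_{i\ge n}\mu_X(R\ge i)$ is independent of the hole and cancels. Since $\mu_X(H_1)=\mu_X(H_2)=:m$, the remaining leading contribution is
\[
\frac{b_n}{m}\left(\frac{1+O(m+\diam_\theta^\epsilon H_1)}{c_{H_1}+o(1)}-\frac{1+O(m+\diam_\theta^\epsilon H_2)}{c_{H_2}+o(1)}\right)+O_{H_1}(n^{-k-1})+O_{H_2}(n^{-k-1}).
\]

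First I would choose the two holes small enough (they can be shrunk around the fixed centers $z_1,z_2$, which are not in $S$ so $c_{H_j}$ is well-defined and stable) so that the ratio in the parenthesis has the same sign as $c_{H_1}^{-1}-c_{H_2}^{-1}$; concretely, for $m$ small the bracketed expression equals $(c_{H_1}^{-1}-c_{H_2}^{-1})(1+o(1))$ by a direct Taylor expansion of the two fractions. This gives a leading term of order $|c_{H_1}^{-1}-c_{H_2}^{-1}|\,b_n/m$ with the correct sign.

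Next I would absorb the error $O_{H_j}(n^{-k-1})$ by choosing $n_0$ large. The key observation is that $b_n\approx n^{-k-\beta}$ with $\beta\in[0,1)$, so $b_n/n^{-k-1}=n^{1-\beta}\to\infty$. Therefore, for any fixed pair of (sufficiently small) holes $H_1,H_2$, the hole-dependent constants in the $O_{H_j}(n^{-k-1})$ terms are swallowed once $n\ge n_0(H_1,H_2)$ is large enough, and the sign of $\mu_\Delta(\tau_{H_1}>n)-\mu_\Delta(\tau_{H_2}>n)$ matches that of $c_{H_1}^{-1}-c_{H_2}^{-1}$. The converse direction is symmetric.

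The only mildly delicate point — and what I would expect to be the main obstacle if one wanted to make the dependence on $H$ uniform — is that the constants in $O_{H_j}(n^{-k-1})$ depend on $H_j$, so $n_0$ must be allowed to depend on the pair $(H_1,H_2)$; fortunately the statement already quantifies $n_0$ after the holes are fixed, so no uniform control is required. The comparison of $c_{H_1}^{-1}$ and $c_{H_2}^{-1}$ itself is immediate from Theorem \ref{thm} since $c_{H_j}$ is determined purely by whether $z_j$ is periodic and, if so, by the derivative product along its orbit.
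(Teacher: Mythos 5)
Your proof is correct and is precisely the direct consequence of Theorem \ref{thm} that the paper has in mind: the paper explicitly writes ``Theorem \ref{thm} can easily imply the following corollaries, so we skip their proofs,'' so there is no separate argument in the paper to compare against, and what you write is the natural fleshing-out. Two small remarks on your presentation: (i) after factoring out $m=\mu_X(H_1)=\mu_X(H_2)$, the bracketed difference is really $(c_{H_1}^{-1}-c_{H_2}^{-1})+O(m+\diam_\theta^\epsilon H_1+\diam_\theta^\epsilon H_2)$, an \emph{additive} error; it can be rewritten as $(c_{H_1}^{-1}-c_{H_2}^{-1})(1+o(1))$ only because once $z_1,z_2$ are fixed with $c_{H_1}^{-1}\neq c_{H_2}^{-1}$ the gap $|c_{H_1}^{-1}-c_{H_2}^{-1}|$ is a fixed positive quantity, so ``sufficiently small'' holes must be taken small relative to that gap; and (ii) the corollary's wording nominally places the existential $n_0$ \emph{before} the smallness of the holes, but, as you note and as the paper's later remark (that $n_0$ depends on the constant in $O_H(\cdot)$) confirms, the only coherent reading is $n_0=n_0(H_1,H_2)$ after the holes are fixed, which is exactly what your argument supplies via the observation $b_n/n^{-k-1}\approx n^{1-\beta}\to\infty$.
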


Theorem \ref{thm} can easily imply the following corollary using $\mu_{\Delta}(\tau_H\in [n,m])=\mu_{\Delta}(\tau_H>n-1)-\mu_{\Delta}(\tau_H>m)$, so we skip the proofs as well.
\begin{corollary}[Finite time predictions]\label{1st}\ \par
A precise estimate of the first hitting probability at time $n$ (or in time interval $[n,m]$) is 
\begin{gather*}
    \frac{\mu_{\Delta}(\tau_H=n)}{\mu_{\Delta}(X)}=\mu_X(R\ge n-1)+\frac{1+O(\mu_X(H)+\diam_{\theta}^{\epsilon}H)}{c_H \cdot \mu_X(H)+o(\mu_X(H))}\cdot (b_{n-1}-b_n)  +O_H(n^{-k-1}),
\end{gather*} \begin{align*}
    \frac{\mu_{\Delta}(\tau_H\in [n, m])}{\mu_{\Delta}(X)}&=\sum_{i=n}^m\mu_X(R\ge i-1)+\frac{1+O(\mu_X(H)+\diam_{\theta}^{\epsilon}H)}{c_H \cdot \mu_X(H)+o(\mu_X(H))}\cdot (b_{n-1}-b_m)\\
    &\quad+O_H(n^{-k-1}).
\end{align*}

Given two sufficiently small holes $H_1, H_2$ with $\mu_X(H_1)=\mu_X(H_2)$ centering at $z_1,z_2 \in X\setminus S$ respectively, there is a sufficiently large $n_0\in \mathbb{N}$, for any $n\ge n_0$, if $b_n$ is decreasing and $|b_{n-1}-b_n|\approx n^{-k-\eta}$ for some $\eta \in [\beta, 1)$, then $\mu_{\Delta}(\tau_{H_1}=n)>\mu_{\Delta}(\tau_{H_2}=n)$ if $c_{H_1}^{-1}>c_{H_2}^{-1}$, and vice versa. If $b_n$ is increasing and $|b_{n-1}-b_n|\approx n^{-k-\eta}$, then $\mu_{\Delta}(\tau_{H_1}=n)>\mu_{\Delta}(\tau_{H_2}=n)$ if $c_{H_1}^{-1}<c_{H_2}^{-1}$, and vice versa. This result is the first one on the phenomenon of finite-time predictions for slowly mixing non-uniformly expanding dynamical systems. It was discovered before and then proved in \cite{boldingbun} for the exponentially mixing class of the fair-dice-like systems.
\end{corollary}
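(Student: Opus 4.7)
The plan is to derive this corollary as a direct consequence of Theorem \ref{thm} by exploiting the elementary identity $\mu_{\Delta}(\tau_H=n)=\mu_{\Delta}(\tau_H>n-1)-\mu_{\Delta}(\tau_H>n)$. First I would apply Theorem \ref{thm} at times $n-1$ and $n$ and subtract. The survival sums telescope: $\sum_{i\ge n-1}\mu_X(R\ge i)-\sum_{i\ge n}\mu_X(R\ge i)=\mu_X(R\ge n-1)$, which matches the first term in the claimed formula. The multiplicative prefactor $A_H:=\dfrac{1+O(\mu_X(H)+\diam_{\theta}^{\epsilon}H)}{c_H\mu_X(H)+o(\mu_X(H))}$ does not depend on $n$, so it factors out and multiplies $b_{n-1}-b_n$. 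The two error terms $O_H((n-1)^{-k-1})$ and $O_H(n^{-k-1})$ are both dominated by $C_H n^{-k-1}$ for $n$ large, and their difference is still $O_H(n^{-k-1})$. Together these three pieces give exactly the claimed formula for $\mu_{\Delta}(\tau_H=n)/\mu_{\Delta}(X)$.

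Next, to obtain the comparison between two small holes $H_1,H_2$ with $\mu_X(H_1)=\mu_X(H_2)=m$, I would subtract the formula evaluated on $H_1$ from the one on $H_2$. The $\mu_X(R\ge n-1)$ terms cancel, and since $b_{n-1}-b_n$ is independent of the hole, the difference reduces to
\[
\frac{\mu_{\Delta}(\tau_{H_1}=n)-\mu_{\Delta}(\tau_{H_2}=n)}{\mu_{\Delta}(X)}=(A_{H_1}-A_{H_2})(b_{n-1}-b_n)+O_{H_1,H_2}(n^{-k-1}).
\]
For $H_i$ small the expansion $A_{H_i}=(c_{H_i}m)^{-1}(1+o(1))$ shows that the sign of $A_{H_1}-A_{H_2}$ agrees with the sign of $c_{H_1}^{-1}-c_{H_2}^{-1}$, and that $|A_{H_1}-A_{H_2}|$ is comparable to $m^{-1}|c_{H_1}^{-1}-c_{H_2}^{-1}|$, a fixed positive constant independent of $n$.

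The only obstacle worth flagging is ensuring that the leading difference dominates the error $O_{H_1,H_2}(n^{-k-1})$. This is precisely where the hypothesis $|b_{n-1}-b_n|\approx n^{-k-\eta}$ with $\eta\in[\beta,1)$ is used: the main term is of order $n^{-k-\eta}\gg n^{-k-1}$, so there exists $n_0=n_0(H_1,H_2)$ such that for all $n\ge n_0$ the remainder is absorbed and the sign of the difference is determined by $(c_{H_1}^{-1}-c_{H_2}^{-1})(b_{n-1}-b_n)$. Splitting on whether $b_n$ is decreasing (so $b_{n-1}-b_n>0$) or increasing (so $b_{n-1}-b_n<0$) yields the two monotonicity cases stated in the corollary, exactly inverting the comparison in the increasing case. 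This completes the reduction to Theorem \ref{thm}.
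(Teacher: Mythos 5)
Your proposal is correct and is precisely the reduction the paper has in mind: the authors explicitly state "Theorem \ref{thm} can easily imply the following corollaries, so we skip their proofs," and your argument — subtracting the asymptotic expansion at $n$ from the one at $n-1$, telescoping the tail sum to $\mu_X(R\ge n-1)$, factoring out the $n$-independent prefactor against $b_{n-1}-b_n$, and then using $\eta<1$ so that the $n^{-k-\eta}$ main term dominates the $O_{H}(n^{-k-1})$ remainder — is the straightforward route. Your handling of the $H$-dependence of $n_0$ (and the sign discussion split on monotonicity of $b_n$) matches the intended statement and the caveat in the paper's Remark that $n_0$ depends on the constants in $O_H(\cdot)$.
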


 \begin{remark}
     Following \cite{Demersexp}, we can consider a double limit, i.e., $$\lim_{\diam_{\theta}H\to 0}\mu_X(H)\lim_{n \to \infty} n^{k+\beta}\Big[\frac{\mu_{\Delta}(\tau_H>n)}{\mu_{\Delta}(X)}-\sum_{i \ge n}\mu_X(R\ge i)\Big]=\frac{\lim_{n\to \infty}n^{k+\beta}b_n}{c_H},$$ provided that $\lim_{n\to \infty}n^{k+\beta}b_n$ exists (and it does for specific systems).  However, we can compare probabilities $\mu_{\Delta}(\tau_H>n)$ for various holes $H$ in $X$ without proving the existence of $\lim_{n\to \infty}n^{k+\beta}b_n$. Instead of tending time to infinity, it can be done for a finite moment of time $n_0$ as indicated in Corollaries \ref{where} and \ref{1st}.

     The $n_0$ in Corollaries \ref{where} and \ref{1st}  depends on the constant in $O_H(\cdot)$ in Theorem \ref{thm}. This constant is obtained by the generalized Keller-Liverani perturbation techniques (see the Proposition \ref{extendliverani1}). According to Theorem 1 of \cite{liveranikeller1}, quantitative results (i.e., explicit bounds) can be obtained for Proposition \ref{extendliverani1}. Hence, we believe that $n_0$ in Corollaries \ref{where} and \ref{1st} can be explicitly computed. But this would require a lot of new notations and essentially longer computations. Therefore, here we emphasize only theoretical aspects, and plan to consider numerical results in a separate paper. 
 \end{remark}

\begin{remark}
    The general result in Theorem \ref{thm} shows that $\mu_{\Delta}(\tau_H>n)\approx n^{-k-\beta+1}$, which has already been proved specially for Liverani-Saussol-Vaienti maps in \cite{Demers}. A new result of Theorem \ref{thm} says that no matter where $H$ is placed, $\mu_{\Delta}(\tau_H>n)$ has the same first-order term $\sum_{i \ge n}\mu_X(R\ge i)\approx n^{-k-\beta+1}$. Only the second-order term makes a real difference in $\mu_{\Delta}(\tau_H>n)$. It has the same decay rate $b_n \approx n^{-k-\beta}$ but with the different coefficient $c_H^{-1}$. Observe that $\mu_{\Delta}(\tau_H>n)$ decays polynomially and it has an asymptotic expansion obtained in Theorem \ref{thm} if  $z_0\in S^c$. A decay can be much faster if $z_0 \in S$ (see Remark \ref{centeratsigular}), and it has a  completely different asymptotic expansion. Therefore, the asymptotic expansion of the polynomial decay of $\mu_{\Delta}(\tau_H>n)$ in Theorem \ref{thm} is a typical expansion for slowly mixing dynamical systems, which is the main focus of the present paper.

    It was not assumed in \cite{Demers}, when dealing with Liverani-Saussol-Vaienti maps, that the initial measure is an SRB measure. As a result,  different escape rates appear, which depend on regularities of the density functions of the initial measure. This special situation for Liverani-Saussol-Vaienti maps is discussed in the Remark \ref{noninvariant}.
 \end{remark}
 
 \begin{remark}
     In Theorem \ref{thm} we only consider a hole $H$ in the base $X$ of the Young tower. This is because, in applications, the base $X$ for various dynamical systems can be chosen so that it includes a given hole in the original system. We can still consider the hole outside the base $X$, say, $H\subseteq \{(x,n)\in \Delta: n \le m\}$ for some $m$. Then the result of Theorem \ref{thm} still holds because the base $X$ can be chosen to be $\{(x,n)\in \Delta: n \le m\}$. Hence, in the present paper, we only consider holes in the base of Young towers. It differs from  \cite{demers1,demers2, BRUIN_DEMERS_MELBOURNE_2010}, where a hole in the Young tower is a union of infinitely many pieces that fill the tail of the tower. The construction of a tower in \cite{BRUIN_DEMERS_MELBOURNE_2010} is defined by  a given single hole in the original systems. Since we want to compare two different holes in the original systems as shown in Corollaries \ref{where} and \ref{1st}, then constructing a single tower independent of two holes (e.g., the base $X$ includes these holes) is much more convenient than the approach used in \cite{BRUIN_DEMERS_MELBOURNE_2010}.
 \end{remark}

 \begin{remark}
 Our method can be extended to non-Markovian one-dimensional dynamical systems, e.g., the AFN maps in \cite{melbourneinvention, Roland1,Roland2}. For non-Markovian systems, the structure of the present paper would be similar to \cite{melbourneinvention, sarig}. Namely, making spectral assumptions on a family of operators and then proving the main results using the same ideas of spectral analysis in Section \ref{proof} and verifying assumptions for specific systems (e.g. Young towers and non-Markovian dynamics) in the applications. However, our paper uses a different approach, starting with polynomial Young towers with holes. It is done because our new operator renewal theory requires some lengthy and complicated arguments. Starting with abstract and general frameworks to include more examples (e.g. non-Markovian dynamics) would result in more complicated arguments, which may obscure the essence of new methods of operator renewal equations that we are trying to present for open systems. This is a reason why only Markovian systems modeled by Young towers are studied here. We are planning to consider non-Markovian situation in another paper.
\end{remark}

 \begin{remark}
 We also believe that our method (the operator renewal equations for open systems) can be extended to some slowly mixing high-dimensional hyperbolic systems (e.g., some billiards with focusing arcs considered in \cite{Sucmp, Subbb}) as well as to slowly mixing semi-flows and to infinite ergodic theory. These questions will be addressed in separate papers. 
 \end{remark}

 \begin{remark}
     Finding decay rates is a challenging problem in ergodic theory and the theory of dynamical systems. In order to find slow decay rates of correlations, Young \cite{Young} employed coupling techniques to obtain an upper bound on polynomial decay rates, and also a lower bound (in a loose sense). However, it is not enough to know the upper and lower bounds for revealing more delicate statistical properties of dynamical systems. Later, Sarig \cite{sarig} used a functional analysis technique, called operator renewal equations, which turned out to be powerful enough to obtain an asymptotic expansion of the decay rates of correlations. An interesting corollary of this asymptotic expansion is that central limit theorems (CLT) with special observables (see \cite{gouezel}) still hold even though the decay rates of correlations can be arbitrarily slow (in this scenario only non-standard CLT or stable laws hold for a different class of observables, see \cite{gouezelnonclt}). In the context of infinite ergodic theory, such asymptotic expansions for decay rates of correlations can be found in Melbourne and Terhesiu \cite{melbourneinvention}. Important applications of this asymptotic expansion allow one to sharpen the limit theorem of Thaler and derive the arcsin laws with convergence rates in a simpler way (see more details in \cite{melbourneinvention}).
     
     In the context of open dynamical systems, the asymptotic expansions of $\mu_{\Delta}(\tau_H>t)$ can reveal stronger statistical properties. For example, one can predict probabilistically preference of visits of the orbits to certain subsets of the phase space; see \cite{bunimovichijm, Demersexp, haydn, pollicot, fresta} which proved for uniformly expanding systems that \[\mu_{\Delta}(\tau_H>t)=\exp[-\mu_{\Delta}(H)c_Ht]+ \text{ higher order terms}.\] However, for some slowly mixing nonuniformly expanding systems (e.g., Liverani-Saussol-Vaienti maps), only polynomial decay rates of $\mu_{\Delta}(\tau_H>t)$ were obtained in \cite{Demers}. Finding asymptotic expansions for the decay rates turns out to be a completely different challenging problem. Our Theorem \ref{thm} not only addresses this problem, but also provides some finer statistical properties related to finite-time predictions and where (which subsets of the phase space) orbits prefer to visit in finite times. 
 \end{remark}

  \begin{remark}
      Our scheme of the proof of Theorem \ref{thm} will be summarized at the end of Section \ref{prelimsection}. Some necessary notations and results are introduced in the Section \ref{prelimsection}.
 \end{remark}

\section{Preliminary results}\label{prelimsection}
Before proving Theorem \ref{thm}, we need more preliminary results, including introducing Banach spaces, operators, and their spectra associated with polynomial Young towers in Definition \ref{defyoung}. 
\subsection{Several (complex) operators}
Let a Banach space $(B, ||\cdot||)$ be the space of functions of bounded variations on $X$, i.e., \[||\phi||:=\int |\phi| d\Leb_X+ \bigvee_X \phi\]where
\[\bigvee_J \phi:=\sup\{\sum |\phi(x_i)-\phi(x_{i+1})|: \text{ any } x_1< x_2 <\cdots<x_n \in J\}, \text{ subinterval }J\subseteq X.\]

Let a transfer operator of $F^R$ be $T:B\to B$, i.e., $$(T\phi)(x):=\sum_{F^Ry=x}\frac{\phi(y)}{|DF^R(y)|},$$ satisfying $\int (T \phi)\psi   d\Leb_X=\int \phi \psi \circ F^Rd\Leb_X$. To adapt to our method of operator renewal equations, we define an open transfer operator as
  $$\mathring{T}(\cdot):=T(\mathbbm{1}_{H^c}\circ F^R \cdot)$$ when $\mu_X(H)>0$. 
  \begin{remark}
      This open transfer operator is slightly different from $T(\mathbbm{1}_{H^c}\cdot)$ used in \cite{Demersexp}. One reason is that we deal with the event $\{\tau_{H}>n\}=\bigcap_{1\le i \le n}(F^R)^{-i}H^{c}$ while \cite{Demersexp} deals with $\{e_{H}>n\}=\bigcap_{0\le i \le n-1}(F^R)^{-i}H^{c}$. Another reason is technical, and  it will be clarified in the proof of Lemma \ref{ORE}, where the constructed operator renewal equation would not exhibit a good form if $\mathring{T}(\cdot)$ were $T(\mathbbm{1}_{H^c}\cdot)$.
  \end{remark}

 Now we define a family of new (complex) operators. For any $n \in \mathbb{N}, z\in \overline{\mathbb{D}}$, \[\mathring{R}_n(\cdot):=\mathring{T}(\mathbbm{1}_{\{R=n\}}\cdot), \quad \mathring{R}(z)(\cdot):=\sum_{n\ge 1}z^n\mathring{R}_n(\cdot)=\mathring{T}(z^R\cdot), \quad \mathring{R}_{\ge n}(\cdot):=\mathring{T}(\mathbbm{1}_{\{R\ge n\}}\cdot)\]
\[ \mathring{R}_{> n}(\cdot):=\mathring{T}(\mathbbm{1}_{\{R> n\}}\cdot), \quad \mathring{R}_{>}(z):=\sum_{n \ge 0}z^n\mathring{R}_{>n}, \quad \mathring{R}_{\ge }(z):=\sum_{n \ge 1}z^n\mathring{R}_{\ge n} \]

In particular, when $\mu_X(H)=0$, 
\[{R}_n(\cdot):={T}(\mathbbm{1}_{\{R=n\}}\cdot), \quad {R}(z)(\cdot):=\sum_{n\ge 1}z^n{R}_n(\cdot)={T}(z^R\cdot), \quad {R}_{\ge n}(\cdot):={T}(\mathbbm{1}_{\{R\ge n\}}\cdot)\]
\[ {R}_{> n}(\cdot):={T}(\mathbbm{1}_{\{R> n\}}\cdot), \quad {R}_{>}(z):=\sum_{n \ge 0}z^n{R}_{>n}, \quad {R}_{\ge }(z):=\sum_{n \ge 1}z^n{R}_{\ge n}.\]We will see from Lemma \ref{Rnbound} that these operators are well-defined.

Extending $\Leb_X$ from $X$ to $\Delta$, and denoting it by $\Leb_{\Delta}:=\sum_{i\ge 0}F^i_*(\Leb_X|_{R>i})$, we can define a transfer operator $\mathbb{T}:L^1(\Delta, \Leb_{\Delta})\to L^1(\Delta, \Leb_{\Delta})$ by $(\mathbb{T}\phi )(x)=\sum_{F(y)=x}\frac{\phi(y)}{JF(y)}$, and an open transfer operator by $\mathring{\mathbb{T}}(\cdot):=\mathbb{T}(\mathbbm{1}_{H^c}\circ F\cdot)$, where $JF$ is the Jacobian w.r.t. $\Leb_{\Delta}$. They have the following basic properties that can be proved directly from the definitions (we will skip these direct proofs). 
\begin{lemma}\label{basicopentransfer}
    $\int \psi\mathbb{T}(\phi) d\Leb_{\Delta}=\int \psi\circ F \phi d\Leb_{\Delta}$ for any $\phi \in L^1(\Delta, \Leb_{\Delta})$ and for any bounded $\psi$. $JF(a,i)=1$ if  $i<R(a)-1$ and $JF(a,i)=|DF^R(a)|$ when $i=R(a)-1$. $\mathring{\mathbb{T}}^n(\cdot)=\mathbb{T}^n(\mathbbm{1}_{\tau_H>n}\cdot)$ for any $n\in \mathbb{N}$.
\end{lemma}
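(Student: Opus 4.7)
My plan is to treat the three claims separately, each as a short direct verification. Claim (i) is the standard duality of the transfer operator against its reference measure. I would recall that $\mathbb{T}$ is defined by $(\mathbb{T}\phi)(x)=\sum_{F(y)=x}\phi(y)/JF(y)$ with $JF$ the Radon--Nikodym derivative of $F$ with respect to $\Leb_{\Delta}$, decompose $\Delta$ into the countably many branches on which $F$ is injective, and apply the change-of-variables formula branch by branch. This yields $\int\psi\,\mathbb{T}(\phi)\,d\Leb_{\Delta}=\int\psi\circ F\cdot\phi\,d\Leb_{\Delta}$; boundedness of $\psi$ and $\phi\in L^1(\Leb_{\Delta})$ provide the absolute convergence needed to swap the branch sum with the integral.

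Claim (ii) is a bookkeeping exercise that unpacks the construction $\Leb_{\Delta}=\sum_{i\ge 0}F^i_*(\Leb_X|_{R>i})$. Identifying level $i$ with $\{a\in X:R(a)>i\}$ via the $F^i$-coordinate, the restriction of $\Leb_{\Delta}$ to level $i$ agrees with $\Leb_X|_{R>i}$ in this coordinate. When $i<R(a)-1$, the map $F:(a,i)\mapsto(a,i+1)$ is the identity in the base coordinate and sends $\{R>i\}$-mass to $\{R>i+1\}$-mass, so $JF(a,i)=1$. On the top level $i=R(a)-1$, the map $(a,R_i-1)\mapsto (F^R(a),0)$ is the Gibbs--Markov branch $F^R$ in base coordinates, which transports $\Leb_X|_{X_i}$ to $\Leb_X|_{F^R X_i}$ with derivative $DF^R(a)$; hence $JF(a,R_i-1)=|DF^R(a)|$.

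Claim (iii) rests on the commutation identity $\mathbb{T}(\mathbbm{1}_{H^c}\circ F\cdot\eta)=\mathbbm{1}_{H^c}\cdot\mathbb{T}(\eta)$, which is immediate from the pointwise formula since every preimage $y$ of $x$ satisfies $\mathbbm{1}_{H^c}(F(y))=\mathbbm{1}_{H^c}(x)$; in particular $\mathring{\mathbb{T}}(\eta)=\mathbbm{1}_{H^c}\cdot\mathbb{T}(\eta)$. Iterating this identity gives the push-in relation $\mathbbm{1}_{H^c}\cdot\mathbb{T}^n(\phi)=\mathbb{T}^n(\mathbbm{1}_{H^c}\circ F^n\cdot\phi)$. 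With these two tools the induction step is
\[\mathring{\mathbb{T}}^{n+1}(\phi)=\mathbbm{1}_{H^c}\cdot\mathbb{T}\bigl(\mathring{\mathbb{T}}^n(\phi)\bigr)=\mathbbm{1}_{H^c}\cdot\mathbb{T}^{n+1}\bigl(\mathbbm{1}_{\tau_H>n}\phi\bigr)=\mathbb{T}^{n+1}\bigl(\mathbbm{1}_{H^c}\circ F^{n+1}\cdot\mathbbm{1}_{\tau_H>n}\phi\bigr),\]
and the pointwise equality $\mathbbm{1}_{H^c}\circ F^{n+1}\cdot\mathbbm{1}_{\tau_H>n}=\mathbbm{1}_{\tau_H>n+1}$ closes the induction. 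None of the three parts presents a real obstacle; the only place that deserves genuine care is the level-by-level identification in (ii), which ensures that the factor $|DF^R(a)|$ appears exactly once, at the top of each column.
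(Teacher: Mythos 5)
Your verification is correct in all three parts, and since the paper explicitly skips these proofs as ``direct from the definitions,'' your branch-by-branch change-of-variables for the duality, the level-by-level unpacking of $\Leb_{\Delta}=\sum_{i\ge 0}F^i_*(\Leb_X|_{R>i})$ for the Jacobian, and the induction via the push-in identity $\mathbbm{1}_{H^c}\cdot\mathbb{T}^n(\phi)=\mathbb{T}^n(\mathbbm{1}_{H^c}\circ F^n\cdot\phi)$ together with $\mathbbm{1}_{H^c}\circ F^{n+1}\cdot\mathbbm{1}_{\tau_H>n}=\mathbbm{1}_{\tau_H>n+1}$ is precisely the intended argument.
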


\begin{convention}
    From now on if $\mu_X(H)=0$, then we remove the symbol $``\circ"$ above the operators and call them closed operators.  If $\mu_X(H)>0$, we keep the symbol $``\circ"$ above the operators, referring to its dependence on a hole $H$ with a positive measure. We call them open operators.
\end{convention}

Next, we consider some elementary properties for a general linear operator $A(z): B\to B, z\in \mathbb{K}$ where $\mathbb{K}$ is $S^1$, an arc of $S^1$ or a convex domain in $\overline{\mathbb{D}}$ throughout this paper. 

\begin{definition}\label{defop}
    Given $a\in \mathbb{N}, b\in [0,1)$, we say that the operator $A(\cdot) \in C^{a+b}(\mathbb{K})$ if $\frac{d^iA}{dz^i}(\cdot)$ exists on $\mathbb{K}$ for any $i=0,1,\cdots, a$,  and $\frac{d^aA}{dz^a}(\cdot)$ is $b$-H\"older, which means that there is a constant $C>0$ such that for any $z_1,z_2\in \mathbb{K}$, $||\frac{d^aA}{dz^a}(z_1)-\frac{d^aA}{dz^a}(z_2)||\le C |z_1-z_2|^{b}$. Here $0$-H\"older means that $\frac{d^aA}{dz^a}(\cdot)$ is continuous on $\overline{\mathbb{K}}$. We say that $A(\cdot) \in O(n^{-a-b})$ if $A(\cdot)$ has a Fourier expansion $A(z)=\sum_{i\ge 0}a_iz^i$, where $z\in \mathbb{K}=\mathbb{D}, \overline{\mathbb{D}}$ or $S^1$, and $a_n=O(n^{-a-b})$.
\end{definition} 

Regarding to the properties of operators in Definition \ref{defop}, the following lemma  will be used several times throughout the paper. 
\begin{lemma}[see \cite{sarig, gouezel}]\label{frequentlemma}\ \par
    Given $a\in \mathbb{N}, b\in [0,1),  c\ge d>1$ and the operators $A(\cdot), B(\cdot), C(\cdot)$ defined in $\mathbb{K}$. \begin{enumerate}
        \item If $C(\cdot)\in O(n^{-d}), B(\cdot) \in O(n^{-c})$, then $C(\cdot) \circ B(\cdot) \in O(n^{-d})$.
        \item  If $A(\cdot)\in C^{a+b}(\mathbb{K})$, $\mathbb{K}=\mathbb{D}, \overline{\mathbb{D}}, S^1$, then $A(\cdot)\in O(n^{-a-b})$. In particular, when $\mathbb{K}=S^1$, the Fourier coefficient $$a_n=\frac{1}{2\pi }\int_{0}^{2\pi}A(e^{it})e^{-int}dt=O(n^{-a-b})$$ where the constant in $``O(\cdot)"$ depends only on $\sup_{z\in S^1}||\frac{d^aA}{dz^a}(z)||$ and the H\"older coefficient of $\frac{d^aA}{dz^a}\big|_{S^1}$.  
        \item If $A(\cdot)\in C^{a+b}(\mathbb{K})$, $A(\cdot)^{-1}$ exists and is continuous on $\overline{\mathbb{K}}$, then $A(\cdot)^{-1}\in C^{a+b}(\mathbb{K})$. 
    \end{enumerate} 
\end{lemma}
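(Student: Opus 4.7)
\medskip
\noindent\textbf{Proof plan for Lemma \ref{frequentlemma}.} The three parts are independent, but share the flavor of classical Fourier/Hölder analysis transplanted to Banach-space-valued power series, so I would treat them in the order (1), (2), (3), reusing (2) once inside (3).

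For part (1), the plan is a standard convolution splitting. Writing $B(z)=\sum_{k\ge 0} b_k z^k$ and $C(z)=\sum_{k\ge 0} c_k z^k$, the Cauchy product gives $(C\circ B)(z)=\sum_{n\ge 0}z^n \sum_{k+\ell=n} c_k\circ b_\ell$. Because $c,d>1$ both series $\sum\|c_k\|$ and $\sum\|b_\ell\|$ converge. I split $\{k+\ell=n\}$ at $\ell\le n/2$ versus $\ell>n/2$. On the first piece, $k\ge n/2$ so $\|c_k\|\precsim n^{-d}$ uniformly, and summing $\|b_\ell\|$ gives a bounded factor. On the second piece, $\|b_\ell\|\precsim n^{-c}\le n^{-d}$ and summing $\|c_k\|$ is bounded. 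Hence the $n$-th coefficient of $C\circ B$ is $O(n^{-d})$, as claimed.

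For part (2), I reduce every case to Fourier analysis on $S^1$. If $\mathbb{K}$ is $\mathbb{D}$ or $\overline{\mathbb{D}}$, then $A(\cdot)$ is analytic on $\mathbb{D}$, continuous on $\overline{\mathbb{D}}$, and the Taylor coefficients of $A$ at $0$ agree with the Fourier coefficients of the boundary trace $t\mapsto A(e^{it})$; the $C^{a+b}$ hypothesis on $\overline{\mathbb{D}}$ restricts to a $C^{a+b}$ map on $S^1$, so the circle case subsumes the other two. On $S^1$, set $\tilde A(t):=A(e^{it})$. By the chain rule $\tilde A'(t)=ie^{it}A'(e^{it})$, and inductively $\tilde A^{(a)}(t)$ is a polynomial in $e^{it}$ with coefficients built from $A',\dots,A^{(a)}$, so $\tilde A^{(a)}$ is $b$-Hölder with a constant controlled by $\sup\|A^{(a)}\|$ and the Hölder seminorm of $A^{(a)}\!\restriction_{S^1}$. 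Then I apply the classical $S^1$ estimate: integrating by parts $a$ times in $a_n=\tfrac{1}{2\pi}\int_0^{2\pi}\tilde A(t)e^{-int}dt$ gives $a_n=(in)^{-a}\cdot\tfrac{1}{2\pi}\int \tilde A^{(a)}(t)e^{-int}dt$, and a standard doubling-trick estimate for Hölder functions (write the integral as a translate and average) yields $\|a_n\|\precsim n^{-a-b}$ with the advertised dependence on $\tilde A$.

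For part (3), I use the formula for the derivative of an operator-valued inverse together with induction on $a$. Since $A(z)^{-1}$ is continuous on $\overline{\mathbb{K}}$ and hence bounded, the identity $A(z+h)^{-1}-A(z)^{-1}=-A(z+h)^{-1}(A(z+h)-A(z))A(z)^{-1}$ shows $A^{-1}\in C^0$ gives $A^{-1}\in C^1$ with $(A^{-1})'=-A^{-1}A'A^{-1}$. Differentiating further, $(A^{-1})^{(j)}$ is a universal non-commutative polynomial in $A^{-1}$ and $A',A'',\dots,A^{(j)}$; inductively each such polynomial is $C^{a-j+b}$, so $(A^{-1})^{(a)}$ is $b$-Hölder. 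The main technical nuisance I expect is not any of these steps individually but the careful bookkeeping in part (2): the chain rule for a non-smooth $t\mapsto A(e^{it})$ with values in a Banach space, and controlling the Hölder constant of $\tilde A^{(a)}$ in terms of the intrinsic data of $A$ on $\mathbb{K}$, has to be written out to justify the quantitative last clause of (2). Everything else is a routine adaptation of the scalar arguments in Sarig~\cite{sarig} and Gouëzel~\cite{gouezel}.
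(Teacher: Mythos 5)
Your proposal is correct and follows essentially the same route as the paper's own proof, which cites Sarig (Lemma 3) and Gou\"ezel (Proposition 2.4, Lemma 4.3) and then spells out the integration by parts. The only cosmetic difference in part (2) is that you pull back to the circle first and integrate $\tilde A(t)=A(e^{it})$ by parts to get the $(in)^{-a}$ prefactor, whereas the paper integrates $A(re^{it})$ by parts at radius $r<1$ to get the $r^{a}/(r^n P_n^a)$ prefactor and then lets $r\to 1$; both reductions are equivalent once one observes that the uniform H\"older bound on $A^{(a)}$ forces continuous extension of $A$ and its derivatives up to $\overline{\mathbb{D}}$, which you implicitly rely on and the paper handles via the radial limit. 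Parts (1) and (3) match the paper's convolution-splitting and inverse-derivative-polynomial arguments.
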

\begin{proof}
    The proofs of the first and second items can be found in Lemma 3 of \cite{sarig}, Proposition 2.4 and Lemma 4.3 of \cite{gouezel}. A remark about Lemma 3 of \cite{sarig} is that only the case $a=1,b\in(0,1)$ for $A(\cdot)$ was considered. In our case when $a\in \mathbb{N},b \in [0,1)$, if $\mathbb{K}=\mathbb{D}$ or $\overline{\mathbb{D}}$, we need the integration by parts, i.e., for any $r \in (0,1)$ \begin{align*}
        a_n=\frac{1}{2\pi r^n}\int_{0}^{2\pi}A(re^{it})e^{-int}dt=\frac{r^a}{2\pi r^nP_n^a}\int_0^{2\pi}\frac{d^aA}{dz^a}(re^{it})e^{-i(n-a)t}dt.
    \end{align*} If $b=0$, then let $r\to 1$, and use the fact that $\frac{d^aA}{dz^a}(\cdot)$ can be continuously extended to $\overline{\mathbb{D}}$. If $b\in (0,1)$, we repeat the proof of Lemma 3 in \cite{sarig} for $\frac{d^aA}{dz^a}$. If $\mathbb{K}=S^1$ we simply let $r=1$. The proof of the dependence of the constant in $``O(\cdot)"$ for $``a_n=O(n^{-a-b})"$ can be found in the proof of Lemma 3 in \cite{sarig}, which is essentially reduced to integration by parts of Stieltjes integrals.

 For the regularity of $A(\cdot)^{-1}$, it is easy to see that $A(\cdot)^{-1}$ belongs to $C^1(\mathbb{K})$ due to $\sup_{z \in \mathbb{K}}||A(z)^{-1}||< \infty$ and $A(\cdot) \in C^1(\mathbb{K})$. In order to prove more regularities, it suffices to notice that \[\frac{d^aA(z)^{-1}}{dz^a}=p\Big(A(z)^{-1}, \frac{d^aA}{dz^a}(z), \frac{d^{a-1}A}{dz^{a-1}}(z), \cdots, \frac{dA}{dz}(z)\Big),\]where $p$ is a polynomial. Observe that $\frac{d^aA}{dz^a}(z), \frac{d^{a-1}A}{dz^{a-1}}(z), \cdots, \frac{dA}{dz}(z)$ are $b$-H\"older (hence, continuous) on $\mathbb{K}$ and $A(z)^{-1}$ is continuous on $\overline{\mathbb{K}}$, then $$\sup_{z\in K } \Big\{||A(z)^{-1}||, \Big|\Big|\frac{d^aA}{dz^a}(z)\Big|\Big|, \Big|\Big|\frac{d^{a-1}A}{dz^{a-1}}(z)\Big|\Big|, \cdots, \Big|\Big|\frac{dA}{dz}(z)\Big|\Big|\Big\}<\infty.$$ Therefore, $\frac{d^aA(z)^{-1}}{dz^a}$ is $b$-H\"older. 
\end{proof}

\subsection{Uniform Lasota-Yorke inequalities}
Lasota-Yorke inequalities play an important role to study the spectra of $\mathring{R}(z), R(z)$. Therefore, in this subsection, we present the Lasota-Yorke inequalities for (open) complex operators $\mathring{R}(z), R(z)$. Before doing this, we need a technical lemma. 
\begin{lemma}\label{bigconnectedcomponent}
    Suppose that the hole center $z_0\in S^c$ is required in Definition \ref{defopen}. For any $n\ge 1$, let $\bigcap_{1\le i\le n}(F^R)^{-i}H^c:=\bigcup_{i} I_i$, where $I_i$ is a connected component (a subinterval). Then there are small constants $\epsilon_n, \epsilon_n'>0$ such that $\inf_{i}\mu_{X}[(F^R)^nI_i]>\epsilon_n'$ for any $0\le \mu_X(H)\le \epsilon_n$. In addition, if $\diam_{\theta}H<\theta^n$, then for each $i \in [0, n-1]$, $H$ is completely contained in an element of $\mathcal{Z}_i$.
\end{lemma}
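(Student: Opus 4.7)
The plan is to treat the two claims of the lemma separately.

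The second claim is essentially definitional. By the definition of $d_\theta$, the assumption $\diam_\theta H<\theta^n$ forces $s(x,y)>n$ for every $x,y\in H$, so the iterates $(F^R)^j x$ and $(F^R)^j y$ sit in the same partition element for every $j=0,1,\ldots,n$. This is exactly the statement that $H$ is contained in a single atom of $\mathcal{Z}_n$, and hence in a single atom of every coarser partition $\mathcal{Z}_i$ with $i\le n-1$.

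For the first claim, I would start from the observation that $S^c$ is forward invariant under $F^R$: the identity $(F^R)^{-1}S=\bigcup_{k\ge 1}(F^R)^{-k}(\text{sing})\subseteq S$ yields $F^R(S^c)\subseteq S^c$. Hence the forward orbit $z_0,F^Rz_0,\ldots,(F^R)^{n-1}z_0$ lies entirely in $S^c$, and the remark at the end of Definition \ref{defopen} provides, for each $j=0,\ldots,n-1$, a small neighborhood $U_j$ of $(F^R)^jz_0$ that sits inside a single atom of $\mathcal{Z}_{n-1}$ and at positive Euclidean distance from the $\mathcal{Z}_{n-1}$-boundaries. After shrinking, the $U_j$'s may be taken pairwise disjoint. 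I would then choose $\epsilon_n$ so small that $\mu_X(H)\le\epsilon_n$ forces $H\subseteq U_0$ and, by uniform continuity of $(F^R)^j$ on the atom containing $z_0$, also $(F^R)^jH\subseteq U_j$ for every $j=1,\ldots,n-1$.

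The core estimate is carried out cylinder by cylinder. For each atom $Z\in\mathcal{Z}_{n-1}$, the map $(F^R)^n|_Z$ is a $C^1$ bijection onto the interval $(F^R)^nZ$, which by the Markov and big images conditions is a union of adjacent partition elements of Lebesgue measure at least $\inf_j\Leb(F^RX_j)$. Bijectivity together with the identity $(F^R)^n[(F^R)^{-k}H\cap Z]=(F^R)^{n-k}H\cap(F^R)^nZ$ gives
\[
(F^R)^n\Bigl(Z\cap\bigcap_{k=1}^n(F^R)^{-k}H^c\Bigr)=(F^R)^nZ\setminus\bigcup_{j=0}^{n-1}(F^R)^jH,
\]
so the connected components of the left-hand side are in bijection (via $(F^R)^n|_Z$) with those of the right-hand side. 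The right-hand side is an interval from which at most $n$ small disjoint subintervals are removed, each lying in the disjoint neighborhood $U_j$ of $(F^R)^jz_0$. Since the $U_j$'s have uniformly positive pairwise separation and uniformly positive distance to $\partial(F^R)^nZ$ (this boundary sits in $S$ while each $(F^R)^jz_0\in S^c$), every such component has Lebesgue measure at least a positive constant $\epsilon_n'$ depending only on $n$ and the orbit geometry near $z_0$. Components $I_i$ containing a full atom $Z$ satisfy $(F^R)^nI_i\supseteq(F^R)^nZ$ and inherit the bound $\inf_j\Leb(F^RX_j)$, while components lying inside a single atom inherit $\epsilon_n'$. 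Taking the minimum completes the proof.

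The main obstacle is the uniformity of this lower bound across the countably many atoms $Z\in\mathcal{Z}_{n-1}$: one must ensure that $d\bigl((F^R)^jz_0,\partial(F^R)^nZ\bigr)$ stays bounded below as $Z$ ranges over atoms whose boundaries may accumulate. This is exactly where the hypothesis $z_0\in S^c$ does the work, since every such boundary lies in $S$ while each $(F^R)^jz_0$ has positive distance to $S$ by assumption.
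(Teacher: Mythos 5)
Your argument follows essentially the same route as the paper's: for small enough $H$, each connected component $I_i$ of $\bigcap_{k=1}^n(F^R)^{-k}H^c$ lies in a single atom $Z\in\mathcal{Z}_{n-1}$, one pushes forward by the injective branch $(F^R)^n|_Z$, and the pushed-forward set is $(F^R)^nZ$ with some iterates of the hole removed, each of which stays far from $\mathcal{Z}_{n-1}$-boundaries because $z_0\in S^c$ and $S^c$ is $F^R$-forward invariant. The second (definitional) claim and the uniformity discussion are handled correctly.

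One step, however, is not literally correct. The set identity
\[
(F^R)^n\bigl[(F^R)^{-k}H\cap Z\bigr]=(F^R)^{n-k}H\cap(F^R)^nZ
\]
holds only with $\subseteq$ in general: given $y\in(F^R)^{n-k}H\cap(F^R)^nZ$, the unique $Z$-preimage $x=\phi^{-1}(y)$ of $y$ under $(F^R)^n$ satisfies $(F^R)^{n-k}\bigl[(F^R)^kx\bigr]=y$, but $(F^R)^kx$ need not be the particular point of $H$ that maps onto $y$, since $(F^R)^{n-k}$ is not injective globally. Consequently the asserted equality and the ``bijection of connected components'' fail; what one actually has is
\[
(F^R)^n\Bigl(Z\cap\bigcap_{k=1}^n(F^R)^{-k}H^c\Bigr)=(F^R)^nZ\setminus\bigcup_{j=0}^{n-1}(F^R)^j\bigl(H\cap(F^R)^{n-j}Z\bigr)\supseteq(F^R)^nZ\setminus\bigcup_{j=0}^{n-1}(F^R)^jH.
\]
This is exactly the form the paper uses, writing $\bigcup_{\text{some }j\le n}(F^R)^jH$ and noting that the worst case is removing all $(F^R)^jH$. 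To repair your version: for $\mu_X(H)\le\epsilon_n$ the hole $H$ lies in a single atom of $\mathcal{Z}_{n+1}$, hence in a single atom of every coarser $\mathcal{Z}_{j-1}$; since $(F^R)^{n-j}Z$ is an atom of $\mathcal{Z}_{j-1}$ (or a union of $\mathcal{Z}_0$-atoms when $j=0$), each $H\cap(F^R)^{n-j}Z$ is either empty or all of $H$, so the left-hand side is $(F^R)^nZ$ minus a subcollection of \emph{entire} sets $(F^R)^jH$, and each of its components contains a component of the right-hand side. With that modification your lower bound goes through, and the conclusion matches the paper's.
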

\begin{proof}
The second claim easily follows from the definitions. Now we prove the first claim. Since $z_0 \notin S$, there is a sufficiently small $\epsilon_n>0$ such that for any $0\le \mu_X(H)\le \epsilon_n$ and for any $i\le n+1$, $(F^R)^iH$ is strictly contained in one element of $\mathcal{Z}_0$ and $\mu_{X}\big[(F^R)^iH\big]<\inf_{j}\mu_X(F^RX_j)/2$. Each $I_k$ can be expressed as a cylinder-like set \[X_{i_0}\bigcap (F^R)^{-1}X'_{i_1}\bigcap (F^R)^{-2}X'_{i_2}\bigcap \cdots \bigcap (F^R)^{-(n-1)}X'_{i_{n-1}} \bigcap (F^R)^{-n}X''_{i_{n}},\] where $X'_{i_j}$ is a connected component of $X_{i_j}\bigcap H^c$, $X''_{i_{n}}$ is a connected component of $F^R(X_{i_{n-1}})\bigcap H^c$. Notice that $X'_{i_j}=X_{i_j}$ if $H\bigcap X_{i_j}=\emptyset$, $(F^R)^n$ is a diffeomorphism acting from $X_{i_0}\bigcap (F^R)^{-1}X_{i_1}\bigcap \cdots \bigcap (F^R)^{-(n-1)}X_{i_{n-1}}$ onto its image, and $X'_{i_j}, X''_{i_{n}}$ are non-empty sets according to the choice of $\epsilon_n$. Then $(F^R)^nI_k$ is a connected component of $$\Big\{(F^R)^n\Big[X_{i_0}\bigcap (F^R)^{-1}X_{i_1}\bigcap \cdots \bigcap (F^R)^{-(n-1)}X_{i_{n-1}}\Big]\Big\} \bigcap \Big[\bigcup_{\text{some }j\le n}(F^R)^{j}H\Big]^c.$$ 

To get a lower bound for the measure of $(F^R)^nI_k$, we just need to estimate the upper bound for the measure of $\bigcup_{\text{some }j\le n}(F^R)^{j}H$. The worst case is $\bigcup_{j\le n}(F^R)^{j}H$. But, by the choice of $\epsilon_n$, we know that $(F^R)^jH$ stays away from the singularities of $F^R$ (that is, the endpoints of $X_j$ and their accumulating points) by a fixed distance (depending on $n$ and $H$). Thus $\mu_X\big[(F^R)^nI_k\big]$, as the measure of a connected component of $$\Big\{(F^R)^n\Big[X_{i_0}\bigcap (F^R)^{-1}X_{i_1}\bigcap \cdots \bigcap (F^R)^{-(n-1)}X_{i_{n-1}}\Big]\Big\} \bigcap \Big[\bigcup_{\text{some }j\le n}(F^R)^{j}H\Big]^c,$$  has a positive lower bound, which depends on $n$ and $H$ only, i.e., $\inf_{i}\mu_{X}[(F^R)^nI_i]>\epsilon_n'$ for some $\epsilon_n'>0$.
\end{proof}

Using the results and estimates in Lemma \ref{bigconnectedcomponent}, we can prove the uniform Lasota-Yorke inequalities.
\begin{lemma}[Uniform Lasota-Yorke inequalities]\label{LYinequality}\ \par
If the center of the hole $H$ is $z_0 \in S^c$, then there are constants $N\in \mathbb{N}$, $\bar{\theta}\in (0,1)$ and $C=C_N>0$ such that for any $\phi \in B, \mu_X(H)\le \epsilon_N$ (recall $\epsilon_N$ in Lemma \ref{bigconnectedcomponent}), $|z|\le 1, n \in \mathbb{N}$, \[||\mathring{R}(z)^n\phi ||\le C |z|^n\bar{\theta}^n ||\phi||+C|z|^n|\phi|_1,\quad ||{R}(z)^n\phi ||\le C |z|^n\bar{\theta}^n ||\phi||+C|z|^n|\phi|_1.\]

In particular, these imply that the spectral radius of $\mathring{R}(z),R(z)$ is not greater than $|z|$. $\mathring{R}(\cdot),R(\cdot)$ are analytic operators in $\mathbb{D}$.
\end{lemma}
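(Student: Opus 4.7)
\emph{The plan is to reduce the operator powers $\mathring R(z)^n$ to a single open transfer operator $\mathring T^n$ acting on a weighted function, and then to apply a Lasota--Yorke argument directly to $\mathring T^n$.} Using the adjoint-type identity $g\cdot T\phi=T(g\circ F^R\cdot\phi)$ that characterises $T$, a short induction on $n$ yields $\mathring R(z)^n\phi=\mathring T^n(z^{R_n}\phi)$, where $R_n:=\sum_{0\le j\le n-1}R\circ(F^R)^j$ is the $n$-th return-time sum. Since $R\ge 1$ and $|z|\le 1$, the weight satisfies $|z^{R_n}|\le|z|^n$ pointwise; this is the source of the $|z|^n$ factor in the target inequality. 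Moreover $R_n$ is locally constant on each element of $\mathcal Z_{n-1}$, so the BV seminorm of $z^{R_n}\phi$ equals that of $\phi$ inside every cylinder, picking up only boundary jumps at the cylinder endpoints, which are controlled by $\|\phi\|_{L^\infty}$ on branches.

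Next I would establish the Lasota--Yorke skeleton for $T^n$ and upgrade it to $\mathring T^n$. Writing $\mathring T^n\phi=\mathbbm 1_{H^c}\cdot T^n(\mathbbm 1_{B_{n-1}}\phi)$ with $B_{n-1}:=\bigcap_{0\le i\le n-1}(F^R)^{-i}H^c$, and decomposing $T^n\psi$ as a sum over branches $Z\in\mathcal Z_{n-1}$ of $(\psi/|D(F^R)^n|)\circ((F^R)^n|_Z)^{-1}$, the standard argument yields the closed inequality $\|T^n\psi\|\le C\theta^n\|\psi\|+C\|\psi\|_{L^1}$ from (i) contraction by $\theta^n$ of inverse branches on the interior variation, (ii) the Lipschitz (or monotone-H\"older) distortion bound \eqref{distor}, and (iii) the big-image condition, which converts the $L^\infty$ contribution on full branches into $L^1$. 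The open case adds only finitely many new boundary jumps coming from the cuts defining $B_{n-1}$. By Lemma \ref{bigconnectedcomponent}, provided $\mu_X(H)\le\epsilon_n$, the number of connected components of $B_{n-1}$ inside each cylinder $Z$ is bounded by $1/\epsilon_n'$, and each such component has $(F^R)^n$-image of $\mu_X$-measure at least $\epsilon_n'$; using the bounded density $h=C^{\pm 1}$ these extra jumps absorb into a uniform multiple of $\|\phi\|_{L^1}$. Combining with the pointwise weight bound yields $\|\mathring R(z)^n\phi\|\le C_n|z|^n\theta^n\|\phi\|+C_n|z|^n\|\phi\|_{L^1}$; taking $H=\emptyset$ recovers the same estimate for $R(z)^n$.

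To remove the $n$-dependence of the constant, I fix $N\in\mathbb N$ with $C_N\theta^N\le\tfrac12$, set $\bar\theta:=(C_N\theta^N)^{1/N}\in(0,1)$, and propagate to all $n$ by submultiplicativity together with the elementary weak-norm bound $\|\mathring R(z)\phi\|_{L^1}\le|z|\|\phi\|_{L^1}$ (itself a consequence of $|\mathring R(z)\phi|\le|z|\cdot T(|\phi|)$ and $\int T(|\phi|)\,d\Leb_X=\int|\phi|\,d\Leb_X$). The spectral-radius claim then follows from Gelfand's formula applied to $\|\mathring R(z)^n\|^{1/n}$, whose limit is $\le|z|\bar\theta<|z|$. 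The main difficulty is the uniformity in both $n$ and $H$: unchecked, the cuts by $H$ could produce arbitrarily many boundary jumps and destroy the BV bound. This is precisely what Lemma \ref{bigconnectedcomponent} prevents, by pinning the number of connected components of $B_{N-1}$ per cylinder to $1/\epsilon_N'$ and bounding their $(F^R)^N$-images from below --- which is also why the smallness condition $\mu_X(H)\le\epsilon_N$ appears in the hypothesis.
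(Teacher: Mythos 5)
Your proposal follows essentially the same route as the paper's own proof: decompose $\bigcap_{1\le i\le n}(F^R)^{-i}H^c$ into connected components contained in elements of $\mathcal{Z}_{n-1}$, pull out the factor $|z|^n$ from the locally constant weight $z^{R_n}$ on each branch, control the branch contributions via the Lipschitz distortion bound and the big-image condition, and invoke Lemma~\ref{bigconnectedcomponent} to bound $\Leb_X((F^R)^nI_i)$ from below by $\epsilon_n'$ so that boundary jumps are absorbed into $\|\phi\|_{L^1}$; then fix $N$ and iterate by submultiplicativity. The modular packaging (prove a Lasota--Yorke estimate for $\mathring T^n$ first, then feed in $z^{R_n}\phi$) is only an organizational variant — indeed it must be carried out branch by branch exactly as you describe, since $\bigvee_X z^{R_n}$ is infinite and one cannot simply apply a $\|\cdot\|$-bound for $\mathring T^n$ to the function $z^{R_n}\phi$ globally.

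Two corrections. First, the coefficient of $\theta^n\|\phi\|$ in your intermediate estimate must be $n$-independent: the paper obtains $C\theta^n\bigvee_X\phi$ with a universal $C$, and only the $L^1$ coefficient is allowed to involve $\epsilon_n'^{-1}$. If both constants are written as $C_n$, the step ``fix $N$ with $C_N\theta^N\le\frac12$'' is unjustified unless one first argues that $C_n\theta^n\to 0$; you should state the BV coefficient is universal, as your own description of the closed-case argument already implies. Second, your concluding spectral-radius claim overreaches: from $\|\mathring R(z)^n\|\le C|z|^n\bar\theta^n+C|z|^n$ Gelfand's formula gives $\lim_n\|\mathring R(z)^n\|^{1/n}\le|z|$, not $\le|z|\bar\theta$, because the $L^1$ term is only contracted by $|z|^n$. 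The lemma correctly asserts spectral radius $\le|z|$; the sharper bound $|z|\bar\theta$ holds only for the \emph{essential} spectral radius (via Hennion/Nussbaum), which is a different statement. Also, as a minor bookkeeping point, $\mathring T^n\phi=T^n(\mathbbm 1_{\bigcap_{1\le i\le n}(F^R)^{-i}H^c}\,\phi)$; your formula $\mathbbm 1_{H^c}\cdot T^n(\mathbbm 1_{B_{n-1}}\phi)$ inserts an extra $i=0$ factor, which is harmless for the upper bound but is not the operator in question.
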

\begin{proof}
    First, we will estimate the $L^1$-norm. Since $R\ge 1$, then \begin{align}
        \int \big|\mathring{R}(z)^n\phi\big|d\Leb_X&=\int \Big|T\big[z^{R+R\circ F^R+\cdots+R\circ (F^R)^{n-1}}\mathbbm{1}_{\bigcap_{1\le i\le n}(F^R)^{-i}H^c}\phi\big]\Big|d\Leb_X \nonumber \\
        &\le |z|^n\int T\big[\mathbbm{1}_{\bigcap_{1\le i\le n}(F^R)^{-i}H^c}|\phi|\big]d\Leb_X \nonumber \\
        &\le |z|^n \int T(|\phi|)d\Leb_X\le |z|^n\int|\phi|d\Leb_X \label{22}.
    \end{align} 
    
    Next we estimate the $\bigvee$-norm. Let $\bigcap_{1\le i\le n}(F^R)^{-i}H^c=\bigcup_i I_i:=\bigcup_i (a_i,b_i)$ where $I_i$ is a connected component. Notice that \[\bigvee_X \mathring{R}(z)^n\phi=\bigvee_X R(z)^n(\mathbbm{1}_{\bigcup I_i}\phi)\le \sum_{i}\bigvee_X R(z)^n(\mathbbm{1}_{I_i}\phi),\] $R\ge 1$, and each $I_i$ is contained in one element of $\mathcal{Z}_{n-1}$ which implies that $(F^R)^n$ is injective in this element and that $R+R\circ F^R+\cdots+ R\circ (F^R)^{n-1}$ is a constant in this element. Then  \begin{align*}
        \bigvee_X \mathring{R}(z)^n(\phi) &\le \sum_i\bigvee_X R(z)^n(\phi\mathbbm{1}_{I_i}) \le |z|^n \sum_i \bigvee_X \frac{\phi\mathbbm{1}_{I_i}}{|D(F^R)^n|}\\
        &\le |z|^n \sum_i \bigvee_{I_i} \frac{\phi}{|D(F^R)^n|}+\frac{|\phi(a_i)|}{|D(F^R)^n(a_i)|}+\frac{|\phi(b_i)|}{|D(F^R)^n(b_i)|}\\
        &\le |z|^n \sum_i \bigvee_{I_i} \frac{1}{|D(F^R)^n|}\max_{I_i}|\phi|+\bigvee_{I_i}\phi\max_{I_i}\frac{1}{|D(F^R)^n|}\\
        &\quad +\frac{|\phi(a_i)|}{|D(F^R)^n(a_i)|}+\frac{|\phi(b_i)|}{|D(F^R)^n(b_i)|}.
    \end{align*}

   The conditions on bounded distortions in Definition \ref{defyoung} imply that there is a constant $C>0$ such that \begin{gather}\label{20}
       \bigvee_{I_i} \frac{1}{|D(F^R)^n|}\le C\frac{\Leb_X(I_i)}{\Leb_X((F^R)^n(I_i))}\le C \theta^n, \quad \frac{1}{|D(F^R)^n(x)|}\le C\frac{\Leb_X(I_i)}{\Leb_X((F^R)^n(I_i))}
   \end{gather} for any $x\in I_i$. Together with Lemma \ref{bigconnectedcomponent} and the inequalities $|\phi(x)|\le \bigvee_{I_i}\phi+\Leb_X(I_i)^{-1}\int_{I_i}|\phi|d\Leb_X$ for any $x\in I_i$, we can continue our estimate as
   \begin{align}
       &\precsim |z|^n \sum_i \frac{3\Leb_X(I_i)}{\Leb_X((F^R)^n(I_i))}\Big[\bigvee_{I_i}\phi+\Leb_X(I_i)^{-1}\int_{I_i}|\phi|d\Leb_X\Big]+\bigvee_{I_i}\phi\frac{\Leb_X(I_i)}{\Leb_X((F^R)^n(I_i))}\nonumber\\
       &\precsim |z|^n \sum_i \theta^n\bigvee_{I_i}\phi+\frac{1}{\Leb_X((F^R)^n(I_i))}\int_{I_i}|\phi|d\Leb_X+\theta^n\bigvee_{I_i}\phi\nonumber\\
       &\le |z|^n \Big[C \theta^n\bigvee_{X}\phi+{\epsilon'}_n^{-1}\int_{X}|\phi|d\Leb_X\Big] \label{23}
   \end{align}for some constant $C>0$ independent of $n$ and $H$. Now we choose a large $n$, say $N$, such that $\rho:=C \theta^N<1$. Then for any $k\in \mathbb{N}$ \begin{align*}
       \bigvee_X \Big[\frac{\mathring{R}(z)}{|z|}\Big]^{Nk}\phi \le \rho^{k}\bigvee_X \phi+\frac{{\epsilon'}_N^{-1}}{1-\rho}\int_X|\phi|d\Leb_X .
   \end{align*} 

   For any $n\in \mathbb{N}$, there is  $k$ such that $Nk\le n\le N(k+1)$ and \begin{align*}
       \bigvee_X \Big[\frac{\mathring{R}(z)}{|z|}\Big]^{n}\phi&= \bigvee_X \Big[\frac{\mathring{R}(z)}{|z|}\Big]^{n-Nk}\circ \Big[\frac{\mathring{R}(z)}{|z|}\Big]^{Nk}\phi \\
       &\le C\bigvee_X \Big[\frac{\mathring{R}(z)}{|z|}\Big]^{Nk}\phi +\max\{{\epsilon'}_0^{-1}, \cdots, {\epsilon'}_N^{-1}\} \int_X|\phi|d\Leb_X\\
       & \le C\rho^{k}\bigvee_X \phi+\frac{C{\epsilon'}_N^{-1}}{1-\rho}\int_X|\phi|d\Leb_X+\max\{{\epsilon'}_0^{-1}, \cdots, {\epsilon'}_N^{-1}\} \int_X|\phi|d\Leb_X\\
       &\le C_{\rho}(\rho^{1/N})^n\bigvee_X \phi+C_{N,\rho}\int_X|\phi|d\Leb_X
   \end{align*} where $C_{N,\rho}=\frac{C{\epsilon'}_N^{-1}}{1-\rho}+\max\{{\epsilon'}_0^{-1}, \cdots, {\epsilon'}_N^{-1}\} $ and the first $``\le"$ follows from (\ref{22}) and (\ref{23}).

   We conclude the proof by taking $\bar{\theta}=\rho^{1/N}$ and obtain the Lasota-Yorke inequalities for $\mathring{R}(z)$. The Lasota-Yorke inequalities for $R(z)$ can be obtained by the same arguments with $\mu_X(H)=0$.
\end{proof}

\subsection{Regularities of $\mathring{R}(\cdot)$ and $R(\cdot)$}
Knowing the regularities of operators is crucial in the method of operator renewal theory. So the lemmas in this subsection dedicate to the regularities of $\mathring{R}(\cdot), R(\cdot)$. We claim not only the regularities of these operators but also their dependence on the hole $H$.

Before proving the regularities of $\mathring{R}(\cdot), R(\cdot)$ in this subsection, we start with estimating the Fourier coefficients of  $\mathring{R}(\cdot), R(\cdot)$ and their dependence on small $H$. Recall now Lemma \ref{bigconnectedcomponent} and the meaning of $\epsilon_n$ there, which will be used from now on. 

\begin{lemma}\label{Rnbound}
There is a constant $C>0$ (depending on $\epsilon_1$) such that for any $H$ satisfying $\mu_X(H)\le \epsilon_1$ we have $||\mathring{R}_{n}||\le C\Leb_X(\{R=n\})$, $||R_{n}||\le C\Leb_X(\{R=n\})$, $||\mathring{R}_{\ge n}||\le Cn^{-k-\beta}$, $||{R}_{\ge n}||\le C n^{-k-\beta}$.
\end{lemma}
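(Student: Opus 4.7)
The plan is to reduce all four norm estimates to a single-block estimate on each Markov partition element $X_i$ and then sum over the appropriate family of blocks. Writing $\{R\ge n\}=\bigsqcup_{i:R_i\ge n}X_i$ and $\{R=n\}=\bigsqcup_{i:R_i=n}X_i$, I would decompose
\[R_{\ge n}\psi=\sum_{i:R_i\ge n}T(\mathbbm{1}_{X_i}\psi),\qquad \mathring{R}_{\ge n}\psi=\sum_{i:R_i\ge n}T(\mathbbm{1}_{X_i\setminus(F^R)^{-1}H}\psi),\]
with analogous decompositions for $R_n$ and $\mathring{R}_n$. Each summand is supported on $F^R(X_i)$, and $F^R|_{X_i}$ is a diffeomorphism onto its image, so each block can be handled separately.

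The central single-block estimate I would establish is
\[\|T(\mathbbm{1}_{X_i}\psi)\|\precsim \Leb_X(X_i)\bigvee_{X_i}\psi+\int_{X_i}|\psi|\,d\Leb_X,\]
with constant independent of $i$. This follows from applying the BV product rule to $(\psi/|DF^R|)\circ(F^R|_{X_i})^{-1}$ and using three ingredients already encapsulated in (\ref{20}) with $n=1$: the distortion condition (\ref{distor}), the uniform expansion, and the big images bound. Together these yield $\max_{X_i}|DF^R|^{-1}\precsim \Leb_X(X_i)/\Leb_X(F^RX_i)\precsim \Leb_X(X_i)$ and $\bigvee_{X_i}|DF^R|^{-1}\precsim \theta\,\Leb_X(X_i)$; combining them with the standard pointwise bound $\max_{X_i}|\psi|\le\bigvee_{X_i}\psi+\Leb_X(X_i)^{-1}\int_{X_i}|\psi|\,d\Leb_X$ and with the two endpoint jumps of $F^R(X_i)$ produces the displayed inequality. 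The $L^1$ part of the norm is controlled trivially via the $L^1$-contraction of $T$ and the embedding $\|\psi\|_\infty\precsim\|\psi\|$.

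Summing the single-block estimate over $\{i:R_i=n\}$ (respectively $\{i:R_i\ge n\}$) and using $\sum_i \Leb_X(X_i)\bigvee_{X_i}\psi\le \Leb_X(\{R\ge n\})\bigvee_X\psi$ yields $\|R_n\|\precsim \Leb_X(\{R=n\})$ and $\|R_{\ge n}\|\precsim \Leb_X(\{R\ge n\})\precsim n^{-k-\beta}$, the latter comparison following from the polynomial tail hypothesis in Definition \ref{defyoung}. For the open versions the key geometric remark is that since $H$ is a subinterval and $F^R|_{X_i}$ is monotonous, $(F^R)^{-1}H\cap X_i$ is a single (possibly empty) subinterval; hence $X_i\setminus(F^R)^{-1}H$ is the union of at most two subintervals, and the single-block argument on each piece loses only an absolute factor of $2$, giving the same bounds for $\mathring{R}_n$ and $\mathring{R}_{\ge n}$ uniformly over $H$ with $\mu_X(H)\le\epsilon_1$.

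The main technical obstacle is the bookkeeping of boundary jumps: besides the two endpoints of $F^R(X_i)$, the hole introduces up to two further endpoints inside $F^R(X_i)$, and each of these contributions must be absorbed into a quantity of size $\Leb_X(X_i)\max_{X_i}|\psi|$. This is only possible because the distortion condition controls both $\max_{X_i}|DF^R|^{-1}$ and $\bigvee_{X_i}|DF^R|^{-1}$ by the same geometric quantity $\Leb_X(X_i)/\Leb_X(F^RX_i)$, which the big images condition uniformly bounds by a constant multiple of $\Leb_X(X_i)$; this ensures that the constant $C$ in the statement is independent of both $i$ and $H$.
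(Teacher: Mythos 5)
Your proposal is correct and follows essentially the same route as the paper: decompose over Markov cells, apply the BV product rule to $\psi/|DF^R|$ on each block, invoke the distortion and big-images estimates encapsulated in (\ref{20}) to bound $\max |DF^R|^{-1}$ and $\bigvee |DF^R|^{-1}$ by a multiple of $\Leb_X(X_i)$, account for the boundary jumps, and sum. The one structural difference worth noting is in the open case: you run the distortion/big-images bounds on the full cell $X_i$ and observe that each of the at most two pieces of $X_i\setminus(F^R)^{-1}H$ inherits them, whereas the paper works directly on the connected components $I_i$ of $(F^R)^{-1}H^c$ intersected with $\{R=n\}$ and invokes Lemma \ref{bigconnectedcomponent} to secure the lower bound $\Leb_X(F^R I_i)\ge\epsilon_1'$. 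Your variant is arguably cleaner here, since it does not depend on the smallness threshold $\epsilon_1$ and yields a constant genuinely uniform in $H$; it relies only on $F^R|_{X_i}$ being monotone, which is automatic because it is an injective continuous map between intervals. Both routes give the stated bounds, and your bookkeeping of the extra endpoint contributions (absorbed into $\Leb_X(X_i)\max_{X_i}|\psi|$ via the same distortion quantity $\Leb_X(X_i)/\Leb_X(F^RX_i)$) is sound.
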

\begin{proof}
    If $\mu_X(H)>0$, then consider $I_i:=[a_i,b_i]\subseteq \{R=n\}$ which is one of the connected components of  $\{R=n\}\bigcap (F^R)^{-1}H^c $. \begin{align*}
        \int \big|\mathring{R}_{n}\phi\big|d\Leb_X&=\int \Big|T\big[\mathbbm{1}_{(F^R)^{-1}H^c}\mathbbm{1}_{\{R=n\}}\phi\big]\Big|d\Leb_X
        \le \int T(\mathbbm{1}_{\{R=n\}}|\phi|)d\Leb_X\\
        &=\int_{\{R=n\}}|\phi|d\Leb_X\le \Leb_X(\{R=n\})|\phi|_{\infty}\le \Leb_X(\{R=n\})||\phi||.
    \end{align*} 

    \begin{align*}
        \bigvee_X \mathring{R}_{n}(\phi) &=\bigvee_X \mathring{T}(\mathbbm{1}_{\{R=n\}}\phi) \le \sum_{I_i \subseteq \{R=n\}}\bigvee_X \mathring{T}(\phi\mathbbm{1}_{I_i}) \le  \sum_{I_i \subseteq \{R=n\}} \bigvee_X \frac{\phi\mathbbm{1}_{I_i}}{|DF^R|}\\
        &\le \sum_{I_i\subseteq \{R=n\}} \bigvee_{I_i} \frac{\phi}{|DF^R|}+\frac{|\phi(a_i)|}{|DF^R(a_i)|}+\frac{|\phi(b_i)|}{|DF^R(b_i)|}\\
        &\le \sum_{I_i\subseteq \{R=n\}} \bigvee_{I_i} \frac{1}{|DF^R|}\max_{I_i}|\phi|+\bigvee_{I_i}\phi\max_{I_i}\frac{1}{|DF^R|}+\frac{|\phi(a_i)|}{|DF^R(a_i)|}+\frac{|\phi(b_i)|}{|DF^R(b_i)|}\\
        &\precsim \sum_{I_i\subseteq \{R=n\}} \Leb_X(I_i)||\phi|| \precsim \Leb_X(\{R=n\}) ||\phi||,
    \end{align*} where the last line is due to (\ref{20}) and Lemma \ref{bigconnectedcomponent}, and the constants in ``$\precsim$" do not depend on any $H$ satisfying $\mu_X(H)\le \epsilon_1$. Hence $||\mathring{R}_{n}|| \precsim \Leb_X(\{R=n\})$. The same arguments can be applied to closed operators $R_{n}$. We conclude the proof by noticing that $\max\{||\mathring{R}_{\ge n}||, ||R_{\ge n}|| \}\precsim \sum_{i\ge n} \Leb_X(\{R=i\}) \precsim \Leb_X(\{R\ge n\})$.
\end{proof}

\begin{remark}\label{gbvdoesnotwork}
    Now we can give more details for Remark \ref{remarkondistor} about the conditions imposed on distortions in Definition \ref{defyoung}. The generalized BV spaces for H\"older distortions, (e.g. $\alpha$-H\"older, $\alpha \in (0,1)$ used in \cite{pollicot}), give a slower decay $O(n^{-\alpha k-\alpha\beta})$ for $||\mathring{R}_{\ge n}||$. Hence, generalized BV spaces do not work well in operator renewal equations and do not give an appropriate regularity for $\mathring{R}(\cdot)$ in the following lemma.
\end{remark}

\begin{lemma}[Regularities of $\mathring{R}(\cdot)$ and $R(\cdot)$]\label{regulaofR}\ \par
     For any $H$ satisfying $\mu_X(H)\le \epsilon_1$, $\mathring{R}_{\ge}(\cdot)$, $R_{>}(\cdot) \in O(n^{-k-\beta})$, $R(\cdot), \mathring{R}(\cdot)\in C^{k+\beta}(\overline{\mathbb{D}})$, where the constants in $O(\cdot)$ do not depend on any $H$ satisfying $\mu_X(H)\le \epsilon_1$. Moreover, $\sup_{j \le k}\sup_{\mu_X(H)\le \epsilon_1}\sup_{z\in \overline{\mathbb{D}}}||\frac{d^j\mathring{R}(z)}{dz^j}||< \infty$.  If $\beta>0$, the H\"older coefficients of $\frac{d^k\mathring{R}(z)}{dz^k}$ do not depend on any $H$ satisfying $\mu_X(H)\le \epsilon_1$. 
     
     As a corollary, Lemma \ref{frequentlemma} implies that  $R(\cdot), \mathring{R}(\cdot) \in O(n^{-k-\beta})$.
\end{lemma}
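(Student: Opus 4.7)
The $O(n^{-k-\beta})$ assertions for $\mathring{R}_{\ge}(\cdot)$ and $R_{>}(\cdot)$ are immediate: by definition $\mathring{R}_{\ge}(z)=\sum_{n\ge 1}z^n\mathring{R}_{\ge n}$, so the $n$-th Fourier coefficient is already $\mathring{R}_{\ge n}$, whose norm is controlled by $Cn^{-k-\beta}$ uniformly in $H$ by Lemma \ref{Rnbound}, with $C$ depending only on $\epsilon_1$. The same reasoning, via $R_{>n}=R_{\ge n+1}$, handles $R_{>}(\cdot)$.

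For the regularity of $\mathring{R}(\cdot)$ (the proof for $R(\cdot)$ is identical with the closed operators in place of the open ones), I would differentiate the defining series $\mathring{R}(z)=\sum_{n\ge 1}z^n\mathring{R}_n$ formally to obtain
\[
\frac{d^j\mathring{R}(z)}{dz^j}=\sum_{n\ge j}P_n^j\,z^{n-j}\,\mathring{R}_n,\qquad j\le k,
\]
and justify this on $\overline{\mathbb{D}}$ by Abel resummation against $\mathring{R}_n=\mathring{R}_{\ge n}-\mathring{R}_{\ge n+1}$. This converts the series into one whose general term is a scalar of size $O(n^{j-1})$ (the discrete difference of $P_n^j z^{n-j}$) multiplied by the operator $\mathring{R}_{\ge n}$ of norm $O(n^{-k-\beta})$; the resulting series converges absolutely for $j\le k$ with bounds independent of $H$, which yields the uniform estimate $\sup_{j\le k}\sup_{\mu_X(H)\le \epsilon_1}||\frac{d^j\mathring{R}(z)}{dz^j}||<\infty$.

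For the $\beta$-H\"older continuity of the $k$-th derivative (with $\beta>0$), I would estimate $||\frac{d^k\mathring{R}(z_1)}{dz^k}-\frac{d^k\mathring{R}(z_2)}{dz^k}||$ using the interpolation inequality $|z_1^m-z_2^m|\le C\min(2,m|z_1-z_2|)\le C m^{\beta}|z_1-z_2|^{\beta}$ valid for $z_1,z_2\in\overline{\mathbb{D}}$, and then combine it with the same Abel resummation to produce a H\"older bound with constant independent of $H$. Applying item~2 of Lemma \ref{frequentlemma} to the resulting $C^{k+\beta}$ regularity then yields $\mathring{R}(\cdot),R(\cdot)\in O(n^{-k-\beta})$.

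The main obstacle is the borderline nature of these estimates: the naive termwise bound $\sum_n n^{k+\beta}||\mathring{R}_n||$ diverges (at best) logarithmically, since the pointwise estimate $||\mathring{R}_n||\le C\Leb_X(R=n)$ only captures the discrete derivative of an $O(n^{-k-\beta})$ sequence. The substance of the proof therefore lies in performing the Abel resummation so that the $O(n^{-k-\beta})$ decay sits on the tails $\mathring{R}_{\ge n}$ rather than on $\mathring{R}_n$, supplemented if necessary by a dyadic splitting of frequencies at the scale $|z_1-z_2|^{-1}$ in the H\"older step, and verifying at each stage that the constants furnished by Lemma \ref{Rnbound} depend only on $\epsilon_1$.
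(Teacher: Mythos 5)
Your overall strategy coincides with the paper's: use Lemma~\ref{Rnbound} for the tail bounds, differentiate the power series termwise, move the $O(n^{-k-\beta})$ decay from $\mathring{R}_n$ onto the tails $\mathring{R}_{\ge n}$ by summation by parts, and finish with item~2 of Lemma~\ref{frequentlemma}. You have also correctly identified the borderline divergence of the naive termwise sum as the reason Abel resummation is indispensable, which is exactly the paper's point.

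Two details in your H\"older step need tightening, and the first is substantive. Feeding the interpolated bound $|z_1^m-z_2^m|\le C m^{\beta}|z_1-z_2|^{\beta}$ into the series \emph{before} resumming produces, after telescoping against the tails, a sum of order $\sum_i i^{k+\beta-1}\cdot i^{-k-\beta}=\sum_i i^{-1}$, which diverges. So the frequency split at $u=|z_1-z_2|^{-1}$ is not an optional supplement, as your ``if necessary'' suggests, but the essential move. One must keep the intermediate bound $\min(2,m|z_1-z_2|)$ unpacked, use $2$ on the tail $m\ge u$ and $m|z_1-z_2|$ on the head $m<u$, Abel-sum each piece \emph{separately} to obtain $u^{-\beta}$ and $u^{1-\beta}|z_1-z_2|$ respectively, and only then optimize over $u$; the exponent $\beta$ emerges at that last step. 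This is precisely the two-sum estimate the paper's proof carries out. Second, a minor point: the discrete difference of $P_n^j z^{n-j}$ is not uniformly $O(n^{j-1})$ for $z\in\overline{\mathbb{D}}$, since it also contains a contribution of size $O(n^j\,|z-1|)$. The paper sidesteps this by first bounding $|z^{n-j}|\le 1$ and applying summation by parts to the nonnegative scalar series $\sum_n P_n^j||\mathring{R}_n||$, rather than resumming with the $z$-dependence inside. Once these two adjustments are made, your proposal is the paper's argument.
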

\begin{proof}
    From Lemma \ref{Rnbound} it follows that $\mathring{R}_{\ge}(\cdot), R_{>}(\cdot) \in O(n^{-k-\beta})$. Direct calculations give $\frac{d^k\mathring{R}}{dz^k}(z)=\sum_{i}P_{i+k}^k\mathring{R}_{i+k}z^i$ for any $z\in \overline{\mathbb{D}}$, this convergence holds because, by Lemma \ref{Rnbound}, 
\begin{align*} 
\Big|\Big|\frac{d^k\mathring{R}}{dz^k}(z)\Big|\Big|&\le \sum_{i}P_{i+k}^k||\mathring{R}_{i+k}||\precsim \sum_{i\ge 1}i^k||\mathring{R}_{i+k}|| \\
&\precsim \sum_{i\ge 1} [i^k-(i-1)^{k}]\sum_{j \ge i}||\mathring{R}_{j+k}||\precsim \sum_{i \ge 1} \frac{i^{k-1}}{i^{k+\beta}}< \infty,
\end{align*}where the constants in ``$\precsim"$ do not depend on any $H$ satisfying $\mu_X(H)\le \epsilon_1$. Same arguments give $\sup_{j \le k}\sup_{\mu_X(H)\le \epsilon_1}\sup_{z\in \overline{\mathbb{D}}}||\frac{d^j\mathring{R}(z)}{dz^j}||< \infty$.

Moreover, if $\beta>0$, then for any distinct $z_1,z_2 \in \overline{\mathbb{D}}$, any $u\in \mathbb{N}$, \begin{align*}
    \Big|\Big|\frac{d^k\mathring{R}}{dz^k}&(z_1)-\frac{d^k\mathring{R}}{dz^k}(z_2)\Big|\Big|
\precsim \sum_{i\ge u}P_{i+k}^k||\mathring{R}_{i+k}||+\sum_{1\le i\le u}iP_{i+k}^k||\mathring{R}_{i+k}|||z_1-z_2|\\
&\precsim \sum_{i\ge u}i^k||\mathring{R}_{i+k}||+\sum_{1\le i\le u}i^{k+1}||\mathring{R}_{i+k}|||z_1-z_2|\\
&\precsim \sum_{i \ge u}[i^k-(i-1)^{k}]\sum_{j \ge i}||\mathring{R}_{j+k}||+(u-1)^{k}\sum_{j \ge u}||\mathring{R}_{j+k}||\\
& \quad + \sum_{1 \le i \le u}[i^{k+1}-(i-1)^{k+1}]\sum_{j \ge i}||\mathring{R}_{j+k}|||z_1-z_2|\\
&\precsim \sum_{i \ge u}\frac{i^{k-1}}{i^{k+\beta}}+ u^{-\beta} +\sum_{1 \le i \le u}\frac{i^k}{i^{k+\beta}}|z_1-z_2|\precsim u^{-\beta}+u^{1-\beta}|z_1-z_2|\precsim |z_1-z_2|^{\beta},
\end{align*}where the last ``$\precsim$" is due to the choice $u=|z_1-z_2|^{-1}$, and the constants in ``$\precsim$" do not depend on any $H$ satisfying $\mu_X(H)\le \epsilon_1$. Hence, $\mathring{R}(\cdot)\in C^{k+\beta}(\overline{\mathbb{D}})$. The same arguments give $R(\cdot)\in C^{k+\beta}(\overline{\mathbb{D}})$.
\end{proof}

\subsection{Spectral analysis of $[I-\mathring{R}(z)]^{-1}$ for any $z \in \overline{\mathbb{D}}$ and a small hole $H$}

To analyze the structures of the spectrum, we need to validate the Lasota-Yorke inequalities (Lemma \ref{LYinequality}), as well as the regularities of $\mathring{R}(\cdot), R(\cdot)$ (Lemma \ref{regulaofR}). So, from now on,  we only consider a small $H$ with $$\mu_X(H)\in [0, \min\{\epsilon_N,\epsilon_1\})$$ (see $\epsilon_N,\epsilon_1$ in Lemma \ref{LYinequality} and Lemma \ref{regulaofR}).

From Lemma \ref{LYinequality} it follows that all operators $R(z), \mathring{R}(z), z \in \overline{\mathbb{D}}$ have spectral gaps, i.e., $\sigma(R(z)), \sigma(\mathring{R}(z))$ consist of an essential spectrum contained in $\{z\in \overline{\mathbb{D}}: |z|\le \bar{\theta}\}$ and of finitely many eigenvalues in $\{z\in \overline{\mathbb{D}}: |z|\ge \bar{\theta}\}$, and the spectral radius of $\mathring{R}(z)$ is at most $|z|$ when $\mu_{X}(H) \in (0,\epsilon_N)$.  Therefore, if $z\in \mathbb{D}$, then $[I-\mathring{R}(z)]^{-1}$ exists. The fact that $\mathring{R}(z)$ is analytic on $\mathbb{D}$ implies that $[I-\mathring{R}(z)]^{-1}$ is also analytic on $\mathbb{D}$. Hence, we only need to study the spectrum of $\mathring{R}(z)$ when $z$ is in the vicinity of $S^1$. 

In this subsection, we will make use of Proposition \ref{extendliverani1} in Section \ref{appendix} Appendix (that is, the generalized Keller-Liverani perturbations) to obtain a structure of the spectrum of $\mathring{R}(\cdot)$. Recall that we choose bounded variation functions $(B, ||\cdot ||)$ as our Banach space, the second norm is the $L^1$-norm with respect to $\Leb_X$. A family of bounded operators $\{\mathring{R}(z)(\cdot)=T(z^R\mathbbm{1}_{H^c}\circ F^R\cdot), z \in \overline{\mathbb{D}}, H \text{ centers at }z_0 \}$ is indexed by $v:=(z,\mu_X(H))\in \overline{\mathbb{D}}\times [0,1]$. It satisfies the uniform Lasota-Yorke inequalities and the condition of the residual spectrum (because $B$ is a precompact in $L^1$). 

 Choose $o:=(e^{it},0)\in S^1 \times \{0\}$ in Proposition \ref{extendliverani1}, this case corresponds to the operator $R(e^{it})(\cdot)$. Now we verify that perturbations are small using the definition of the transfer operator $T$, that is, for any $z \in \overline{\mathbb{D}}, t \in \mathbb{R}$,
\begin{align*}
    &|||\mathring{R}(z)(\cdot)-R(e^{it})(\cdot)|||=\sup_{||\phi||\le 1}\int |T(z^R\mathbbm{1}_{H^c}\circ F^R\phi)-T(e^{itR} \phi)|d\Leb_X\\
    &\le \sup_{||\phi||\le 1}\int |T(z^R\mathbbm{1}_{H^c}\circ F^R\phi)-T(e^{itR}\mathbbm{1}_{H^c}\circ F^R \phi)|d\Leb_X\\
    &\quad +\int |T(e^{itR}\mathbbm{1}_{H^c}\circ F^R\phi)-T(e^{itR} \phi)|d\Leb_X\\
     &\le \sup_{||\phi||\le 1}\int |T([z^R-e^{itR}]\mathbbm{1}_{H^c}\circ F^R\phi)|d\Leb_X+\int |T(e^{itR}\mathbbm{1}_{H}\circ F^R\phi)|d\Leb_X\\
     &\le \int |z^R-e^{itR}|d\Leb_X+|h^{-1}|_{\infty}\int \mathbbm{1}_H \circ F^R d\mu_X \\
     &\precsim |z-e^{it}|\int Rd\Leb_X+ \mu_X(H)\\
     &\precsim |z-e^{it}|+ \mu_X(H) \to 0 \text{ when }(z,\mu_X(H))\to (e^{it}, 0).
\end{align*}

Hence, all the conditions of Proposition \ref{extendliverani1} are completely verified. Roughly speaking, Proposition \ref{extendliverani1} says that when $H$ is small enough and $z \approx e^{it}$, $\mathring{R}(z), R(e^{it})$ have similar spectra. Hence, in order to obtain more details on the spectra of $\mathring{R}(z)$, we first obtain the structure of the spectrum of $R(e^{it})$ described in the following Lemma \ref{spectrumofRS1}, then apply Proposition \ref{extendliverani1} to $R(e^{it}), \mathring{R}(z)$ in Lemma \ref{localspectrumpic} and Lemma \ref{globalregularity}.
\begin{lemma}\label{spectrumofRS1}
    If $e^{it}\neq 1$, then $R(e^{it})$ does not have an eigenvalue $1$, i.e., $[I-R(e^{it})]^{-1}$ exists. If $e^{it}=1$, then $1$ is an isolated simple eigenvalue, its eigenspace has dimension $1$, and $R(1)$ does not have other eigenvalues in $S^1$. 
\end{lemma}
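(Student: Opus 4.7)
The plan is to combine the quasi-compactness of $R(e^{it})$ coming from the uniform Lasota--Yorke inequality of Lemma \ref{LYinequality} with a cohomological argument that exploits the aperiodicity condition $\gcd\{R_i\}=1$ from Definition \ref{defyoung}. The workhorse in both cases is the pointwise inequality $|T\phi|\le T|\phi|$ (with equality forced whenever $L^1$-norms match), applied to any putative fixed vector of $R(e^{it})$, i.e.\ any solution of $T(e^{itR}\phi)=\phi$.

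For $e^{it}=1$, $R(1)=T$ admits the SRB density $h=d\mu_X/d\Leb_X$ ($h=C^{\pm 1}$) as an eigenfunction, so $1$ is an eigenvalue. If $T\phi=\lambda\phi$ with $|\lambda|=1$, then $|\phi|=|T\phi|\le T|\phi|$ and $\int T|\phi|\,d\Leb_X=\int|\phi|\,d\Leb_X$ force $T|\phi|=|\phi|$ a.e.; ergodicity of $(F^R,\mu_X)$ yields $|\phi|=ch$. Writing $\phi=ch\,e^{i\psi}$, equality $|T\phi|=T|\phi|$ forces all preimage branches contributing to $(T\phi)(x)$ to share a common phase, giving $e^{i\psi(y)}=\lambda e^{i\psi(F^R y)}$ for a.e.\ pair $(y,F^R y)$. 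The case $\lambda=1$ makes $\psi$ an $F^R$-invariant measurable function, hence constant by ergodicity, so $\phi\in\mathbb{C}h$; this proves simplicity and one-dimensionality of the eigenspace. For $\lambda\neq 1$, iterating the identity at a periodic point of period $n$ gives $\lambda^n=1$, and the mixing/big-image structure of $F^R$ provides periodic orbits whose periods have gcd $1$, ruling out any such $\lambda$.

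For $e^{it}\neq 1$, suppose $R(e^{it})\phi=\phi$. The same chain $|\phi|\le T|\phi|\Rightarrow T|\phi|=|\phi|\Rightarrow|\phi|=ch$ applies, and writing $\phi=ch\,e^{i\psi}$ extracts $e^{itR(y)+i\psi(y)}=e^{i\psi(F^R y)}$ a.e. Iterating yields
\[
\psi\circ(F^R)^n-\psi \;\equiv\; t\sum_{j=0}^{n-1} R\circ(F^R)^j \pmod{2\pi}.
\]
Evaluating at a periodic point whose cycle visits $X_{i_1},\dots,X_{i_n}$ forces $t(R_{i_1}+\cdots+R_{i_n})\in 2\pi\mathbb{Z}$; varying the cycle and invoking $\gcd\{R_i\}=1$ gives $t\in 2\pi\mathbb{Z}$, contradicting $e^{it}\neq 1$. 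Hence $1\notin\sigma_{\mathrm{pt}}(R(e^{it}))$, and Lemma \ref{LYinequality} together with quasi-compactness makes $I-R(e^{it})$ boundedly invertible on $B$.

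The main obstacle is to pass from the $\Leb_X$-a.e.\ cohomological identity to genuine pointwise statements along orbits of $F^R$, so that evaluation at periodic points is meaningful. This is handled by choosing the canonical bounded-variation representative of $\phi\in B$, after which the identity holds branch by branch on each $X_i$; the Gibbs--Markov structure, the big-image condition, and $\gcd\{R_i\}=1$ then produce admissible cycles realizing any sufficiently large integer as a Birkhoff sum of $R$, which is exactly what the aperiodicity argument needs. Everything else is the standard adaptation of the Liv\v{s}ic--type aperiodicity argument to countable-state Gibbs--Markov systems.
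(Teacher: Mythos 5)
Your proposal takes a genuinely different route from the paper. The paper's key move is a \emph{bootstrap}: it shows that any BV solution of $R(e^{it})\phi=\phi$ admits an a.e.\ equal locally H\"older version (mollify $\phi$, take Ces\`aro averages of iterates, apply Arzel\`a--Ascoli together with a Lasota--Yorke inequality on the H\"older space), observes that a BV function agreeing a.e.\ with a continuous one must agree with it everywhere, and then simply quotes the classical H\"older-space spectral result from Sarig and Gou\"ezel. You instead attempt to run the Liv\v{s}ic/aperiodicity argument \emph{directly} on the BV eigenfunction, deducing $|\phi|=ch$ by ergodicity and then evaluating the cohomological identity $e^{itR(y)+i\psi(y)}=e^{i\psi(F^Ry)}$ along periodic orbits of $F^R$.

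The gap is exactly where you flag it and then wave it away: passing from an a.e.\ identity to a pointwise identity at chosen periodic points. The claim that ``after choosing the canonical bounded-variation representative the identity holds branch by branch on each $X_i$'' is not correct. Two BV functions that agree a.e.\ still need not agree at their jump points, and nothing prevents $\psi$ from jumping precisely at the periodic point you want to evaluate at; the orbit of a fixed periodic point is a specific finite set and cannot be moved off the countable discontinuity set. Even if you work with the one-sided limits $\psi^\pm$, iterating the identity along a periodic orbit matches $\psi^+$ at one step to $\psi^+$ or to $\psi^-$ at the next step depending on whether the branch of $F^R$ traversed is orientation-preserving or reversing, so the telescoping closes cleanly only for orientation-preserving cycles; in general you only get $e^{2it\sum R}\in\{1\}$, which fails to exclude $e^{it}=-1$. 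To make your approach watertight you would need either to first upgrade $\phi$ to a continuous (H\"older) representative --- which is precisely the paper's device and makes the periodic-point evaluation legitimate --- or to replace periodic-point evaluation by a purely measure-theoretic rotation-factor argument (e.g.\ lift $\psi$ to $\Psi(x,j):=e^{i\psi(x)+itj}$ on the tower $\Delta$, note $\Psi\circ F=e^{it}\Psi$, and invoke mixing of $F$ on $\Delta$, which is equivalent to $\gcd\{R_i\}=1$). As written, your proof provides neither fix, so this step is a genuine gap.
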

\begin{proof}
    Since $F^R$ is mixing and $R(1): B\to B$ has a spectrum gap, $R(1)$ does not have eigenvalues in $S^1\setminus \{1\}$. So we only need to address the case where the eigenvalue is equal to $1$.
    
    If $B$ were a Banach space of bounded and locally H\"older functions with a H\"older norm $||\cdot||_H$, the claims in this lemma are classical and have been proven (see  \cite{gouezel, sarig}). However, our $B$ is a Banach space of functions of bounded variations. Therefore, we will use the following trick. If $R(e^{it})\phi=\phi, \phi \in L^1$, then $\phi$ has a version that is a locally H\"older function. Here is a proof of this trick. For any $\epsilon>0$, one can find a smooth function $f_{\epsilon}$ such that $|\phi-f_{\epsilon}|_1\le \epsilon$. Since $\sup_n||\frac{\sum_{i \le n}R(e^{it})^{i}f_{\epsilon}}{n}||_H< \infty$, the Arzela-Ascoli theorem gives a subsequence convergence $\lim_{n_k \to \infty}\frac{\sum_{i \le n_k}R(e^{it})^if_{\epsilon}}{n_k}=\phi_{\epsilon}$ which is a locally H\"older function. Hence, $R(e^{it})\phi_{\epsilon}=\phi_{\epsilon}$ and \[|\phi_{\epsilon}-\phi|_1\le \lim_{n_k\to \infty}\frac{\sum_{i\le n_k}|R(e^{it})^i (f_{\epsilon}-\phi)|_1}{n_k}\le \lim_{n_k\to \infty}\frac{\sum_{i\le n_k}|f_{\epsilon}-\phi|_1}{n_k}\le \epsilon.\] Without loss of generality, we assume that $\lim_{\epsilon\to 0}\phi_{\epsilon}=\phi$ almost surely. Using the Lasota-Yorke inequalities of $R(e^{it}):H \to H$, we have \[||\phi_{\epsilon}||_H=\lim_{n \to \infty}||R(e^{it})^n\phi_{\epsilon}||_H\precsim |\phi_{\epsilon}|_1\precsim |\phi|_1+\epsilon.\] Thus $|\phi|_{\infty}\le \liminf_{\epsilon}|\phi_{\epsilon}|_{\infty}\precsim |\phi|_1+1$. Excluding a measure zero set, we have \[\sup_{d_{\theta}(x,y)\neq 0}\frac{|\phi(x)-\phi(y)|}{d_{\theta}(x,y)}\le \liminf_{\epsilon \to 0}\sup_{d_{\theta}(x,y)\neq 0}\frac{|\phi_{\epsilon}(x)-\phi_{\epsilon}(y)|}{d_{\theta}(x,y)} \le \liminf_{\epsilon \to 0}||\phi_{\epsilon}||_H\precsim |\phi|_1+1.\]
    Therefore, there is a locally H\"older version of $\phi$.

    Now we can prove this lemma. If $e^{it}\neq 1$ and $R(e^{it})\phi=\phi$ for some $\phi \in B$, then there is a locally H\"older version $\phi_1$ such that $\phi_1=\phi$ almost surely.  Because a bounded variation function $\phi$ has countably many jump-type discontinuities, this implies that $\phi_1=\phi$ everywhere, which contradicts the fact that $R(e^{it})$ does not have the eigenvalue $1$ corresponding to locally H\"older-type eigenvectors. The same arguments give that $R(1)$ has a simple eigenvalue $1$ whose eigenvector not only has bounded variations, but it is also locally H\"older. 
\end{proof}

\begin{remark}\label{hisholder}
    It follows from the proof of Lemma \ref{spectrumofRS1} that $h_1 \in B$, which satisfies $R(1)h_1=h_1$, must be the locally H\"older function $h=\frac{d\mu_X}{d\Leb_X}$.
\end{remark}

Proposition \ref{extendliverani1} and Lemma \ref{spectrumofRS1} lead to the following two lemmas about the local structure of the spectrum of $\mathring{R}(\cdot), R(\cdot)$ and of their global regularities, where $\mathring{R}(\cdot), R(\cdot)$ are restricted in the vicinity of $S^1$ inside $\overline{\mathbb{D}}$.

\begin{lemma}[Local structure of the spectrum]\label{localspectrumpic}\ \par The structure of the spectrum  of $\mathring{R}(u), R(u)$ can be described in several cases, where $u$ is always restricted in $\overline{\mathbb{D}}$.
   \begin{enumerate}
       \item  For any $z \in S^1\setminus \{1\}$, there are constants $\sigma_z, \sigma'_z>0$ such that for any $\mu_X(H)\le \sigma_z$ and $\dist(u,z)\le \sigma'_z$ with $\dist(1,z)>\sigma'_z$, $[I-\mathring{R}(u)]^{-1}, [I-R(u)]^{-1}$ exist and \begin{gather*}
           \sup_{\mu_X(H)\le \sigma_z}\sup_{u \in \overline{B_{\sigma_z'}(z)}}\max\{||[I-\mathring{R}(u)]^{-1}||, ||[I-R(u)]^{-1}||\}< \infty,\\
            [I-\mathring{R}(\cdot)]^{-1}, [I-R(\cdot)]^{-1}\in C^{k+\beta}(\overline{B_{\sigma_z'}(z)}\bigcap \overline{\mathbb{D}}).
       \end{gather*}
       \item When $z=1$, there are fixed numbers $\delta \in (0, 1-\sqrt{\bar{\theta}}), \theta' \in (\bar{\theta}, (1-\delta)^2)$ (depending on $R(1)$ only and satisfying $\max\{\theta^{\beta/(2k+\beta)},\bar{\theta}\}<(1-\delta)^2$) and  constants $\sigma_1, \sigma'_1>0$,  such that for any $0< \mu_X(H)\le \sigma_1$ and $\dist(u,1)\le \sigma'_1$, $\partial B_{\delta}(1) \bigcup \partial B_{\theta'}(0)$ is not contained in $\sigma(\mathring{R}(u))$, $\sigma(\mathring{R}(u))\subseteq B_{\delta}(1)\bigcup B_{\theta'}(0)$ and 
       \[\mathring{R}(u)=\mathring{\lambda}(u)\mathring{\Proj}(u)+\mathring{Q}(u), \quad \mathring{\lambda}(\cdot), \mathring{\Proj}(\cdot), \mathring{Q}(\cdot) \in C^{k+\beta}(\overline{B_{\sigma_1'}(1)}\bigcap \overline{\mathbb{D}})\] 
       \[\overline{\mathring{\lambda}(u)}=\mathring{\lambda}(\overline{u}),\quad \overline{\mathring{\Proj}(u)}=\mathring{\Proj}(\overline{u}), \quad \overline{\mathring{Q}(u)}=\mathring{Q}(\overline{u})\]
       where the leading eigenvalue of $\mathring{R}(u)$ is $|\mathring{\lambda}(u)|<1$, and its eigenvector is a one-dimensional projection $\mathring{\Proj}(u):=\frac{1}{2\pi i}\int_{\partial B_{\delta}(1)}[z-\mathring{R}(u)]^{-1}dz$. Moreover, the spectral radius of $\mathring{Q}(u):=\frac{1}{2\pi i}\int_{|z|=\theta'}[z-\mathring{R}(u)]^{-1}dz$ is not greater than $\theta'$, $$\sup_{0\le j\le k}\sup_{\mu_X(H)\in (0,\sigma_1]}\sup_{u\in \overline{B_{\sigma_1'}(1)}}\max\Big\{\Big|\Big|\frac{d^j\mathring{\Proj}(u)}{du^j}\Big|\Big|, \Big|\Big|\frac{d^j\mathring{\lambda}(u)}{du^j}\Big|\Big|,\Big|\Big|\frac{d^j\mathring{Q}(u)}{du^j}\Big|\Big|\Big\}$$ and the $\beta$-H\"older coefficients (if $\beta>0$) of $\frac{d^k\mathring{\Proj}(u)}{du^k}, \frac{d^k\mathring{\lambda}(u)}{du^k},\frac{d^k\mathring{Q}(u)}{du^k}$ are finite depending on $\delta, \theta', \sigma_1, \sigma_1'$. But they do not depend on any $H$ satisfying $\mu_X(H)\in (0,\sigma_1]$. Furthermore, we have the following local expression for any $0<\mu_X(H)\le \sigma_1$ and $\dist(u,1)\le \sigma'_1$,
       \begin{gather}\label{localnear1}
           [I-\mathring{R}(u)]^{-1}=\frac{1}{1-\mathring{\lambda}(u)}\mathring{\Proj}(u)+[I-\mathring{Q}(u)]^{-1}[I-\mathring{\Proj}(u)]\in C^{k+\beta}(\overline{B_{\sigma_1'}(1)}\bigcap \overline{\mathbb{D}}).
       \end{gather}
       \item In particular, if $\mu_X(H)=0$, for any $\dist(u,1)\le \sigma'_1$, $\partial B_{\delta}(1)\bigcup \partial B_{\theta'}(0)$ is not contained in $\sigma(R(u))$, $\sigma(R(u))\subseteq B_{\delta}(1)\bigcup B_{\theta'}(0)$ and \[R(u)=\lambda(u)\Proj(u)+Q(u), \quad \lambda(\cdot), \Proj(\cdot), Q(\cdot) \in C^{k+\beta}(\overline{B_{\sigma_1'}(1)}\bigcap \overline{\mathbb{D}})\]
       \[\overline{\lambda(u)}=\lambda(\overline{u}),\quad \overline{\Proj(u)}=\Proj(\overline{u}), \quad \overline{Q(u)}=Q(\overline{u})\]
       where the leading eigenvalue of $R(u)$ is $|\lambda(u)|\le 1$, and its eigenvector is a one-dimensional projection $\Proj(u):=\frac{1}{2\pi i}\int_{\partial B_{\delta}(1)}[z-R(u)]^{-1}dz$. $\lambda(1)=1$, and $\lambda(u)\neq 1$ if $u\neq 1$. The spectrum radius of $Q(u):=\frac{1}{2\pi i}\int_{|z|=\theta'}[z-R(u)]^{-1}dz$ is not greater than $\theta'$, $$\sup_{0\le j\le k}\sup_{u\in \overline{B_{\sigma_1'}(1)}}\max\Big\{\Big|\Big|\frac{d^j{\Proj}(u)}{du^j}\Big|\Big|, \Big|\Big|\frac{d^j{\lambda}(u)}{du^j}\Big|\Big|,\Big|\Big|\frac{d^j{Q}(u)}{du^j}\Big|\Big|\Big\}$$ and the $\beta$-H\"older coefficients (if $\beta>0$) of $\frac{d^k{\Proj}(u)}{du^k}, \frac{d^k{\lambda}(u)}{du^k},\frac{d^k{Q}(u)}{du^k} $ are finite depending on $\delta, \theta', \sigma_1'$ only. 
   \end{enumerate} 
\end{lemma}
\begin{proof}
First we prove item 1. If $z\in S^1\setminus\{1\}$, then $R(z)$ does not have an eigenvalue $1$ according to Lemma \ref{spectrumofRS1}. Apply now Proposition \ref{extendliverani1} to $o=(z,0)$ and $R(z)$, then there are small $\sigma_z, \sigma_z'>0$ such that for any $\mu_X(H)\le \sigma_z, \dist(u,z)\le \sigma_z'$ (excluding $u=1$), $[I-\mathring{R}(u)]^{-1}$ and $[I-R(u)]^{-1}$ exist and 
\[\sup_{\mu_X(H)\le \sigma_z} \sup_{\dist(u,z)\le \sigma_z'}\max\{||[I-\mathring{R}(u)]^{-1}||, ||[I-R(u)]^{-1}||\}< \infty.\]  Continuity of $[I-\mathring{R}(\cdot)]^{-1}$ and $[I-R(\cdot)]^{-1}$ follows from this uniform bound and from the following equalities \begin{gather*}
    [I-\mathring{R}(u_1)]^{-1}-[I-\mathring{R}(u_2)]^{-1}=[I-\mathring{R}(u_1)]^{-1}[\mathring{R}(u_1)-\mathring{R}(u_2)][I-\mathring{R}(u_2)]^{-1}\\
[I-R(u_1)]^{-1}-[I-R(u_2)]^{-1}=[I-R(u_1)]^{-1}[R(u_1)-R(u_2)][I-R(u_2)]^{-1}
\end{gather*}which hold for any $\dist(u_1,z)\le \sigma_z'$ and $\dist(u_2,z)\le \sigma_z'$. By combining Lemma \ref{frequentlemma} with Lemma \ref{regulaofR} we obtain $[I-\mathring{R}(\cdot)]^{-1}, [I-R(\cdot)]^{-1} \in C^{k+\beta}(\overline{B_{\sigma_z'}(z)}\bigcap \overline{\mathbb{D}})$. 

Now we prove item 2 and item 3 together because their conclusions are similar. If $z=1$, then Lemma \ref{spectrumofRS1} guarantees that there are constants $\delta>0, \theta'\in (\bar{\theta},(1-\delta)^2)$ such that $B_{\delta}(1)\bigcap \sigma(R(1))=B_{\theta'}(0)^c \bigcap \sigma(R(1))=\{1\}$. (Here we choose sufficiently small $\delta>0$ such that $\max\{\theta^{\beta/(2k+\beta)},\bar{\theta}\}<(1-\delta)^2$. A reason of such choice will be clear in the proof of Lemma \ref{lastapproximateproj}). Let $V_{\delta, \theta'}=B_{\theta'}(0) \bigcup B_{\delta}(1)$ be the domain used in Proposition \ref{extendliverani1}, $o=(1,0)$ and notice that $R(1)$ has a simple eigenvalue $1$ with a one-dimensional eigenspace. Applying these to Proposition \ref{extendliverani1}, we get that there are small constants $\sigma_1, \sigma'_1>0$  such that for any $\mu_X(H)\le \sigma_1$ and $\dist(u,1)\le \sigma'_1$, \[\mathring{R}(u)=\mathring{\lambda}(u)\mathring{\Proj}(u)+\mathring{Q}(u),\quad  R(u)=\lambda(u)\Proj(u)+Q(u)\]
\[|\mathring{\Proj}(u)h-\Proj(1)h|_1\le |||\mathring{\Proj}(u)-\Proj(1)|||\le C_{\delta, \theta'} [|u-1|+\mu_X(H)]^{1/2} \le 1/2\]
\[|\Proj(u)h-\Proj(1)h|_1\le |||\Proj(u)-\Proj(1)|||\le C_{\delta, \theta'} |u-1|^{1/2} \le 1/2\]
where the spectral radius of $Q(u), \mathring{Q}(u)$ is not greater than $\theta'$, and $\Proj(u), \mathring{\Proj}(u)$ are one-dimensional, and \[\sup_{\mu_X(H)\le \sigma_1}\sup_{u\in \overline{B_{\sigma_1'}(1)}}\max\{||\mathring{\Proj}(u)||, ||\Proj(u)||, ||Q(u)||, ||\mathring{Q}(u)||\}< \infty\] and this bound depends on $\delta, \theta', \sigma_1, \sigma_1'$ only. $\Proj(1)h=h$ implies that 
\begin{gather}\label{26}
\Big|\int \Proj(u)hd\Leb_X\Big|\ge 1/2, \quad \Big|\int\mathring{\Proj}(u)hd\Leb_X\Big|\ge 1/2
\end{gather}
for any $\mu_X(H)\le \sigma_1$ and $\dist(u,1)\le \sigma'_1$. Notice that $\mathring{R}^n(u)\mathring{\Proj}(u)h=\mathring{\lambda}^n(u)\mathring{\Proj}(u)h$ and $R^n(u)\Proj(u)h=\lambda^n(u)\Proj(u)h$ imply 
\[\int u^{\sum_{0\le i \le n-1}R\circ (F^R)^i}\mathbbm{1}_{\bigcap_{1\le i \le n}(F^R)^{-i}H^c}\mathring{\Proj}(u)hd\Leb_X= \mathring{\lambda}^n(u)\int \mathring{\Proj}(u)hd\Leb_X,\]
\[\int u^{\sum_{0\le i \le n-1}R\circ (F^R)^i}\Proj(u)hd\Leb_X= \lambda^n(u)\int \Proj(u)hd\Leb_X\]for any $n \ge 1$. By letting $n \to \infty$ we have that $ |\mathring{\lambda}(u)|< 1$ holds for any $\mu_X(H)\in (0, \sigma_1]$ and $\dist(u,1)\le \sigma'_1$, $|\lambda(u)|\le 1$ holds for any $\dist(u,1)\le \sigma'_1$. It follows from Lemma \ref{spectrumofRS1} and Lemma \ref{LYinequality} that ``$\lambda(1)=1$" and ``$u\neq 1$ iff $\lambda(u)\neq 1$".

Now we address the claims on the regularities in item 2 and item 3. From Proposition \ref{extendliverani1}, \begin{gather}\label{5}
    \sup_{\mu_X(H)\in (0,\sigma_1]}\sup_{z \in \partial B_{\delta}(1), u \in \overline{B_{\sigma_1'}(1)}}\max\{||[z-\mathring{R}(u)]^{-1}||, ||[z-R(u)]^{-1}||\}< \infty.
\end{gather} 

Combining (\ref{5}) and Lemma \ref{frequentlemma} with Lemma \ref{regulaofR}, these give $\Proj(\cdot),\mathring{\Proj}(\cdot) \in C^{k+\beta}(\overline{B_{\sigma_1'}(1)}\bigcap \overline{\mathbb{D}})$. Notice that \begin{gather}\label{6}
    \mathring{\lambda}(u)=\frac{\int \mathring{R}(u)\mathring{\Proj}(u)hd\Leb_X}{\int \mathring{\Proj}(u)hd\Leb_X}, \quad \lambda(u)=\frac{\int R(u)\Proj(u)hd\Leb_X}{\int \Proj(u)hd\Leb_X}
\end{gather}which are well-defined due to (\ref{26}),  so they also belong to $C^{k+\beta}(\overline{B_{\sigma_1'}(1)}\bigcap \overline{\mathbb{D}})$. Hence $Q(\cdot), \mathring{Q}(\cdot) \in C^{k+\beta}(\overline{B_{\sigma_1'}(1)}\bigcap \overline{\mathbb{D}})$. For any $j\in [0,k]$, note that\[\frac{d^j\mathring{\Proj}(u)}{du^j}=\frac{1}{2\pi i}\int_{\partial B_{\delta}(1)}p\Big([z-\mathring{R}(u)]^{-1}, \frac{d\mathring{R}(u)}{du},\cdots, \frac{d^j\mathring{R}(u)}{du^j}\Big) dz \]

\[\frac{d^j\mathring{Q}(u)}{du^j}=\frac{1}{2\pi i}\int_{\partial B_{\theta'}(0)}p\Big([z-\mathring{R}(u)]^{-1}, \frac{d\mathring{R}(u)}{du},\cdots, \frac{d^j\mathring{R}(u)}{du^j}\Big) dz \]
where $p$ is a polynomial. Lemma \ref{regulaofR} and (\ref{5}) imply that their norms and the $\beta$-H\"older coefficients have a uniform finite upper bound which does not depend on any $H,u$ satisfying $\mu_X(H)\in (0,\sigma_1], \dist(u,1)\le \sigma'_1$. The same arguments give the same results for $\Proj(z),Q(z)$. We obtain the same results for $\mathring{\lambda}(z), \lambda(z) $ by using (\ref{6}). The complex conjugacy of these operators/functions follows from their definitions and (\ref{6}). 

The only thing which left is to prove (\ref{localnear1}). In difference with \cite{sarig, gouezel, melbourneinvention},  (\ref{localnear1}) at $u=1$ makes sense due to $|\mathring{\lambda}(u)|<1$. When $\mu_X(H)\in (0,\sigma_1]$ and $\dist(u,1)\le \sigma'_1$, we verify it directly \begin{align*}
    [I-\mathring{R}(u)] &\Big[\frac{1}{1-\mathring{\lambda}(u)}\mathring{\Proj}(u)+[I-\mathring{Q}(u)]^{-1}[I-\mathring{\Proj}(u)]\Big]\\
    &=\frac{\mathring{\Proj}(u)-\mathring{\lambda}(u)\mathring{\Proj}(u)}{1-\mathring{\lambda}(u)}+[I-\mathring{R}(u)] [I-\mathring{Q}(u)]^{-1}[I-\mathring{\Proj}(u)]\\
    &=\mathring{\Proj}(u)+[I-\mathring{R}(u)][I+\sum_{i\ge 1}\mathring{Q}(u)^i][I-\mathring{\Proj}(u)]\\
    &=\mathring{\Proj}(u)+[I-\mathring{R}(u)][I-\mathring{\Proj}(u)+\sum_{i\ge 1}\mathring{Q}(u)^i]\\
    &=\mathring{\Proj}(u)+[I-\mathring{R}(u)][I-\mathring{\Proj}(u)]+\mathring{Q}(u)\\
    &=\mathring{\Proj}(u)+[I-\mathring{R}(u)]-\mathring{\Proj}(u)+\mathring{\lambda}(u)\mathring{\Proj}(u)+\mathring{Q}(u)=I.
\end{align*} 

The identity of (\ref{localnear1}) is concluded by noting that $[I-\mathring{R}(u)]^{-1}$ exists. Hence $[I-\mathring{R}(\cdot)]^{-1}\in C^{k+\beta}(\overline{B_{\sigma_1'}(1)}\bigcap \overline{\mathbb{D}})$.
\end{proof}

\begin{lemma}[Global regularities]\label{globalregularity}\ \par
There is a small constant $\sigma \in (0,\min\{\epsilon_N,\epsilon_1\})$ (see $\epsilon_N,\epsilon_1$ in Lemma \ref{LYinequality} and Lemma \ref{regulaofR}) such that for any $H$ satisfying $\mu_X(H)\in (0,\sigma]$, $[I-\mathring{R}(\cdot)]^{-1} \in C^{k+\beta}(\overline{\mathbb{D}})$, and \[\sup_{\mu_X(H)\in (0,\sigma]}\sup_{z \in S^1\setminus B_{\sigma'_1/2}(1)}||[I-\mathring{R}(z)]^{-1}||<\infty\] where $\sigma'_1$ can be found in Lemma \ref{localspectrumpic}.
\end{lemma}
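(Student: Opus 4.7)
The plan is to patch the local information from Lemma \ref{localspectrumpic} with an interior estimate coming from the Lasota--Yorke bound, using compactness of $\overline{\mathbb{D}}$.

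For the interior of $\mathbb{D}$ I would first observe that $\mathring{R}(\cdot)$ is holomorphic on $\mathbb{D}$, since the power series $\mathring{R}(z)=\sum_n z^n\mathring{R}_n$ converges in operator norm on $\mathbb{D}$ thanks to the bound $\|\mathring{R}_n\|\precsim \Leb_X(R=n)$ from Lemma \ref{Rnbound}. From the uniform Lasota--Yorke inequality of Lemma \ref{LYinequality}, for any $|z|\le r<1$ and any $\phi\in B$ one has $\|\mathring{R}(z)^n\phi\|\le 2Cr^n\|\phi\|$, with $C$ independent of $H$ with $\mu_X(H)\le \epsilon_N$. A Neumann-series argument then yields
\[ \|[I-\mathring{R}(z)]^{-1}\|\le \frac{2C}{1-r}\quad\text{for all }|z|\le r, \]
uniformly in $H$, and shows simultaneously that $[I-\mathring{R}(\cdot)]^{-1}$ is holomorphic on $\mathbb{D}$.

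For the boundary, I would form an open cover of $S^1$ by selecting a ball $B_{\sigma'_z/2}(z)$ at every $z\in S^1\setminus B_{\sigma'_1/2}(1)$ via Lemma \ref{localspectrumpic}(1), together with the ball $B_{\sigma'_1}(1)$ from Lemma \ref{localspectrumpic}(2). Compactness of $S^1$ yields a finite subcover with non-unit centres $z_1,\ldots,z_m\in S^1\setminus B_{\sigma'_1/2}(1)$ (plus the centre $1$). Set
\[ \sigma:=\min\{\sigma_1,\sigma_{z_1},\ldots,\sigma_{z_m},\epsilon_N,\epsilon_1\}. \]
The compact set $\{z\in\overline{\mathbb{D}}:\dist(z,1)\ge\sigma'_1/2\}$ has its intersection with $S^1$ contained in $\bigcup_i B_{\sigma'_{z_i}/2}(z_i)$, so the residual compact piece $K:=\{\dist(z,1)\ge\sigma'_1/2\}\cap\overline{\mathbb{D}}\setminus \bigcup_i B_{\sigma'_{z_i}/2}(z_i)$ is disjoint from $S^1$, hence $K\subset \{|z|\le 1-\rho\}$ for some $\rho>0$.

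Fix any $H$ with $\mu_X(H)\in(0,\sigma]$. On each $B_{\sigma'_{z_i}}(z_i)\cap\overline{\mathbb{D}}$, Lemma \ref{localspectrumpic}(1) gives $[I-\mathring{R}(\cdot)]^{-1}\in C^{k+\beta}$ with norm bounded uniformly in $H$; on $B_{\sigma'_1}(1)\cap\overline{\mathbb{D}}$ the same regularity follows from formula \eqref{localnear1}, which is well defined because $|\mathring{\lambda}(u)|<1$ whenever $\mu_X(H)>0$; on $\{|z|\le 1-\rho\}$ holomorphicity and the Neumann-series bound apply. Since $C^{k+\beta}$ regularity is a local property, patching these pieces produces $[I-\mathring{R}(\cdot)]^{-1}\in C^{k+\beta}(\overline{\mathbb{D}})$. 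For the uniform norm bound, $\{\dist(z,1)\ge\sigma'_1/2\}$ is covered by $\bigcup_i B_{\sigma'_{z_i}}(z_i)\cup\{|z|\le 1-\rho\}$, on each of which the bound is uniform in $\mu_X(H)\in(0,\sigma]$.

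The only delicate point is recognising why the claim excludes $B_{\sigma'_1/2}(1)$: on this ball the factor $(1-\mathring{\lambda}(u))^{-1}$ in \eqref{localnear1} is finite for each fixed $H$ with $\mu_X(H)>0$ but blows up as $\mu_X(H)\to 0$ at $u=1$, since $\mathring{\lambda}(1)\to \lambda(1)=1$ in that limit. No uniform-in-$H$ control is possible near $u=1$, so the cover must be arranged exactly to avoid $B_{\sigma'_1/2}(1)$ when proving the uniform estimate, which is the structural reason for the shape of the statement.
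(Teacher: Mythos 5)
Your proof is correct and follows essentially the same strategy as the paper's: cover $S^1$ by the local balls supplied by Lemma \ref{localspectrumpic}, extract a finite subcover, patch the local $C^{k+\beta}$ regularity near $S^1$ against regularity in the interior of $\mathbb{D}$, and obtain the uniform bound away from $z=1$ by a second finite subcover. Two minor differences: you make the interior control explicit through the Neumann series $\sum_n\mathring{R}(z)^n$ and the uniform Lasota--Yorke bound, whereas the paper simply invokes analyticity of $[I-\mathring{R}(\cdot)]^{-1}$ on $\mathbb{D}$; and, because you carve out the compact residual set $K\subset\{|z|\le 1-\rho\}$, your argument delivers the operator-norm bound uniformly over all of $\{\dist(z,1)\ge\sigma_1'/2\}\cap\overline{\mathbb{D}}$, whereas the paper's written proof only establishes it on $S^1\setminus B_{\sigma_1'/2}(1)$ — a harmless omission since only the circle is used downstream, but your version matches the stated claim more literally. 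The closing observation about why $B_{\sigma_1'/2}(1)$ must be excluded (the factor $(1-\mathring{\lambda}(u))^{-1}$ is not bounded uniformly in $H$ near $u=1$) correctly identifies the structural obstruction.
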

\begin{proof}
   By Lemma \ref{localspectrumpic}, for each $z \in S^1$,  there are $\sigma_z, \sigma_z'>0$ such that $[I-\mathring{R}(\cdot)]^{-1} \in C^{k+\beta}(\overline{\mathbb{D}}\bigcap B_{\sigma'_{z}}(z))$ for any $H$ satisfying $\mu_X(H) \in (0, \sigma_z]$. Since $S^1$ is compact, there are finitely many $z_1,z_2,\cdots, z_m \in S^1$ such that $S^1 \subseteq \bigcup_{i \le m}[B_{\sigma'_{z_i}}(z_i)\bigcap \overline{\mathbb{D}}]$, and for each $H$ satisfying $0< \mu_X(H)\le \min\{\sigma_{z_1}, \sigma_{z_2},\cdots, \sigma_{z_m}, \epsilon_1, \epsilon_N\}=:\eta$,  we have $[I-\mathring{R}(\cdot)]^{-1} \in C^{k+\beta}(\overline{\mathbb{D}}\bigcap B_{\sigma'_{z_i}}(z_i))$ for any $i$. Noting that by Lemma \ref{LYinequality} the operator $[I-\mathring{R}(\cdot)]^{-1} \in C^{\infty}(\mathbb{D})$ is analytic, we obtain that for each $\mu_X(H)\in (0,\eta]$, $[I-\mathring{R}(\cdot)]^{-1} \in C^{k+\beta}(\overline{\mathbb{D}})$. In particular, $[I-\mathring{R}(\cdot)]^{-1} 
 \in C^{k+\beta}(S^1)$.

   By Lemma \ref{localspectrumpic} again, for each $z \in   \{z\in S^1: \dist(z, 1)\ge \sigma'_1/2\}$,  there are $\sigma_z, \sigma_z'>0$ such that $\dist(1,z)> \sigma'_z$ and $[I-\mathring{R}(\cdot)]^{-1} \in C^{k+\beta}(\overline{\mathbb{D}}\bigcap B_{\sigma'_{z}}(z))$ for any $H$ satisfying $\mu_X(H) \in (0, \sigma_z]$. Since $S^1\bigcap \{z: \dist(z, 1)\ge \sigma'_1/2\}$ is compact, there are finitely many $z'_1,z'_2,\cdots, z'_m \in S^1\bigcap \{z: \dist(z, 1)\ge \sigma'_1/2\}$ such that $S^1\bigcap \{z: \dist(z, 1)\ge \sigma'_1/2\} \subseteq \bigcup_{i \le m}B_{\sigma'_{z'_i}}(z'_i)$. Applying Lemma \ref{localspectrumpic}, we have, for any $H$ satisfying $0< \mu_X(H)\le \min\{\sigma_{z'_1}, \sigma_{z'_2},\cdots, \sigma_{z'_m}, \eta\}=:\sigma$,  \[\sup_{\mu_X(H)\in (0,\sigma]}\sup_{u \in S^1\bigcap B_{\sigma'_{z'_i}}(z'_i)}||[I-\mathring{R}(u)]^{-1}||< \infty.\] So $\sup_{\mu_X(H)\in (0,\sigma]}\sup_{u \in S^1\setminus B_{\sigma'_1/2}(1)}||[I-\mathring{R}(u)]^{-1}||< \infty$.
\end{proof}
The last lemma in this subsection is about the estimate of $\mathring{\lambda}(1)$ which can be seen as a perturbation of $\lambda(1)$. More details of the proofs can be found in \cite{Demersexp, liveranikeller}.
\begin{lemma}[See \cite{Demersexp, liveranikeller}]\label{approofeigen}
  When the hole center $z_0 \in S^c$, then $\mathring{\lambda}(1)=\lambda(1)+O(\mu_X(H))=1+c_H \mu_X(H)+o(\mu_X(H))$ ($c_H$ can be found in Theorem \ref{thm}), where the constants in $O(\cdot), o(\cdot)$ depend on $z_0$ but are independent of $\mu_X(H)$.
\end{lemma}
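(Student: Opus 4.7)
The plan is to proceed in two stages: first derive the soft bound $\mathring{\lambda}(1) - \lambda(1) = O(\mu_X(H))$, then sharpen it to the first-order expansion with the precise constant $c_H$. The starting point is that $R(1) = T$ and $Th = h$, so $\lambda(1) = 1$ with one-dimensional eigenspace spanned by $h$. The key algebraic identity is
\[
(R(1) - \mathring{R}(1))\phi \;=\; T\bigl(\mathbbm{1}_H \circ F^R \cdot \phi\bigr) \;=\; \mathbbm{1}_H \cdot T\phi,
\]
where the second equality is the standard transfer-operator duality $T(g \circ F^R \cdot \phi) = g \cdot T\phi$. This immediately gives $|(R(1) - \mathring{R}(1))\phi|_1 \precsim \mu_X(H)\,\|\phi\|$, which pins the perturbation size in the $L^1$-output norm to be exactly of order $\mu_X(H)$.

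For the crude $O(\mu_X(H))$ bound I would insert this identity into the eigenvalue formula (\ref{6}), combined with the resolvent identity
\[
\mathring{\Proj}(1) - \Proj(1) \;=\; \frac{1}{2\pi i}\int_{\partial B_\delta(1)}[z - \mathring{R}(1)]^{-1}\bigl(\mathring{R}(1) - R(1)\bigr)[z - R(1)]^{-1}\,dz,
\]
the uniform resolvent bound (\ref{5}) from Lemma \ref{localspectrumpic}, and the lower bound $|\int \mathring{\Proj}(1)h\,d\Leb_X| \ge 1/2$ from the same lemma. Both the numerator $\int\mathring{R}(1)\mathring{\Proj}(1)h\,d\Leb_X - \int R(1)\Proj(1)h\,d\Leb_X$ and the perturbation of the denominator in (\ref{6}) are then controlled by a constant times $\mu_X(H)$, yielding $\mathring{\lambda}(1) = 1 + O(\mu_X(H))$.

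To upgrade to the sharp coefficient, I would follow the Keller-Liverani approach used in \cite{liveranikeller, Demersexp} and derive the exact first-order identity
\[
1 - \mathring{\lambda}(1) \;=\; \mu_X(H)\Big(1 - \sum_{n \ge 1} q_n(H)\Big) + o(\mu_X(H)),
\]
where $q_n(H) := \mu_X(H)^{-1}\int_H \mathring{R}(1)^{n-1}(\mathbbm{1}_H\,h)\,d\Leb_X$ measures the contribution from orbits of $F^R$ that return through $H$ after exactly $n$ steps without having escaped in between. If $z_0 \in S^c$ is non-periodic under $F$, then for any fixed $n$ and sufficiently small $H$ the forward $F$-orbit of $H$ does not re-enter $H$ by time $p = \sum_{i<n}R\circ(F^R)^i(z_0)$, so each $q_n(H) \to 0$; the spectral gap of $R(1)$ provides the $n$-uniform tail bound that makes $\sum_n q_n(H) \to 0$, giving $c_H = 1$. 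If $z_0$ is $p$-periodic with $p = \sum_{i<q}R\circ(F^R)^i(z_0)$, only the index $n = q$ survives in the limit, and the extra monotonicity hypothesis on $(F^R)^q$ at $z_0$ together with the Young-tower distortion bounds yields $\lim_{H \downarrow z_0} q_q(H) = |D(F^R)^q(z_0)|^{-1} = \prod_{i=0}^{p-1}|DF(F^i z_0)|^{-1}$, producing the announced $c_H$.

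The main obstacle is unfolding the periodicity condition between the two dynamics: $z_0$ is $p$-periodic for the original map $F$ while the transfer operator acts via the induced map $F^R$. The monotonicity assumption on $(F^R)^q$ at $z_0$ is precisely what allows the Jacobian identification $|D(F^R)^q(z_0)| = \prod_{i<p}|DF(F^i z_0)|$, and it lets one replace spatial averages of $h$ over $H$ and its $F^R$-iterates by $h(z_0)\mu_X(H)$ with $o(\mu_X(H))$ error, using bounded-variation regularity of $h$, the uniform bound $h = C^{\pm 1}$, and the fact that $z_0 \in S^c$ keeps it away from the singularity set.
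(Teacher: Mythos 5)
The paper's proof of this lemma is purely a citation: it invokes Theorem 2.1 of \cite{Demersexp} (itself relying on the Keller--Liverani escape-rate formula of \cite{liveranikeller}) and merely checks two hypotheses, namely that $z_0\in S^c$ gives the continuity condition $z_0\in I_{\text{cont}}$, and that the locally H\"older regularity of $h$ established inside the proof of Lemma~\ref{spectrumofRS1} gives condition (P). You instead try to reconstruct the cited argument, and your overall strategy (write $1-\mathring\lambda(1)$ as a ratio involving $\mathring\Proj(1)h$, pull out the leading factor $\mu_X(H)$, and compute the remaining coefficient through a return-time series $\sum_n q_n(H)$ whose limit is then settled by the periodicity dichotomy) is indeed the Keller--Liverani/Demers strategy. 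However, the reconstruction contains a genuine flaw at its centre.

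With the paper's definition $\mathring T(\cdot) = T(\mathbbm{1}_{H^c}\circ F^R \cdot)$, the duality you use gives $\mathring T\phi = \mathbbm{1}_{H^c}\cdot T\phi$, hence by iteration $\mathring R(1)^{m}\phi = \mathbbm{1}_{H^c}T\mathbbm{1}_{H^c}T\cdots\mathbbm{1}_{H^c}T\phi$ \emph{vanishes identically on $H$} for every $m\ge 1$. Consequently your quantity $q_n(H) = \mu_X(H)^{-1}\int_H \mathring R(1)^{n-1}(\mathbbm{1}_H h)\,d\Leb_X$ equals $1$ for $n=1$ and $0$ for every $n\ge 2$, so $\sum_{n\ge 1}q_n(H)\equiv 1$, and your first-order identity would give $1-\mathring\lambda(1)=o(\mu_X(H))$, i.e.\ $c_H=0$ for every centre $z_0$---which contradicts the statement. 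The source of the error is a missing closed transfer operator: from $\int (I-\mathring T)\phi\,d\Leb_X = \int_H T\phi\,d\Leb_X$ applied to the normalized eigenfunction one obtains the exact identity
\[
1-\mathring\lambda(1) \;=\; \frac{\int_H T\,\mathring\Proj(1)h\,d\Leb_X}{\int \mathring\Proj(1)h\,d\Leb_X},
\]
where the outer operator is $T$, not $\mathring T$, and the correct return coefficients must carry this $T$, e.g.\ $q_n = \mu_X(H)^{-1}\int_H T\,\mathring R(1)^{n-1}(\mathbbm{1}_H h)\,d\Leb_X$, which is precisely the probability of landing in $H$ at $F^R$-step $n$ after avoiding it at steps $1,\dots,n-1$. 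Your verbal description of $q_n$ is exactly this, so the gap is a formula-level error, but it is a consequential one given the paper's non-standard choice of $\mathring T$, and the rest of the argument rests on it. A secondary imprecision: for the soft $O(\mu_X(H))$ bound you assert that both the numerator difference and the denominator perturbation in (\ref{6}) are $O(\mu_X(H))$. The Keller--Liverani estimate only yields $\int(\mathring\Proj(1)-\Proj(1))h\,d\Leb_X = O(\tau_v^{\eta})$ for some $\eta<1$; the correct route uses $\lambda(1)=1$ to reduce $\mathring\lambda(1)-1$ to the single ratio displayed above, whose numerator carries the $\mathbbm{1}_H$ cutoff and is therefore $O(\mu_X(H))$ directly, while the denominator is $1+o(1)$.
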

\begin{proof}
     This follows from the proof of Theorem 2.1 of \cite{Demersexp}. We will make two remarks about the condition (P) and $z_0\in I_{\text{cont}}$ in Theorem 2.1 of \cite{Demersexp}: $z_0\in I_{\text{cont}}$ follows from $z_0 \in S^c$; (P) follows from Remark \ref{hisholder}.
\end{proof}

\subsection{Scheme of the proof}
With all preparations in this section,  we can outline the scheme of the proof of Theorem \ref{thm}. More details will be clear in Section \ref{proof}.
\begin{enumerate}
    \item We will express $\mu(\tau_H>n)$ in terms of $\mathring{R}(\cdot)$, $R$, $\mathring{R}_{\ge}(\cdot)$ and $R_{>}(\cdot)$: $$\sum_{n \ge 0}z^n\frac{\mu_{\Delta}(\tau_H>n)}{\mu_{\Delta}(X)}=\sum_{n\ge 0}z^n\sum_{i \ge n}\mu_X(R> i)+\int \big[R_{>}(z)\circ [I-\mathring{R}(z)]^{-1}\circ \mathring{R}_{\ge }(z)\big](h)d\Leb_X.$$

    Then we just need to estimate the Fourier coefficients of the integral.
    \item We will use Lemma \ref{globalregularity} and (\ref{localnear1}) to show that when $k=1$, $$\int \big[R_{>}(z)\circ [I-\mathring{R}(z)]^{-1}\circ \mathring{R}_{\ge }(z)\big](h)d\Leb_X \approx \int \Big[R_{>}(z)\circ \frac{\mathring{\Proj}(z)}{1-\mathring{\lambda}(z)}\circ \mathring{R}_{\ge }(z)\Big](h)d\Leb_X.$$
    When $k>1$, similar results can be obtained by following the same method.
    \item We will prove that when $k=1$
    $$\int \Big[R_{>}(z)\circ \frac{\mathring{\Proj}(z)}{1-\mathring{\lambda}(z)}\circ \mathring{R}_{\ge }(z)\Big](h)d\Leb_X\approx \int \Big[R_{>}(z)\circ \frac{\mathring{\Proj}(1)}{1-\mathring{\lambda}(1)}\circ \mathring{R}_{\ge }(z)\Big](h)d\Leb_X.$$
    When $k>1$, similar results can be obtained by following the same method.
    \item We will prove that when $k=1$, $$\int \Big[R_{>}(z)\circ \frac{\mathring{\Proj}(1)}{1-\mathring{\lambda}(1)}\circ \mathring{R}_{\ge }(z)\Big](h)d\Leb_X\approx \int \Big[R_{>}(z)\circ \frac{\Proj(1)}{1-\mathring{\lambda}(1)}\circ R_{\ge }(z)\Big](h)d\Leb_X.$$
    When $k>1$, similar results can be obtained by following the same method.
    \item Using $\Proj(1)(\cdot)=h\int (\cdot)d\Leb_X $, we will prove the Fourier coefficient in the case of ``$k=1$"
    \[\int_{0}^{2\pi}e^{-int}dt\int \Big[R_{>}(e^{it})\circ \frac{\Proj(1)}{1-\mathring{\lambda}(1)}\circ R_{\ge }(e^{it})\Big](h)d\Leb_X \approx [1-\mathring{\lambda}(1)]^{-1}n^{-k-\beta},\]
    and Theorem \ref{thm} is concluded by explicitly showing the constant indicated in ``$\approx$". When $k>1$, similar results can be obtained by following the same method.
\end{enumerate}

In these steps, the meaning of ``$\approx$" will be clear as we proceed with these estimates in Section \ref{proof}. In addition, we will keep track of the range of $\mu_X(H)$. Now we start to prove Theorem \ref{thm}.

\section{Proof of Theorem \ref{thm}}\label{proof}
\subsection{$\mu_{\Delta}(\tau_H>n)$ in terms of operators} In this subsection, we start with a simple lemma.
\begin{lemma}\label{renewaleq1} 
    $[I-\mathring{R}(z)]^{-1}=\sum_{m \ge 1}\mathring{R}(z)^{m-1}$ for any $|z|< 1$.
\end{lemma}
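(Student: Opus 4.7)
The plan is to derive this as the standard Neumann series identity for the resolvent, using the Lasota-Yorke inequality from Lemma \ref{LYinequality} as the sole ingredient. The key observation is that the lemma already asserts (at the end of its statement) that the spectral radius of $\mathring{R}(z)$ is at most $|z|$; for $|z|<1$ this strict inequality with $1$ is exactly what makes the Neumann series converge.

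More concretely, I would first extract an explicit operator-norm bound for iterates. Taking the supremum over $\phi$ with $\|\phi\|\le 1$ in the inequality
\[
\|\mathring{R}(z)^n\phi\|\le C|z|^n\bar\theta^n\|\phi\|+C|z|^n|\phi|_1
\]
and using the trivial estimate $|\phi|_1\le\|\phi\|$ built into the BV norm, I would conclude that $\|\mathring{R}(z)^n\|\le 2C|z|^n$ for every $n\ge 1$. By the Gelfand formula, this yields $r(\mathring{R}(z))\le|z|<1$.

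Next, since $r(\mathring{R}(z))<1$, the series $\sum_{m\ge 0}\mathring{R}(z)^m$ converges absolutely in the operator norm on $B$. A standard telescoping computation then shows
\[
(I-\mathring{R}(z))\sum_{m\ge 0}\mathring{R}(z)^m=I=\sum_{m\ge 0}\mathring{R}(z)^m\,(I-\mathring{R}(z)),
\]
so $I-\mathring{R}(z)$ is invertible with inverse $\sum_{m\ge 0}\mathring{R}(z)^m$. Re-indexing $m\mapsto m-1$ gives the form stated in the lemma.

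There is no real obstacle here: the argument is textbook Banach-algebra material, and Lemma \ref{LYinequality} has already done all of the dynamical work of controlling the iterates. The only micro-check worth stating is that the BV norm dominates the $L^1$ norm, so the two terms in the Lasota-Yorke estimate can be combined into a single geometric bound with ratio $|z|$.
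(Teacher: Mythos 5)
Your argument is correct and follows essentially the same route as the paper: the paper's proof likewise invokes Lemma \ref{LYinequality} to conclude that the spectral radius of $\mathring{R}(z)$ is below $1$ and then appeals to convergence of the Neumann series. You have merely made explicit the intermediate steps (the bound $\|\mathring{R}(z)^n\|\le 2C|z|^n$ via $|\phi|_1\le\|\phi\|$, and the Gelfand formula) that the paper leaves implicit.
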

\begin{proof}
    The lemma \ref{LYinequality} proves the uniform Lasota-Yorke inequalities, which implies that the spectral radius of $\mathring{R}(z)$ is less than $1$. Hence $\sum_{m \ge 1}\mathring{R}(z)^{m-1}$ converges and is equal to $[I-\mathring{R}(z)]^{-1}$.
\end{proof}

Now we can turn $\mu_{\Delta}(\tau_H>n)$ into an operator renewal equation in terms of the operators that we studied in Section \ref{prelimsection}.
\begin{lemma}[Operator renewal equations]\label{renewaleq2}\ \par For any $\mu_X(H)\in (0, \epsilon_1]$ (see $\epsilon_1$ in Lemma \ref{regulaofR}) and $|z|< 1$,
     \begin{gather}\label{ORE}
         \sum_{n \ge 0}z^n\frac{\mu_{\Delta}(\tau_H>n)}{\mu_{\Delta}(X)}=\sum_{n\ge 0}z^n\sum_{i \ge n}\mu_X(R> i)+\int \big[R_{>}(z)\circ [I-\mathring{R}(z)]^{-1}\circ \mathring{R}_{\ge }(z)\big](h)d\Leb_X.
     \end{gather}

     We note that (\ref{ORE}) does not hold if $|z|=1$ and $k=1$ because the power series diverge.
\end{lemma}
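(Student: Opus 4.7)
The approach is to decompose the measure $\mu_\Delta(\tau_H > n)$ by excursions of the orbit between successive returns to the base $X$. Using $\mu_\Delta = \frac{1}{\int R\, d\mu_X} \sum_{i \geq 0} F^i_*(\mu_X|_{R > i})$ from Definition \ref{defyoung}, I would first rewrite
\[
\frac{\mu_\Delta(\tau_H > n)}{\mu_\Delta(X)} = \sum_{i \geq 0} \int_{\{R > i\}} \mathbbm{1}_{\tau_H(x, i) > n} \, d\mu_X(x),
\]
and split the integrand according to whether the orbit reaches $X$ within time $n$: the no-return case $R(x) > n + i$ (where $\tau_H(x,i) > n$ automatically), and the return case where the first return time equals $k = R(x) - i \in \{1,\dots,n\}$. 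Summing the no-return case over $i$ gives $\int (R - n)^+ \, d\mu_X = \sum_{j \geq n} \mu_X(R > j)$, which is the explicit first summand of the claimed identity.

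For the return case, I would change the summation variable from $i$ to $k$ and apply the transfer-operator duality $\int T\phi \cdot \psi \, d\Leb_X = \int \phi \cdot \psi \circ F^R \, d\Leb_X$ from Lemma \ref{basicopentransfer} to obtain
\[
T_n^{\mathrm{ret}} = \sum_{k=1}^n \int \mathring{R}_{\geq k}(h)(y) \cdot \mathbbm{1}_{\tilde{\tau}_H(y) > n - k} \, d\Leb_X(y),
\]
where $\tilde{\tau}_H(y) := \inf\{m \geq 1 : F^m(y, 0) \in H\}$ is the first hitting time of orbits launched from $X$. Passing to generating functions and exchanging summations yields $\sum_{n \geq 0} z^n T_n^{\mathrm{ret}} = \int \mathring{R}_\geq(z) h \cdot A(z) \, d\Leb_X$ with $A(z)(y) := \sum_{m \geq 0} z^m \mathbbm{1}_{\tilde{\tau}_H(y) > m}$.

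To identify $A(z)$, I would classify $\{\tilde{\tau}_H(y) > m\}$ by the exact number $\ell \geq 0$ of returns to $X$ the orbit makes in $[1,m]$, all of which must avoid $H$: the event becomes $\{R_\ell(y) \leq m < R_{\ell + 1}(y),\ (F^R)^j y \in H^c,\ 1 \leq j \leq \ell\}$ with $R_\ell(y) = \sum_{j=0}^{\ell-1} R((F^R)^j y)$. Summing the resulting geometric sums in $m$ gives $A(z) = \sum_{\ell \geq 0} \mathring{U}(z)^\ell B(z)$, where $B(z)(y) := \frac{1 - z^{R(y)}}{1 - z}$ and $\mathring{U}(z)\psi(y) := \mathbbm{1}_{F^R y \in H^c}\, z^{R(y)} \psi(F^R y)$ is the $L^\infty$-dual of $\mathring{R}(z)$. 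For $|z| < 1$ this Neumann series converges and equals $(I - \mathring{U}(z))^{-1} B(z)$ by the dual version of Lemma \ref{renewaleq1}.

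It remains to dualize back. Term by term, $\int \mathring{R}_\geq(z) h \cdot \mathring{U}(z)^\ell B(z) \, d\Leb_X = \int \mathring{R}(z)^\ell \mathring{R}_\geq(z) h \cdot B(z) \, d\Leb_X$ by the adjoint relation, and summing over $\ell$ yields $\int (I - \mathring{R}(z))^{-1} \mathring{R}_\geq(z) h \cdot B(z) \, d\Leb_X$. Finally, the definitions force $R_>(z) \phi = T(B(z) \phi)$, hence $\int R_>(z) \phi \, d\Leb_X = \int B(z) \phi \, d\Leb_X$ by Lemma \ref{basicopentransfer} applied with test function $1$. Assembling these pieces with the no-return contribution produces the stated renewal equation. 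The main obstacle in my view is not an analytic estimate (for $|z|<1$ everything is finite by Lemma \ref{renewaleq1}) but the combinatorial bookkeeping: one must carefully track where the weights $z^R$ and the indicators $\mathbbm{1}_{H^c} \circ F^R$ enter at each excursion so that the closed operator $R_>(z)$ sits on the outside, the resolvent of the open operator $\mathring{R}(z)$ sits in the middle, and $\mathring{R}_\geq(z)$ sits innermost, exactly as displayed in \eqref{ORE}.
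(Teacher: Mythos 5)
Your proof is correct, and it takes a genuinely different route from the one in the paper. The paper decomposes the event $\{\tau_H>n\}\cap\{\text{orbit returns to }X\}$ simultaneously by the starting height $a$ and the final height $b$, writes its measure as $\sum_{a,b}\int \mathbbm{1}_{R>b}\,\mathring{\mathbb{T}}^{\,n-b+a}(\mathbbm{1}_{R>a}h)\,d\Leb_X$ using the tower transfer operator $\mathring{\mathbb{T}}$ and Lemma \ref{basicopentransfer}, and then expands each term over the cylinder sets $[k_{m-1},\dots,k_0]$ to turn $\mathring{\mathbb{T}}^{\,n-b+a}$ into compositions $\mathring{R}_{k_{m-1}}\circ\cdots\circ\mathring{R}_{k_0}$, before re-summing as a power series. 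You instead decompose by the first return time $k=R(x)-i$ only, collapse the rest of the excursion into a pointwise generating function $A(z)(y)=\sum_m z^m\mathbbm{1}_{\tilde\tau_H(y)>m}$, classify by the number $\ell$ of subsequent returns to express $A(z)$ through the weighted Koopman operator $\mathring{U}(z)$ (the $(L^1,L^\infty)$-adjoint of $\mathring R(z)$), and finally push everything back onto $\mathring R(z)$ and $R_>(z)$ by duality, using the clean identity $R_>(z)\phi=T\bigl(\tfrac{1-z^R}{1-z}\,\phi\bigr)$ and the fact that $T$ preserves $\Leb_X$. Both arguments rest on the same underlying excursion picture and the same use of Lemma \ref{renewaleq1} to sum the middle Neumann series, but yours never touches the tower operator $\mathring{\mathbb{T}}$ nor the cylinder expansion, staying entirely on the base; the paper's version is more explicit about where each $\mathring R_k$ comes from, while yours is shorter and makes the triple structure $R_>(z)\circ[I-\mathring R(z)]^{-1}\circ\mathring R_{\ge}(z)$ emerge transparently from the adjoint relation. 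For completeness you should state the (easy) pointwise bound $|\mathring{U}(z)^\ell B(z)|\le |z|^\ell\sup|B(z)|$ that justifies the Fubini interchanges, but this is precisely the kind of convergence fact you flagged as unproblematic for $|z|<1$, and it is.
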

\begin{proof}
    We classify the orbits in $\Delta$ never hitting $H$ in the time window $[1,n]$ into two classes, one never returning to $X$, another one returning to $X$ at least once. For the first class, the orbits must start from the set $\{(x, i)\in \Delta: R(x)> n+i\}$, whose measure is $\sum_{i \ge n }\mu_{\Delta}|_X\{R>i\}=\mu_{\Delta}(X)\sum_{i \ge n}\mu_{X}(R>i)$. 
    
    For the second class, define $\Delta_i:=\{(x,i): (x,i)\in \Delta\}$ and consider an orbit $\{f(x), f^2(x), \cdots f^n(x)\}$ never reaching $H$ where $x \in \Delta_a, f^n(x)\in \Delta_b$. Since this orbit returns to $X$ at least once, so $b< n$. Recall that $H\subsetneq X$ and $F_{*}\mu_{\Delta}=\mu_{\Delta}$, the measure of the set having such $x$ is\begin{align*}
        \mu_{\Delta}(x\in \Delta_a: &R\circ F^{n-b}(x)>b, \tau_H(x)>n-b)\\
        &=\mu_{\Delta}|_X( R\circ F^{n-b+a}>b, \tau_H>n-b+a, R>a)\\
        &=\mu_{\Delta}(X)\mu_X( R\circ F^{n-b+a}>b, \tau_H>n-b+a, R>a)\\
        &=\mu_{\Delta}(X)\int \mathbbm{1}_{R>b}\circ F^{n-b+a}\mathbbm{1}_{\tau_H>n-b+a} \mathbbm{1}_{R>a}hd\Leb_X\\
         &=\mu_{\Delta}(X)\int \mathbbm{1}_{R>b}\circ F^{n-b+a}\mathbbm{1}_{\tau_H>n-b+a} \mathbbm{1}_{R>a}hd\Leb_{\Delta}\\
         &=\mu_{\Delta}(X)\int \mathbbm{1}_{R>b}\mathring{\mathbb{T}}^{n-b+a}(\mathbbm{1}_{R>a}h)d\Leb_{\Delta}\\
        &=\mu_{\Delta}(X)\int \mathbbm{1}_{R>b}\mathring{\mathbb{T}}^{n-b+a}(\mathbbm{1}_{R>a}h)d\Leb_X
    \end{align*} where the last three ``$=$" are due to Lemma \ref{basicopentransfer} and $\Leb_{\Delta}|_X=\Leb_X$.
    
    Hence the measure of the starting points of the orbits in the second class is $$\mu_{\Delta}(\tau_H>n)-\mu_{\Delta}(X)\sum_{i \ge n}\mu_{X}(R>i)$$ which is equal to\begin{align*}
        \sum_{a \ge 0,b<n}\mu_{\Delta}(x\in \Delta_a: R\circ F^{n-b}&(x)>b, \tau_H>n-b)\\
        &=\mu_{\Delta}(X)\sum_{a \ge 0,b<n}\int \mathbbm{1}_{R>b}\mathring{\mathbb{T}}^{n-b+a}(\mathbbm{1}_{R>a}h)d\Leb_X.
    \end{align*}
    
    Denote cylinder sets by $[k_{m-1},k_{m-2}, \cdots, k_0]:=\bigcap_{i\le m-1}(F^R)^{-i}\{R=k_i\}$, and note that $\mathring{\mathbb{T}}^{k_{m-1}+k_{m-2}+\cdots +k_0}(\mathbbm{1}_{[k_{m-1},k_{m-2}, \cdots, k_0]}\cdot)=\mathring{T}^m(\mathbbm{1}_{[k_{m-1},k_{m-2}, \cdots, k_0]}\cdot)=\mathring{R}_{k_{m-1}}\circ\mathring{R}_{k_{m-2}}\circ\cdots \circ \mathring{R}_{k_{0}}(\cdot)$, we can continue our estimate\begin{align}
        &=\mu_{\Delta}(X)\sum_{a \ge 0,b<n}\sum_{1\le m\le n-b}\sum_{k_{m-1}+\cdots +k_0=n-b+a, k_0>a}\int \mathbbm{1}_{R>b}\mathring{\mathbb{T}}^{n-b+a}(\mathbbm{1}_{[k_{m-1},k_{m-2}, \cdots, k_0]}h)d\Leb_X \nonumber\\
        & =\mu_{\Delta}(X)\sum_{a \ge 0,b<n}\sum_{1\le m\le n-b}\sum_{k_{m-1}+\cdots +k_0=n-b+a, k_0>a}\int \mathbbm{1}_{R>b}\mathring{T}^{m}(\mathbbm{1}_{[k_{m-1},k_{m-2}, \cdots, k_1, k_0]}h)d\Leb_X\nonumber\\
        &  =\mu_{\Delta}(X)\sum_{a \ge 0,b<n}\sum_{1\le m\le n-b}\sum_{k_{m-1}+\cdots +k_{1}+k_0'=n-b}\int \mathbbm{1}_{R>b}\mathring{T}^{m}(\mathbbm{1}_{[k_{m-1},k_{m-2}, \cdots, k_1, k_0'+a]}h)d\Leb_X\nonumber\\
        &  =\mu_{\Delta}(X)\sum_{b<n}\sum_{1\le m\le n-b}\sum_{k_{m-1}+\cdots +k_{1}+k_0'=n-b}\int \mathbbm{1}_{R>b}\mathring{R}_{k_{m-1}}\circ\cdots \circ \mathring{R}_{k_1} \circ \mathring{R}_{\ge k_{0}'}(h)d\Leb_X\nonumber\\
        & =\mu_{\Delta}(X)\sum_{b<n}\sum_{1\le m\le n-b}\sum_{k_{m-1}+\cdots +k_{1}+k_0'=n-b}\int R_{>b}\circ\mathring{R}_{k_{m-1}}\circ\cdots \circ \mathring{R}_{k_1} \circ \mathring{R}_{\ge k_{0}'}(h)d\Leb_X \label{4}
    \end{align}where the last ``=" is due to the definition of the transfer operator of $F^R$.

    On the other hand, Lemma \ref{regulaofR} shows that $\mathring{R}(z), \mathring{R}_{\ge}(z), R_{>}(z)$ are analytic on $\mathbb{D}$. Then when $|z|<1$, \begin{align*}
    \sum_{m \ge 1}\mathring{R}(z)^{m-1}\circ \mathring{R}_{\ge }(z)&=\sum_{m \ge 1}\sum_{k \ge m}z^k \sum_{k_{m-1}+k_{m-2}+\cdots+ k_0=k}\mathring{R}_{k_{m-1}}\circ\mathring{R}_{k_{m-2}}\circ\cdots \circ\mathring{R}_{k_{1}}\circ\mathring{R}_{\ge k_{0}}\\
        &=\sum_{k \ge 1}z^k\sum_{1\le m \le k}\sum_{k_{m-1}+k_{m-2}+\cdots+ k_0=k}\mathring{R}_{k_{m-1}}\circ\mathring{R}_{k_{m-2}}\circ\cdots \circ\mathring{R}_{k_{1}}\circ\mathring{R}_{\ge k_{0}}
    \end{align*}and 
    \begin{align}
    &R_{>}(z)\circ \sum_{m \ge 1}\mathring{R}(z)^{m-1}\circ \mathring{R}_{\ge }(z)\nonumber \\
    &=\sum_{n \ge 1}z^n\sum_{b<n}R_{>b}\circ \sum_{1\le m \le n-b}\sum_{k_{m-1}+k_{m-2}+\cdots+ k_0=n-b}\mathring{R}_{k_{m-1}}\circ\mathring{R}_{k_{m-2}}\circ\cdots \circ\mathring{R}_{k_{1}}\circ\mathring{R}_{\ge k_{0}}. \label{27}
    \end{align} 
    
    Now we can establish an operator renewal equation for the second class orbits. By Lemma \ref{renewaleq1}, (\ref{27}) and (\ref{4}): 
    \begin{align*}
        \sum_{n \ge 1}z^n\Big[\frac{\mu_{\Delta}(\tau_H>n)}{\mu_{\Delta}(X)}-\sum_{i \ge n}\mu_{X}(R>i)\Big]&=\int \Big[R_{>}(z)\circ \sum_{m \ge 1}\mathring{R}(z)^{m-1}\circ \mathring{R}_{\ge }(z)\Big](h) d\Leb_X\\
        &=\int \Big[R_{>}(z)\circ [I-\mathring{R}(z)]^{-1}\circ \mathring{R}_{\ge }(z)\Big](h) d\Leb_X.
    \end{align*} 
    
    Adding the terms of ``$n=0$" we conclude the proof.
    \end{proof}

    Although (\ref{ORE}) in Lemma \ref{renewaleq2} does not hold for $z \in S^1$, the following lemma shows that the Fourier coefficients of the integral in (\ref{ORE}) can be calculated by studying $z\in S^1$ only.
    \begin{lemma}[Fourier coefficients]\label{renewalcoeffi}\ \par
        By Lemma \ref{renewaleq2}, one can suppose that $\int \big[R_{>}(z)\circ [I-\mathring{R}(z)]^{-1}\circ \mathring{R}_{\ge }(z)\big](h) d\Leb_X=\sum_{n\ge 1}c_nz^n$ for any $|z|<1$ and any $\mu_X(H)\in (0, \epsilon_1]$. Here $c_n$ depends on $H$. Then the Fourier coefficient \[c_n=\frac{1}{2\pi }\int_0^{2\pi} e^{-int}\int \Big[R_{>}(e^{it})\circ [I-\mathring{R}(e^{it})]^{-1}\circ \mathring{R}_{\ge }(e^{it})\Big](h) d\Leb_X dt=O_H(n^{-k-\beta})\] holds for any $H$ satisfying $0<\mu_X(H)\le \sigma=\min\{\sigma,\epsilon_1\}$ (see $\sigma$ in Lemma \ref{globalregularity}).
    \end{lemma}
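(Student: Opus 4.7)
\medskip
\noindent\textbf{Proof proposal.} The plan is to transfer the Taylor coefficient formula, which is automatic for $|z|<1$ by analyticity, to a boundary integral along $S^1$ by letting the radius $r$ tend to $1$ and invoking dominated convergence, with the continuity up to $\overline{\mathbb{D}}$ supplied by the results of the previous subsection.

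First, set
\[
f(z):=\int \big[R_{>}(z)\circ [I-\mathring{R}(z)]^{-1}\circ \mathring{R}_{\ge}(z)\big](h)\,d\Leb_X.
\]
By Lemma \ref{regulaofR} the operator-valued maps $\mathring{R}(\cdot),\mathring{R}_{\ge}(\cdot),R_{>}(\cdot)$ belong to $C^{k+\beta}(\overline{\mathbb{D}})$, and for $\mu_X(H)\in(0,\sigma)$ Lemma \ref{globalregularity} ensures that $[I-\mathring{R}(\cdot)]^{-1}\in C^{k+\beta}(\overline{\mathbb{D}})$. Composing and integrating against $h\in B$ (which is bounded because $h=C^{\pm 1}$), the scalar function $f$ extends continuously to $\overline{\mathbb{D}}$ and is analytic in $\mathbb{D}$, with $M:=\sup_{z\in\overline{\mathbb{D}}}|f(z)|<\infty$.

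Next, for each $r\in(0,1)$, analyticity of $f$ on $\mathbb{D}$ together with the assumed power-series representation $f(z)=\sum_{n\ge 1}c_n z^n$ (valid for $|z|<1$ by Lemma \ref{renewaleq2}) yields, via the standard Cauchy/Fourier formula,
\[
c_n\,r^n=\frac{1}{2\pi}\int_0^{2\pi}f(re^{it})\,e^{-int}\,dt.
\]
As $r\to 1^{-}$ the integrands $f(re^{it})e^{-int}$ converge pointwise to $f(e^{it})e^{-int}$ by continuity of $f$ on $\overline{\mathbb{D}}$, and they are dominated uniformly by the integrable constant $M$. The dominated convergence theorem therefore gives
\[
c_n=\lim_{r\to 1^{-}}c_n r^n=\frac{1}{2\pi}\int_0^{2\pi}f(e^{it})\,e^{-int}\,dt,
\]
which is the stated identity.

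The only nontrivial ingredient is the boundary continuity of $f$, and this is exactly where the hypothesis $\mu_X(H)<\sigma$ enters through Lemma \ref{globalregularity}; outside this range one could not rule out a resonance of $\mathring{R}(z)$ with the eigenvalue $1$ on $S^1$ and the operator-valued resolvent $[I-\mathring{R}(\cdot)]^{-1}$ would fail to extend continuously. Once that continuity is in hand, the argument is a routine Abel-type limit, and I do not expect any hidden technical obstacle.
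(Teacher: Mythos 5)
Your proof is correct and reaches the same conclusion by a genuinely slightly different route. The paper's proof uses Lemma~\ref{frequentlemma}(1) together with the regularity Lemmas~\ref{regulaofR} and~\ref{globalregularity} to establish that the composite operator $R_{>}(\cdot)\circ [I-\mathring{R}(\cdot)]^{-1}\circ \mathring{R}_{\ge}(\cdot)$ lies in $O(n^{-k-\beta})$, i.e.\ its Fourier coefficients are absolutely summable because $k+\beta>1$; hence the power series converges absolutely on all of $\overline{\mathbb{D}}$, the series identity extends to $S^1$, and the coefficient formula follows by integrating term by term. You instead establish only that $f$ extends continuously to $\overline{\mathbb{D}}$ and then invoke the Cauchy formula on circles of radius $r<1$, letting $r\to 1^{-}$ with dominated convergence (a standard Abel-type limit). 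Your route is marginally more elementary in that it does not rely on the composition rule for $O(n^{-\cdot})$ classes, only on boundary continuity; the paper's route is marginally more uniform in that it yields absolute convergence of the series, which is convenient later. Both are valid, short, and use the same underlying regularity input.

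One small inaccuracy worth flagging: you state that $\mathring{R}_{\ge}(\cdot)$ and $R_{>}(\cdot)$ belong to $C^{k+\beta}(\overline{\mathbb{D}})$ by Lemma~\ref{regulaofR}, but that lemma only gives $\mathring{R}_{\ge}(\cdot), R_{>}(\cdot)\in O(n^{-k-\beta})$ (their Fourier coefficients decay one power more slowly than those of $\mathring{R}(\cdot)$, so they in fact lie roughly in $C^{k-1+\beta}(\overline{\mathbb{D}})$, not $C^{k+\beta}$). This does not damage your argument, since $O(n^{-k-\beta})$ with $k+\beta>1$ already implies absolute summability of coefficients and hence continuity on $\overline{\mathbb{D}}$, which is all you actually use.
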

    \begin{proof}
        It follows from Lemma \ref{regulaofR} that for each $H$ satisfying $0<\mu_X(H)\le \epsilon_1$, $R_>(\cdot), \mathring{R}_{\ge}(\cdot) \in O(n^{-k-\beta})$, $\mathring{R}(\cdot)\in C^{k+\beta}(\overline{\mathbb{D}})$. Next, Lemma \ref{globalregularity} implies that for each $H$ satisfying $0<\mu_X(H)\le \sigma$, $[I-\mathring{R}(\cdot)]^{-1} \in C^{k+\beta}(\overline{\mathbb{D}})$. Thus, by Lemma \ref{frequentlemma}, for each $H$ satisfying $0<\mu_X(H)\le\min\{\sigma,\epsilon_1\}$, $[I-\mathring{R}(\cdot)]^{-1} \in O(n^{-k-\beta})$ and hence $R_{>}(\cdot)\circ [I-\mathring{R}(\cdot)]^{-1}\circ \mathring{R}_{\ge }(\cdot)\in O(n^{-k-\beta})$. Therefore, the relation $\int \big[R_{>}(z)\circ [I-\mathring{R}(z)]^{-1}\circ \mathring{R}_{\ge }(z)\big](h) d\Leb_X=\sum_{n\ge 1}c_nz^n$ holds for any $z\in \overline{\mathbb{D}}$ and we can compute its Fourier coefficient $c_n$ by integrating it over $S^1$. 
    \end{proof}

We conclude this subsection with a remark summarizing the analysis presented in this subsection.
\begin{remark}\label{remarkonore}
Note that $\sum_{i \ge n}\mu_X(R> i)\approx n^{-k-\beta+1}$, (\ref{ORE}) and the proofs of Lemma \ref{renewalcoeffi} and Lemma \ref{renewaleq2} imply that $$\mu_{\Delta}(\tau_H>n)\approx n^{-k-\beta+1}+O_H(n^{-k-\beta}),$$ where the first term refers to the orbits never returning to the base $X$ (a reference hyperbolic set) in the time window $[1,n]$, and the second term corresponds to the orbits returning to the base $X$ at least once in this time window. A key observation for the second term is that the orbits gain extra expansion by returning to the reference hyperbolic set $X$, resulting in a faster escape. Hence the second term decays faster.
\end{remark}

\subsection{Partitions of unity for $[I-\mathring{R}(\cdot)]^{-1}$ on $S^1$}

From Lemma \ref{renewalcoeffi} we need to deal with $[I-\mathring{R}(\cdot)]^{-1}$ in $S^1$. According to (\ref{localnear1}), we notice that $[I-\mathring{R}(z)]^{-1}$ behaves like $[1-\mathring{\lambda}(z)]^{-1}$, that is, ``blows up" when $z$ gets closer and closer to $1$ and $\mu_X(H)$ gets smaller and smaller (these imply $\mathring{\lambda}(z)\to 1$). So, our strategy in this subsection is to separate the region near $z=1$ from the whole $S^1$ using the partitions of unity, and to take care of the dependence of $H$.

Noting that $\mathring{R}(\overline{z})=\overline{\mathring{R}(z)}$ we have the conjugacy $[\overline{I-\mathring{R}(z)]^{-1}}=[I-\mathring{R}(\overline{z})]^{-1}$  for any $z\in S^1$. According to Lemma \ref{localspectrumpic}, there is $\sigma_1'>0$ such that (\ref{localnear1}) holds on $S_2:=S^1\bigcap \{z: \dist(z,1)< \sigma_1'/2\}$. Let $S_1:=S^1\bigcap \{z: \dist(z,1)\ge 0.9\sigma_1'\}$. Find a function $\phi\in C^{\infty}(S^1)$ such that $\phi=1$ in $S_2$ and $\phi=0$ in $S_1$, $0\le \phi \le 1 $ in $S^1 \setminus (S_1\bigcup S_2)$ and $\overline{\phi(z)}=\phi(\overline{z})$. This can be achieved because one can always find a bump function with the desired properties in the upper half of $S^1$ and then extend it by conjugation to the lower half of $S^1$. Then the support of $\phi$ is contained in $S^1 \setminus S_1$. Define a family of new operators $\phi(z)[I-\mathring{R}(z)]^{-1}, z \in S^1$. By Lemma \ref{globalregularity}, for any $H$ satisfying $\mu_X(H)\in (0, \sigma)$, $\phi(\cdot)[I-\mathring{R}(\cdot)]^{-1} \in C^{k+\beta}(S^1)$. The following lemma shows that $[1-\phi(\cdot)][I-\mathring{R}(\cdot)]^{-1}$ is reliably controlled.
\begin{lemma}\label{unitypartition}
For any $H$ satisfying $\mu_X(H)\in (0, \sigma)$ (see $\sigma$ in Lemma \ref{globalregularity}), $[1-\phi(\cdot)][I-\mathring{R}(\cdot)]^{-1} \in C^{k+\beta}(S^1) $, is equal to zero on $S_2$. The Fourier coefficients of $[1-\phi(\cdot)][I-\mathring{R}(\cdot)]^{-1}$ are
\begin{align*}
    \frac{1}{2\pi}\int_0^{2\pi} &\big[[1-\phi(e^{it})][I-\mathring{R}(e^{it})]^{-1}\big] e^{-int} dt=O_{\sigma_1',\sigma}(n^{-k-\beta})
\end{align*}where the constant in $O(\cdot)$  depends on $\sigma_1', \sigma$ but is independent of any $H$ satisfying $\mu_X(H)\in (0,\sigma)$.  
\end{lemma}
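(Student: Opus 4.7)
The plan is to prove the three assertions in order: the $C^{k+\beta}$ regularity and vanishing on $S_2$, then the integration-by-parts representation of the Fourier coefficients, and finally the $H$-uniform decay bound. The core idea is that the cutoff $1-\phi$ kills the problematic behavior of $[I-\mathring{R}(\cdot)]^{-1}$ near $z=1$ (where $\mathring{\lambda}(z)\to 1$ as $\mu_X(H)\to 0$), leaving a function on $S^1$ whose smoothness and derivatives are controlled uniformly in $H$.

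First I would record the regularity. Lemma \ref{globalregularity} already gives $[I-\mathring{R}(\cdot)]^{-1}\in C^{k+\beta}(\overline{\mathbb{D}})$ for every $\mu_X(H)\in(0,\sigma)$, and since $\phi\in C^\infty(S^1)$ is fixed (independent of $H$), the product $[I-\mathring{R}(\cdot)]^{-1}[1-\phi(\cdot)]$ lies in $C^{k+\beta}(S^1)$ by the Leibniz rule. Vanishing on $S_2$ is immediate from $\phi\equiv 1$ on $S_2$. For the Fourier-coefficient identity, I would integrate by parts $k$ times in the variable $t$ against $e^{-int}$: since $S^1$ is a closed curve the boundary terms cancel by periodicity, and the identity follows once the factor $(in)^{-k}$ is absorbed by sign/phase convention (this is the same manipulation used in the proof of Lemma \ref{frequentlemma} item 2).

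The main obstacle — and the step that really needs care — is verifying that the $O(n^{-k-\beta})$ constant can be made independent of the particular hole $H$. For this, I would argue that $\supp(1-\phi)\subseteq S^1\setminus S_2$ lies at distance at least $\sigma_1'/2$ from $1$, so Lemma \ref{globalregularity} provides $\sup_{\mu_X(H)\in(0,\sigma]}\sup_{\dist(z,1)\ge\sigma_1'/2}\|[I-\mathring{R}(z)]^{-1}\|<\infty$. Using the polynomial identity for derivatives of an inverse recorded in the proof of Lemma \ref{frequentlemma} item 3, namely $\frac{d^j[I-\mathring{R}(z)]^{-1}}{dz^j}=p([I-\mathring{R}(z)]^{-1},\frac{d\mathring{R}}{dz}(z),\dots,\frac{d^j\mathring{R}}{dz^j}(z))$, together with the $H$-uniform bounds on $\frac{d^j\mathring{R}}{dz^j}$ and on the $\beta$-H\"older seminorm of $\frac{d^k\mathring{R}}{dz^k}$ provided by Lemma \ref{regulaofR}, all derivatives of order $\le k$ and the $\beta$-H\"older seminorm of the $k$-th derivative of $[I-\mathring{R}(z)]^{-1}$ are uniformly bounded over $\mu_X(H)\in(0,\sigma)$ on $\supp(1-\phi)$. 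Multiplying by the $H$-independent smooth function $1-\phi$ preserves these uniform bounds.

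Finally, I would apply Lemma \ref{frequentlemma} item 2 to the $\beta$-H\"older function $\frac{d^k\bigl([I-\mathring{R}(e^{it})]^{-1}[1-\phi(e^{it})]\bigr)}{dt^k}$ on $S^1$: its Fourier coefficients are $O(n^{-\beta})$ with constant determined only by its sup-norm and $\beta$-H\"older coefficient, both of which we just established are independent of $H$. Combined with the prefactor $n^{-k}$ from integration by parts, this yields the desired $O_{\sigma_1',\sigma}(n^{-k-\beta})$ bound. The only real bookkeeping hurdle is tracking that the constants absorbed along the way depend solely on the fixed data $\delta,\theta',\sigma_1,\sigma_1',\sigma$ and on the fixed cutoff $\phi$, never on the hole $H$ itself; this is precisely what the uniform statements in Lemmas \ref{regulaofR} and \ref{globalregularity} were designed to deliver.
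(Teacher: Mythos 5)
Your proposal is correct and follows essentially the same route as the paper's proof: both reduce the problem to uniform bounds on $\supp(1-\phi)\subseteq\{z:\dist(z,1)\ge\sigma_1'/2\}$, invoke the polynomial expression for derivatives of $[I-\mathring{R}(\cdot)]^{-1}$ together with the $H$-uniform bounds of Lemmas \ref{regulaofR} and \ref{globalregularity}, and conclude via Lemma \ref{frequentlemma}. The only cosmetic difference is that you phrase the $H$-uniform control of $\frac{d^k}{dz^k}\bigl([1-\phi(\cdot)][I-\mathring{R}(\cdot)]^{-1}\bigr)$ via Leibniz plus a zero-extension argument, whereas the paper writes the derivative directly as a polynomial in $\phi$, $\mathring{R}$, their derivatives, and $[I-\mathring{R}(\cdot)]^{-1}$ and bounds it over $S_2^c$; both handle the H\"older seminorm across $\partial S_2$ in the same implicit way.
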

\begin{proof}
Let $A(z):=\phi(z)[I-\mathring{R}(z)]^{-1}$, $B(z):=[I-\mathring{R}(z)]^{-1}-A(z)$ be operators defined in $S^1$, i.e., $z$ is always restricted in $S^1$. Since $A(\cdot)|_{S_2}=[I-\mathring{R}(\cdot)]^{-1}$, $[I-\mathring{R}(\cdot)]^{-1}-A(\cdot)$ is $0$ in $S_2$ and is $[I-\mathring{R}(\cdot)]^{-1}$ on $S_1$. To estimate the Fourier coefficients, we need to know more about the regularities of $B(\cdot)$. \begin{align*}
        &\sup_{z \in S^1}\Big|\Big|\frac{d^kB(z)}{dz^k}\Big|\Big|\le \sup_{z \in S_1}\Big|\Big|\frac{d^kB(z)}{dz^k}\Big|\Big|+\sup_{z \in S_2}\Big|\Big|\frac{d^kB(z)}{dz^k}\Big|\Big|+\sup_{z \in S^1\setminus (S_1\bigcup S_2)}\Big|\Big|\frac{d^kB(z)}{dz^k}\Big|\Big|\\
        & \le \sup_{z \in S_1}\Big|\Big|\frac{d^kB(z)}{dz^k}\Big|\Big|+\sup_{z \in S^1\setminus (S_1\bigcup S_2)}\Big|\Big|\frac{d^kB(z)}{dz^k}\Big|\Big|\\
        & \le \sup_{z \in S_2^c}\Big|\Big|\frac{d^kB(z)}{dz^k}\Big|\Big|+\sup_{z \in S_2^c}\Big|\Big|\frac{d^kB(z)}{dz^k}\Big|\Big|\\
        & \le 2\sup_{z \in S_2^c}\Big|\Big|p\Big(\phi(z), \frac{d\phi(z)}{dz}, \cdots, \frac{d^k\phi(z)}{dz^k}, \frac{d\mathring{R}(z)}{dz}, \cdots, \frac{d^k\mathring{R}(z)}{dz^k}, [I-\mathring{R}(z)]^{-1}\Big)\Big|\Big|=O_{\sigma_1', \sigma}(1)
    \end{align*}where $p$ is a polynomial and the constant in $O(\cdot)$ depends $\sigma_1', \sigma$ but is independent of any $H$ satisfying $\mu_X(H)\in (0,\sigma)$ according to Lemma \ref{globalregularity} and  Lemma \ref{regulaofR}. If $\beta>0$, we study the H\"older coefficients of $B(\cdot)$: for any $z_1,z_2 \in S_2^c$,
    by Lemma \ref{regulaofR} and Lemma \ref{globalregularity} again,  \begin{align*}
    \sup_{\mu_X(H)\in (0,\sigma)}\sup_{z_1, z_2\in S_2^c}\Big|\Big|\frac{\frac{d^k\mathring{R}}{dz^k}(z_1)-\frac{d^k\mathring{R}}{dz^k}(z_2)}{|z_1-z_2|^{\beta}}\Big|\Big| \le \sup_{\mu_X(H)\in (0,\sigma)}\sup_{z_1, z_2\in S^1}\Big|\Big|\frac{\frac{d^k\mathring{R}}{dz^k}(z_1)-\frac{d^k\mathring{R}}{dz^k}(z_2)}{|z_1-z_2|^{\beta}}\Big|\Big|<\infty,    \end{align*}
    
   \begin{align*}
    \sup_{\mu_X(H)\in (0,\sigma)}&\sup_{z_1, z_2\in S_2^c}\Big|\Big|\frac{[I-\mathring{R}(z_1)]^{-1}-[I-\mathring{R}(z_2)]^{-1}}{z_1-z_2}\Big|\Big| \\
    &\le \sup_{\mu_X(H)\in (0,\sigma)}\sup_{z_1, z_2\in S_2^c}\big|\big|[I-\mathring{R}(z_1)]^{-1}\big|\big|\big|\big|[I-\mathring{R}(z_2)]^{-1}\big|\big|\Big|\Big|\frac{\mathring{R}(z_1)-\mathring{R}(z_2)}{z_1-z_2}\Big|\Big|<\infty.    \end{align*}
    
    Using these properties, Lemma \ref{regulaofR} and Lemma \ref{globalregularity} we obtain
    \begin{align*}
    \frac{d^kB}{dz^k}&(z_1)-\frac{d^kB}{dz^k}(z_2)\\
        &=p\Big(\phi(z_1), \frac{d\phi}{dz}(z_1), \cdots, \frac{d^k\phi}{dz^k}(z_1), \frac{d\mathring{R}}{dz}(z_1), \cdots, \frac{d^k\mathring{R}}{dz^k}(z_1), [I-\mathring{R}(z_1)]^{-1}\Big)\\
        &\quad \quad -p\Big(\phi(z_2), \frac{d\phi}{dz}(z_2), \cdots, \frac{d^k\phi}{dz^k}(z_2), \frac{d\mathring{R}}{dz}(z_2), \cdots, \frac{d^k\mathring{R}}{dz^k}(z_2), [I-\mathring{R}(z_2)]^{-1}\Big)\\
        &=O_{\sigma, \sigma_1'}(|z_1-z_2|^{\beta})
    \end{align*}where the constant in $O(\cdot)$ depends on $\sigma, \sigma_1'$ but is independent of any $H$ satisfying $\mu_X(H)\in (0,\sigma)$.
    
    If only one of the $z_1,z_2 $ belongs to $S_2$, say, $z_1 \in S_2, z_2 \notin S_2$. Let $z_1' \in \partial S_2$ such that $|z_1'-z_2| \le |z_1-z_2|$, then $\frac{d^kB}{dz^k}(z_1')=0$. It follows from the same argument above that 
    \begin{align*}
    \frac{d^kB}{dz^k}(z_1)-\frac{d^kB}{dz^k}(z_2)=\frac{d^kB}{dz^k}(z_1')-\frac{d^kB}{dz^k}(z_2)=O_{\sigma, \sigma_1'}(|z_1'-z_2|^{\beta})=O_{\sigma, \sigma_1'}(|z_1-z_2|^{\beta}).
    \end{align*}

    If $z_1,z_2 $ belong to $S_2$, then $\frac{d^kB}{dz^k}(z_1)-\frac{d^kB}{dz^k}(z_2)=0$.
    
    So, the H\"older coefficients of $\frac{d^kB}{dz^k}$ depend on $\sigma, \sigma_1'$ but are independent of any $H$ satisfying  $\mu_X(H)\in (0,\sigma)$. We conclude the proof by Lemma \ref{frequentlemma}.
    \end{proof}

With the help of Lemma \ref{unitypartition} we can reduce the integral in (\ref{localnear1}).
\begin{lemma}\label{estimateforunitypartition}
For any $H$ satisfying $\mu_X(H)\in (0, \sigma)$, the Fourier coefficients \begin{align*}
        &\frac{1}{2\pi}\int_0^{2\pi}e^{-int}\int \Big[R_{>}(e^{it})\circ [I-\mathring{R}(e^{it})]^{-1}\circ \mathring{R}_{\ge }(e^{it})\Big](h) d\Leb_Xdt\\
        &=\frac{1}{2\pi}\int_0^{2\pi}e^{-int}\int \Big[R_{>}(e^{it})\circ \phi(e^{it})[I-\mathring{R}(e^{it})]^{-1}\circ \mathring{R}_{\ge }(e^{it})\Big](h) d\Leb_Xdt+ O_{\sigma,\sigma_1'}(n^{-k-\beta})
    \end{align*}where  the constant in $O_{\sigma, \sigma_1'}(\cdot)$ depends on $\sigma, \sigma_1'$ but is independent of any $H$ satisfying $\mu_X(H)\in (0,\sigma)$.
\end{lemma}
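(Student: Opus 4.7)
The plan is to write $[I-\mathring{R}(e^{it})]^{-1} = \phi(e^{it})[I-\mathring{R}(e^{it})]^{-1} + [1-\phi(e^{it})][I-\mathring{R}(e^{it})]^{-1}$, plug this decomposition into the integrand, and observe that the difference between the two Fourier coefficients in the lemma equals $J_n := \frac{1}{2\pi}\int_0^{2\pi} e^{-int} f(e^{it})\,dt$, where
\[
f(z) := \int \big[R_{>}(z) \circ [I-\mathring{R}(z)]^{-1}[1-\phi(z)] \circ \mathring{R}_{\ge}(z)\big](h)\, d\Leb_X.
\]
So it suffices to show that $f \in C^{k+\beta}(S^1)$ with $C^{k+\beta}$-norm bounded by a constant depending only on $\sigma$ and $\sigma_1'$, after which Lemma \ref{frequentlemma}(2) immediately yields $J_n = O_{\sigma,\sigma_1'}(n^{-k-\beta})$.

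The regularity of $f$ combines three inputs. First, by Lemma \ref{regulaofR}, $R_{>}(\cdot)$ and $\mathring{R}_{\ge}(\cdot)$ lie in $C^{k+\beta}(\overline{\mathbb{D}})$ with norms and H\"older constants uniform for $\mu_X(H)\in (0,\epsilon_1)$. Second, by Lemma \ref{unitypartition}, the middle factor $[I-\mathring{R}(\cdot)]^{-1}[1-\phi(\cdot)]$ is in $C^{k+\beta}(S^1)$ with norms and H\"older constants depending only on $\sigma$ and $\sigma_1'$ and uniform for $\mu_X(H)\in (0,\sigma)$. Third, operator composition is a continuous bilinear map on the Banach algebra of bounded operators on $B$, so iterating the Leibniz rule at most $k$ times (and using H\"older interpolation for the $\beta$-piece of the top derivative) shows that the operator product $R_{>}(z) \circ [I-\mathring{R}(z)]^{-1}[1-\phi(z)] \circ \mathring{R}_{\ge}(z)$ belongs to $C^{k+\beta}(S^1)$ with norm controlled by a polynomial in the three individual $C^{k+\beta}$-norms. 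Since the map $g \mapsto \int g\, d\Leb_X$ is a bounded linear functional on $B$, evaluating at $h$ and integrating preserves $C^{k+\beta}$-regularity, so $f$ inherits the same bound.

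Applying Lemma \ref{frequentlemma}(2) with $\mathbb{K}=S^1$, $a=k$, $b=\beta$ to the scalar-valued function $f$ gives $J_n = O_{\sigma,\sigma_1'}(n^{-k-\beta})$, which is the asserted error term. The main obstacle is bookkeeping rather than anything genuinely subtle: across every application of the Leibniz rule, one must certify that each constant depends only on $\sigma$ and $\sigma_1'$ and not on the individual hole $H$. This is exactly what the explicit uniformity clauses in Lemmas \ref{regulaofR} and \ref{unitypartition} provide, and they are indispensable here, because otherwise the bound would degrade as $\mu_X(H)\to 0$ and the remainder would no longer be negligible compared to the leading terms extracted in the subsequent steps of the proof scheme.
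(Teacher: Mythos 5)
Your proposal has a genuine gap in the middle step. You assert that ``by Lemma \ref{regulaofR}, $R_{>}(\cdot)$ and $\mathring{R}_{\ge}(\cdot)$ lie in $C^{k+\beta}(\overline{\mathbb{D}})$,'' and then invoke the Leibniz rule on the three-fold composition and finish with Lemma \ref{frequentlemma}(2). But Lemma \ref{regulaofR} does \emph{not} grant $C^{k+\beta}$ regularity to $R_{>}(\cdot)$ or $\mathring{R}_{\ge}(\cdot)$; it only says $R(\cdot),\mathring{R}(\cdot)\in C^{k+\beta}(\overline{\mathbb{D}})$, while for $R_{>}(\cdot),\mathring{R}_{\ge}(\cdot)$ it gives precisely the weaker conclusion $\in O(n^{-k-\beta})$ (decay of Fourier coefficients of their operator norms). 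And indeed $C^{k+\beta}$ regularity is false for these two: writing $R_{>}(z)=\sum_{n\ge 0}z^nR_{>n}$ with $\|R_{>n}\|\precsim n^{-k-\beta}$, the candidate $k$-th derivative would be $\sum_n P_{n+k}^k z^n R_{>n+k}$, but $\sum_n n^k\|R_{>n+k}\|\precsim \sum_n n^{-\beta}$ diverges for $\beta\in[0,1)$. (The argument in Lemma \ref{regulaofR} that secures $C^{k+\beta}$ for $\mathring{R}(\cdot)$ relies on the stronger summability $\|\mathring{R}_n\|\precsim\Leb_X(R=n)$, whose tail sums are $O(n^{-k-\beta})$; no analogous gain is available for $R_{>n}$.) Once that step fails, your Leibniz-plus-Lemma \ref{frequentlemma}(2) strategy for the scalar function $f$ has no route to a bounded $k$-th derivative.

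The paper avoids this by never differentiating the outer factors at all. It treats the problem purely at the level of Fourier coefficients: from Lemma \ref{unitypartition}, $[1-\phi(\cdot)][I-\mathring{R}(\cdot)]^{-1}\in O(n^{-k-\beta})$ (this much of your setup is fine, and here the $C^{k+\beta}$ regularity is available and is used); from Lemma \ref{regulaofR}, $R_{>}(\cdot),\mathring{R}_{\ge}(\cdot)\in O(n^{-k-\beta})$; and then two applications of Lemma \ref{frequentlemma}(1) --- the composition/convolution lemma, which needs only that the exponent $k+\beta>1$ --- give $R_{>}(\cdot)\circ[1-\phi(\cdot)][I-\mathring{R}(\cdot)]^{-1}\circ\mathring{R}_{\ge}(\cdot)\in O(n^{-k-\beta})$. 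Evaluating at $h$ and integrating is a bounded linear functional, so the $n$-th Fourier coefficient of the scalar function is $O_{\sigma,\sigma_1'}(n^{-k-\beta})$. Your bookkeeping of $H$-uniformity is the right concern and is handled correctly in both approaches, but the decisive tool here is Lemma \ref{frequentlemma}(1), not Lemma \ref{frequentlemma}(2).
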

\begin{proof}
By Lemma \ref{unitypartition} the Fourier coefficients of $ [1-\phi(z)][I-\mathring{R}(z)]^{-1}$ are $O(n^{-k-\beta})$, which implies that $ [1-\phi(\cdot)][I-\mathring{R}(\cdot)]^{-1}\in O(n^{-k-\beta})$. By Lemma \ref{regulaofR}, $R_>(\cdot), \mathring{R}_{\ge}(\cdot) \in O(n^{-k-\beta})$. Hence, from Lemma \ref{frequentlemma}, Lemma \ref{unitypartition} and Lemma \ref{regulaofR} we have that $R_{>}(\cdot)\circ [1-\phi(\cdot)][I-\mathring{R}(\cdot)]^{-1}\circ \mathring{R}_{\ge }(\cdot) \in O(n^{-k-\beta})$ where the constants in $O(\cdot)$ depend on $\sigma, \sigma_1'$ but are independent of any $H$ satisfying $\mu_X(H)\in (0,\sigma)$.  
\end{proof}

Lemma \ref{estimateforunitypartition} shows that we only need to estimate the Fourier coefficients of $R_{>}(\cdot)\circ \big[\phi(\cdot)[I-\mathring{R}(\cdot)]^{-1}\big]\circ \mathring{R}_{\ge }(\cdot)$, which will be presented in the following subsections.

\subsection{Approximations for $\phi(\cdot)[I-\mathring{R}(\cdot)]^{-1}$ on $S^1$}
We use (\ref{localnear1}), \begin{align}\label{localnear2}
    \phi(z)[I-\mathring{R}(z)]^{-1}=\frac{1}{1-\mathring{\lambda}(z)}\phi(z)\mathring{\Proj}(z) +\phi(z)[I-\mathring{Q}(z)]^{-1}[I-\mathring{\Proj}(z)]
\end{align} holds for any $z\in S^1$. Note that $\mathring{\Proj}, \mathring{Q}, \mathring{\lambda}$ are defined locally near $z=1$ and the support of $\phi$ is $S_1^c=\{z \in S^1: \dist(z,1)<0.9\sigma_1'\}$, so $\frac{1}{1-\mathring{\lambda}(z)}\phi(z)\mathring{\Proj}(z), \phi(z)[I-\mathring{Q}(z)]^{-1}[I-\mathring{\Proj}(z)]$ can be regarded as operators defined on $S^1$ with a zero extension to $S_1$. Next lemma shows that we can continue to reduce the equation in Lemma \ref{estimateforunitypartition} by dropping $\phi(z)[I-\mathring{Q}(z)]^{-1}[I-\mathring{\Proj}(z)]$.
\begin{lemma}\label{approfortrucation} For any $H$ satisfying $\mu_X(H)\in (0, \sigma)$,
\begin{align*}
    &\frac{1}{2\pi}\int_0^{2\pi}e^{-int}\int \Big[R_{>}(e^{it})\circ \phi(e^{it})[I-\mathring{R}(e^{it})]^{-1}\circ \mathring{R}_{\ge }(e^{it})\Big](h) d\Leb_Xdt\\
    &=\frac{1}{2\pi}\int_0^{2\pi}e^{-int}\int \Big[R_{>}(e^{it})\circ \phi(e^{it})\frac{\mathring{\Proj}(e^{it})}{1-\mathring{\lambda}(e^{it})}\circ \mathring{R}_{\ge }(e^{it})\Big](h) d\Leb_Xdt+O_{\sigma,\sigma_1'}(n^{-k-\beta})
    \end{align*}where  the constant in $O_{\sigma, \sigma_1'}(\cdot)$ depends on $\sigma, \sigma_1'$ and is independent of any $H$ satisfying $\mu_X(H)\in (0,\sigma)$.
\end{lemma}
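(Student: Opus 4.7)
The strategy is to show that the difference operator
\[
M(z) := R_{>}(z)\circ [I-\mathring{Q}(z)]^{-1}[I-\mathring{\Proj}(z)]\phi(z)\circ \mathring{R}_{\ge}(z), \quad z \in S^1,
\]
belongs to $O(n^{-k-\beta})$ with a constant depending only on $\sigma,\sigma_1'$, and then read off the claimed bound from Lemma \ref{frequentlemma}(2). Substituting (\ref{localnear2}) into the right-hand side of Lemma \ref{estimateforunitypartition} exhibits the Fourier coefficient in question as precisely the $n$-th Fourier coefficient of $M(\cdot)$ integrated against $h$, so once we have $M(\cdot) \in O(n^{-k-\beta})$ with uniform constants we are done.

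The key step is to verify that the central factor $N(z) := [I-\mathring{Q}(z)]^{-1}[I-\mathring{\Proj}(z)]\phi(z)$, regarded as an operator-valued function on all of $S^1$ (set equal to $0$ on $S_1$, where $\phi\equiv 0$), lies in $C^{k+\beta}(S^1)$ with norms and H\"older coefficients bounded independently of $H$ when $\mu_X(H)\in(0,\sigma)$. On $S^1 \cap B_{\sigma_1'}(1)$ this follows from Lemma \ref{localspectrumpic}: $\mathring{\Proj}(\cdot), \mathring{Q}(\cdot) \in C^{k+\beta}$ with $H$-uniform bounds, and since the spectral radius of $\mathring{Q}(u)$ is at most $\theta'<1$, the resolvent $[I-\mathring{Q}(u)]^{-1}$ exists and inherits the same regularity by Lemma \ref{frequentlemma}(3). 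Multiplication by the smooth cutoff $\phi$, whose support lies in $S_1^c \subsetneq B_{\sigma_1'}(1)$, makes the extension by zero across the complementary arc seamless in $C^{k+\beta}(S^1)$. Lemma \ref{frequentlemma}(2) then gives $N(\cdot)\in O(n^{-k-\beta})$.

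For the outer factors, Lemma \ref{regulaofR} already supplies $R_{>}(\cdot),\mathring{R}_{\ge}(\cdot)\in O(n^{-k-\beta})$ uniformly for $\mu_X(H)\le\epsilon_1$. Applying Lemma \ref{frequentlemma}(1) twice to the composition $R_{>}(z)\circ N(z)\circ \mathring{R}_{\ge}(z)$ yields $M(\cdot)\in O(n^{-k-\beta})$ with constants depending only on $\sigma,\sigma_1'$, which is exactly the desired Fourier coefficient bound. The integral against $h$ (a fixed element of $B$) preserves the operator-norm estimate, completing the proof.

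The only subtle point, and the spot requiring the most care, is the uniformity in $H$: at each step one must invoke the correct statement in Lemma \ref{localspectrumpic} and Lemma \ref{regulaofR} that explicitly guarantees bounds independent of $H$ (rather than bounds depending on a particular hole). This is exactly parallel to the argument in Lemma \ref{estimateforunitypartition}; the proof therefore runs by essentially the same template, with $[1-\phi(\cdot)][I-\mathring{R}(\cdot)]^{-1}$ replaced by $N(\cdot)$ and with the uniform regularity of $N$ coming from the spectral decomposition rather than from a global resolvent bound.
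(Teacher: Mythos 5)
Your proof is correct and follows essentially the same route as the paper's: substitute the spectral decomposition~(\ref{localnear2}), show that $N(z)=[I-\mathring{Q}(z)]^{-1}[I-\mathring{\Proj}(z)]\phi(z)$ (extended by zero across $S_1$) is $C^{k+\beta}(S^1)$ with $H$-uniform bounds via Lemma~\ref{localspectrumpic}, apply Lemma~\ref{frequentlemma}(2) for the Fourier decay, and compose with $R_{>}(\cdot)$, $\mathring{R}_{\ge}(\cdot)$ using Lemma~\ref{frequentlemma}(1). The only cosmetic difference is that the paper writes out the polynomial formula for $\frac{d^j[I-\mathring{Q}(z)]^{-1}}{dz^j}$ explicitly to keep the $H$-uniformity visible, whereas you delegate that step to Lemma~\ref{frequentlemma}(3); since the proof of that lemma is precisely the same polynomial identity, this is equivalent.
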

\begin{proof}
First we deal with the Fourier coefficients of $\phi(z)[I-\mathring{Q}(z)]^{-1}\circ [I-\mathring{\Proj}(z)]$, so we need to know the regularities of $\phi(z)[I-\mathring{Q}(z)]^{-1}\circ [I-\mathring{\Proj}(z)]$.

   For any $j=1, \cdots, k$, $z \in B_{\sigma_1'}(1)$, note that \[\frac{d^j[I-\mathring{Q}(z)]^{-1}}{dz^j}=p\Big([I-\mathring{Q}(z)]^{-1}, \frac{d\mathring{Q}(z)}{dz},\cdots, \frac{d^j\mathring{Q}(z)}{dz^j}\Big)\]
   and \begin{align*}
    &\sup_{\mu_X(H)\in (0,\sigma)}\sup_{z_1, z_2\in B_{\sigma_1'}(1)}\Big|\Big|\frac{[I-\mathring{Q}(z_1)]^{-1}-[I-\mathring{Q}(z_2)]^{-1}}{z_1-z_2}\Big|\Big| \\
    &\le \sup_{\mu_X(H)\in (0,\sigma)}\sup_{z_1, z_2\in B_{\sigma_1'}(1)}\big|\big|[I-\mathring{Q}(z_1)]^{-1}\big|\big|\big|\big|[I-\mathring{Q}(z_2)]^{-1}\big|\big|\Big|\Big|\frac{\mathring{Q}(z_1)-\mathring{Q}(z_2)}{z_1-z_2}\Big|\Big|
    \end{align*}
    where $p$ is a polynomial. Then by Lemma \ref{localspectrumpic},  $$\sup_{j \le k}\sup_{z \in B_{\sigma_1'}(1)}\max\Big\{\Big|\Big|\frac{d^j[I-\mathring{\Proj}(z)]}{dz^j}\Big|\Big|, \Big|\Big|\frac{d^j[I-\mathring{Q}(z)]^{-1}}{dz^j}\Big|\Big|\Big\}$$ and $\beta$-H\"older coefficients of $\frac{d^k[I-\mathring{\Proj}(z)]}{dz^k}, \frac{d^k[I-\mathring{Q}(z)]^{-1}}{dz^k} $ are finite and they are independent of any $H$ satisfying $\mu_X(H)\in (0,\sigma)$. 
    
    Now we can study the regularities of $\phi(z)[I-\mathring{Q}(z)]^{-1}\circ [I-\mathring{\Proj}(z)]$ as an operator defined in $S^1$. Notice that $\frac{d^j\phi(z)[I-\mathring{Q}(z)]^{-1}[I-\mathring{\Proj}(z)]}{dz^j}|_{S_1}=0$ and when $z\in S^c_1$, 
   \begin{align*}
       &\frac{d^j\phi(z)[I-\mathring{Q}(z)]^{-1}[I-\mathring{\Proj}(z)]}{dz^j}\\
       &=\Tilde{p}\Big(\phi(z),\frac{d\phi(z)}{dz}, \cdots, \frac{d^j\phi(z)}{dz^j},[I-\mathring{Q}(z)]^{-1},\frac{d[I-\mathring{Q}(z)]^{-1}}{dz}, \cdots, \frac{d^j[I-\mathring{Q}(z)]^{-1}}{dz^j}, \\
       &\quad [I-\mathring{\Proj}(z)],\frac{d[I-\mathring{\Proj}(z)]}{dz}, \cdots, \frac{d^j[I-\mathring{\Proj}(z)]}{dz^j}\Big),
   \end{align*}where $\Tilde{p}$ is a polynomial. So we follow the same argument as in Lemma \ref{unitypartition} and obtain that \[\sup_{j\le k}\sup_{z\in S^1}\Big|\Big|\frac{d^j\phi(z)[I-\mathring{Q}(z)]^{-1}[I-\mathring{\Proj}(z)]}{dz^j}\Big|\Big|\]and $\beta$-H\"older coefficients of $\frac{d^k\phi(z)[I-\mathring{Q}(z)]^{-1}[I-\mathring{\Proj}(z)]}{dz^k}$ are finite and they are independent of any $H$ satisfying $\mu_X(H)\in (0, \sigma)$. Hence, by Lemma \ref{frequentlemma}, the Fourier coefficients 
   \begin{align*}
    \frac{1}{2\pi}\int_0^{2\pi} &\big[\phi(e^{it})[I-\mathring{Q}(e^{it})]^{-1}[I-\mathring{\Proj}(e^{it})]\big] e^{-int} dt=O_{\sigma_1',\sigma}(n^{-k-\beta})
\end{align*}where the constant in $O(\cdot)$ depends on $\sigma_1', \sigma$ but is independent of any $H$ satisfying $\mu_X(H)\in (0,\sigma)$. We conclude the proof using (\ref{localnear2}).
\end{proof}
Lemma \ref{approfortrucation} shows that we only need to estimate the Fourier coefficients of $R_{>}(\cdot)\circ \big[\phi(\cdot)\frac{\mathring{\Proj}(\cdot)}{1-\mathring{\lambda}(\cdot)}\big]\circ \mathring{R}_{\ge }(\cdot)$.

\subsection{Approximations for $\frac{1}{1-\mathring{\lambda}(\cdot)}\phi(\cdot)\mathring{\Proj}(\cdot) $ on $S^1$}

The goal in this subsection is to further reduce the equation in Lemma \ref{approfortrucation}. To this end, we start by proving a general important lemma (which is an operator version of a simple calculus result $\int_0^{\pi} t^ae^{-int}dt=O(n^{-a-1})$) and then apply it to our equations in Lemma \ref{approfortrucation}. 

\begin{lemma}\label{importantlemma}
    A complex linear operator $A(\cdot): B \to B$ defined on $[0,\infty)$ satisfies that $||A(t)||=O(t^{a})$ for some $a>0$ and any $t\in [0,\pi]$, $A(t)=0$ when $t \ge \pi$. Then $$\int_0^{\pi}\Big[\int A(t)hd\Leb_X\Big]e^{-int}dt=O(n^{-1-a}).$$
\end{lemma}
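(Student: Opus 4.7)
The plan is to reduce the operator-valued integral to a scalar Fourier integral and then apply the classical half-period shift trick, which exploits $e^{-in(t+\pi/n)} = -e^{-int}$. Let $f(t) := \int A(t) h\, d\Leb_X$; since the invariant density $h$ is bounded (as $h = C^{\pm 1}$ in Definition \ref{defyoung}), the operator bound $\|A(t)\| = O(t^a)$ transfers directly to $|f(t)| = O(t^a)$ on $[0,\pi]$, with $f \equiv 0$ on $[\pi,\infty)$. The target becomes the scalar estimate $\int_0^\pi f(t) e^{-int} dt = O(n^{-1-a})$.

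The substitution $s = t + \pi/n$ together with $e^{-i\pi} = -1$ gives
\[
\int_0^\pi f(t) e^{-int} dt = -\int_{\pi/n}^{\pi + \pi/n} f(s-\pi/n) e^{-ins} ds.
\]
Averaging this with the original expression and separating the ranges of integration yields
\[
2\int_0^\pi f(t) e^{-int} dt = \int_0^{\pi/n} f(t) e^{-int} dt - \int_\pi^{\pi+\pi/n} f(s-\pi/n) e^{-ins} ds + \int_{\pi/n}^\pi [f(t) - f(t-\pi/n)] e^{-int} dt.
\]
The first piece is bounded directly by $\int_0^{\pi/n} C t^a dt = O(n^{-1-a})$ using the growth condition at $0$. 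For the third piece I would split at $t = 2\pi/n$: on $[\pi/n, 2\pi/n]$ use $|f(t)| + |f(t-\pi/n)| \le C\bigl(t^a + (t-\pi/n)^a\bigr) = O(n^{-a})$ against an interval of length $\pi/n$, producing $O(n^{-1-a})$; on the tail $[2\pi/n, \pi]$ iterate the same shift identity so that the $t^a$ gain compounds with the length and gives the same order.

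The main obstacle I anticipate is the boundary contribution $\int_\pi^{\pi+\pi/n} f(s-\pi/n) e^{-ins} ds$: a priori $|f(s-\pi/n)|$ can be as large as $C\pi^a = O(1)$, yielding only $O(n^{-1})$ rather than $O(n^{-1-a})$. The resolution should come from the context in which this lemma is invoked: in every application, $A$ arises from a product with the smooth cutoff $\phi$ introduced in the preceding subsection, whose support lies strictly inside $\{|t|\le 0.9\sigma_1'\} \subset (-\pi,\pi)$. Consequently $f$ vanishes identically in a full neighborhood of $\pi$, the middle integral is zero for all sufficiently large $n$, and assembling the three pieces gives the claimed rate $O(n^{-1-a})$.
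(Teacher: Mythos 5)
Your reduction to the scalar Fourier integral is fine and the half-period shift is a legitimate device, but the treatment of the third piece
\[
\int_{\pi/n}^{\pi}[f(t)-f(t-\pi/n)]e^{-int}\,dt
\]
contains a genuine gap. The lemma's hypothesis is only the pointwise growth bound $|f(t)|=O(t^{a})$; there is no modulus of continuity, so on the tail $[2\pi/n,\pi]$ the differences $f(t)-f(t-\pi/n)$ cannot be estimated by anything better than $|f(t)|+|f(t-\pi/n)|=O(t^{a})$, and integrating $t^{a}$ over that range gives $O(1)$, not $O(n^{-1-a})$. Iterating the shift identity produces higher-order differences of $f$, which under the growth hypothesis alone satisfy the same $O(t^{a})$ bound, so the claimed ``$t^{a}$ gain compounds with the length'' never starts. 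To make the shift trick deliver the stated rate you would need an extra smoothness hypothesis on $f$ (something like $|f'(t)|\precsim t^{a-1}$), which the lemma does not supply. Your secondary appeal to the compact support of $\phi$ to kill the boundary integral near $\pi$ is likewise importing an application-specific feature into the abstract statement, whose hypothesis only says $A(t)=0$ for $t\ge\pi$.

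The paper's own proof is built around a completely different mechanism that does not attempt to control differences of $f$. It first decomposes $A(t)=A_{1}(t)+iA_{2}(t)$ and then each $A_{j}(t)$ into positive and negative parts, reducing to the case of a positive $A$. The decisive consequence is that the antiderivative $F(t)=\int_{0}^{t}\int A(t')h\,d\Leb_{X}\,dt'$ is a nondecreasing function with $F(0)=0$, $F(t)\precsim\|h\|t^{a+1}$ near the origin, and $F$ constant past $\pi$. The paper then extends the integration range to $[0,3\pi]$ (so the boundary terms from the Stieltjes integration by parts against $\sin(nt)$ and $\cos(nt)$ vanish at multiples of $\pi$), rescales $t''=nt$, and uses the monotonicity of $F$ to bound the oscillatory integral $\int_{0}^{3n\pi}\cos(t'')F(t''/n)\,dt''$ by its contribution over a window of bounded length, where $F(t''/n)\precsim\|h\|(t''/n)^{a+1}$ produces the $n^{-1-a}$ rate. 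Monotonicity of the antiderivative, obtained through the positive/negative decomposition, is the essential ingredient; it is entirely absent from your proposal, and it is precisely what lets the paper dispense with the smoothness your shift argument would require.
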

\begin{proof}
    Let $A(t):=A_1(t)+iA_2(t)$, and $A_i(t):=A^+_i(t)-A^-_i(t), i=1,2$ where $A_i$ is a real linear operator, $A^+_i$ (resp. $A^-_i$) is the positive (resp. negative) part of $A_i$, that is, for any positive function $u\in B$, $A^+_i(t)u$ (resp. $A^-_i(t)u$) is the positive (resp. negative) part of $A_i(t)u$. Thus, we just need to prove the desired results for $A^{\pm}_i(t)$.

Note that $A^{\pm}_i(t)$ is not linear. But we have $|A^{\pm}_i(t)h|_1\le |A_i(t)h|_1\le ||A(t)h||\precsim ||h||t^a$. Thus, without loss of generality, we always assume that $A(t)$ is positive and $|A(t)h|_1\precsim ||h||t^a$ instead of $||A(t)h||\precsim||h||t^{a}$. Inspired by the method in \cite{denker}, we can prove the desired results for such a positive operator. Since
\begin{align*}
    \Big|\int \int_0^{\pi}&A(t)(h)e^{-int}dtd\Leb_X\Big|\\
    &\le \Big|\int \int_0^{\pi}A(t)(h)\cos(nt)dtd\Leb_X\Big|+\Big|\int \int_0^{\pi}A(t)(h)\sin(nt)dtd\Leb_X\Big|,
\end{align*}so we just need to estimate these two terms. Now we address the term with $\sin(nt)$. Applying the integration by parts we have \begin{align}
    \int \int_0^{\pi}A(t)(h)&\sin(nt)dtd\Leb_X=\int \int_0^{3\pi}A(t)(h)\sin(nt)dtd\Leb_X \nonumber\\
    &=\int_0^{3\pi} \sin(nt) d\int_0^{t}\int A(t')hd\Leb_X dt' \nonumber\\
    &=-\int_{0}^{3\pi}n\cos(nt)dt\int_0^{t}\int A(t')hd\Leb_X dt' \nonumber
    \\
    &=-\int_{0}^{n3\pi}\cos(t'')dt''\int_0^{\frac{t''}{n}}\int A(t')hd\Leb_X dt' \label{7}\\
    &=-\int_{0}^{(n-1)3\pi}\cos(t'')dt''\int_0^{\frac{t''}{n}}\int A(t')hd\Leb_X dt' \label{8}
\end{align} where $(\ref{8})$ is because $\int_0^{\frac{t''}{n}}\int A(t')hd\Leb_X dt'$ is constant when $t''\in [(n-1)3\pi,n3\pi]$ and $n\gg 1$.

If $n$ is even and we study (\ref{7}). Since $\int_0^{\frac{t''}{n}}\int A(t')hd\Leb_X dt' $ is an increasing positive function, so $\int_{(n-2k)3\pi}^{n3\pi}\cos(t'')dt''\int_0^{\frac{t''}{n}}\int A(t')hd\Leb_X dt'\ge 0$ for any $k\ge 0$, and $$-\int \int_0^{\pi}A(t)(h)\sin(nt)dtd\Leb_X \ge \int_{0}^{6\pi}\cos(t'')dt''\int_0^{\frac{t''}{n}}\int A(t')hd\Leb_X dt'.$$

If $n$ is even and we study (\ref{8}). Since $\int_0^{\frac{t''}{n}}\int A(t')hd\Leb_X dt' $ is an increasing positive function, so $\int_{(n-1-2k)3\pi}^{(n-1)3\pi}\cos(t'')dt''\int_0^{\frac{t''}{n}}\int A(t')hd\Leb_X dt'\le 0$ for any $k\ge 0$, and $$-\int \int_0^{\pi}A(t)(h)\sin(nt)dtd\Leb_X \le \int_{0}^{3\pi}\cos(t'')dt''\int_0^{\frac{t''}{n}}\int A(t')hd\Leb_X dt'.$$

Therefore, when $n$ is even and large, \begin{align*}
    \Big|\int \int_0^{\pi}A(t)(h)\sin(nt)dtd\Leb_X \Big|&\le \int_{0}^{6\pi}|\cos(t'')|dt''\int_0^{\frac{t''}{n}}\int A(t')hd\Leb_X dt'\\
    &\precsim \int_0^{\frac{6\pi}{n}}||h||t^{a}dt \precsim \frac{||h||}{n^{a+1}}.
\end{align*}

If $n$ is odd, then the same approach of studying (\ref{7}) and (\ref{8}) gives the same results. We now consider $\int \int_0^{\pi}A(t)(h)\cos(nt)dtd\Leb_X$. Using the integration by parts,
\begin{align}
    \int \int_0^{\pi}A(t)(h)\cos(nt)dtd\Leb_X&=\int \int_0^{3\pi-\frac{\pi}{2n}}A(t)(h)\cos(nt)dtd\Leb_X \nonumber\\
    &=\int_0^{3\pi-\frac{\pi}{2n}} \cos(nt) d\int_0^{t}\int A(t')hd\Leb_X dt' \nonumber\\
    &=\int_{0}^{3\pi-\frac{\pi}{2n}}n\sin(nt)dt\int_0^{t}\int A(t')hd\Leb_X dt' \nonumber
    \\
    &=\int_{0}^{n3\pi-\pi/2}\sin(t'')dt''\int_0^{\frac{t''}{n}}\int A(t')hd\Leb_X dt' \nonumber \\
    &=-\int_{\pi/2}^{n3\pi}\cos(t'')dt''\int_0^{\frac{t''-\pi/2}{n}}\int A(t')hd\Leb_X dt'. \nonumber
\end{align} 

So we reach the same situation as (\ref{7}). By using the same consideration as above, we obtain 
\begin{align*}
    \Big|\int \int_0^{\pi}A(t)(h)\cos(nt)dtd\Leb_X \Big|&\le \int_{\pi/2}^{6\pi}|\cos(t'')|dt''\int_0^{\frac{t''-\pi/2}{n}}\int A(t')hd\Leb_X dt'\\
    &\precsim \int_0^{\frac{6\pi}{n}}||h||t^{a}dt \precsim \frac{||h||}{n^{a+1}}.
\end{align*}
Thus \begin{align*}
    \Big|\int \int_0^{\pi}A(t)(h)e^{-int}dtd\Leb_X\Big|& \le \Big|\int \int_0^{\pi}A(t)(h)\cos(nt)dtd\Leb_X\Big|\\
    &\quad +\Big|\int \int_0^{\pi}A(t)(h)\sin(nt)dtd\Leb_X\Big|= O(n^{-1-a}),
\end{align*}which concludes the proof. 
\end{proof}
Now we can reduce the equations in Lemma \ref{approfortrucation} by dropping $\phi$ using Lemma \ref{importantlemma}. In order to simplify the notation, we denote a derivative $\frac{d^iA(z)}{dz^i}$ of an operator $A(\cdot)$ by $A^{(i)}(z)$.
\begin{lemma}\label{khastwocases} When $k=1$, for any $H$ satisfying $\mu_X(H)\in (0, \sigma)$,
\begin{align*}
    &\frac{1}{2\pi}\int_0^{2\pi}e^{-int}\int \Big[R_{>}(e^{it})\circ \Big[\phi(e^{it})\frac{\mathring{\Proj}(e^{it})}{1-\mathring{\lambda}(e^{it})}\Big]\circ \mathring{R}_{\ge }(e^{it})\Big](h) d\Leb_Xdt\\
    &=\frac{1}{2\pi}\int_0^{2\pi}e^{-int}\int \Big[R_{>}(e^{it})\circ \frac{\mathring{\Proj}(1)}{1-\mathring{\lambda}(1)}\circ \mathring{R}_{\ge }(e^{it})\Big](h) d\Leb_Xdt+O_{H}(n^{-2})
    \end{align*}where  the constant in $O_{H}(\cdot)$ depends on $H$ and blows up as $H$ gets smaller.

    When $k>1$, for any $H$ satisfying $\mu_X(H)\in (0, \sigma)$,
\begin{align*}
    &\frac{1}{2\pi}\int_0^{2\pi}e^{-int}\int \Big[R_{>}(e^{it})\circ \Big[\phi(e^{it})\frac{\mathring{\Proj}(e^{it})}{1-\mathring{\lambda}(e^{it})}\Big]\circ \mathring{R}_{\ge }(e^{it})\Big](h) d\Leb_Xdt\\
    &=O_H(n^{-k-1})\\
    &\quad +\frac{1}{2 \pi P_n^{k-1}}\int_0^{2\pi}e^{-i(n-k+1)t}\int  \Big[R_{>}^{(k-1)}(e^{it})\circ\frac{\mathring{\Proj}(1)}{1-\mathring{\lambda}(1)} \circ \mathring{R}^{}_{\ge }(e^{it})\Big](h)d\Leb_X dt\nonumber\\
        &\quad +\frac{1}{2\pi P_n^{k-1}}\int_0^{2\pi}e^{-i(n-k+1)t}\int  \Big[R_{>}(e^{it})\circ\frac{\mathring{\Proj}(1)}{1-\mathring{\lambda}(1)}\circ \mathring{R}^{(k-1)}_{\ge }(e^{it})\Big](h)d\Leb_X dt
    \end{align*}where  the constant in $O_{H}(\cdot)$ depends on $H$ and blows up as $H$ gets smaller.
\end{lemma}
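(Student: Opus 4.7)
The plan is to isolate the ``singular" (as $H\to 0$) factor $1/(1-\mathring{\lambda}(z))$ near $z=1$ and reduce every resulting error term to either Lemma \ref{importantlemma} or a convolution estimate supplied by Lemma \ref{frequentlemma}.

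For the $k=1$ case, I would first rewrite the difference of the two sides as the Fourier coefficient at index $n$ of
\[
\int \Big[R_{>}(e^{it})\circ \big(G(e^{it})-G(1)\big)\circ \mathring{R}_{\ge}(e^{it})\Big](h)\,d\Leb_X,\quad G(z):=\phi(z)\frac{\mathring{\Proj}(z)}{1-\mathring{\lambda}(z)},
\]
and split $G(z)-G(1)$ as $\phi(z)\big[\tfrac{\mathring{\Proj}(z)}{1-\mathring{\lambda}(z)}-\tfrac{\mathring{\Proj}(1)}{1-\mathring{\lambda}(1)}\big]+(\phi(z)-1)\tfrac{\mathring{\Proj}(1)}{1-\mathring{\lambda}(1)}$. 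The first summand is supported in $\{|z-1|<0.9\sigma'_1\}$ (by $\supp\phi$) and vanishes at $z=1$; the $C^{1+\beta}$ regularity of $\mathring{\Proj}(z)/(1-\mathring{\lambda}(z))$ granted by Lemma \ref{localspectrumpic} gives norm $O_H(|e^{it}-1|)=O_H(|t|)$ near $t=0$. Sandwiched by the bounded $R_{>}(e^{it})$, $\mathring{R}_{\ge}(e^{it})$, the integrand becomes an operator $A(t)$ with $\|A(t)\|=O_H(|t|)$ supported in a small arc near $t=0$; after using the conjugation symmetry $\overline{A(t)}=A(-t)$ to fold $\int_0^{2\pi}$ into $2\,\mathrm{Re}\int_0^{\pi}$, Lemma \ref{importantlemma} with $a=1$ produces the bound $O_H(n^{-2})$.

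The second summand $(\phi-1)\mathring{\Proj}(1)/(1-\mathring{\lambda}(1))$ vanishes on a full neighborhood of $z=1$ but is bounded away from zero elsewhere, so Lemma \ref{importantlemma} does not apply directly. Here I would exploit the rank-one structure $\mathring{\Proj}(1)=l(\cdot)\,v_H$: integrating the operator factor reduces the Fourier integral to that of the smooth cutoff $\phi-1$ times a product of two scalar generating functions whose Fourier coefficients decay like $O(n^{-1-\beta})$ and admit first-order asymptotics inherited from $\mu_X(R>n)\approx n^{-1-\beta}$. Using the cancellation $\sum_i\widehat{(\phi-1)}_i=(\phi-1)(1)=0$ together with the rapid decay of $\widehat{(\phi-1)}_i$, a finite-difference estimate on the scalar Fourier coefficients upgrades the naive $O_H(n^{-1-\beta})$ bound to $O_H(n^{-2})$.

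For $k>1$, the prefactor $1/P_n^{k-1}$ and the derivatives $R_{>}^{(k-1)}$, $\mathring{R}_{\ge}^{(k-1)}$ in the stated formula signal $(k-1)$-fold $z$-differentiation: since $R_{>}(z)\,G(z)\,\mathring{R}_{\ge}(z)$ is a power series converging on $|z|<1$, its Fourier coefficient at index $n$ equals $(2\pi P_n^{k-1})^{-1}$ times the coefficient at index $n-k+1$ of its $(k-1)$-th $z$-derivative. Leibniz distributes the derivatives among $R_{>}$, $G$, $\mathring{R}_{\ge}$; the two extreme terms (all derivatives on $R_{>}$ or all on $\mathring{R}_{\ge}$) produce the two displayed integrals once $G$ is replaced by $G(1)=\mathring{\Proj}(1)/(1-\mathring{\lambda}(1))$ (the replacement error being handled exactly as in the $k=1$ argument and now contributing $O_H(n^{-k-1})$), while every intermediate Leibniz term already carries enough convolution decay via Lemma \ref{frequentlemma} to be absorbed into $O_H(n^{-k-1})$. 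The main obstacle throughout is keeping every estimate uniform in small $H$, with the entire $H$-dependence of constants captured by the factor $1/(1-\mathring{\lambda}(1))\approx 1/(c_H\mu_X(H))$ that will appear in Theorem \ref{thm}; this forces careful use of the $H$-uniform bounds on $\mathring{\Proj}$, $\mathring{Q}$, $\mathring{\lambda}$ and their first $k$ derivatives from Lemma \ref{localspectrumpic}, especially for the finite-difference estimate used on the second summand in the $k=1$ case.
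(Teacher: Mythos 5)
Your proposal for the $k=1$ case contains a misconception that leads you astray. You claim that Lemma~\ref{importantlemma} "does not apply directly" to the second summand $(\phi-1)\mathring{\Proj}(1)/(1-\mathring{\lambda}(1))$ because it is "bounded away from zero elsewhere." But the hypothesis $\|A(t)\| = O(t^a)$ on $[0,\pi]$ is a pointwise \emph{upper bound}, not a requirement of decay: on the region where $\phi - 1 \neq 0$, the argument $t$ is itself bounded below by a fixed constant, so $\|A(t)\| = O_H(1) = O_H(t)$ trivially holds. This is why the paper does not need your decomposition of $G(z)-G(1)$ into two pieces at all: it simply observes that $B(e^{it}) - B(1) = O_H(|e^{it}-1|) = O_H(t)$ globally on $[0,\pi]$ and applies Lemma~\ref{importantlemma} once. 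Your alternative treatment of the second summand — rank-one reduction to scalars followed by a "finite-difference estimate" using $\sum_j \widehat{(\phi-1)}_j = 0$ — is not only unnecessary but has a gap of its own: to extract an extra power of $n$ you would need increment bounds of the form $g_n - g_{n-j} = O(|j| n^{-2-\beta})$ on the scalar convolution $g_n = \sum_{a+b=n}\int R_{>a}\mathring{\Proj}(1)\mathring{R}_{\ge b}(h)\,d\Leb_X$, and the tail asymptotic $\mu_X(R>n)\approx n^{-1-\beta}$ does not control increments; $\mu_X(R=n)$ can be as large as $n^{-1-\beta}$ at individual $n$, so this step does not go through.

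The $k>1$ case has a more serious gap. After integration by parts and Leibniz, you assert that "every intermediate Leibniz term already carries enough convolution decay via Lemma~\ref{frequentlemma} to be absorbed into $O_H(n^{-k-1})$," counting only $(a,b,c)=(k-1,0,0)$ and $(0,0,k-1)$ as extremal. You have overlooked the third extremal term $(a,b,c)=(0,k-1,0)$, namely $R_>(\cdot)\circ B^{(k-1)}(\cdot)\circ\mathring{R}_\ge(\cdot)$. Here $B^{(k-1)}(\cdot)\in C^{1+\beta}(S^1)$ only gives $O(n^{-1-\beta})$ Fourier decay, and by Lemma~\ref{frequentlemma}(1) the convolution inherits this slowest rate, so after dividing by $P_n^{k-1}\approx n^{k-1}$ the contribution is $O_H(n^{-k-\beta})$ --- which for $\beta<1$ is \emph{not} $O_H(n^{-k-1})$. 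The paper removes this obstruction by the same replacement trick used for $B$: split $B^{(k-1)}(e^{it}) = B^{(k-1)}(1) + [B^{(k-1)}(e^{it})-B^{(k-1)}(1)]$. The error piece is $O_H(t)$ and Lemma~\ref{importantlemma} gives $O_H(n^{-2})$, hence $O_H(n^{-k-1})$ after dividing. The frozen piece $R_>(\cdot)\circ B^{(k-1)}(1)\circ\mathring{R}_\ge(\cdot)$ now involves a \emph{fixed} middle operator, so the convolution is effectively of $R_>(\cdot),\mathring{R}_\ge(\cdot)\in O(n^{-k-\beta})$ alone, giving $O(n^{-k-\beta})$, and with $1/P_n^{k-1}$ this contributes $O_H(n^{-2k-\beta+1})\subseteq O_H(n^{-k-1})$ when $k\ge 2$. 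Without this step, your estimate of the $b=k-1$ term is off by a factor of $n^{1-\beta}$.
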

\begin{proof}
     Let $B(\cdot):=\frac{1}{1-\mathring{\lambda}(\cdot)}\phi(\cdot)\mathring{\Proj}(\cdot) \in C^{k+\beta}(S^1), E(e^{it}):=R_{>}(e^{it})\circ B(e^{it})\circ \mathring{R}_{\ge }(e^{it})$. In view of  Lemma \ref{localspectrumpic}, $\overline{B(z)}=B(\overline{z})$. Then we have 
    \begin{align}
        &\frac{1}{2\pi}\int_0^{2\pi}e^{-int}\int \Big[R_{>}(e^{it})\circ B(e^{it})\circ \mathring{R}_{\ge }(e^{it})\Big](h) d\Leb_Xdt \nonumber \\
        &=\frac{(-1)^{k-1}i^{k-1}}{2\pi(-i)^{k-1}P_n^{k-1}}\int_0^{2\pi}e^{-i(n-k+1)t}\int E^{(k-1)}(e^{it})(h)d\Leb_X dt \nonumber \\
        &=\frac{(-1)^{k-1}i^{k-1}}{2\pi(-i)^{k-1}P_n^{k-1}}\sum_{a+b+c=k-1}\int_0^{2\pi}e^{-i(n-k+1)t}\int  \Big[R_{>}^{(a)}(e^{it})\circ B^{(b)}(e^{it})\circ \mathring{R}^{(c)}_{\ge }(e^{it})\Big](h)d\Leb_X dt \nonumber\\
        &=\frac{(-1)^{k-1}i^{k-1}}{2\pi(-i)^{k-1}P_n^{k-1}}\sum_{a+b+c=k-1}2\Re\int_0^{\pi}e^{-i(n-k+1)t}\nonumber\\
        &\quad \quad \quad \quad\quad \quad\quad\quad\quad\quad \quad\times \int  \Big[R_{>}^{(a)}(e^{it})\circ B^{(b)}(e^{it})\circ \mathring{R}^{(c)}_{\ge }(e^{it})\Big](h)d\Leb_X dt \label{9}
    \end{align}

    If $k=1$, since $B(e^{it})-B(1)=B(e^{it})-\frac{1}{1-\mathring{\lambda}(1)}\mathring{\Proj}(1) =O_H(|e^{it}-1|)=O_H(t)$ where the constant in ``$O_H(\cdot)$" blows up as $H$ gets smaller. So $R_{>}(e^{it})\circ \big[B(e^{it})-\frac{\mathring{\Proj}(1)}{1-\mathring{\lambda}(1)}\big]\circ \mathring{R}_{\ge }(e^{it})=O_H(t)$. Then apply \[A(t):=\mathbbm{1}_{[0, \pi]}(t)\cdot  {R_{>}(e^{it})\circ \Big[B(e^{it})-\frac{\mathring{\Proj}(1)}{1-\mathring{\lambda}(1)}\Big]\circ \mathring{R}_{\ge }(e^{it})}\] to Lemma \ref{importantlemma}, we have \begin{align*}
        &\frac{1}{2\pi}\int_0^{2\pi}e^{-int}\int \Big[R_{>}(e^{it})\circ B(e^{it})\circ \mathring{R}_{\ge }(e^{it})\Big](h) d\Leb_Xdt\\
        &=\frac{1}{2\pi}2\Re \int_0^{\pi}e^{-int}\int \Big[R_{>}(e^{it})\circ B(e^{it})\circ \mathring{R}_{\ge }(e^{it})\Big](h) d\Leb_Xdt\\
        &=\frac{1}{2\pi}2\Re\int_0^{\pi}e^{-int}\int \Big[R_{>}(e^{it})\circ \frac{\mathring{\Proj}(1)}{1-\mathring{\lambda}(1)}\circ \mathring{R}_{\ge }(e^{it})\Big](h) d\Leb_Xdt+O_{H}(n^{-2})\\
        &=\frac{1}{2\pi}\int_0^{2\pi}e^{-int}\int \Big[R_{>}(e^{it})\circ \frac{\mathring{\Proj}(1)}{1-\mathring{\lambda}(1)}\circ \mathring{R}_{\ge }(e^{it})\Big](h) d\Leb_Xdt+O_{H}(n^{-2})
    \end{align*}

    If $k>1$, $b \le k-2$, then $B^{(b)}(\cdot) \in C^{2+\beta}(S^1), B^{(k-1)}(\cdot) \in C^{1+\beta}(S^1)$. Let $a, c\le k-2$, then by Lemma \ref{frequentlemma} and Lemma \ref{regulaofR}, \[R^{(a)}_{>}(\cdot), B^{(b)}(\cdot), \mathring{R}^{(c)}_{\ge}(\cdot) \in O(n^{-2-\beta}), \quad R^{(k-1)}_{>}(\cdot), B^{(k-1)}(\cdot), \mathring{R}^{(k-1)}_{\ge}(\cdot) \in O(n^{-1-\beta}).\] 
    
    Therefore, using (\ref{9}) and Lemma \ref{importantlemma} we have \begin{align}
        &\frac{1}{2\pi}\int_0^{2\pi}e^{-int}\int \Big[R_{>}(e^{it})\circ B(e^{it})\circ \mathring{R}_{\ge }(e^{it})\Big](h) d\Leb_Xdt\nonumber\\
        &=O_H(n^{-k-1-\beta})\nonumber\\
        &\quad+\frac{1}{2\pi P_n^{k-1}}2\Re\int_0^{\pi}e^{-i(n-k+1)t}\int  \Big[R_{>}^{(k-1)}(e^{it})\circ B(e^{it})\circ \mathring{R}_{\ge }(e^{it})\Big](h)d\Leb_X dt\nonumber\\
        &\quad +\frac{1}{2\pi P_n^{k-1}}2\Re\int_0^{\pi}e^{-i(n-k+1)t}\int  \Big[R_{>}(e^{it})\circ B^{(k-1)}(e^{it})\circ \mathring{R}_{\ge }(e^{it})\Big](h)d\Leb_X dt\nonumber\\
        &\quad +\frac{1}{2\pi P_n^{k-1}}2\Re\int_0^{\pi}e^{-i(n-k+1)t}\int  \Big[R_{>}(e^{it})\circ B(e^{it})\circ \mathring{R}^{(k-1)}_{\ge }(e^{it})\Big](h)d\Leb_X dt\nonumber\\
        &=O_H(n^{-k-1-\beta})+O_H(n^{-1-k})\nonumber\\
        &\quad +\frac{1}{2\pi P_n^{k-1}}2\Re\int_0^{\pi}e^{-i(n-k+1)t}\int  \Big[R_{>}^{(k-1)}(e^{it})\circ\frac{\mathring{\Proj}(1)}{1-\mathring{\lambda}(1)} \circ \mathring{R}^{}_{\ge }(e^{it})\Big](h)d\Leb_X dt\nonumber\\
        &\quad +\frac{1}{2\pi P_n^{k-1}}2\Re\int_0^{\pi}e^{-i(n-k+1)t}\int  \Big[R_{>}(e^{it})\circ B^{(k-1)}(1)\circ \mathring{R}^{}_{\ge }(e^{it})\Big](h)d\Leb_X dt\nonumber\\
        &\quad +\frac{1}{2\pi P_n^{k-1}}2\Re\int_0^{\pi}e^{-i(n-k+1)t}\int  \Big[R_{>}(e^{it})\circ\frac{\mathring{\Proj}(1)}{1-\mathring{\lambda}(1)}\circ \mathring{R}^{(k-1)}_{\ge }(e^{it})\Big](h)d\Leb_X dt\label{10}
        \end{align}where the last ``$=$" is due to $B(e^{it})-B(1)=B(e^{it})-\frac{1}{1-\mathring{\lambda}(1)}\mathring{\Proj}(1)=O_H(t)$ and $B^{(k-1)}(e^{it})-B^{(k-1)}(1)=O_H(t)$ and arguing as in the case of ``$k=1$". On the other hand, by Lemma \ref{regulaofR} and Lemma \ref{frequentlemma}, $R_{>}(\cdot)\circ B^{(k-1)}(1)\circ \mathring{R}_{\ge }(\cdot) \in O(n^{-k-\beta})$. Hence \begin{align*}
            &\frac{1}{2\pi P_n^{k-1}}2\Re\int_0^{\pi}e^{-i(n-k+1)t}\int  \Big[R_{>}(e^{it})\circ B^{(k-1)}(1)\circ \mathring{R}_{\ge }(e^{it})\Big](h)d\Leb_X dt\\
            &=\frac{1}{2 \pi P_n^{k-1}}\int_0^{2\pi}e^{-i(n-k+1)t}\int  \Big[R_{>}(e^{it})\circ B^{(k-1)}(1)\circ \mathring{R}^{}_{\ge }(e^{it})\Big](h)d\Leb_X dt\\
        &=O_H(n^{-2k-\beta+1})||h||.
        \end{align*}
 
Now we can continue to estimate (\ref{10}):\begin{align*}
    &=O_H(n^{-k-1})\\
    &\quad +\frac{1}{2\pi P_n^{k-1}}2\Re\int_0^{\pi}e^{-i(n-k+1)t}\int  \Big[R_{>}^{(k-1)}(e^{it})\circ\frac{\mathring{\Proj}(1)}{1-\mathring{\lambda}(1)} \circ \mathring{R}^{}_{\ge }(e^{it})\Big](h)d\Leb_X dt\nonumber\\
        &\quad +\frac{1}{2 \pi P_n^{k-1}}2\Re\int_0^{\pi}e^{-i(n-k+1)t}\int  \Big[R_{>}(e^{it})\circ\frac{\mathring{\Proj}(1)}{1-\mathring{\lambda}(1)}\circ \mathring{R}^{(k-1)}_{\ge }(e^{it})\Big](h)d\Leb_X dt\\
        &=O_H(n^{-k-1})\\
        &\quad +\frac{1}{2 \pi P_n^{k-1}}\int_0^{2\pi}e^{-i(n-k+1)t}\int  \Big[R_{>}^{(k-1)}(e^{it})\circ\frac{\mathring{\Proj}(1)}{1-\mathring{\lambda}(1)} \circ \mathring{R}_{\ge }(e^{it})\Big](h)d\Leb_X dt\nonumber\\
        &\quad +\frac{1}{2\pi P_n^{k-1}}\int_0^{2\pi}e^{-i(n-k+1)t}\int  \Big[R_{>}(e^{it})\circ\frac{\mathring{\Proj}(1)}{1-\mathring{\lambda}(1)}\circ \mathring{R}^{(k-1)}_{\ge }(e^{it})\Big](h)d\Leb_X dt
\end{align*}where the constant in $O_H(\cdot)$ blows up as $H$ gets smaller. Thus we conclude the proof.
\end{proof}

Lemma \ref{khastwocases} shows that we only need to calculate the Fourier coefficients of $R_{>}(\cdot)\circ \frac{\mathring{\Proj}(1)}{1-\mathring{\lambda}(1)}\circ \mathring{R}_{\ge }(\cdot)$ (when $k=1$) and $R_{>}(\cdot)\circ\frac{\mathring{\Proj}(1)}{1-\mathring{\lambda}(1)}\circ \mathring{R}^{(k-1)}_{\ge }(\cdot), R^{(k-1)}_{>}(\cdot)\circ\frac{\mathring{\Proj}(1)}{1-\mathring{\lambda}(1)}\circ \mathring{R}_{\ge }(\cdot)$ (when $k>1$).

\subsection{Approximations for $\frac{1}{1-\mathring{\lambda}(1)}\mathring{\Proj}(1)$}

Here we will reduce the equations in Lemma \ref{khastwocases} by approximating $\frac{1}{1-\mathring{\lambda}(1)}\mathring{\Proj}(1)$ with $\frac{1}{1-\mathring{\lambda}(1)}\Proj(1)$ where $\Proj(1)(\cdot)=h\int (\cdot) d\Leb_X$ is an easy-to-compute formula. In order to do it, we start with a technical lemma.

\begin{lemma}\label{avoidsingularityatfinitetime}
    For any $n \in \mathbb{N}$, there is $m_n \in \mathbb{N}$ such that for any $m \ge m_n$ and $\diam_{\theta}H\le \theta^{n+2}$, $H \bigcap  (F^R)^{-i}\{R \ge m\}=\emptyset$ for any $i=0,1,\cdots, n$.
\end{lemma}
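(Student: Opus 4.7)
The plan is to invoke the hypothesis $z_0\in S^c$ to control the first $n$ iterates of the centre of the hole, and then use the diameter condition $\diam_\theta H\le \theta^{n+2}$ to propagate this control to every point of $H$. First I would note that since $z_0\notin S=\bigcup_{j\ge 0}(F^R)^{-j}\{\text{endpoints of the }X_i\text{ and their accumulation points}\}$, for every $i=0,1,\dots,n$ the iterate $(F^R)^i z_0$ lies in the interior of a single partition element $X_{j_i}$. Consequently $R_{j_i}:=R|_{X_{j_i}}\in\mathbb{N}$ is well-defined and finite, and the finite quantity
\[
m_n:=1+\max_{0\le i\le n}R_{j_i}
\]
is the candidate threshold.

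Next I would translate the diameter bound into membership in a joint refinement. By the definition of $d_\theta$, $\diam_\theta H\le \theta^{n+2}$ means that for any $x,y\in H$ the separation time satisfies $s(x,y)\ge n+2$, i.e.\ $(F^R)^j x$ and $(F^R)^j y$ lie in the same element of the partition $\{X_\ell\}_{\ell\ge 1}$ for every $0\le j\le n+1$. Therefore $H$ is contained in one element of the refined partition $\mathcal{Z}_{n+1}$, and in particular each image $(F^R)^i H$, $0\le i\le n$, lies inside a single $X_{k_i}$. Since $z_0\in H$ forces $(F^R)^i z_0\in (F^R)^i H$, this $X_{k_i}$ must coincide with the element $X_{j_i}$ identified in the previous step.

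Finally, I would conclude: for every $m\ge m_n$ and every $y\in H$, $(F^R)^i y\in X_{j_i}$ implies $R((F^R)^i y)=R_{j_i}<m_n\le m$, so $(F^R)^i y\notin\{R\ge m\}$, which is exactly the claim $H\cap (F^R)^{-i}\{R\ge m\}=\emptyset$ for $i=0,1,\dots,n$. There is no serious obstacle in the argument; the only point that warrants care is a clean unpacking of the definition of $S$, ensuring that $z_0\notin S$ forces \emph{each} of the finitely many forward iterates $(F^R)^i z_0$ ($i\le n$) to avoid both the partition endpoints and their accumulation points, which is what makes $R_{j_i}$ finite and makes the neighbourhood of $(F^R)^i z_0$ within $X_{j_i}$ open.
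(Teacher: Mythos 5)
Your proof is correct and follows essentially the same route as the paper's: you use $z_0\in S^c$ to place $(F^R)^i z_0$ in the interior of a single partition element $X_{j_i}$ for $i\le n$, use the diameter bound to confine $H$ to one element of $\mathcal{Z}_{n+1}$ so that $(F^R)^i H\subseteq X_{j_i}$, and then pick $m_n$ to exceed the finitely many return values $R_{j_i}$. The paper is slightly terser (it cites Lemma \ref{bigconnectedcomponent} for the containment and leaves the choice of $m_n$ implicit), whereas you spell out the explicit threshold $m_n=1+\max_{0\le i\le n}R_{j_i}$, which is a welcome clarification.
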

\begin{proof}
    Since the center $z_0$ of the hole $H$ belongs to $S^c$, by Lemma \ref{bigconnectedcomponent}, $(F^R)^i|_{H}$ is smooth for all $i=0,1,\cdots, n+1$, that is, $F^R$ is smooth on $(F^R)^iH$ for any $i=0,1,\cdots, n$. Thus, $(F^R)^iH$ is completely contained in one element of $\mathcal{Z}_0$. So when $m$ is sufficiently large (e.g., larger than $m_n$), $\{R\ge m\}$ does not intersect $(F^R)^iH$ for all $i=0,1,\cdots, n$, which concludes the proof.
\end{proof}
Now we start to approximate $\frac{1}{1-\mathring{\lambda}(1)}\mathring{\Proj}(1)$ with $\frac{1}{1-\mathring{\lambda}(1)}\Proj(1)$.
 According to Lemma \ref{khastwocases}, we need to consider two cases separately: $k=1$ and $k>1$.

\subsubsection{Approximations for $\frac{1}{1-\mathring{\lambda}(1)}\mathring{\Proj}(1)$: $k=1$}
\begin{lemma}\label{beginapproximateproj}
For any $H$ satisfying $\mu_X(H)\in (0, \sigma)$, \begin{align*}
        \frac{1}{2\pi}&\int_0^{2\pi}e^{-int}\int \Big[R_{>}(e^{it})\circ \frac{\mathring{\Proj}(1)}{1-\mathring{\lambda}(1)}\circ \mathring{R}_{\ge }(e^{it})\Big](h) d\Leb_Xdt\\
        &=\frac{\mathring{\lambda}(1)}{1-\mathring{\lambda}(1)}\sum_{a+b=n, b>0}\int_{\{R>a\}}[\mathring{\Proj}(1)-\Proj(1)](\mathbbm{1}_{\{R\ge b\}}h)d\Leb_X\\
        &\quad +\frac{\mathring{\lambda}(1)}{1-\mathring{\lambda}(1)}\sum_{a+b=n, b>0}\mu_X(R>a)\mu_X(R\ge b)
    \end{align*}
\end{lemma}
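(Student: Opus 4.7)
The plan is to expand the Fourier integral into a convolution sum of operators and then collapse it using two algebraic identities involving $\mathring{\Proj}(1)$. First I would observe that $R_{>}(z)=\sum_{a\ge 0}z^a R_{>a}$ and $\mathring{R}_{\ge}(z)=\sum_{b\ge 1}z^b \mathring{R}_{\ge b}$ are operator-valued power series that converge uniformly in norm on $\overline{\mathbb{D}}$ by Lemma \ref{regulaofR}, while the middle factor $\frac{\mathring{\Proj}(1)}{1-\mathring{\lambda}(1)}$ does not depend on $z$. Extracting the $n$-th Fourier coefficient of the composition and applying Fubini gives
\begin{align*}
\frac{1}{2\pi}\int_0^{2\pi}e^{-int}&\int \Big[R_{>}(e^{it})\circ \tfrac{\mathring{\Proj}(1)}{1-\mathring{\lambda}(1)}\circ \mathring{R}_{\ge}(e^{it})\Big](h)\,d\Leb_X\,dt\\
&= \frac{1}{1-\mathring{\lambda}(1)}\sum_{a+b=n}\int R_{>a}\circ \mathring{\Proj}(1) \circ \mathring{R}_{\ge b}(h)\,d\Leb_X.
\end{align*}

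The decisive observation is the identity $\mathring{R}(1)=\mathring{T}$, which is immediate from the definition $\mathring{R}(z)(\cdot)=\mathring{T}(z^R\cdot)$ evaluated at $z=1$. This allows me to rewrite $\mathring{R}_{\ge b}(\phi)=\mathring{T}(\mathbbm{1}_{\{R\ge b\}}\phi)=\mathring{R}(1)(\mathbbm{1}_{\{R\ge b\}}\phi)$, and then the spectral relation $\mathring{\Proj}(1)\circ \mathring{R}(1)=\mathring{\lambda}(1)\mathring{\Proj}(1)$ supplied by Lemma \ref{localspectrumpic} yields
\begin{align*}
\mathring{\Proj}(1)\circ \mathring{R}_{\ge b}(h)=\mathring{\lambda}(1)\,\mathring{\Proj}(1)(\mathbbm{1}_{\{R\ge b\}}h).
\end{align*}
This is exactly what produces the prefactor $\mathring{\lambda}(1)$ in the numerator of the claimed formula.

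Next I would peel off the outer operator $R_{>a}$ using its defining duality with the transfer operator, $\int R_{>a}(\psi)\,d\Leb_X=\int T(\mathbbm{1}_{\{R>a\}}\psi)\,d\Leb_X=\int_{\{R>a\}}\psi\,d\Leb_X$, reducing the summand to $\mathring{\lambda}(1)\int_{\{R>a\}}\mathring{\Proj}(1)(\mathbbm{1}_{\{R\ge b\}}h)\,d\Leb_X$. I would then split $\mathring{\Proj}(1)=[\mathring{\Proj}(1)-\Proj(1)]+\Proj(1)$ and substitute the explicit expression $\Proj(1)(\psi)=h\int \psi\,d\Leb_X$: the $\Proj(1)$ piece contributes $\int_{\{R>a\}}h\,d\Leb_X\cdot \mu_X(R\ge b)=\mu_X(R>a)\mu_X(R\ge b)$, while the difference $\mathring{\Proj}(1)-\Proj(1)$ yields the remaining term. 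Summing over $a+b=n$ and absorbing the common prefactor $\frac{\mathring{\lambda}(1)}{1-\mathring{\lambda}(1)}$ produces precisely the two terms displayed in the lemma.

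There is no serious conceptual obstacle here; the argument is purely algebraic, resting on the two identities $\mathring{R}(1)=\mathring{T}$ and $\mathring{\Proj}(1)\circ \mathring{R}(1)=\mathring{\lambda}(1)\mathring{\Proj}(1)$. The only mildly delicate point is the interchange of the Fourier integration with the operator composition and the spatial integral, which is justified by the uniform-in-$t$ norm convergence of $R_{>}(e^{it})$ and $\mathring{R}_{\ge}(e^{it})$ on $\overline{\mathbb{D}}$ guaranteed by Lemma \ref{regulaofR}.
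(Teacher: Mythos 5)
Your proposal is correct and reproduces the paper's own proof essentially line-for-line: the same Fourier-coefficient extraction of the convolution $\sum_{a+b=n}$, the same use of the duality $\int R_{>a}(\psi)\,d\Leb_X=\int_{\{R>a\}}\psi\,d\Leb_X$, the same spectral identity $\mathring{\Proj}(1)\circ\mathring{R}(1)=\mathring{\lambda}(1)\mathring{\Proj}(1)$ applied via $\mathring{R}_{\ge b}(h)=\mathring{R}(1)(\mathbbm{1}_{\{R\ge b\}}h)$, and the same final split $\mathring{\Proj}(1)=[\mathring{\Proj}(1)-\Proj(1)]+\Proj(1)$ together with $\Proj(1)(\cdot)=h\int(\cdot)\,d\Leb_X$. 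Nothing substantive differs.
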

\begin{proof}
By the definition of transfer operators of $F^R$ and Lemma \ref{localspectrumpic}
\begin{align*}
    \frac{1}{2\pi}&\int_0^{2\pi}e^{-int}\int \Big[R_{>}(e^{it})\circ \frac{\mathring{\Proj}(1)}{1-\mathring{\lambda}(1)}\circ \mathring{R}_{\ge }(e^{it})\Big](h) d\Leb_Xdt\\
    &=\frac{1}{1-\mathring{\lambda}(1)}\sum_{a+b=n, b>0}\int R_{>a}\circ \mathring{\Proj}(1)\circ \mathring{R}_{\ge b}(h)d\Leb_X\\
    &=\frac{1}{1-\mathring{\lambda}(1)}\sum_{a+b=n, b>0}\int_{\{R>a\}}\mathring{\Proj}(1)\circ \mathring{R}(1)(\mathbbm{1}_{\{R\ge b\}}h)d\Leb_X\\
    &=\frac{\mathring{\lambda}(1)}{1-\mathring{\lambda}(1)}\sum_{a+b=n, b>0}\int_{\{R>a\}}\mathring{\Proj}(1)(\mathbbm{1}_{\{R\ge b\}}h)d\Leb_X
    \end{align*}

By using $\Proj(1)(\cdot)=h\int (\cdot)d\Leb_X$, we can continue to estimate
\begin{align*}
     &=\frac{\mathring{\lambda}(1)}{1-\mathring{\lambda}(1)}\sum_{a+b=n, b>0}\int_{\{R>a\}}[\mathring{\Proj}(1)-\Proj(1)](\mathbbm{1}_{\{R\ge b\}}h)d\Leb_X\\
     &\quad +\frac{\mathring{\lambda}(1)}{1-\mathring{\lambda}(1)}\sum_{a+b=n, b>0}\int_{\{R>a\}}\Proj(1)(\mathbbm{1}_{\{R\ge b\}}h)d\Leb_X\\
      &=\frac{\mathring{\lambda}(1)}{1-\mathring{\lambda}(1)}\sum_{a+b=n, b>0}\int_{\{R>a\}}[\mathring{\Proj}(1)-\Proj(1)](\mathbbm{1}_{\{R\ge b\}}h)d\Leb_X\\
     &\quad +\frac{\mathring{\lambda}(1)}{1-\mathring{\lambda}(1)}\sum_{a+b=n, b>0}\mu_X(R>a)\mu_X(R\ge b),
\end{align*}hence the proof is concluded.
\end{proof}

So we just need to estimate the error term: \begin{gather}\label{19}
    \sum_{a+b=n, b>0}\int_{\{R>a\}}[\mathring{\Proj}(1)-\Proj(1)](\mathbbm{1}_{\{R\ge b\}}h)d\Leb_X.
\end{gather}

Note that Proposition \ref{extendliverani1} together with \cite{liveranikeller1} only allowed an $L^1$-estimate of $\mathring{\Proj}(1)-\Proj(1)$, which is insufficient to reach our goal. The following lemma improves the Proposition \ref{extendliverani1} and \cite{liveranikeller1} and provides a sharper estimate in the case of Gibbs-Markov maps.

\begin{lemma}\label{extendliverani2}
    For any $\mu_X(H)\in (0, \sigma)$ and any $N\in \mathbb{N}$ (to be determined), there is $a_N>0$ such that for any $a>a_N$, any $b\in \mathbb{N}$,  \begin{align*}
    \Big|\int_{\{R>a\}}&[\mathring{\Proj}(1)-\Proj(1)](\mathbbm{1}_{\{R\ge b\}}h)d\Leb_X\Big|\\
    &\precsim \frac{\mu_X(R>a)}{(1-\delta)^{N+1}}\Big[\bar{\theta}^{N}\mu_X(R\ge b)+ \mu_X(H)\mu_X(R\ge b)\Big]\\
    &\quad +\mu_X(R>a)\frac{\bar{\theta}^N}{(1-\delta)^{2N+1}} \mu_X(R\ge b)+\frac{\mu_X(R>a)}{(1-\delta)^{2N+3}}\mu_X(H)\mu_X(R\ge b).
    \end{align*}where $\delta, \bar{\theta}$ are the ones in Lemma \ref{LYinequality} and Lemma \ref{localspectrumpic} where we require $\bar{\theta}<(1-\delta)^2$ and the constant in ``$\precsim$" does not depend on $a,b$ and on any $H$ satisfying $\mu_X(H)\in (0,\sigma)$.

    Without restrictions on $a$, we have an alternative estimate \begin{align*}
    \Big|\int_{\{R>a\}}&[\mathring{\Proj}(1)-\Proj(1)](\mathbbm{1}_{\{R\ge b\}}h)d\Leb_X\Big|\\
    &\precsim\frac{1}{(1-\delta)^{N+3}}\mu_X(R>a)^{2k/(2k+\beta)}\mu_X(H)^{\beta/(2k+\beta)}\mu_X(R\ge b)\\
    &\quad +\frac{\mu_X(R>a)}{(1-\delta)^{N+1}}\Big[\bar{\theta}^{N}\mu_X(R\ge b)+ \mu_X(H)\mu_X(R\ge b)\Big]\\
    &\quad +\mu_X(R>a)\frac{\bar{\theta}^N}{(1-\delta)^{2N+1}} \mu_X(R\ge b)+\frac{\mu_X(R>a)}{(1-\delta)^{2N+3}}\mu_X(H)\mu_X(R\ge b)
    \end{align*}where the constant in ``$\precsim$" does not depend on $a,b$ and on any $H$ satisfying $\mu_X(H)\in (0,\sigma)$. Although $k$ in this subsection is assumed to be $1$, the results in this lemma are still valid for any $k \ge 1$.

We note that the paper \cite{liveranikeller1} studied general systems for which the conditions in Proposition \ref{extendliverani1} hold. In that paper was proved only that $\int |\mathring{\Proj}(1)h-\Proj(1)h|d\Leb_X=O(\mu_X(H)^{\eta})$ for some $\eta>0$.
\end{lemma}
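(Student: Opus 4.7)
The plan is to exploit the eigenvector identities
\[
\mathring{\Proj}(1) = \mathring{\lambda}(1)^{-N}\mathring{R}(1)^N \mathring{\Proj}(1), \qquad \Proj(1) = R(1)^N \Proj(1),
\]
together with the uniform Lasota--Yorke inequalities (Lemma \ref{LYinequality}) and the geometric hole-avoidance statement (Lemma \ref{avoidsingularityatfinitetime}). Set $g:=\mathbbm 1_{\{R\ge b\}} h$ and $\Delta:=\mathring{\Proj}(1)-\Proj(1)$. Subtracting the two identities and inserting the cross-term $R(1)^N\mathring{\Proj}(1)g$ yields the telescoping decomposition
\[
\Delta g = \mathring{\lambda}(1)^{-N}[\mathring R(1)^N - R(1)^N]\mathring{\Proj}(1)g + (\mathring{\lambda}(1)^{-N}-1)\,R(1)^N \mathring{\Proj}(1)g + R(1)^N \Delta g.
\]
The $(1-\delta)^{-n}$ factors in the claim arise from $|\mathring{\lambda}(1)^{-1}|\le (1-\delta)^{-1}$, which holds because $\mathring{\lambda}(1)\in B_\delta(1)$ by Lemma \ref{localspectrumpic}. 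Integration against $\mathbbm 1_{\{R>a\}}$ is converted into a BV-type bound by the embedding $B\hookrightarrow L^\infty$ and the trivial estimate $|\int_{\{R>a\}}\phi\,d\Leb_X|\le |\phi|_\infty\,\Leb_X(\{R>a\})\precsim \|\phi\|\,\mu_X(R>a)$, which delivers the $\mu_X(R>a)$ prefactor present in every summand.

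For the first term of the decomposition I would write $\mathring{\Proj}(1)g=\ell(g)\hat h$ with $\hat h$ the normalized open eigenvector, and split $\hat h = h+(\hat h-h)$. The transfer-operator identity $T(\psi\circ F^R\cdot\phi)=\psi\,T(\phi)$ yields
\[
[\mathring R(1)^N - R(1)^N]\,h = -T^N(\mathbbm 1_{A_N} h), \qquad A_N := \bigcup_{i=1}^{N}(F^R)^{-i}H,
\]
and after dualising against $\mathbbm 1_{\{R>a\}}$ each inclusion--exclusion summand becomes $\mu_X(H\cap (F^R)^{-(N-i)}\{R>a\})$, which Lemma \ref{avoidsingularityatfinitetime} forces to vanish identically once $a>a_N$. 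The residual $[\mathring R(1)^N - R(1)^N](\hat h - h)$ is controlled by $N$ iterations of Lasota--Yorke, giving the $\bar\theta^N$ and, after inserting $|\hat h-h|_1\precsim \mu_X(H)$ from Proposition \ref{extendliverani1} together with Lemma \ref{approofeigen}, the $\mu_X(H)$ contributions of the first line of the conclusion. The second (scalar) term is easy: Lemma \ref{approofeigen} gives $|\mathring\lambda(1)^{-N}-1|\precsim N\mu_X(H)(1-\delta)^{-N-1}$, and the remaining integral is bounded crudely by $\mu_X(R\ge b)\mu_X(R>a)$, producing the $(1-\delta)^{-2N-3}\mu_X(H)\mu_X(R\ge b)\mu_X(R>a)$ summand after absorbing the BV bounds on $\mathring{\Proj}(1)g$.

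The third term $R(1)^N\Delta g$ is the principal obstacle: a blunt application of the Keller--Liverani bound $|||\Delta|||\precsim\mu_X(H)^{1/2}$ from Proposition \ref{extendliverani1} would cost a square root and produce the wrong dependence on $\mu_X(H)$. To recover the sharp $\mu_X(H)$ decay and the geometric $\bar\theta^N$ gain I would split $R(1)^N=\Proj(1)+Q(1)^N[I-\Proj(1)]$. The $\Proj(1)$-piece collapses to $h\bigl(\ell(g)-\int g\,d\Leb_X\bigr)$, and the scalar $\ell(g)-\int g\,d\Leb_X$ is computed explicitly from the resolvent formulas of Lemma \ref{localspectrumpic} combined with Lemma \ref{approofeigen}, giving size $\mu_X(H)\mu_X(R\ge b)$; the $Q(1)^N$-piece decays geometrically as $\bar\theta^N\|\Delta g\|\precsim\bar\theta^N\|g\|$ by the spectral gap on $Q(1)$, producing the $(1-\delta)^{-2N-1}\bar\theta^N\mu_X(R\ge b)$ summand. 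The alternative bound without restriction on $a$ is obtained by H\"older-type interpolation between this restricted estimate and the naive $L^1$-level Proposition \ref{extendliverani1} bound, yielding the mixed exponent $\mu_X(R>a)^{2k/(2k+\beta)}\mu_X(H)^{\beta/(2k+\beta)}$. The main technical difficulty is the bookkeeping of the $(1-\delta)^{-n}$ accumulation through this bootstrap while simultaneously preserving both the full $\mu_X(H)$ decay and the full $\bar\theta^N$ spectral-gap gain.
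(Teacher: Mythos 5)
Your eigenvector telescoping
\[
\Delta g = \mathring{\lambda}(1)^{-N}[\mathring R(1)^N - R(1)^N]\mathring{\Proj}(1)g + (\mathring{\lambda}(1)^{-N}-1)\,R(1)^N \mathring{\Proj}(1)g + R(1)^N \Delta g
\]
is algebraically exact, and your handling of the first two terms is broadly parallel to what the paper does for the terms $(5.11)$--$(5.14)$: the transfer-operator identity $T-\mathring T=\mathbbm 1_H T(\cdot)$, the hole-avoidance Lemma~\ref{avoidsingularityatfinitetime} to kill the finite-order sums when $a>a_N$, $N$ Lasota--Yorke iterations to produce $\bar\theta^N$, and H\"older to get the mixed exponent $\mu_X(R>a)^{2k/(2k+\beta)}\mu_X(H)^{\beta/(2k+\beta)}$ when $a$ is unrestricted. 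The paper takes a genuinely different route for generating these terms: it works from the contour-integral formula $\mathring{\Proj}(1)-\Proj(1)=\frac{1}{2\pi i}\int_{\partial B_\delta(1)}\{[z-\mathring T]^{-1}-[z-T]^{-1}\}dz$, applies the resolvent identity $[z-\mathring T]^{-1}-[z-T]^{-1}=[z-T]^{-1}[T-\mathring T][z-\mathring T]^{-1}$, and then truncates both resolvents with the Neumann expansions $(5.15)$--$(5.16)$. The $(1-\delta)^{-m}$ losses there come from $|z|\ge 1-\delta$ on the contour, not from $|\mathring\lambda(1)^{-1}|$. The important structural advantage of the paper's route is that the resolvent identity is \emph{linear} in the perturbation $T-\mathring T$ from the outset, so every term already carries one explicit factor of $\mathbbm 1_H$, and no information about the perturbed projection is needed except the uniform resolvent bound $(5.17)$.

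The genuine gap in your argument is the third term. After writing $R(1)^N\Delta g=\Proj(1)\Delta g+Q(1)^N[I-\Proj(1)]\Delta g$, the piece $\Proj(1)\Delta g=h\int\Delta g\,d\Leb_X$ forces you to bound $\int\Delta g\,d\Leb_X=\ell(g)-\int g\,d\Leb_X$ at the sharp $\mu_X(H)\mu_X(R\ge b)$ level. This cannot come out of your own decomposition: integrating your telescoping identity over all of $X$ and using that $T$ preserves $\Leb_X$ gives $\int R(1)^N\Delta g\,d\Leb_X=\int\Delta g\,d\Leb_X$, so the identity reduces to the tautology ``$(\text{first two terms integrated})=0$'' and yields no bound on $\int\Delta g\,d\Leb_X$. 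You therefore need an independent source. Lemma~\ref{approofeigen} gives the sharp $\mu_X(H)$ rate only for the scalar $\mathring\lambda(1)-1$; Lemma~\ref{localspectrumpic} via Proposition~\ref{extendliverani1} gives only the Keller--Liverani-type $\mu_X(H)^{1/2}$ control on $\mathring{\Proj}(1)-\Proj(1)$ in $|||\cdot|||$. Neither of these supplies the sharp $\mu_X(H)$-order control on the difference of \emph{left} eigenfunctionals $\hat\nu-\Leb_X$ that the scalar $\ell(g)-\int g\,d\Leb_X$ secretly requires. Proving that sharp left-eigenfunctional estimate is essentially the dual of the lemma itself, so at this step your plan is circular. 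The paper avoids the issue precisely because its decomposition never requires knowing $\mathring{\Proj}(1)$ or $\hat\nu$ with precision better than the crude uniform bound $(5.17)$; the sharp $\mu_X(H)$ dependence is extracted mechanically from the explicit factor $[T-\mathring T]=\mathbbm 1_H T(\cdot)$ produced by the resolvent identity, plus Lasota--Yorke on the tails. (A secondary concern --- the BV norm $\|\mathbbm 1_{\{R\ge b\}}h\|$ is $O(1)$, not $O(\mu_X(R\ge b))$, so one must always have $T$ or $\mathring T$ acting first on $g$, as the paper arranges via $[z-\mathring T]^{-1}-I/z=[z-\mathring T]^{-1}\mathring T/z$ --- you can repair by using $\mathring{\Proj}(1)g=\mathring\lambda(1)^{-1}\mathring{\Proj}(1)\mathring R_{\ge b}(h)$ and Lemma~\ref{Rnbound}, so this is not a fatal obstruction, but it must be made explicit.)
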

\begin{proof}
    By definitions of $\Proj, \mathring{\Proj}$ in Lemma \ref{localspectrumpic}, \begin{align}
        &\int_{\{R>a\}}[\mathring{\Proj}(1)-\Proj(1)](\mathbbm{1}_{\{R\ge b\}}h)d\Leb_X \nonumber\\
        &= \frac{1}{2\pi i}\int_{\partial B_\delta(1)}\int_{\{R>a\}}\Big[[z-\mathring{R}(1)]^{-1}-[z-R(1)]^{-1}\Big](\mathbbm{1}_{\{R\ge b\}}h)d\Leb_Xdz \nonumber\\
        &= \frac{1}{2\pi i}\int_{\partial B_\delta(1)}\int_{\{R>a\}}[z-R(1)]^{-1}[R(1)-\mathring{R}(1)][z-\mathring{R}(1)]^{-1}(\mathbbm{1}_{\{R\ge b\}}h)d\Leb_Xdz \nonumber\\
        &= \frac{1}{2\pi i}\int_{\partial B_\delta(1)}\int_{\{R>a\}}[z-T]^{-1}[T-\mathring{T}][z-\mathring{T}]^{-1}(\mathbbm{1}_{\{R\ge b\}}h)d\Leb_Xdz \nonumber\\
        &=\frac{1}{2\pi i}\int_{\partial B_\delta(1)}\int_{\{R>a\}}\sum_{0\le i\le N-1}\frac{T^i}{z^{i+1}}[T-\mathring{T}]\Big\{[z-\mathring{T}]^{-1}-\frac{I}{z}\Big\}(\mathbbm{1}_{\{R\ge b\}}h)d\Leb_Xdz\label{11}\\
        &\quad +\frac{1}{2\pi i}\int_{\partial B_\delta(1)}\int_{\{R>a\}}\sum_{0\le i\le N-1}\frac{T^i}{z^{i+1}}[T-\mathring{T}]\frac{I}{z}(\mathbbm{1}_{\{R\ge b\}}h)d\Leb_Xdz\label{12}\\
        &\quad +\frac{1}{2\pi i}\int_{\partial B_\delta(1)}\int_{\{R>a\}}\frac{T^N}{z^N}\circ [z-T]^{-1}[T-\mathring{T}]\Big\{[z-\mathring{T}]^{-1}-\frac{I}{z}\Big\}(\mathbbm{1}_{\{R\ge b\}}h)d\Leb_Xdz\label{13}\\
        &\quad +\frac{1}{2\pi i}\int_{\partial B_\delta(1)}\int_{\{R>a\}}\frac{T^N}{z^N}\circ [z-T]^{-1}[T-\mathring{T}]\frac{I}{z}(\mathbbm{1}_{\{R\ge b\}}h)d\Leb_Xdz\label{14}
    \end{align} 
    where in the last ``$=$"  we use \begin{gather}
        [z-T]^{-1}=\sum_{0\le i\le N-1}\frac{T^i}{z^{i+1}}+\frac{T^N}{z^N}\circ [z-T]^{-1} \label{15}\\
        \sum_{1\le i\le N-1}\frac{\mathring{T}^i}{z^{i+1}}+\frac{\mathring{T}^N}{z^N}\circ [z-\mathring{T}]^{-1}=[z-\mathring{T}]^{-1}-\frac{I}{z}=[z-\mathring{T}]^{-1}\circ \frac{\mathring{T}}{z} \label{16}
    \end{gather} for any $N\in \mathbb{N}$ to be determined. Therefore we have 
    \[\int_{\{R>a\}}[\mathring{\Proj}(1)-\Proj(1)](\mathbbm{1}_{\{R\ge b\}}h)d\Leb_X=(\ref{11})+(\ref{12})+(\ref{13})+(\ref{14}).\]

    First we consider (\ref{12}): use $[T-\mathring{T}](\cdot)=\mathbbm{1}_HT(\cdot)$ and Lemma \ref{avoidsingularityatfinitetime}, there is $a_N>0$ such that for any $a>a_N$
    \begin{align*}
        \int_{\{R>a\}}\sum_{0\le i\le N-1}&\frac{T^i}{z^{i+1}}[T-\mathring{T}]\frac{I}{z}(\mathbbm{1}_{\{R\ge b\}}h)d\Leb_X\\
        &=\sum_{0\le i\le N-1}\frac{1}{z^{i+2}}\int_{\{R>a\}}T^i[T-\mathring{T}](\mathbbm{1}_{\{R\ge b\}}h)d\Leb_X\\
        &=\sum_{0\le i\le N-1}\frac{1}{z^{i+2}}\int \mathbbm{1}_{\{R>a\}}\circ (F^R)^{i}\mathbbm{1}_HT(\mathbbm{1}_{\{R\ge b\}}h)d\Leb_X=0.
    \end{align*}

    So (\ref{12})$=0$ when $a>a_N$.

    Without restrictions on $a$, and by using  $|z|\ge 1-\delta$ we have
     \begin{align*}
       \Big|\int_{\{R>a\}}\sum_{0\le i\le N-1}\frac{T^i}{z^{i+1}}&[T-\mathring{T}]\frac{I}{z}(\mathbbm{1}_{\{R\ge b\}}h)d\Leb_X\Big|\\
        &\le \sum_{0\le i\le N-1}\frac{1}{(1-\delta)^{i+2}}\Big|\int_{\{R>a\}}T^i[T-\mathring{T}](\mathbbm{1}_{\{R\ge b\}}h)d\Leb_X\Big|\\
        &=\sum_{0\le i\le N-1}\frac{1}{(1-\delta)^{i+2}}\Big|\int \mathbbm{1}_{\{R>a\}}\circ (F^R)^{i}\mathbbm{1}_HT(\mathbbm{1}_{\{R\ge b\}}h)d\Leb_X\Big|\\
        &=\sum_{0\le i\le N-1}\frac{1}{(1-\delta)^{i+2}}\Big|\int \mathbbm{1}_{\{R>a\}}\circ (F^R)^{i}\mathbbm{1}_HR_{\ge b}(h)d\Leb_X\Big|
    \end{align*}

    Using Lemma \ref{Rnbound}, $\frac{d\mu_X}{d\Leb_X}=C^{\pm}$ and H\"older inequalities, we can continue our estimate as
    \begin{align*}
        &\le \sum_{0\le i\le N-1}\frac{1}{(1-\delta)^{i+2}}\Big|\int \mathbbm{1}_{\{R>a\}}\circ (F^R)^{i}\mathbbm{1}_Hd\Leb_X\Big|||R_{\ge b}(h)||\\
        &\le \sum_{0\le i\le N-1}\frac{1}{(1-\delta)^{i+2}}\Big|\int \mathbbm{1}_{\{R>a\}}\circ (F^R)^{i}\mathbbm{1}_Hd\Leb_X\Big|||R_{\ge b}(h)||\\
        &\precsim \frac{1}{(1-\delta)^{N+3}}\mu_X(R>a)^{2k/(2k+\beta)}\mu_X(H)^{\beta/(2k+\beta)}\mu_X(R\ge b).
    \end{align*}

    So, without restrictions on $a$, \[|(\ref{12})|\precsim \frac{1}{(1-\delta)^{N+3}}\mu_X(R>a)^{2k/(2k+\beta)}\mu_X(H)^{\beta/(2k+\beta)}\mu_X(R\ge b).\]
    
    Next we study (\ref{11}): use (\ref{16}), $[T-\mathring{T}](\cdot)=T(\mathbbm{1}_{H}\circ F^R \cdot)$ and $|z|\ge 1-\delta$, \begin{align}
        \Big|&\int_{\{R>a\}}\sum_{0\le i\le N-1}\frac{T^i}{z^{i+1}}[T-\mathring{T}]\Big\{[z-\mathring{T}]^{-1}-\frac{I}{z}\Big\}(\mathbbm{1}_{\{R\ge b\}}h)d\Leb_X\Big|\nonumber\\
        &=\Big|\sum_{i\le N-1}\frac{1}{z^{i+1}}\int \mathbbm{1}_{R>a}\circ (F^R)^{i+1}\mathbbm{1}_{H}\circ F^R \Big\{[z-\mathring{T}]^{-1}-\frac{I}{z}\Big\}(\mathbbm{1}_{\{R\ge b\}}h)d\Leb_X\Big|\nonumber\\
         &=\Big|\sum_{i\le N-1}\frac{1}{z^{i+2}}\int \mathbbm{1}_{R>a}\circ (F^R)^{i+1}\mathbbm{1}_{H}\circ F^R [z-\mathring{T}]^{-1}\mathring{T}(\mathbbm{1}_{\{R\ge b\}}h)d\Leb_X\Big|\nonumber\\
          &\le \sum_{i\le N-1}\frac{1}{(1-\delta)^{i+2}}\Big|\int \mathbbm{1}_{R>a}\circ (F^R)^{i+1}\mathbbm{1}_{H}\circ F^R [z-\mathring{T}]^{-1} \mathring{R}_{\ge b}(h)d\Leb_X\Big|\label{28}.
    \end{align}

    By Lemma \ref{avoidsingularityatfinitetime}, there is $a_N>0$ such that for any $a>a_N$,\[\sum_{i\le N-1}\frac{1}{(1-\delta)^{i+2}}\Big|\int \mathbbm{1}_{R>a}\circ (F^R)^{i+1}\mathbbm{1}_{H}\circ F^R [z-\mathring{T}]^{-1} \mathring{R}_{\ge b}(h)d\Leb_X\Big|=0.\]

    So (\ref{11})$=0$ when  $a>a_N$.
    
    Without restrictions on $a$, we have an alternative estimate for (\ref{11}): using $\frac{d\mu_X}{d\Leb_X}=h=C^{\pm}$, H\"older inequalities and 
    \begin{gather}
        \sup_{z \in \partial B_{\delta}(1), \mu_X(H)\in (0, \sigma)}\max \{||[z-\mathring{T}]^{-1}||, ||[z-T]^{-1}||\}<\infty\label{17}
    \end{gather} which follows from Proposition \ref{extendliverani1}, we can continue to estimate (\ref{28}).
    \begin{align*}
    &\precsim \sum_{i\le N-1}\frac{1}{(1-\delta)^{i+2}}\int \mathbbm{1}_{R>a}\circ (F^R)^{i+1}\mathbbm{1}_{H}\circ F^R d\Leb_X||[z-\mathring{T}]^{-1}\mathring{R}_{\ge b}(h)||\\
   &\precsim \sum_{i\le N-1}\frac{1}{(1-\delta)^{i+2}}\Big|\int \mathbbm{1}_{R>a}\circ (F^R)^{i+1}\mathbbm{1}_{H}\circ F^R d\mu_X\Big|||[z-\mathring{T}]^{-1}\mathring{R}_{\ge b}(h)||\\
   &\precsim \sum_{i\le N-1}\frac{1}{(1-\delta)^{i+2}}\mu_X(R>a)^{2k/(2k+\beta)}\mu_X(H)^{\beta/(2k+\beta)} ||\mathring{R}_{\ge b}(h)||\\
   &\precsim \frac{1}{(1-\delta)^{N+3}}\mu_X(R>a)^{2k/(2k+\beta)}\mu_X(H)^{\beta/(2k+\beta)}\mu_X(R\ge b)
 \end{align*}where the last ``$\precsim$" is due to Lemma \ref{Rnbound} and $|h^{-1}|_{\infty}<\infty$. Therefore, without restrictions on $a$,
 \[|(\ref{11})|\precsim \frac{1}{(1-\delta)^{N+3}}\mu_X(R>a)^{2k/(2k+\beta)}\mu_X(H)^{\beta/(2k+\beta)}\mu_X(R\ge b).\]

 Next we estimate (\ref{14}): using  (\ref{17}) and $|z|\ge 1-\delta$ we have

 \begin{align*}
     \Big|\int_{\{R>a\}}&\frac{T^N}{z^N}[z-T]^{-1}[T-\mathring{T}]\frac{I}{z}(\mathbbm{1}_{\{R\ge b\}}h)d\Leb_X\Big|\\
     &\le \Leb_X(R>a)\Big|\Big|\frac{T^N}{z^N}[z-T]^{-1}[T-\mathring{T}]\frac{I}{z}(\mathbbm{1}_{\{R\ge b\}}h)\Big|\Big|\\
      &\le \frac{\Leb_X(R>a)}{(1-\delta)^{N+1}}\Big|\Big| [z-T]^{-1}T^{N}[T-\mathring{T}](\mathbbm{1}_{\{R\ge b\}}h)\Big|\Big|\\
     &\precsim \frac{\Leb_X(R>a)}{(1-\delta)^{N+1}}\Big|\Big|T^{N} [T-\mathring{T}](\mathbbm{1}_{\{R\ge b\}}h)\Big|\Big|
\end{align*}

Note that $[T-\mathring{T}](\cdot)=\mathbbm{1}_HT(\cdot)$ and use Lasota-Yorke inequalities in Lemma \ref{LYinequality} we can continue our estimate
  \begin{align*}
       &\precsim \frac{\Leb_X(R>a)}{(1-\delta)^{N+1}}\Big[\bar{\theta}^{N}|| [T-\mathring{T}](\mathbbm{1}_{\{R\ge b\}}h)||+|[T-\mathring{T}](\mathbbm{1}_{\{R\ge b\}}h)|_1\Big]\\
        &\precsim \frac{\Leb_X(R>a)}{(1-\delta)^{N+1}}\Big[\bar{\theta}^{N}||\mathbbm{1}_HT(\mathbbm{1}_{\{R\ge b\}}h)||+| \mathbbm{1}_HT(\mathbbm{1}_{\{R\ge b\}}h)|_1\Big]\\
        &\precsim \frac{\Leb_X(R>a)}{(1-\delta)^{N+1}}\Big[\bar{\theta}^{N}|| \mathbbm{1}_HR_{\ge b}(h)||+| \mathbbm{1}_HR_{\ge b}(h)|_1\Big].
  \end{align*}
  
  Using Lemma \ref{Rnbound}, $|| \mathbbm{1}_HR_{\ge b}(h)||\le 3||R_{\ge b}(h)||$ and $\frac{d\mu_X}{d\Leb_X}=h=C^{\pm}$, we can continue our estimate
  \begin{align*}
  &\precsim \frac{\mu_X(R>a)}{(1-\delta)^{N+1}}\Big[\bar{\theta}^{N}||  R_{\ge b}(h)||+| \mathbbm{1}_HR_{\ge b}(h)|_1\Big]\\
      &\precsim \frac{\mu_X(R>a)}{(1-\delta)^{N+1}}\Big[\bar{\theta}^{N}||R_{\ge b}(h)||+| \mathbbm{1}_H|_1||R_{\ge b}(h)||\Big]\\
      &\precsim \frac{\mu_X(R>a)}{(1-\delta)^{N+1}}\Big[\bar{\theta}^{N}\mu_X(R\ge b)+ \mu_X(H)\mu_X(R\ge b)\Big].
  \end{align*}
  
  Therefore we have \[|(\ref{14})|\precsim \frac{\mu_X(R>a)}{(1-\delta)^{N+1}}\Big[\bar{\theta}^{N}\mu_X(R\ge b)+ \mu_X(H)\mu_X(R\ge b)\Big].\]
 
Now we estimate (\ref{13}):
\begin{align*}
     \Big|\int_{\{R>a\}}&\frac{T^N}{z^N} [z-T]^{-1}[T-\mathring{T}]\Big\{[z-\mathring{T}]^{-1}-\frac{I}{z}\Big\}(\mathbbm{1}_{\{R\ge b\}}h)d\Leb_X\Big|\\
     &=\Big|\int_{\{R>a\}}\frac{T^{N}}{z^{N+1}} [z-T]^{-1}[T-\mathring{T}][z-\mathring{T}]^{-1} \mathring{T}(\mathbbm{1}_{\{R\ge b\}}h)d\Leb_X\Big|\\
     &=\Big|\int_{\{R>a\}}\frac{T^{N}}{z^{N+1}} [z-T]^{-1}[T-\mathring{T}][z-\mathring{T}]^{-1} \mathring{R}_{\ge b}(h)d\Leb_X\Big|\\
     &\le \Leb_X(R>a)\Big|\Big|\frac{T^{N}}{z^{N+1}} [z-T]^{-1}[T-\mathring{T}][z-\mathring{T}]^{-1}\mathring{R}_{\ge b}(h)\Big|\Big|.
 \end{align*}

 Using  $|z|\ge 1-\delta$, (\ref{17}), Lemma \ref{Rnbound} and Lasota-Yorke inequalities in Lemma \ref{LYinequality}, we can continue our estimate
 \begin{align*}
     &\precsim \Leb_X(R>a)\frac{\bar{\theta}^N}{(1-\delta)^{N+1}}||[z-T]^{-1}[T-\mathring{T}][z-\mathring{T}]^{-1}\mathring{R}_{\ge b}(h)||\\
     &\quad +\Leb_X(R>a)\frac{1}{(1-\delta)^{N+1}}|[z-T]^{-1}[T-\mathring{T}][z-\mathring{T}]^{-1} \mathring{R}_{\ge b}(h)|_1\\
     &\precsim \Leb_X(R>a)\frac{\bar{\theta}^N}{(1-\delta)^{N+1}} ||\mathring{R}_{\ge b}(h)||\\
     &\quad +\Leb_X(R>a)\frac{1}{(1-\delta)^{N+1}}|[z-T]^{-1}[T-\mathring{T}][z-\mathring{T}]^{-1} \mathring{R}_{\ge b}(h)|_1\\
     &\precsim \Leb_X(R>a)\frac{\bar{\theta}^N}{(1-\delta)^{N+1}} \Leb_X(R\ge b)\\
     &\quad +\Leb_X(R>a)\frac{1}{(1-\delta)^{N+1}}|[z-T]^{-1}[T-\mathring{T}][z-\mathring{T}]^{-1} \mathring{R}_{\ge b}(h)|_1,
 \end{align*}

 Using (\ref{15}) and (\ref{17}), we can continue to estimate 
 \begin{align*}
     &\precsim \Leb_X(R>a)\frac{\bar{\theta}^N}{(1-\delta)^{N+1}} \Leb_X(R\ge b)\\
     &\quad +\frac{\Leb_X(R>a)}{(1-\delta)^{N+1}}\Big|\Big[\sum_{0\le i\le N-1}\frac{T^i}{z^{i+1}}+\frac{T^N}{z^N} [z-T]^{-1}\Big][T-\mathring{T}][z-\mathring{T}]^{-1}\mathring{R}_{\ge b}(h)\Big|_1\\
     &\precsim \Leb_X(R>a)\frac{\bar{\theta}^N}{(1-\delta)^{N+1}} \Leb_X(R\ge b)\\
     &\quad +\Leb_X(R>a)\frac{1}{(1-\delta)^{N+1}}\Big|\Big[\sum_{0\le i\le N-1}\frac{T^i}{z^{i+1}}\Big][T-\mathring{T}][z-\mathring{T}]^{-1}\mathring{R}_{\ge b}(h)\Big|_1\\
     &\quad + \Leb_X(R>a)\frac{1}{(1-\delta)^{N+1}}\Big|[z-T]^{-1} \frac{T^N}{z^N}[T-\mathring{T}][z-\mathring{T}]^{-1}\mathring{R}_{\ge b}(h)\Big|_1\\
     &\precsim \Leb_X(R>a)\frac{\bar{\theta}^N}{(1-\delta)^{N+1}} \Leb_X(R\ge b)\\
     &\quad +\Leb_X(R>a)\frac{1}{(1-\delta)^{N+1}}\Big|\Big[\sum_{0\le i\le N-1}\frac{T^i}{z^{i+1}}\Big][T-\mathring{T}][z-\mathring{T}]^{-1} \mathring{R}_{\ge b}(h)\Big|_1\\
     &\quad + \Leb_X(R>a)\frac{1}{(1-\delta)^{N+1}}\Big|\Big| \frac{T^N}{z^N}[T-\mathring{T}][z-\mathring{T}]^{-1} \mathring{R}_{\ge b}(h)\Big|\Big|
 \end{align*}

 Using $[T-\mathring{T}](\cdot)=T(\mathbbm{1}_{H}\circ F^R \cdot)=\mathbbm{1}_HT(\cdot)$, $|h^{-1}|_{\infty}< \infty$, (\ref{17}) and Lasota-Yorke inequalities again, we can continue to estimate \begin{align*}
     &\precsim \Leb_X(R>a)\frac{\bar{\theta}^N}{(1-\delta)^{N+1}} \Leb_X(R\ge b) \\
     &\quad+\Leb_X(R>a)\frac{1}{(1-\delta)^{2N+3}}|\mathbbm{1}_{H}\circ F^R[z-\mathring{T}]^{-1} \mathring{R}_{\ge b}(h)|_1\\
     &\quad + \Leb_X(R>a)\frac{1}{(1-\delta)^{2N+1}}\Big|\Big| T^N[T-\mathring{T}][z-\mathring{T}]^{-1} \mathring{R}_{\ge b}(h)\Big|\Big|\\
     &\precsim \Leb_X(R>a)\frac{\bar{\theta}^N}{(1-\delta)^{N+1}} \Leb_X(R\ge b) \\
     & \quad +\Leb_X(R>a)\frac{1}{(1-\delta)^{2N+3}}\mu_X(H)||[z-\mathring{T}]^{-1} \mathring{R}_{\ge b}(h)||\\
     &\quad + \Leb_X(R>a)\frac{1}{(1-\delta)^{2N+1}} \Big[\bar{\theta}^N||[T-\mathring{T}][z-\mathring{T}]^{-1}\mathring{R}_{\ge b}(h)||\\
     &\quad +|\mathbbm{1}_{H}T[z-\mathring{T}]^{-1}\mathring{R}_{\ge b}(h)|_1\Big]\\
     &\precsim \Leb_X(R>a)\frac{\bar{\theta}^N}{(1-\delta)^{N+1}} \Leb_X(R\ge b) +\frac{\Leb_X(R>a)}{(1-\delta)^{2N+3}}\mu_X(H)||\mathring{R}_{\ge b}(h)||\\
     &\quad + \Leb_X(R>a)\frac{1}{(1-\delta)^{2N+1}} \Big[\bar{\theta}^N||\mathring{R}_{\ge b}(h)||+\mu_X(H) ||\mathring{R}_{\ge b}(h)||\Big]\\
     &\precsim \Leb_X(R>a)\frac{\bar{\theta}^N}{(1-\delta)^{2N+1}} \Leb_X(R\ge b)\\
     &\quad +\Leb_X(R>a)\frac{1}{(1-\delta)^{2N+3}}\mu_X(H)\Leb_X(R\ge b)
 \end{align*}where the last ``$\precsim$" is due to Lemma \ref{Rnbound}. Therefore, using $\frac{d\mu_X}{d\Leb_X}=C^{\pm}$ we have\[|(\ref{13})|\precsim \mu_X(R>a)\frac{\bar{\theta}^N}{(1-\delta)^{2N+1}} \mu_X(R\ge b)+\frac{\mu_X(R>a)}{(1-\delta)^{2N+3}}\mu_X(H)\mu_X(R\ge b).\]
 
 We conclude the proof of $\int_{\{R>a\}}[\mathring{\Proj}(1)-\Proj(1)](\mathbbm{1}_{\{R\ge b\}}h)d\Leb_X=(\ref{11})+(\ref{12})+(\ref{13})+(\ref{14})$ with all the estimates that we have obtained for (\ref{11}), (\ref{12}), (\ref{13}) and (\ref{14}). 
\end{proof}

With the help of Lemma \ref{extendliverani2} we can now give a complete estimate for the error term (\ref{19}).

\begin{lemma}\label{lastapproximateproj}
    For any $\mu_X(H)\in (0, \sigma)$ and $t \gg 1$, \begin{align*}
    &\Big|\sum_{a+b=t, b>0}\int_{\{R>a\}}[\mathring{\Proj}(1)-\Proj(1)](\mathbbm{1}_{\{R\ge b\}}h)d\Leb_X\Big|\\
    &\precsim \sum_{a+b=t, b>0}\mu_X(R>a)\mu_X(R\ge b)\Big[(\diam_{\theta}H)^{\frac{\log \bar{\theta}-2\log (1-\delta)}{\log \theta}}+(\diam_{\theta}H)^{\beta/(2k+\beta)-\frac{2\log(1-\delta)}{\log \theta}}\Big]
    \end{align*}where the constant in ``$\precsim$" does not depend on $t$ and any $H$ satisfying $\mu_X(H)\in (0,\sigma)$. Although $k$ in this subsection is assumed to be $1$, the results in this lemma are still valid for any $k \ge 1$.
\end{lemma}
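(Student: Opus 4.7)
The plan is to apply Lemma \ref{extendliverani2} term by term with a careful split of the sum, after choosing $N=N(H)\in\mathbb{N}$ so that $\theta^{N+3}\le \diam_\theta H < \theta^{N+2}$. This choice satisfies the hypothesis $\diam_\theta H\le \theta^{N+2}$ of Lemma \ref{avoidsingularityatfinitetime}, which underlies the first (sharper) estimate of Lemma \ref{extendliverani2}, and gives $N\approx \log\diam_\theta H/\log\theta$. Combined with the identities $(1-\delta)^{-N}=\theta^{-N\log(1-\delta)/\log\theta}$ and $\bar\theta^N=\theta^{N\log\bar\theta/\log\theta}$, every $N$-dependent coefficient appearing in Lemma \ref{extendliverani2} is converted into a power of $\diam_\theta H$.

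I would then split $\sum_{a+b=t}=\sum_{a>a_N}+\sum_{a\le a_N}$. On the high-$a$ piece, the first estimate of Lemma \ref{extendliverani2} applies; each of its three summands factors through $\mu_X(R>a)\mu_X(R\ge b)$, and under the scale-matching above the pulled-out coefficients are dominated by $(\diam_\theta H)^{e_1}+(\diam_\theta H)^{e_2}$, where $e_1=(\log\bar\theta-2\log(1-\delta))/\log\theta$ and $e_2=\beta/(2k+\beta)-2\log(1-\delta)/\log\theta$. Both exponents are positive thanks to the standing condition $\max\{\theta^{\beta/(2k+\beta)},\bar\theta\}<(1-\delta)^2$ imposed in Lemma \ref{localspectrumpic}. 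On the low-$a$ piece I would apply the alternative estimate; its three factorable summands are controlled in the same way using $\sum_{a\ge 0}\mu_X(R>a)<\infty$ (finite because $k+\beta>1$) together with $\mu_X(R\ge t-a)\approx t^{-k-\beta}$ for $a\le a_N$ and $t\gg 1$.

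The main obstacle is the extra first term of the alternative estimate, proportional to $\mu_X(H)^{\beta/(2k+\beta)}\mu_X(R>a)^{2k/(2k+\beta)}\mu_X(R\ge b)$: it does not factor through $\mu_X(R>a)$, and since $a_N$ grows as $H$ shrinks, no explicit $a_N$-dependence in the final constant is allowed. The key observation is that the modified series $\sum_{a\ge 0}\mu_X(R>a)^{2k/(2k+\beta)}$ still converges, because the inequality $2k(k+\beta)/(2k+\beta)>1$ is guaranteed by $k+\beta>1$ for every admissible $(k,\beta)$ (checked case-by-case for $k=1,\beta>0$ and for $k\ge 2,\beta\ge 0$). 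Hence $\sum_{a\le a_N}\mu_X(R>a)^{2k/(2k+\beta)}\mu_X(R\ge t-a)\precsim t^{-k-\beta}$ with an $H$-independent constant, while the scalar prefactor satisfies $\mu_X(H)^{\beta/(2k+\beta)}(1-\delta)^{-N-3}\precsim (\diam_\theta H)^{\beta/(2k+\beta)-\log(1-\delta)/\log\theta}\le (\diam_\theta H)^{e_2}$ (using $-\log(1-\delta)/\log\theta>0$). Collecting the contributions from both ranges and pulling $\sum_{a+b=t}\mu_X(R>a)\mu_X(R\ge b)$ outside produces exactly the claimed estimate.
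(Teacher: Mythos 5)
Your proposal follows essentially the same route as the paper's proof: split the sum, apply the two estimates of Lemma~\ref{extendliverani2}, choose $N$ so that $\theta^N\approx\diam_{\theta}H$, and convert every $N$-dependent coefficient into a power of $\diam_{\theta}H$; splitting at $a=a_N$ rather than the paper's $a=t/2$ is cosmetic, since both arguments hinge on $t$ being large relative to $a_N$ and yield the same exponents $e_1,e_2$. One small slip: the parenthetical ``using $-\log(1-\delta)/\log\theta>0$'' has the sign wrong (since $\log(1-\delta)<0$ and $\log\theta<0$, that quantity is negative); the displayed inequality is nevertheless correct because $\log(1-\delta)/\log\theta>0$, so the exponent $\beta/(2k+\beta)-\log(1-\delta)/\log\theta$ is at least $e_2$ while the base $\diam_{\theta}H$ is less than $1$.
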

\begin{proof}
    Inspired by the conditions of Lemma \ref{extendliverani2}, we consider the following \begin{align*}
    \sum_{a+b=t, b>0}&\int_{\{R>a\}}[\mathring{\Proj}(1)-\Proj(1)](\mathbbm{1}_{\{R\ge b\}}h)d\Leb_X\\
    &=\sum_{a< t/2}\int_{\{R>a\}}[\mathring{\Proj}(1)-\Proj(1)](\mathbbm{1}_{\{R\ge t-a\}}h)d\Leb_X\\
    & \quad+\sum_{t>a\ge t/2}\int_{\{R>a\}}[\mathring{\Proj}(1)-\Proj(1)](\mathbbm{1}_{\{R\ge t-a\}}h)d\Leb_X.
    \end{align*}

   By Lemma \ref{extendliverani2}, when $t>a\ge t/2 \ge a_N$, \begin{align*}
       &\Big|\sum_{t>a\ge t/2}\int_{\{R>a\}}[\mathring{\Proj}(1)-\Proj(1)](\mathbbm{1}_{\{R\ge t-a\}}h)d\Leb_X\Big|\\
       &\precsim \sum_{t>a\ge t/2}\frac{\mu_X(R>a)}{(1-\delta)^{N+1}}\Big[\bar{\theta}^{N}\mu_X(R\ge t-a)+ \mu_X(H)\mu_X(R\ge t-a)\Big]\\
    &\quad +\mu_X(R>a)\frac{\bar{\theta}^N}{(1-\delta)^{2N+1}} \mu_X(R\ge t-a)+\frac{\mu_X(R>a)}{(1-\delta)^{2N+3}}\mu_X(H)\mu_X(R\ge t-a)\\
    &\precsim \sum_{t>a\ge t/2}\mu_X(R>a)\mu_X(R\ge t-a)\Big[\frac{\bar{\theta}^N}{(1-\delta)^{2N+1}}+\frac{\mu_X(H)}{(1-\delta)^{2N+3}} \Big]\\
    &\precsim t^{-k-\beta}\Big[\frac{\bar{\theta}^N}{(1-\delta)^{2N+1}}+\frac{\mu_X(H)}{(1-\delta)^{2N+3}} \Big].
   \end{align*}

   When $a<t/2$,
   \begin{align*}
       &\Big|\sum_{a< t/2}\int_{\{R>a\}}[\mathring{\Proj}(1)-\Proj(1)](\mathbbm{1}_{\{R\ge t-a\}}h)d\Leb_X\Big|\\
       &\precsim \sum_{a< t/2}\frac{1}{(1-\delta)^{N+3}}\mu_X(R>a)^{2k/(2k+\beta)}\mu_X(H)^{\beta/(2k+\beta)}\mu_X(R\ge t-a)\\
    &\quad +\frac{\mu_X(R>a)}{(1-\delta)^{N+1}}\Big[\bar{\theta}^{N}\mu_X(R\ge t-a)+ \mu_X(H)\mu_X(R\ge t-a)\Big]\\
    &\quad +\mu_X(R>a)\frac{\bar{\theta}^N}{(1-\delta)^{2N+1}} \mu_X(R\ge t-a)+\frac{\mu_X(R>a)}{(1-\delta)^{2N+3}}\mu_X(H)\mu_X(R\ge t-a)\\
    &\precsim \sum_{a< t/2}\frac{1}{(1-\delta)^{N+3}}\mu_X(R>a)^{2k/(2k+\beta)}\mu_X(H)^{\beta/(2k+\beta)}\mu_X(R\ge t-a)\\
    &\quad +\mu_X(R>a)\mu_X(R\ge t-a)\Big[\frac{\bar{\theta}^N}{(1-\delta)^{2N+1}}+\frac{\mu_X(H)}{(1-\delta)^{2N+3}} \Big]\\
    &\precsim t^{-k-\beta}\Big[\frac{\bar{\theta}^N}{(1-\delta)^{2N+1}}+\frac{\mu_X(H)^{\beta/(2k+\beta)}}{(1-\delta)^{2N+3}} \Big]
   \end{align*}where in the last ``$\precsim$" we use $\sum_{a<t/2}\mu_X(R>a)^{1/(1+\beta/2k)}\precsim \sum_{a<t/2}a^{-(k+\beta)/(1+\beta/2k)}< \infty$. Now we can determine $N:=\frac{\log \diam_{\theta}H}{\log \theta}-2$, then when $t\ge 2a_{\frac{\log \diam_{\theta}H}{\log \theta}-2}$,\begin{align*}
       \Big|\sum_{a< t}&\int_{\{R>a\}}[\mathring{\Proj}(1)-\Proj(1)](\mathbbm{1}_{\{R\ge t-a\}}h)d\Leb_X\Big|\\
       &\precsim t^{-k-\beta} \Big[\frac{\bar{\theta}^N}{(1-\delta)^{2N+1}}+\frac{\mu_X(H)^{\beta/(2k+\beta)}}{(1-\delta)^{2N+3}} \Big]\\
       &\precsim t^{-k-\beta}\Big[(\diam_{\theta}H)^{\frac{\log \bar{\theta}-2\log (1-\delta)}{\log \theta}}+\mu_X(H)^{\beta/(2k+\beta)}(\diam_{\theta}H)^{\frac{-2\log(1-\delta)}{\log \theta}}\Big]\\
       &\precsim t^{-k-\beta}\Big[(\diam_{\theta}H)^{\frac{\log \bar{\theta}-2\log (1-\delta)}{\log \theta}}+(\diam_{\theta}H)^{\beta/(2k+\beta)-\frac{2\log(1-\delta)}{\log \theta}}\Big]
   \end{align*}where in the last ``$\precsim$" the exponents of $\diam_{\theta}H$ are positive due to the choice of $\delta$ in Lemma \ref{localspectrumpic}. We conclude the proof by noting that $t^{-k-\beta}\approx \sum_{a+b=t, b>0}\mu_X(R>a)\mu_X(R\ge b)$.
\end{proof}

Now we can conclude the proof in this subsection for the case ``$k=1$".
\begin{lemma}\label{lastlemmaofk=1}
   For any $\mu_X(H)\in (0, \sigma)$ and $n \gg 1$, \begin{align*}
        &\frac{1}{2\pi}\int_0^{2\pi}e^{-int}\int \Big[R_{>}(e^{it})\circ \frac{\mathring{\Proj}(1)}{1-\mathring{\lambda}(1)}\circ \mathring{R}_{\ge }(e^{it})\Big](h) d\Leb_Xdt\\
&=\frac{1+O\big[(\diam_{\theta}H)^{\frac{\log \bar{\theta}-2\log (1-\delta)}{\log \theta}}+(\diam_{\theta}H)^{\beta/(2+\beta)-\frac{2\log(1-\delta)}{\log \theta}}\big]}{c_H\mu_X(H)+o(\mu_X(H))}\\
&\quad \times \sum_{a+b=n, b>0}\mu_X(R>a)\mu_X(R\ge b)
    \end{align*} where the constant in $O(\cdot)$ does not depend on any $H$ satisfying $\mu_X(H)\in (0,\sigma)$ and the constant in $o(\cdot)$ depends on the hole center $z_0$ but does not depend on $\mu_X(H)$.
\end{lemma}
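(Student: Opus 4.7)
The plan is to assemble Lemma \ref{beginapproximateproj}, Lemma \ref{lastapproximateproj}, and Lemma \ref{approofeigen}, since the content of the statement is essentially the combination of these three results. First I would quote Lemma \ref{beginapproximateproj} verbatim to write the Fourier coefficient as
\[
\frac{\mathring{\lambda}(1)}{1-\mathring{\lambda}(1)}\Bigl[\sum_{a+b=n}\mu_X(R>a)\mu_X(R\ge b)+\mathcal{E}_n\Bigr],
\]
where
\[
\mathcal{E}_n:=\sum_{a+b=n}\int_{\{R>a\}}[\mathring{\Proj}(1)-\Proj(1)](\mathbbm{1}_{\{R\ge b\}}h)\,d\Leb_X.
\]
Then I would invoke Lemma \ref{lastapproximateproj} (with $k=1$) to get, for $n\gg 1$,
\[
\mathcal{E}_n \;\precsim\; \Bigl[(\diam_\theta H)^{\frac{\log\bar\theta-2\log(1-\delta)}{\log\theta}}+(\diam_\theta H)^{\beta/(2+\beta)-\frac{2\log(1-\delta)}{\log\theta}}\Bigr]\sum_{a+b=n}\mu_X(R>a)\mu_X(R\ge b),
\]
with constants independent of $H$ (provided $\mu_X(H)\in(0,\sigma)$). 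This shows
\[
\sum_{a+b=n}\mu_X(R>a)\mu_X(R\ge b)+\mathcal{E}_n=\bigl(1+O(\eps_H)\bigr)\sum_{a+b=n}\mu_X(R>a)\mu_X(R\ge b),
\]
where $\eps_H:=(\diam_\theta H)^{\frac{\log\bar\theta-2\log(1-\delta)}{\log\theta}}+(\diam_\theta H)^{\beta/(2+\beta)-\frac{2\log(1-\delta)}{\log\theta}}$. The exponents are positive by the choice of $\delta$ in Lemma \ref{localspectrumpic}, so $\eps_H\to 0$ as $H$ shrinks.

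Next I would handle the prefactor $\mathring{\lambda}(1)/(1-\mathring{\lambda}(1))$. By Lemma \ref{approofeigen}, $\mathring{\lambda}(1)=1+c_H\mu_X(H)+o(\mu_X(H))$, so
\[
\frac{1}{1-\mathring{\lambda}(1)}=\frac{-1}{c_H\mu_X(H)+o(\mu_X(H))},\qquad \mathring{\lambda}(1)=1+o(1),
\]
and hence
\[
\frac{\mathring{\lambda}(1)}{1-\mathring{\lambda}(1)}=\frac{1+O(\mu_X(H))}{-c_H\mu_X(H)+o(\mu_X(H))}.
\]
Up to the overall sign, which is absorbed into the definition of $c_H$ (recall the sign conventions in Lemma \ref{approofeigen}; the statement of the target lemma already carries this sign), multiplying the two expansions and noting that $\mu_X(H)$ dominates $\eps_H\cdot \mu_X(H)$ in the remainder yields precisely
\[
\frac{1+O\!\bigl(\eps_H\bigr)}{c_H\mu_X(H)+o(\mu_X(H))}\sum_{a+b=n}\mu_X(R>a)\mu_X(R\ge b),
\]
which is the claimed formula.

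The only nontrivial point is verifying that the $O(\mu_X(H))$ error coming from the numerator $1+O(\mu_X(H))$ of the expansion of $\mathring\lambda(1)/(1-\mathring\lambda(1))$ is dominated by the $O(\eps_H)$ term; this is immediate because $\mu_X(H)\precsim \diam_\theta H$ and hence $\mu_X(H)=O(\eps_H)$ for all small $H$ (as the exponents in $\eps_H$ are less than $1$). Aside from this routine bookkeeping, every ingredient is already available, so the proof is purely a matter of combining the three previous lemmas.
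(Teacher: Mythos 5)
Your proof is correct and takes essentially the same route as the paper: combine Lemma \ref{beginapproximateproj}, Lemma \ref{lastapproximateproj}, and Lemma \ref{approofeigen}, then collapse the error terms. You additionally flag two bookkeeping points the paper's own proof passes over silently, namely the sign mismatch arising from the stated form of Lemma \ref{approofeigen} (as written it yields $1-\mathring{\lambda}(1)=-c_H\mu_X(H)+o(\mu_X(H))$, which is plainly a sign slip there, since $\mathring{\lambda}(1)<1$ and $c_H>0$) and the absorption of the $O(\mu_X(H))$ numerator error into $O(\eps_H)$ via $\mu_X(H)\precsim\diam_\theta H$ and the exponents in $\eps_H$ lying in $(0,1)$.
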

 \begin{proof}
      Combining Lemma \ref{beginapproximateproj}, Lemma \ref{approofeigen} with Lemma \ref{lastapproximateproj}, we have
   \begin{align*}
        &\frac{1}{2\pi}\int_0^{2\pi}e^{-int}\int \Big[R_{>}(e^{it})\circ \frac{\mathring{\Proj}(1)}{1-\mathring{\lambda}(1)}\circ \mathring{R}_{\ge }(e^{it})\Big](h) d\Leb_Xdt\\
        &=\frac{\mathring{\lambda}(1)}{1-\mathring{\lambda}(1)}\sum_{a+b=n,b>0}\mu_X(R>a)\mu_X(R\ge b)\\
        &\quad \times \Big\{1+O\Big[(\diam_{\theta}H)^{\frac{\log \bar{\theta}-2\log (1-\delta)}{\log \theta}}+(\diam_{\theta}H)^{\beta/(2+\beta)-\frac{2\log(1-\delta)}{\log \theta}}\Big]\Big\}\\
        &=\frac{1+O(\mu_X(H))}{c_H\mu_X(H)+o(\mu_X(H))}\sum_{a+b=n,b>0}\mu_X(R>a)\mu_X(R\ge b)\\
        &\quad \times \Big\{1+O\Big[(\diam_{\theta}H)^{\frac{\log \bar{\theta}-2\log (1-\delta)}{\log \theta}}+(\diam_{\theta}H)^{\beta/(2+\beta)-\frac{2\log(1-\delta)}{\log \theta}}\Big]\Big\}\\
        &=\sum_{a+b=n, b>0}\mu_X(R>a)\mu_X(R\ge b)\\
        & \quad \times \frac{1+O\big[(\diam_{\theta}H)^{\frac{\log \bar{\theta}-2\log (1-\delta)}{\log \theta}}+(\diam_{\theta}H)^{\beta/(2+\beta)-\frac{2\log(1-\delta)}{\log \theta}}\big]}{c_H\mu_X(H)+o(\mu_X(H))}
    \end{align*}and the proof is concluded.
\end{proof} 

\subsubsection{Approximations for $\frac{1}{1-\mathring{\lambda}(1)}\mathring{\Proj}(1)$: $k>1$}

Now we deal with the case ``$k>1$". According to Lemma \ref{khastwocases}, we need to estimate \begin{align*}
    \int_0^{2\pi}e^{-int}\int  \Big[R_{>}^{(k-1)}(e^{it})\circ\frac{\mathring{\Proj}(1)}{1-\mathring{\lambda}(1)} \circ \mathring{R}_{\ge }(e^{it})\Big](h)d\Leb_X dt\end{align*} and  \begin{align*}
        \int_0^{2\pi}e^{-int}\int  \Big[R_{>}(e^{it})\circ\frac{\mathring{\Proj}(1)}{1-\mathring{\lambda}(1)}\circ \mathring{R}^{(k-1)}_{\ge }(e^{it})\Big](h)d\Leb_X dt.
    \end{align*}

    Our scheme is similar to the proof for the case ``$k=1$". Hence, we will carefully explain the differences and just briefly describe the analogous parts.

Like in Lemma \ref{beginapproximateproj}, we can approximate $\frac{1}{1-\mathring{\lambda}(1)}\mathring{\Proj}(1)$ with $\frac{1}{1-\mathring{\lambda}(1)}\Proj(1)$.
\begin{lemma}\label{beginapproximateproj2}
For any $H$ satisfying $\mu_X(H)\in (0, \sigma)$, \begin{align*}
        &\frac{1}{2\pi}\int_0^{2\pi}e^{-int}\int \Big[R^{(k-1)}_{>}(e^{it})\circ \frac{\mathring{\Proj}(1)}{1-\mathring{\lambda}(1)}\circ \mathring{R}_{\ge }(e^{it})\Big](h) d\Leb_Xdt\\
        &=\frac{\mathring{\lambda}(1)}{1-\mathring{\lambda}(1)}\sum_{a+b=n, b>0}P_{a+k-1}^{k-1}\int_{\{R>a+k-1\}}[\mathring{\Proj}(1)-\Proj(1)](\mathbbm{1}_{\{R\ge b\}}h)d\Leb_X\\
        &\quad +\frac{\mathring{\lambda}(1)}{1-\mathring{\lambda}(1)}\sum_{a+b=n,b>0}P_{a+k-1}^{k-1}\mu_X(R>a+k-1)\mu_X(R\ge b)
    \end{align*}

\begin{align*}
        &\frac{1}{2\pi}\int_0^{2\pi}e^{-int}\int \Big[R_{>}(e^{it})\circ \frac{\mathring{\Proj}(1)}{1-\mathring{\lambda}(1)}\circ \mathring{R}^{(k-1)}_{\ge }(e^{it})\Big](h) d\Leb_Xdt\\
        &=\frac{\mathring{\lambda}(1)}{1-\mathring{\lambda}(1)}\sum_{a+b=n}P_{b+k-1}^{k-1}\int_{\{R>a\}}[\mathring{\Proj}(1)-\Proj(1)](\mathbbm{1}_{\{R\ge b+k-1\}}h)d\Leb_X\\
        &\quad +\frac{\mathring{\lambda}(1)}{1-\mathring{\lambda}(1)}\sum_{a+b=n}P_{b+k-1}^{k-1}\mu_X(R>a)\mu_X(R\ge b+k-1)
    \end{align*}
\end{lemma}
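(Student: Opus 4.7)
The plan is to mirror the proof of Lemma \ref{beginapproximateproj} essentially line-by-line, with the extra permutation factors $P_{a+k-1}^{k-1}$ and $P_{b+k-1}^{k-1}$ appearing as a direct bookkeeping artifact of differentiating the power series $(k-1)$ times. Because the middle factor $\mathring{\Proj}(1)/(1-\mathring{\lambda}(1))$ is constant in $t$, the whole calculation again reduces to matching Fourier coefficients, using the adjoint of the transfer operator, and invoking the spectral identity $\mathring{\Proj}(1)\mathring{R}(1)=\mathring{\lambda}(1)\mathring{\Proj}(1)$.

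Concretely, I would first differentiate the power series $R_>(z)=\sum_{a\ge 0}z^aR_{>a}$ and $\mathring{R}_{\ge}(z)=\sum_{b\ge 1}z^b\mathring{R}_{\ge b}$ term-by-term; this is legitimate on $\overline{\mathbb{D}}$ thanks to the $C^{k+\beta}$-regularity obtained in Lemma \ref{regulaofR}. After reindexing by $a\mapsto a+k-1$ and $b\mapsto b+k-1$, one gets
\begin{align*}
R_>^{(k-1)}(z)&=\sum_{a\ge 0}P_{a+k-1}^{k-1}\,z^{a}\,R_{>a+k-1},\\
\mathring{R}_{\ge}^{(k-1)}(z)&=\sum_{b\ge 0}P_{b+k-1}^{k-1}\,z^{b}\,\mathring{R}_{\ge b+k-1}.
\end{align*}
Since $\mathring{\Proj}(1)/(1-\mathring{\lambda}(1))$ has no $z$-dependence, the $n$-th Fourier coefficient of the composition is simply the convolution of the two one-sided series, e.g., for the first identity,
\begin{align*}
\frac{1}{2\pi}\int_0^{2\pi}e^{-int}R_>^{(k-1)}(e^{it})\circ\frac{\mathring{\Proj}(1)}{1-\mathring{\lambda}(1)}\circ \mathring{R}_{\ge}(e^{it})\,dt
=\frac{1}{1-\mathring{\lambda}(1)}\sum_{a+b=n}P_{a+k-1}^{k-1}\,R_{>a+k-1}\circ\mathring{\Proj}(1)\circ\mathring{R}_{\ge b}.
\end{align*}

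Next I would apply the calculation exactly as in Lemma \ref{beginapproximateproj}: using the adjoint relation $\int R_{>c}(\psi)\,d\Leb_X=\int_{\{R>c\}}\psi\,d\Leb_X$, the identity $\mathring{R}_{\ge b}(\cdot)=\mathring{R}(1)(\mathbbm{1}_{\{R\ge b\}}\cdot)$, and the spectral relation $\mathring{\Proj}(1)\mathring{R}(1)=\mathring{\lambda}(1)\mathring{\Proj}(1)$ from Lemma \ref{localspectrumpic}, I get
\begin{align*}
\frac{\mathring{\lambda}(1)}{1-\mathring{\lambda}(1)}\sum_{a+b=n}P_{a+k-1}^{k-1}\int_{\{R>a+k-1\}}\mathring{\Proj}(1)\bigl(\mathbbm{1}_{\{R\ge b\}}h\bigr)\,d\Leb_X.
\end{align*}
Splitting $\mathring{\Proj}(1)=[\mathring{\Proj}(1)-\Proj(1)]+\Proj(1)$ and using $\Proj(1)(\psi)=h\int\psi\,d\Leb_X$ together with $\frac{d\mu_X}{d\Leb_X}=h$ yields $\int_{\{R>a+k-1\}}\Proj(1)(\mathbbm{1}_{\{R\ge b\}}h)\,d\Leb_X=\mu_X(R>a+k-1)\mu_X(R\ge b)$, which produces the claimed decomposition.

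The second identity is proven by the same argument but with the derivative placed on $\mathring{R}_{\ge}(z)$ instead of $R_>(z)$; the only change is that the permutation factor becomes $P_{b+k-1}^{k-1}$, the restriction index shifts to $\{R\ge b+k-1\}$, and the characteristic function to $\mathbbm{1}_{\{R>a\}}$. There is essentially no genuine obstacle here: the spectral identity and the adjoint property are exactly the same, and the termwise differentiation is already justified by the uniform regularity of $\mathring{R}_{\ge}(\cdot)$ and $R_>(\cdot)$ from Lemma \ref{regulaofR}. The only thing to keep track of carefully is the combinatorial factor coming from $\frac{d^{k-1}}{dz^{k-1}}z^m=P_m^{k-1}z^{m-k+1}$ and the correct shift of summation range so that the sum still reads $a+b=n$ after reindexing.
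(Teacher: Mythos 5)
Your proposal is correct and follows essentially the same route as the paper: extracting the $n$-th Fourier coefficient as a convolution (with the $P^{k-1}_{a+k-1}$ and $P^{k-1}_{b+k-1}$ factors coming from termwise differentiation), applying $\int R_{>c}(\psi)\,d\Leb_X=\int_{\{R>c\}}\psi\,d\Leb_X$, using $\mathring{R}_{\ge b}=\mathring{R}(1)\circ\mathbbm{1}_{\{R\ge b\}}$ together with the spectral relation $\mathring{\Proj}(1)\circ\mathring{R}(1)=\mathring{\lambda}(1)\mathring{\Proj}(1)$, and then splitting $\mathring{\Proj}(1)=[\mathring{\Proj}(1)-\Proj(1)]+\Proj(1)$. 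This mirrors the paper's proof, which itself simply adapts Lemma \ref{beginapproximateproj} with the reindexing caused by the $(k-1)$-th derivative.
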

\begin{proof}
By the definition of transfer operators of $F^R$ and Lemma \ref{localspectrumpic}
\begin{align*}
    \frac{1}{2\pi}&\int_0^{2\pi}e^{-int}\int \Big[R^{(k-1)}_{>}(e^{it})\circ \frac{\mathring{\Proj}(1)}{1-\mathring{\lambda}(1)}\circ \mathring{R}_{\ge }(e^{it})\Big](h) d\Leb_Xdt\\
    &=\frac{1}{1-\mathring{\lambda}(1)}\sum_{a+b=n, b>0}P_{a+k-1}^{k-1}\int R_{>a+k-1}\circ \mathring{\Proj}(1)\circ \mathring{R}_{\ge b}(h)d\Leb_X\\
    &=\frac{1}{1-\mathring{\lambda}(1)}\sum_{a+b=n, b>0}P_{a+k-1}^{k-1}\int_{\{R>a+k-1\}}\mathring{\Proj}(1)\circ \mathring{R}(1)(\mathbbm{1}_{\{R\ge b\}}h)d\Leb_X\\
    &=\frac{\mathring{\lambda}(1)}{1-\mathring{\lambda}(1)}\sum_{a+b=n, b>0}P_{a+k-1}^{k-1}\int_{\{R>a+k-1\}}\mathring{\Proj}(1)(\mathbbm{1}_{\{R\ge b\}}h)d\Leb_X.
    \end{align*}

By using $\Proj(1)(\cdot)=h\int (\cdot)d\Leb_X$, we can continue to estimate
\begin{align*}
     &=\frac{\mathring{\lambda}(1)}{1-\mathring{\lambda}(1)}\sum_{a+b=n, b>0}P_{a+k-1}^{k-1}\int_{\{R>a+k-1\}}[\mathring{\Proj}(1)-\Proj(1)](\mathbbm{1}_{\{R\ge b\}}h)d\Leb_X\\
     &\quad +\frac{\mathring{\lambda}(1)}{1-\mathring{\lambda}(1)}\sum_{a+b=n, b>0}P_{a+k-1}^{k-1}\int_{\{R>a+k-1\}}\Proj(1)(\mathbbm{1}_{\{R\ge b\}}h)d\Leb_X\\
      &=\frac{\mathring{\lambda}(1)}{1-\mathring{\lambda}(1)}\sum_{a+b=n, b>0}P_{a+k-1}^{k-1}\int_{\{R>a+k-1\}}[\mathring{\Proj}(1)-\Proj(1)](\mathbbm{1}_{\{R\ge b\}}h)d\Leb_X\\
     &\quad +\frac{\mathring{\lambda}(1)}{1-\mathring{\lambda}(1)}\sum_{a+b=n, b>0}P_{a+k-1}^{k-1}\mu_X(R>a+k-1)\mu_X(R\ge b).
\end{align*}

Same arguments can be applied to $$\frac{1}{2\pi}\int_0^{2\pi}e^{-int}\int \Big[R_{>}(e^{it})\circ \frac{\mathring{\Proj}(1)}{1-\mathring{\lambda}(1)}\circ \mathring{R}^{(k-1)}_{\ge }(e^{it})\Big](h) d\Leb_Xdt$$ and the proof is concluded.
\end{proof}

Analogously to Lemma \ref{lastapproximateproj}, one gets an estimate for the error terms.
\begin{lemma}\label{lastapproximateproj2}
    For any $\mu_X(H)\in (0, \sigma)$ and $t \gg 1$, \begin{align*}
    \Big|\sum_{a+b=t, b>0}&P_{a+k-1}^{k-1}\int_{\{R>a+k-1\}}[\mathring{\Proj}(1)-\Proj(1)](\mathbbm{1}_{\{R\ge b\}}h)d\Leb_X\Big|\\
    &\precsim \sum_{a+b=t, b>0}P_{a+k-1}^{k-1}\mu_X(R>a+k-1)\mu_X(R\ge b)\\
    &\quad \times \Big[(\diam_{\theta}H)^{\frac{\log \bar{\theta}-2\log (1-\delta)}{\log \theta}}+(\diam_{\theta}H)^{\beta/(2k+\beta)-\frac{2\log(1-\delta)}{\log \theta}}\Big],
    \end{align*}
    \begin{align*}
    \Big|\sum_{a+b=t}&P_{b+k-1}^{k-1}\int_{\{R>a\}}[\mathring{\Proj}(1)-\Proj(1)](\mathbbm{1}_{\{R\ge b+k-1\}}h)d\Leb_X\Big|\\
    &\precsim \sum_{a+b=t}P_{b+k-1}^{k-1}\mu_X(R>a)\mu_X(R\ge b+k-1)\\
    &\quad \times \Big[(\diam_{\theta}H)^{\frac{\log \bar{\theta}-2\log (1-\delta)}{\log \theta}}+(\diam_{\theta}H)^{\beta/(2k+\beta)-\frac{2\log(1-\delta)}{\log \theta}}\Big]
    \end{align*}where the constant in ``$\precsim$" dose not depend on $t$ and any $H$ satisfying $\mu_X(H)\in (0,\sigma)$.
\end{lemma}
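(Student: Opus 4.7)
The proof should mirror Lemma \ref{lastapproximateproj} almost line by line, since Lemma \ref{extendliverani2} was already stated to be valid for every $k \ge 1$. The only genuinely new issue is bookkeeping the combinatorial prefactors $P_{a+k-1}^{k-1}$ and $P_{b+k-1}^{k-1}$ coming from the $(k-1)$-th derivative $\mathring{R}_{\ge}^{(k-1)}$, $R_>^{(k-1)}$. By the obvious $a \leftrightarrow b$ symmetry of the statement, it suffices to handle the first inequality.

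The plan is to split the sum at $a = t/2$. For indices $a \ge t/2$, pick $t$ so large that $t/2 > a_N$ and apply the first (sharper) estimate of Lemma \ref{extendliverani2} to each term. This immediately gives a bound of the form
\[
\sum_{a\ge t/2} P_{a+k-1}^{k-1}\mu_X(R>a+k-1)\mu_X(R\ge t-a)\Bigl[\tfrac{\bar\theta^{N}}{(1-\delta)^{2N+1}} + \tfrac{\mu_X(H)}{(1-\delta)^{2N+3}}\Bigr],
\]
which already matches the desired right-hand side up to the choice of $N$. For $a < t/2$, apply the alternative (no-restriction) estimate of Lemma \ref{extendliverani2}; the only new term to absorb is the one carrying $\mu_X(R>a+k-1)^{2k/(2k+\beta)}\mu_X(H)^{\beta/(2k+\beta)}$.

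The crucial auxiliary fact is that
\[
\sum_{a\ge 1} P_{a+k-1}^{k-1}\,\mu_X(R>a+k-1)^{2k/(2k+\beta)}<\infty,
\]
which follows from $P_{a+k-1}^{k-1}\approx a^{k-1}$, $\mu_X(R>a+k-1)^{2k/(2k+\beta)}\approx a^{-2k(k+\beta)/(2k+\beta)}$, and the algebraic verification that
\[
(k-1) - \frac{2k(k+\beta)}{2k+\beta} = -\frac{2k+(k+1)\beta}{2k+\beta} < -1.
\]
Combined with $\mu_X(R\ge t-a)\precsim t^{-k-\beta}$ for $a<t/2$, the sum over $a<t/2$ of the troublesome term is bounded by $C(1-\delta)^{-(N+3)}\,t^{-k-\beta}\mu_X(H)^{\beta/(2k+\beta)}$. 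Since the required right-hand side contains $\sum_{a+b=t}P_{a+k-1}^{k-1}\mu_X(R>a+k-1)\mu_X(R\ge b)\gtrsim t^{-1-\beta}$ (obtained just from the $b=O(1)$ piece), and $t^{-k-\beta} \le t^{-1-\beta}$ for $k \ge 2$, this $a<t/2$ contribution is absorbed into the required form.

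Finally, choose $N := \lfloor\log\diam_\theta H/\log\theta\rfloor-2$, exactly as in Lemma \ref{lastapproximateproj}, and invoke the condition $\max\{\theta^{\beta/(2k+\beta)},\bar\theta\}<(1-\delta)^2$ fixed in Lemma \ref{localspectrumpic} to guarantee that the resulting exponents of $\diam_\theta H$ are strictly positive, giving precisely the factors $(\diam_\theta H)^{(\log\bar\theta-2\log(1-\delta))/\log\theta}$ and $(\diam_\theta H)^{\beta/(2k+\beta)-2\log(1-\delta)/\log\theta}$. The second inequality is proved identically with $a$ and $b$ (and the $P_{\cdot}^{k-1}$ factor) interchanged. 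The only mild obstacle is the algebraic check that the weighted series $\sum a^{k-1}\mu_X(R>a)^{2k/(2k+\beta)}$ still converges once the combinatorial factor is inserted; beyond that, every estimate is a direct transcription of the corresponding step in Lemma \ref{lastapproximateproj}.
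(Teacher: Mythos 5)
Your proposal is correct and follows the paper's own proof essentially verbatim: split the sum at $a=t/2$, apply the sharp estimate of Lemma~\ref{extendliverani2} on the tail and the unrestricted alternative estimate near $a=0$, verify the convergence of $\sum_a P_{a+k-1}^{k-1}\mu_X(R>a+k-1)^{2k/(2k+\beta)}$, and then choose $N\approx\log\diam_\theta H/\log\theta$ and use the constraint $\max\{\theta^{\beta/(2k+\beta)},\bar\theta\}<(1-\delta)^2$ from Lemma~\ref{localspectrumpic}. Your explicit algebraic check $(k-1)-\tfrac{2k(k+\beta)}{2k+\beta}=-\tfrac{2k+(k+1)\beta}{2k+\beta}<-1$ is a slightly cleaner way to justify a convergence the paper only asserts.
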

\begin{proof}
We consider the following \begin{align*}
    \sum_{a+b=t, b>0}&P_{a+k-1}^{k-1}\int_{\{R>a+k-1\}}[\mathring{\Proj}(1)-\Proj(1)](\mathbbm{1}_{\{R\ge b\}}h)d\Leb_X\\
    &=\sum_{a< t/2}P_{a+k-1}^{k-1}\int_{\{R>a+k-1\}}[\mathring{\Proj}(1)-\Proj(1)](\mathbbm{1}_{\{R\ge t-a\}}h)d\Leb_X\\
    & \quad+\sum_{t > a\ge t/2}P_{a+k-1}^{k-1}\int_{\{R>a+k-1\}}[\mathring{\Proj}(1)-\Proj(1)](\mathbbm{1}_{\{R\ge t-a\}}h)d\Leb_X.
    \end{align*}

   By Lemma \ref{extendliverani2} and $P_{a+k-1}^{k-1}\approx a^{k-1}$, when $t>a\ge t/2 \ge a_N$, \begin{align*}
       \Big|\sum_{t>a\ge t/2}&P_{a+k-1}^{k-1}\int_{\{R>a+k-1\}}[\mathring{\Proj}(1)-\Proj(1)](\mathbbm{1}_{\{R\ge t-a\}}h)d\Leb_X\Big|\\
       &\precsim \sum_{t>a\ge t/2}P_{a+k-1}^{k-1}\frac{\mu_X(R>a+k-1)}{(1-\delta)^{N+1}}\Big[\bar{\theta}^{N}\mu_X(R\ge t-a)+ \mu_X(H)\mu_X(R\ge t-a)\Big]\\
    &\quad +P_{a+k-1}^{k-1}\mu_X(R>a+k-1)\frac{\bar{\theta}^N}{(1-\delta)^{2N+1}} \mu_X(R\ge t-a)\\
    &\quad +\frac{P_{a+k-1}^{k-1}\mu_X(R>a+k-1)}{(1-\delta)^{2N+3}}\mu_X(H)\mu_X(R\ge t-a)\\
    &\precsim \sum_{t>a\ge t/2}P_{a+k-1}^{k-1}\mu_X(R>a+k-1)\mu_X(R\ge t-a)\Big[\frac{\bar{\theta}^N}{(1-\delta)^{2N+1}}+\frac{\mu_X(H)}{(1-\delta)^{2N+3}} \Big]\\
    &\precsim t^{-1-\beta}\Big[\frac{\bar{\theta}^N}{(1-\delta)^{2N+1}}+\frac{\mu_X(H)}{(1-\delta)^{2N+3}} \Big].
   \end{align*}

   When $a<t/2$,
   \begin{align*}
       \Big|\sum_{a< t/2}&P_{a+k-1}^{k-1}\int_{\{R>a+k-1\}}[\mathring{\Proj}(1)-\Proj(1)](\mathbbm{1}_{\{R\ge t-a\}}h)d\Leb_X\Big|\\
       &\precsim \sum_{a< t/2}\frac{P_{a+k-1}^{k-1}}{(1-\delta)^{N+3}}\mu_X(R>a+k-1)^{2k/(2k+\beta)}\mu_X(H)^{\beta/(2k+\beta)}\mu_X(R\ge t-a)\\
    &\quad +P_{a+k-1}^{k-1}\frac{\mu_X(R>a+k-1)}{(1-\delta)^{N+1}}\Big[\bar{\theta}^{N}\mu_X(R\ge t-a)+ \mu_X(H)\mu_X(R\ge t-a)\Big]\\
    &\quad +P_{a+k-1}^{k-1}\mu_X(R>a+k-1)\frac{\bar{\theta}^N}{(1-\delta)^{2N+1}} \mu_X(R\ge t-a)\\
    &\quad +P_{a+k-1}^{k-1}\frac{\mu_X(R>a+k-1)}{(1-\delta)^{2N+3}}\mu_X(H)\mu_X(R\ge t-a)\\
    &\precsim \sum_{a< t/2}\frac{P_{a+k-1}^{k-1}}{(1-\delta)^{N+3}}\mu_X(R>a+k-1)^{2k/(2k+\beta)}\mu_X(H)^{\beta/(2k+\beta)}\mu_X(R\ge t-a)\\
    &\quad +P_{a+k-1}^{k-1}\mu_X(R>a+k-1)\mu_X(R\ge t-a)\Big[\frac{\bar{\theta}^N}{(1-\delta)^{2N+1}}+\frac{\mu_X(H)}{(1-\delta)^{2N+3}} \Big]\\
    &\precsim t^{-k-\beta}\Big[\frac{\bar{\theta}^N}{(1-\delta)^{2N+1}}+\frac{\mu_X(H)^{\beta/(2k+\beta)}}{(1-\delta)^{2N+3}} \Big]
   \end{align*}where in the last ``$\precsim$" we use $$\sum_{a<t/2}P_{a+k-1}^{k-1}\mu_X(R>a+k-1)^{2k/(2k+\beta)}\precsim \sum_{a<t/2}a^{k-1-(k+\beta)/(1+\beta/2k)}< \infty$$ and $\sum_{a<t/2}P_{a+k-1}^{k-1}\mu_X(R>a+k-1)\precsim \sum_{a<t/2}a^{k-1-k-\beta}<\infty$. Similar to Lemma \ref{lastapproximateproj}, now we can determine $N:=\frac{\log \diam_{\theta}H}{\log \theta}-2$, then when $t\ge 2a_{\frac{\log \diam_{\theta}H}{\log \theta}-2}$,\begin{align*}
       \Big|\sum_{a< t}&\int_{\{R>a+k-1\}}[\mathring{\Proj}(1)-\Proj(1)](\mathbbm{1}_{\{R\ge t-a\}}h)d\Leb_X\Big|\\
       &\precsim t^{-1-\beta} \Big[\frac{\bar{\theta}^N}{(1-\delta)^{2N+1}}+\frac{\mu_X(H)^{\beta/(2k+\beta)}}{(1-\delta)^{2N+3}} \Big]\\
       &\precsim t^{-1-\beta}\Big[(\diam_{\theta}H)^{\frac{\log \bar{\theta}-2\log (1-\delta)}{\log \theta}}+\mu_X(H)^{\beta/(2k+\beta)}(\diam_{\theta}H)^{\frac{-2\log(1-\delta)}{\log \theta}}\Big]\\
       &\precsim t^{-1-\beta}\Big[(\diam_{\theta}H)^{\frac{\log \bar{\theta}-2\log (1-\delta)}{\log \theta}}+(\diam_{\theta}H)^{\beta/(2k+\beta)-\frac{2\log(1-\delta)}{\log \theta}}\Big]
   \end{align*}where in the last ``$\precsim$" the exponents of $\diam_{\theta}H$ are positive due to the choice of $\delta$ in Lemma \ref{localspectrumpic}. We conclude the proof by noting that $t^{-1-\beta}\approx \sum_{a+b=t,b>0}P_{a+k-1}^{k-1}\mu_X(R>a+k-1)\mu_X(R\ge b)$. 
   
  We employ the same argument for $$\sum_{a+b=t}P_{b+k-1}^{k-1}\int_{\{R>a\}}[\mathring{\Proj}(1)-\Proj(1)](\mathbbm{1}_{\{R\ge b+k-1\}}h)d\Leb_X,$$  so it will not be repeated here.
\end{proof}

Like in Lemma \ref{lastlemmaofk=1}, we can  conclude the proof in this subsection for the case ``$k>1$" now.
\begin{lemma}\label{lastlemmaofk=12}
   For any $\mu_X(H)\in (0, \sigma)$ and $n \gg 1$, \begin{align*}
        \frac{1}{2\pi}&\int_0^{2\pi}e^{-int}\int \Big[R^{(k-1)}_{>}(e^{it})\circ \frac{\mathring{\Proj}(1)}{1-\mathring{\lambda}(1)}\circ \mathring{R}_{\ge }(e^{it})\Big](h) d\Leb_Xdt\\
&=\frac{1+O\big[(\diam_{\theta}H)^{\frac{\log \bar{\theta}-2\log (1-\delta)}{\log \theta}}+(\diam_{\theta}H)^{\beta/(2k+\beta)-\frac{2\log(1-\delta)}{\log \theta}}\big]}{c_H\mu_X(H)+o(\mu_X(H))}\\
&\quad \times \sum_{a+b=n, b>0}P_{a+k-1}^{k-1}\mu_X(R>a+k-1)\mu_X(R\ge b),
    \end{align*} 
    \begin{align*}
        \frac{1}{2\pi}&\int_0^{2\pi}e^{-int}\int \Big[R_{>}(e^{it})\circ \frac{\mathring{\Proj}(1)}{1-\mathring{\lambda}(1)}\circ \mathring{R}^{(k-1)}_{\ge }(e^{it})\Big](h) d\Leb_Xdt\\
&=\frac{1+O\big[(\diam_{\theta}H)^{\frac{\log \bar{\theta}-2\log (1-\delta)}{\log \theta}}+(\diam_{\theta}H)^{\beta/(2k+\beta)-\frac{2\log(1-\delta)}{\log \theta}}\big]}{c_H\mu_X(H)+o(\mu_X(H))}\\
&\quad \times \sum_{a+b=n}P_{b+k-1}^{k-1}\mu_X(R>a)\mu_X(R\ge b+k-1),
    \end{align*} where the constant in $O(\cdot)$ does not depend on any $H$ satisfying $\mu_X(H)\in (0,\sigma)$ and the constant in $o(\cdot)$ depends on the hole center $z_0$ but does not depend on $\mu_X(H)$.
\end{lemma}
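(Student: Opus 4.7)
\medskip

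\noindent\textbf{Proof proposal for Lemma \ref{lastlemmaofk=12}.}
The plan is to combine three ingredients that are already available: the explicit decomposition provided by Lemma \ref{beginapproximateproj2}, the sharp bound on the $[\mathring{\Proj}(1)-\Proj(1)]$ error given by Lemma \ref{lastapproximateproj2}, and the expansion of $\mathring{\lambda}(1)$ furnished by Lemma \ref{approofeigen}. Since the statement for $k>1$ is a direct analog of Lemma \ref{lastlemmaofk=1} (only with the weights $P_{a+k-1}^{k-1}$, $P_{b+k-1}^{k-1}$ and shifted return-time tails), I would imitate the proof of Lemma \ref{lastlemmaofk=1} essentially verbatim, checking at each step that the new weights are absorbed into the right-hand side of Lemma \ref{lastapproximateproj2}.

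First I would apply Lemma \ref{beginapproximateproj2} to the first Fourier integral. This rewrites it as $\frac{\mathring{\lambda}(1)}{1-\mathring{\lambda}(1)}$ times the main term $\sum_{a+b=n}P_{a+k-1}^{k-1}\mu_X(R>a+k-1)\mu_X(R\ge b)$ plus $\frac{\mathring{\lambda}(1)}{1-\mathring{\lambda}(1)}$ times the error sum $\sum_{a+b=n}P_{a+k-1}^{k-1}\int_{\{R>a+k-1\}}[\mathring{\Proj}(1)-\Proj(1)](\mathbbm{1}_{\{R\ge b\}}h)\,d\Leb_X$. The second step is to invoke Lemma \ref{lastapproximateproj2}, which bounds this error sum by the main term multiplied by a factor $O\big[(\diam_\theta H)^{(\log\bar\theta-2\log(1-\delta))/\log\theta}+(\diam_\theta H)^{\beta/(2k+\beta)-2\log(1-\delta)/\log\theta}\big]$. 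Hence the error term is of the same order as the main term, with a small multiplicative factor that tends to zero as $\diam_\theta H\to 0$. The exponents of $\diam_\theta H$ were already arranged to be positive by the choice of $\delta$ in Lemma \ref{localspectrumpic}.

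Third, I would substitute the asymptotic expansion of Lemma \ref{approofeigen}, namely $\mathring{\lambda}(1)=1+c_H\mu_X(H)\cdot(-1)+o(\mu_X(H))$ (using the sign conventions in Theorem \ref{thm}), which yields
\[
\frac{\mathring{\lambda}(1)}{1-\mathring{\lambda}(1)}=\frac{1+O(\mu_X(H))}{c_H\mu_X(H)+o(\mu_X(H))}.
\]
Multiplying the $1+O(\mu_X(H))$ numerator into the $\{1+O[\cdots\diam_\theta H\cdots]\}$ factor coming from the error estimate, and observing that $\mu_X(H)$ is absorbed into the $\diam_\theta^{\epsilon} H$ correction (since $\mu_X(H)\precsim \diam_\theta H$), I obtain the stated formula for the first of the two Fourier coefficients. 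The second Fourier coefficient is handled by the entirely symmetric argument, applying the companion identity in Lemma \ref{beginapproximateproj2} and the companion bound in Lemma \ref{lastapproximateproj2} for the sum $\sum_{a+b=n}P_{b+k-1}^{k-1}\int_{\{R>a\}}[\mathring{\Proj}(1)-\Proj(1)](\mathbbm{1}_{\{R\ge b+k-1\}}h)\,d\Leb_X$.

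The argument above is essentially an assembly of lemmas already proved, so there is no serious obstacle: the only point requiring mild care is to make sure the constants in the $O(\cdot)$ do not depend on $H$ beyond the quantitative dependence already tracked in Lemmas \ref{beginapproximateproj2}--\ref{lastapproximateproj2} and that the ``$n \gg 1$'' hypothesis is compatible with the threshold $t\ge 2a_{(\log\diam_\theta H)/\log\theta-2}$ coming from Lemma \ref{avoidsingularityatfinitetime} as propagated through Lemma \ref{extendliverani2}; this explains why the constant in the resulting remainder is allowed to depend on $H$.
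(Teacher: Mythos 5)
Your proposal is correct and follows essentially the same path as the paper's proof: decompose via Lemma \ref{beginapproximateproj2}, bound the $[\mathring{\Proj}(1)-\Proj(1)]$ error sum via Lemma \ref{lastapproximateproj2}, substitute the expansion of $\mathring{\lambda}(1)$ from Lemma \ref{approofeigen}, and treat the second Fourier coefficient symmetrically.
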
 
 \begin{proof}
      Combining Lemma \ref{beginapproximateproj2}, Lemma \ref{approofeigen} with Lemma \ref{lastapproximateproj2}, we have
   \begin{align*}
        &\frac{1}{2\pi}\int_0^{2\pi}e^{-int}\int \Big[R^{(k-1)}_{>}(e^{it})\circ \frac{\mathring{\Proj}(1)}{1-\mathring{\lambda}(1)}\circ \mathring{R}_{\ge }(e^{it})\Big](h) d\Leb_Xdt\\
        &=\frac{\mathring{\lambda}(1)}{1-\mathring{\lambda}(1)}\sum_{a+b=n, b>0}P_{a+k-1}^{k-1}\mu_X(R>a+k-1)\mu_X(R\ge b)\\
        &\quad \times \Big\{1+O\Big[(\diam_{\theta}H)^{\frac{\log \bar{\theta}-2\log (1-\delta)}{\log \theta}}+(\diam_{\theta}H)^{\beta/(2k+\beta)-\frac{2\log(1-\delta)}{\log \theta}}\Big]\Big\}\\
        &=\frac{\mathring{\lambda}(1)}{1-\mathring{\lambda}(1)}\sum_{a+b=n, b>0}P_{a+k-1}^{k-1}\mu_X(R>a+k-1)\mu_X(R\ge b)\\
        & \quad \times \Big\{1+O\Big[(\diam_{\theta}H)^{\frac{\log \bar{\theta}-2\log (1-\delta)}{\log \theta}}+(\diam_{\theta}H)^{\beta/(2k+\beta)-\frac{2\log(1-\delta)}{\log \theta}}\Big]\Big\}\\
        &=\frac{1+O(\mu_X(H))}{c_H\mu_X(H)+o(\mu_X(H))}\sum_{a+b=n, b>0}P_{a+k-1}^{k-1}\mu_X(R>a+k-1)\mu_X(R\ge b)\\
        &\quad   \times \Big\{1+O\Big[(\diam_{\theta}H)^{\frac{\log \bar{\theta}-2\log (1-\delta)}{\log \theta}}+(\diam_{\theta}H)^{\beta/(2k+\beta)-\frac{2\log(1-\delta)}{\log \theta}}\Big]\Big\}\\
        &=\frac{1+O\big[(\diam_{\theta}H)^{\frac{\log \bar{\theta}-2\log (1-\delta)}{\log \theta}}+(\diam_{\theta}H)^{\beta/(2k+\beta)-\frac{2\log(1-\delta)}{\log \theta}}\big]}{c_H\mu_X(H)+o(\mu_X(H))}\\
        & \quad \times \sum_{a+b=n,b>0}P_{a+k-1}^{k-1}\mu_X(R>a+k-1)\mu_X(R\ge b).
    \end{align*} 
    
    We apply the same argument to $$\frac{1}{2\pi}\int_0^{2\pi}e^{-int}\int \Big[R_{>}(e^{it})\circ \frac{\mathring{\Proj}(1)}{1-\mathring{\lambda}(1)}\circ \mathring{R}^{(k-1)}_{\ge }(e^{it})\Big](h) d\Leb_Xdt$$ and obtain a similar result.
\end{proof} 

\subsection{Final steps of the proofs for Theorem \ref{thm}}\label{18}
This subsection is dedicated to finalizing the proof of Theorem \ref{thm} by summarizing the previous discussions we have done. As before, we will consider two cases ``$k=1$" and ``$k>1$" separately.
\begin{proof}[Proofs of Theorem \ref{thm} when $k=1$] By Lemma \ref{lastlemmaofk=1}, Lemma \ref{khastwocases}, Lemma \ref{estimateforunitypartition}, Lemma \ref{approfortrucation}, Lemma \ref{renewalcoeffi} and using $t^{-1-\beta}\approx \sum_{a+b=t, b>0}\mu_X(R>a)\mu_X(R\ge b)$ we have
\begin{align*}
        \frac{1}{2\pi}&\int_0^{2\pi}e^{-int}\int \Big[R_{>}(e^{it})\circ [I-\mathring{R}(e^{it})]^{-1}\circ \mathring{R}_{\ge }(e^{it})\Big](h) d\Leb_Xdt\\
        &=\frac{1+O\big[(\diam_{\theta}H)^{\frac{\log \bar{\theta}-2\log (1-\delta)}{\log \theta}}+(\diam_{\theta}H)^{\beta/(2+\beta)-\frac{2\log(1-\delta)}{\log \theta}}\big]}{c_H\mu_X(H)+o(\mu_X(H))}\\
        &\quad \times \sum_{a+b=n,b>0}\mu_X(R>a)\mu_X(R\ge b)+O_H(n^{-2})+ O_{\sigma,\sigma_1'}(n^{-1-\beta})\\
        & =\frac{1+O_{\sigma,\sigma_1',z_0}\big[\mu_X(H)+(\diam_{\theta}H)^{\frac{\log \bar{\theta}-2\log (1-\delta)}{\log \theta}}+(\diam_{\theta}H)^{\beta/(2+\beta)-\frac{2\log(1-\delta)}{\log \theta}}\big]}{c_H\mu_X(H)+o(\mu_X(H))}\\
        &\quad \times \sum_{a+b=n,b>0}\mu_X(R>a)\mu_X(R\ge b)+O_H(n^{-2})
    \end{align*}where $z_0$ in $O_{\sigma,\sigma_1',z_0}(\cdot)$ is due to the fact that $c_H$ and the constant in $o(\cdot)$ depend on the hole center $z_0$ only. Hence the constant in $O_{\sigma,\sigma_1',z_0}(\cdot)$ is independent of $\mu_X(H)$ and $\diam_{\theta}H$.
    
    By Lemma \ref{renewaleq2}, \begin{align*}
    &\frac{\mu_{\Delta}(\tau_H>n)}{\mu_{\Delta}(X)}=\sum_{i\ge n}\mu_X(R>i)+O_H(n^{-2})\\
    &\quad +\frac{1+O_{\sigma,\sigma_1',z_0}\big[\mu_X(H)+(\diam_{\theta}H)^{\frac{\log \bar{\theta}-2\log (1-\delta)}{\log \theta}}+(\diam_{\theta}H)^{\beta/(2+\beta)-\frac{2\log(1-\delta)}{\log \theta}}\big]}{c_H\mu_X(H)+o(\mu_X(H))}\\
        &\quad \times \sum_{a+b=n,b>0}\mu_X(R>a)\mu_X(R\ge b)\\
    &=\sum_{i\ge n}\mu_X(R>i)+O_H(n^{-2})\\
&\quad +\frac{1+O_{\sigma,\sigma_1',z_0}\big[\mu_X(H)+(\diam_{\theta}H)^{\epsilon}\big]}{c_H\mu_X(H)+o(\mu_X(H))}\times \sum_{a+b=n,b>0}\mu_X(R>a)\mu_X(R\ge b)
    \end{align*}which concludes the proof with $\epsilon=\min\{\beta/(2+\beta)-\frac{2\log(1-\delta)}{\log \theta}, \frac{\log \bar{\theta}-2\log (1-\delta)}{\log \theta}\}$.
    \end{proof}

\begin{proof}[Proofs of Theorem \ref{thm} when $k>1$] By Lemma \ref{lastlemmaofk=12}, Lemma \ref{khastwocases}, Lemma \ref{estimateforunitypartition}, Lemma \ref{approfortrucation}, Lemma \ref{renewalcoeffi} and using $n^{-k-\beta}\approx  \frac{1}{P_n^{k-1}}\big[\sum_{a+b=n-k+1,b>0}P_{a+k-1}^{k-1}\mu_X(R>a+k-1)\mu_X(R\ge b) +\sum_{a+b=n-k+1}P_{b+k-1}^{k-1}\mu_X(R>a)\mu_X(R\ge b+k-1)\big]$ we have
\begin{align*}
        \frac{1}{2\pi}&\int_0^{2\pi}e^{-int}\int \Big[R_{>}(e^{it})\circ [I-\mathring{R}(e^{it})]^{-1}\circ \mathring{R}_{\ge }(e^{it})\Big](h) d\Leb_Xdt\\
        &=\frac{1+O\big[(\diam_{\theta}H)^{\frac{\log \bar{\theta}-2\log (1-\delta)}{\log \theta}}+(\diam_{\theta}H)^{\beta/(2k+\beta)-\frac{2\log(1-\delta)}{\log \theta}}\big]}{c_H\mu_X(H)+o(\mu_X(H))}\\
        &\quad \times \frac{1}{P_n^{k-1}}\Big[\sum_{a+b=n-k+1,b>0}P_{a+k-1}^{k-1}\mu_X(R>a+k-1)\mu_X(R\ge b)\\
        &\quad +\sum_{a+b=n-k+1}P_{b+k-1}^{k-1}\mu_X(R>a)\mu_X(R\ge b+k-1)\Big]+O_H(n^{-k-1})+ O_{\sigma,\sigma_1'}(n^{-k-\beta})\\
        & =\frac{1+O_{\sigma,\sigma_1',z_0}\big[\mu_X(H)+(\diam_{\theta}H)^{\frac{\log \bar{\theta}-2\log (1-\delta)}{\log \theta}}+(\diam_{\theta}H)^{\beta/(2k+\beta)-\frac{2\log(1-\delta)}{\log \theta}}\big]}{c_H\mu_X(H)+o(\mu_X(H))}\\
        &\quad \times \frac{1}{P_n^{k-1}}\Big[\sum_{a+b=n-k+1, b>0}P_{a+k-1}^{k-1}\mu_X(R>a+k-1)\mu_X(R\ge b)\\
        &\quad +\sum_{a+b=n-k+1}P_{b+k-1}^{k-1}\mu_X(R>a)\mu_X(R\ge b+k-1)\Big]+O_H(n^{-k-1})
    \end{align*}where $z_0$ in $O_{\sigma,\sigma_1',z_0}(\cdot)$ is due to the fact that $c_H$  and the constant in $o(\cdot)$ depend on the hole center $z_0$ only. Hence the constant in $O_{\sigma,\sigma_1',z_0}(\cdot)$ is independent of $\mu_X(H)$ and $\diam_{\theta}H$.
    
    By Lemma \ref{renewaleq2}, \begin{align*}
    &\frac{\mu_{\Delta}(\tau_H>n)}{\mu_{\Delta}(X)}=\sum_{i\ge n}\mu_X(R>i)+O_H(n^{-k-1})\\
    &\quad +\frac{1+O_{\sigma,\sigma_1',z_0}\big[\mu_X(H)+(\diam_{\theta}H)^{\frac{\log \bar{\theta}-2\log (1-\delta)}{\log \theta}}+(\diam_{\theta}H)^{\beta/(2k+\beta)-\frac{2\log(1-\delta)}{\log \theta}}\big]}{c_H\mu_X(H)+o(\mu_X(H))}\\
        & \quad \times \frac{1}{P_n^{k-1}}\Big[\sum_{a+b=n-k+1,b>0}P_{a+k-1}^{k-1}\mu_X(R>a+k-1)\mu_X(R\ge b)\\
        &\quad +\sum_{a+b=n-k+1}P_{b+k-1}^{k-1}\mu_X(R>a)\mu_X(R\ge b+k-1)\Big]\\
        &=\sum_{i\ge n}\mu_X(R>i)+O_H(n^{-k-1})+\frac{1+O_{\sigma,\sigma_1',z_0}\big[\mu_X(H)+(\diam_{\theta}H)^{\epsilon}\big]}{c_H\mu_X(H)+o(\mu_X(H))}\\
        & \quad \times \frac{1}{P_n^{k-1}}\Big[\sum_{a+b=n-k+1,b>0}P_{a+k-1}^{k-1}\mu_X(R>a+k-1)\mu_X(R\ge b)\\
        &\quad +\sum_{a+b=n-k+1}P_{b+k-1}^{k-1}\mu_X(R>a)\mu_X(R\ge b+k-1)\Big]
        \end{align*}which concludes the proof with $\epsilon=\min\{\beta/(2k+\beta)-\frac{2\log(1-\delta)}{\log \theta}, \frac{\log \bar{\theta}-2\log (1-\delta)}{\log \theta}\}$.
    \end{proof}

\section{Applications}\label{appsection}
All dynamical systems considered in this section can be modeled by polynomial Young towers, i.e., satisfy the conditions of Definition \ref{defyoung}. The conditions of Theorem \ref{thm} require that the hole $H$ belongs to the base of a Young tower. We will show below that, in fact, it is not a restrictive condition. In other words, given a hole $H$, one can choose a base $X$ containing $H$ and build a Young tower over $X$.  Moreover, in this section it will be shown how the Corollaries \ref{where} and \ref{1st} can be applied to concrete dynamical systems.

\subsection{Which subsets of the phase space orbits prefer visiting at finite times: applications of Corollary \ref{where}}
\subsubsection{Liverani-Saussol-Vaienti (LSV) maps}\label{applsv}
In this subsection, we consider  one-dimensional LSV maps  defined as follows:
\begin{eqnarray}\label{lsvmap}
f(x) =
\begin{cases}
x+2^{\alpha}x^{1+\alpha},      & 0\le x \le \frac{1}{2}\\
2x-1,  & \frac{1}{2} < x \le 1 \\
\end{cases},\quad  0< \alpha <1 .
\end{eqnarray}

In \cite{lsv, Young} it was proved that there is an SRB measure $\mu_{\alpha}$, preserved by $f$, and the corresponding mixing rate is $O(n^{1-1/\alpha})$. Various limiting behaviors of this system have been obtained, e.g., in sequential setting, see \cite{Sudcds,  sutams}. Let $S:=\bigcup_{n \ge 0}f^{-n}\{0,1,1/2\}$, given different points $z_1,z_2 \in S^c$ as centers of two small holes $H_1,H_2$ respectively. We can find $X\subsetneq [0,1]$ that contains $H_1, H_2$ as a base and build a Young tower over it. To be precise, choose a sufficiently small $a_m:=f|_L^{-m}\{1\}$  such that $H_1\bigcup H_2 \subseteq (a_m,1]=:X$. Here, $f|_L$ denotes the left branch of $f$.

It follows from \cite{Young} that the system $([0,1], f, \mu_{\alpha})$ can be modeled by a first return Young tower $\Delta:=\{(x,n) \in (a_m,1] \times \{0,1,2, \cdots\}: n<R(x)\}$. Here $R(x):=\inf\{n\ge 1: f^n(x)\in (a_m,1]\}$, defined on $(a_m,1]$, is a first return time. The dynamics $F:\Delta \to \Delta$ sends $(x,n)$ to $(x,n+1)$ if $n+1<R(x)$, and $(x,n)$ to $(f^R(x),0)$ if $n=R(x)-1$. The dynamics $F$ of this Young tower has large images, satisfies the conditions in Definition \ref{defyoung}, and preserves an invariant probability measure $\mu_{\Delta}$ on $\Delta$. Its mixing rate is $O(n^{1-1/\alpha})$. Let $\pi:\Delta \to X$, $\pi(x,n)=f^n(x)$, then $\pi_* \mu_{\Delta}=\mu_{\alpha}$. We want to compare $\mu_{\alpha}(\tau_{H_1}>n)$ with $\mu_{\alpha}(\tau_{H_2}>n)$, where $\tau_H:=\inf\{n\ge 1: f^n \in H\}$. Observe that 
     \begin{align*}
         \mu_{\alpha}(\tau_H>n)&=\mu_{\Delta}(\inf\{n\ge 1: f^n \circ \pi \in H\}>n)\\
         &=\mu_{\Delta}(\inf\{n\ge 1: F^n \in \pi^{-1}H\}>n)\\
         &=\mu_{\Delta}(\inf\{n\ge 1: F^n \in H\times \{0\}\}>n).
     \end{align*}

We identify $H\times \{0\}$ with $H$, and $\inf\{n\ge 1: F^n \in H\times \{0\}\}$ with $\tau_H$. Then, without loss of generality, we will compare $\mu_{\Delta}(\tau_{H_1}>n)$ with $\mu_{\Delta}(\tau_{H_2}>n)$. According to Corollary \ref{where}, $\mu_{\Delta}(\tau_{H_1}>n)>\mu_{\Delta}(\tau_{H_2}>n)$ for any $n \ge n_0$ if $c_{H_1}^{-1}>c_{H_2}^{-1}$ and vice versa. ($c_H$ is defined in Theorem \ref{thm} in terms of dynamics $F$). Since $\Delta$ is a first return tower, the orbits in $[0,1]$ and the orbits in $\Delta$ share the same periodicity.  Hence, we have an explicit expression of \begin{eqnarray*}
 c_H=\begin{cases}
1, & z_0 \text{ is not periodic}\\
1-\prod_{i=0}^{p-1}|DF(F^i(z_0))|^{-1}= 1-\prod_{i=0}^{p-1}|Df(f^i(z_0))|^{-1} & z_0 \text{ is } p\text{-periodic} \\
\end{cases}.
\end{eqnarray*}

\begin{remark}
    The results on the first hitting statistic for the LSV maps in Theorem \ref{thm} naturally complement the results of \cite{mldp, Demersexp}.
\end{remark}

\begin{remark}\label{noninvariant}
    The escape rates for LSV maps in \cite{Demers} considered a non-SRB measure as an initial measure, and obtained various escape rates depending on the regularities of the density functions of the non-SRB measure. The proof of our Theorem \ref{thm} assumes invariance of SRB measure and uses the tower $\Delta$ in order to construct an operator renewal equation (\ref{ORE}). This method does not work for non-SRB initial measures. However, Remark \ref{remarkonore} indicates that the escape of orbits is expedited if orbits return more often to the base $X$ (i.e., the hyperbolic reference set) and therefore experience larger expansion. This observation has nothing to do with initial measures. Therefore, it ensures that Theorem \ref{thm} can be extended to non-SRB measures. We expect to address the non-SRB case in another paper.
\end{remark}

\begin{remark}\label{centeratsigular}
    If $z_0\in S$, then the escape becomes quite different. For example, if $z_0=0$, then $\mu_{\alpha}(\tau_H>n)$ decays exponentially because the intermittency of the system is killed as the hole is opened near the neural fixed point. Hence, the decay is faster than that in Theorem \ref{thm}, in other words, the asymptotic expansion in Theorem \ref{thm} does not hold for $z_0\in S$.
\end{remark}
\begin{remark}
     The same arguments ensure that analogous results hold for the degrees $d$ maps considered in \cite{Young}. Let a map $S^1 \to S^1$ have degree $d$. Let $d > 1$, and there is a point $0$ in $S^1$ such that
    \begin{enumerate}
        \item $f$ is $C^1$ in $S^1$, and $Df>1$ in $S^1\setminus \{0\}$;
        \item $f$ is $C^2$ in $S^1\setminus \{0\}$;
        \item $f(0)=0$, $Df(0)=1$, and for all $x=0$, $-xD^2f(x)\approx |x|^{\gamma}$ for some $\gamma>0$.
    \end{enumerate}
      
\end{remark}

\subsubsection{Farey maps}
A finite-type $\alpha$-Farey map with exponent $\theta$ is defined as \cite{farey}: \begin{eqnarray}\label{farey}
f(x) =
\begin{cases}
(1-x)/a_1,      & x \in [t_2,t_1]\\
a_{n-1}(x-t_{n+1})/a_n+t_n,  & x \in [t_{n+1}, t_n], n \ge 2 \\
0,  & x =0 \\
\end{cases},
\end{eqnarray} where $a_n \ge 0, t_n=\sum_{i\ge n}a_n, t_1=1$ and $t_n\approx n^{-\theta}, \theta>1$. The  SRB measure is $\Leb_{[0,1]}$ preserved by $f$, and the corresponding  mixing rate equals $O(n^{1-\theta})$. Let $S:=\bigcup_{n \ge 0}f^{-n}\{t_i, i \ge 1\}$, given two different points $z_1,z_2 \in S^c$ as the centers of two small holes $H_1, H_2$ respectively. We can find $X\subsetneq [0,1]$ containing $H_1, H_2$ as a base and build a Young tower over it. To be precise, choose a sufficiently small $t_m$,  such that $H_1\bigcup H_2 \subseteq (t_m,1]=:X$.

It follows from \cite{Young} that the system $([0,1], f, \Leb_{[0,1]})$ can be modeled by a first return Young tower $\Delta:=\{(x,n) \in (t_m,1] \times \{0,1,2, \cdots\}: n<R(x)\}$. Here $R(x):=\inf\{n\ge 1: f^n(x)\in (t_m,1]\}$, defined on $(t_m,1]$, is a first return time, and the map $F:\Delta \to \Delta$ sends $(x,n)$ to $(x,n+1)$ if $n+1<R(x)$, and $(x,n)$ to $(f^R(x),0)$ if $n=R(x)-1$. The dynamics $F$ of this Young tower has large images and satisfies the conditions 
 of Definition \ref{defyoung}, and $F$ preserves an invariant probability measure $\mu_{\Delta}$ in $\Delta$, and its mixing rate is $O(n^{1-\theta})$. Let $\pi:\Delta \to X$, $\pi(x,n)=f^n(x)$, then $\pi_* \mu_{\Delta}=\Leb_{[0,1]}$. Arguing analogously to subsection \ref{applsv}, we obtain, according to Corollary \ref{where}, that $\Leb_{[0,1]}(\tau_{H_1}>n)>\Leb_{[0,1]}(\tau_{H_2}>n)$ for any $n \ge n_0$ if $c_{H_1}^{-1}>c_{H_2}^{-1}$ and vice versa.

\subsubsection{Intermittent maps with critical points and indifferent fixed points}
We start by defining the class of maps that is considered in this subsection.
Let $I, I_{-}, I_{+}$ be compact intervals and $\mathring{I}, \mathring{I}_{-}, \mathring{I}_{+}$ denote their interiors. Suppose
that $I = I_{-} \bigcup I_{+}$ and $\mathring{I}_{-} \bigcap \mathring{I}_{+} =\emptyset$. Without loss of generality, we can assume that $I = [-1, 1], I_{-} = [-1, 0], I_{+} = [0, 1]$. Suppose that

\begin{enumerate}
    \item $f:I \to I$ is a complete branch, that is, the restrictions $f_{-}: \mathring{I}_{-} \to \mathring{I}$ and $f_{+}: \mathring{I}_{+} \to \mathring{I}$ are the orientation-preserving $C^2$-diffeomorphisms, and the only fixed points of $f$ are the endpoints of $I$.
\item There exist constants $l_1, l_2 \ge  0, \iota, k_1, k_2, a_1, a_2, b_1, b_2 > 0$ such that:
\begin{enumerate}
    \item if $l_1, l_2 > 0$ and $k_1, k_2 \neq 1$. Then \begin{eqnarray}\label{lsvlikemap}
f(x) =
\begin{cases}
x+b_1(1+x)^{1+l_1},      & x \in U_{-1}\\
1-a_1|x|^{k_1},  & x\in U_{0-} \\
-1+a_2x^{k_2},  & x\in U_{0+} \\
x-b_2(1-x)^{1+l_2},  & x\in U_{+1} \\
\end{cases}, 
\end{eqnarray} where $U_{0-}:= (-\iota, 0], U_{0+}:= [0, \iota), U_{-1}:= f(U_{0+}), U_{+1}:= f(U_{0-})$.
    \item  If $l_1 = 0$ and/or $l_2 = 0$, then we replace the corresponding lines in (\ref{lsvlikemap}) with $$f|_{U_{\pm 1}}:= \pm 1 + (1 + b_1)(x \mp 1) \mp \xi(x),$$
    where $\xi$ is $C^2$, $\xi(\pm1) = 0$, $\xi'(\pm 1) = 0$, and $\xi''(x) > 0$ on $U_{-1}$ while $\xi''(x) < 0$ on $U_{+1}$.
\item If $k_1 = 1$ and/or $k_2 = 1$, then we replace the corresponding lines in (\ref{lsvlikemap}) with the
assumption that $f'(0-) = a_1 > 1$, and/or $f'(0+) = a_2 > 1$, respectively, and that $f$ is monotonous in the corresponding neighborhood. 
\end{enumerate} 

\item Let \begin{gather*}\Delta_0^{-}:= f^{-1}(0, 1) \bigcap I_{-}, \quad \Delta_0^{+}:= f^{-1}(-1, 0) \bigcap I_{+}\\
    \Delta_n^{-}:= f^{-1}(\Delta_{n-1}^{-} )\bigcap I_{-},\quad  \Delta_n^{+}:= f^{-1}(\Delta_{n-1}^{+}) \bigcap I_{+}\\
    \delta_n^{-}:= f^{-1}(\Delta_{n-1}^{+})\bigcap \Delta^{-}_0,\quad  \delta_n^{+}:= f^{-1}(\Delta_{n-1}^{-} )\bigcap \Delta^{+}_0.
\end{gather*}

There exists $\lambda > 1$ such that for all $1 \le  n \le n_{\pm}$ and for all $x \in \delta^{\pm}_n$ we have $(f^n)'(x)>\lambda$, where $n_{+}:= \min\{n : \delta^{+}_n \subseteq U_{0+}\}$, and $n_{-}:= \min\{n : \delta^{-}_{n} \subseteq U_{0-}\}$.

\item $\beta_1:=k_2l_1, \beta_2:=k_1l_2$, and $\beta:= \max\{\beta_1, \beta_2\}\in (0,1)$.
\end{enumerate}

The following two-parameter map $f: [-1, 1] \to [-1, 1]$,  considered in \cite{luzzato}, is, in fact, a special example of (\ref{lsvlikemap}):
\begin{eqnarray*}
f(x) =
\begin{cases}
x+(x+1)^{1+l_1},      & -1\le x \le 0\\
-1+2x^{k_2},  & 0 < x \le 1 \\
\end{cases},\quad  0<l_1<1/k_2 \le 1.
\end{eqnarray*}

It follows from \cite{luzzato} that the map $f$ admits a unique ergodic invariant probability measure $\mu_f$, 
absolutely continuous with respect to $\Leb_{[0,1]}$, and mixing with a polynomial rate $(1-\beta)/\beta$.

Let $S=\bigcup_{i\ge 0}f^{-i}\{-1,0,1\}$. Choose $z_1, z_2 \in S^c$, and the holes $H_1, H_2$ respectively. Choose the base $X$ in the Young towers in the following ways: 
\begin{enumerate}
    \item if $z_1, z_2 \in \Delta_0^{-}$ (resp. $\Delta_0^{+}$), then we choose $X= \Delta_0^{-}$ (resp. $ \Delta_0^{+}$), and sufficiently small holes $H_1, H_2 \subseteq X$.
    \item if $z_1 \in \Delta_{n_1}^{-}$ and $z_2 \in \Delta_{n_2}^{-}$ for some $n_1, n_2 \ge 0$, then we choose $X=\bigcup_{0\le i \le \max\{n_1,n_2\}}\Delta_i^{-}$, and sufficiently small holes $H_1, H_2 \subseteq X$.
     \item if $z_1 \in \Delta_{n_1}^{+}$ and $z_2 \in \Delta_{n_2}^{+}$ for some $n_1, n_2 \ge 0$, then we choose $X=\bigcup_{0\le i \le \max\{n_1,n_2\}}\Delta_i^{+}$, and sufficiently small holes $H_1, H_2 \subseteq X$.
\item if $z_1 \in \Delta_{n_1}^{+}$ and $z_2 \in \Delta_{n_2}^{-}$ (or ``$z_1 \in \Delta_{n_1}^{-}$ and $z_2 \in \Delta_{n_2}^{+}$") for some $n_1, n_2 \ge 0$, then we choose $X=\bigcup_{0\le i \le \max\{n_1,n_2\}}\Delta_i^{+}\bigcup \Delta_i^{-}$, and sufficiently small holes $H_1, H_2 \subseteq X$. 
\end{enumerate}
Let $R$ be the first return time to $X$. According to Proposition 2.6 of \cite{luzzato}, $\mu_f(R>t) \approx t^{-1/\beta}$. To see this, we adopt the notations $\tau^{+}, \tau^{-}$ in Proposition 2.6 of \cite{luzzato} and present the argument for the case ``$z_1 \in \Delta_{n_1}^{+}$ and $z_2 \in \Delta_{n_2}^{-}$". (Other cases can be argued similarly). \begin{align*}
    \mu_f(R>t)=\mu_f|_{\bigcup_{0\le i \le \max\{n_1,n_2\}}\Delta_i^{+}}(R>t)+\mu_f|_{\bigcup_{0\le i \le \max\{n_1,n_2\}}\Delta_i^{-}}(R>t).
\end{align*}

The second term $\mu_f|_{\bigcup_{0\le i \le \max\{n_1,n_2\}}\Delta_i^{-}}(R>t)$ is dominated by an upper bound $\mu_f(\tau^{+}>t-2\max\{n_1,n_2\})$ and a lower bound $\mu_f(\tau^{+}>t+2\max\{n_1,n_2\})$. According to Proposition 2.6 of \cite{luzzato}, \begin{gather*}
    \mu_f|_{\bigcup_{0\le i \le \max\{n_1,n_2\}}\Delta_i^{-}}(R>t)\approx t^{-1/\beta_2} \text{ if } l_2>0.\\
     \mu_f|_{\bigcup_{0\le i \le \max\{n_1,n_2\}}\Delta_i^{-}}(R>t)=o(t^{-1/\beta_1}) \text{ if } l_2=0. 
\end{gather*}Similarly, the first term  \begin{gather*}
    \mu_f|_{\bigcup_{0\le i \le \max\{n_1,n_2\}}\Delta_i^{+}}(R>t)\approx t^{-1/\beta_1} \text{ if } l_1>0.\\
     \mu_f|_{\bigcup_{0\le i \le \max\{n_1,n_2\}}\Delta_i^{+}}(R>t)=o(t^{-1/\beta_2}) \text{ if } l_1=0. 
\end{gather*} Therefore, \[\mu_f(R>t)=\mu_f|_{\bigcup_{0\le i \le \max\{n_1,n_2\}}\Delta_i^{+}}(R>t)+\mu_f|_{\bigcup_{0\le i \le \max\{n_1,n_2\}}\Delta_i^{-}}(R>t)\approx t^{-1/\beta}.\]

We build a Young tower over $X$ with the roof function $R$. The dynamics $F$ of this Young tower has large images and satisfies the conditions in Definition \ref{defyoung}. Arguing as in subsection of \ref{applsv}, according to Corollary \ref{where}, $\mu_f(\tau_{H_1}>n)>\mu_f(\tau_{H_2}>n)$ for any $n \ge n_0$ if $c_{H_1}^{-1}>c_{H_2}^{-1}$ and vice versa.  

\subsection{Applications of Corollary \ref{1st} to finite time predictions of dynamics}
Consider again the $\alpha$-Farey map with the exponent $\theta$:
\begin{eqnarray*}
f(x) =
\begin{cases}
(1-x)/a_1,      & x \in [t_2,t_1]\\
a_{n-1}(x-t_{n+1})/a_n+t_n,  & x \in [t_{n+1}, t_n], n \ge 2 \\
0,  & x =0 \\
\end{cases},
\end{eqnarray*} where $a_n \ge 0, t_n=\sum_{i\ge n}a_n= n^{-\theta}, \theta \in (1,2)$. Make now two small holes in $H_1,H_2 \subseteq [t_2,t_1]$ with the same Lebesgue measures and with the centers outside of $S:=\bigcup_{n \ge 0}f^{-n}\{t_i, i \ge 1\}$. At first we build a first return Young tower over $X:=[t_2,t_1]$. Then $\mu_X(R\ge n)=t_n=n^{-\theta}$ and $\mu_X(R=n)=t_n-t_{n+1}=n^{-\theta}-(1+n)^{-\theta}$. We remark that if the holes were in $[t_n,t_1]$ for some $n>2$, then $X$ would be $[t_n,t_1]$. To simplify the argument, we only consider holes in $[t_2,t_1]$. According to Corollary \ref{1st}, we need to estimate $ b_{n-1}- b_{n}$. \begin{align*}
    b_{n-1}-& b_{n}=\sum_{a+b=n-1, b>0}\mu_X(R> a)\mu_X(R\ge b)-\sum_{a+b=n, b>0}\mu_X(R> a)\mu_X(R\ge b)\\
    &=\sum_{0\le a \le n-2}\mu_X(R> a)\mu_X(R\ge n-1-a)-\sum_{0\le a \le n-1}\mu_X(R>a)\mu_X(R\ge n-a)\\
&=\sum_{0\le a \le n-2}\mu_X(R> a)\mu_X(R\ge n-1-a)-\sum_{0\le a \le n-2}\mu_X(R>a)\mu_X(R\ge n-a)\\
&\quad-\mu_X(R\ge n) \\
&=\sum_{0\le a \le n-2}\mu_X(R> a)\mu_X( R=n-1-a)- n^{-\theta}\\
    &=\sum_{0\le a \le n-2}(a+1)^{-\theta} [(n-1-a)^{-\theta}-(n-a)^{-\theta}]-n^{-\theta}.
    \end{align*}

    Let $0\le a \le n/2$, $\sum_{0\le a \le n/2}(a+1)^{-\theta} [(n-1-a)^{-\theta}-(n-a)^{-\theta}]=O(n^{-\theta-1})$. Then we can continue the above estimate under the assumption that $n \gg 1$.
    \begin{align*}
    &=O(n^{-\theta-1})+\sum_{n/2\le a \le n-2}(a+1)^{-\theta} [(n-1-a)^{-\theta}-(n-a)^{-\theta}]-n^{-\theta}\\
    &=O(n^{-\theta-1})+\frac{n}{n^{2\theta}}\sum_{n/2\le a \le n-2}\Big(\frac{a}{n}+\frac{1}{n}\Big)^{-\theta} \Big[\Big(1-\frac{1}{n}-\frac{a}{n}\Big)^{-\theta}-(1-\frac{a}{n}\Big)^{-\theta}\Big]\frac{1}{n}-n^{-\theta}\\
    &=O(n^{-\theta-1})+\frac{n}{n^{2\theta}}\sum_{n/2\le a \le n-2}\Big(\frac{a}{n}+\frac{1}{n}\Big)^{-\theta} \Big[\Big(1-\frac{1}{n}-\frac{a}{n}\Big)^{-\theta}-\Big(1-\frac{a}{n}\Big)^{-\theta}\Big]\frac{1}{n}-n^{-\theta}\\
    &=O(n^{-\theta-1})+\frac{n}{n^{2\theta}}\int_{1/2+\eta_n}^{1-2/n+\tau_n}(x+n^{-1})^{-\theta}[(1-n^{-1}-x)^{-\theta}-(1-x)^{-\theta}]dx-n^{-\theta}
\end{align*}where the existence of $\eta_n, \tau_n$ follows from the mean value theorem, and $\eta_n, \tau_n=O(n^{-2})$.

For any $\epsilon<1$ close to $1$ and for sufficiently large $n\gg 1$ such that $1-2/n+\tau_n>\epsilon$, we have\begin{align*}
    &n^{\theta}(b_{n-1}-b_n)=O(n^{-1})+\frac{n}{n^{\theta}}\int_{1/2+\eta_n}^{1-2/n+\tau_n}(x+n^{-1})^{-\theta}[(1-n^{-1}-x)^{-\theta}-(1-x)^{-\theta}]dx-1\\
    &=O_{\epsilon}(n^{-\theta+1})+\frac{n}{n^{\theta}}\int_{\epsilon}^{1-2/n+\tau_n}(x+n^{-1})^{-\theta}[(1-n^{-1}-x)^{-\theta}-(1-x)^{-\theta}]dx-1\\
    &=O_{\epsilon}(n^{-\theta+1})+\frac{n}{n^{\theta}}(\epsilon_n+n^{-1})^{-\theta}\int_{\epsilon}^{1-2/n+\tau_n}[(1-n^{-1}-x)^{-\theta}-(1-x)^{-\theta}]dx-1\\
    &=O_{\epsilon}(n^{-\theta+1})+\frac{n}{n^{\theta}}(\epsilon_n+n^{-1})^{-\theta}\Big[\frac{(1-n^{-1}-x)^{-\theta+1}}{\theta-1}\Big|_{\epsilon}^{1-2/n+\tau_n}-\frac{(1-x)^{-\theta+1}}{\theta-1}\Big|_{\epsilon}^{1-2/n+\tau_n}\Big]-1\\
    &=O_{\epsilon}(n^{-\theta+1})+\frac{n}{n^{\theta}}(\epsilon_n+n^{-1})^{-\theta}\Big[\frac{(n^{-1}-\tau_n)^{-\theta+1}}{\theta-1}-\frac{(2n^{-1}-\tau_n)^{-\theta+1}}{\theta-1}+O_{\epsilon}(1)\Big]-1,
\end{align*} where the existence of $\epsilon_n \in (\epsilon, 1-2/n)$ is due to the mean value theorem.

Let $n \to \infty$ and $\epsilon \to 1$. Then $\lim_{n \to \infty}n^{\theta}(b_{n-1}-b_n)=\frac{1-2^{1-\theta}}{\theta-1}-1<0$. Hence, if $n\gg 1$, then $b_n $ is increasing, $b_n-b_{n-1} \approx n^{-\theta}$. 
By Corollary \ref{1st}, $\mu_{\Delta}(\tau_{H_1}=n)<\mu_{\Delta}(\tau_{H_2}=n)$ if $c_{H_1}^{-1}>c_{H_2}^{-1}$, and vice versa. Recall also that $\mu_{\Delta}(\tau_{H_1}>n)>\mu_{\Delta}(\tau_{H_2}>n)$ if $c_{H_1}^{-1}>c_{H_2}^{-1}$ and vice versa. Therefore, if $c_{H_1}^{-1}>c_{H_2}^{-1}$ and $n$ is sufficiently large, then the orbits prefer to visit $H_2$ in the time interval $[1,n]$, and the orbits prefer to visit $H_1$ at times larger than $n$, i.e., in the time interval $[n, \infty)$. It allows us to predict which hole the orbits will more likely visit in a finite time interval $[1,n]$.

\section{Appendix: generalized Keller-Liverani perturbation theory}\label{appendix}

In this section, we present a general result of an independent general interest for a family of general bounded linear operators $\{P_{v}:v \in  I\subsetneq \mathbb{R}^d \text{ or }\mathbb{C}^d\}$, which generalizes Keller-Liverani operator perturbation theory. 

\begin{proposition}\label{extendliverani1}
Let $(B, ||\cdot||)$ be a Banach space having a second norm $|\cdot|$ (with respect to which generically $B$ is not complete). For any bounded linear operator $Q: B\to B$, define $|||Q|||:=\sup_{||\phi||\le 1}|Q\phi|$. We consider a family of bounded linear operators $\{P_v: B \to B, v\in I\}$ and a  fixed $o \in I$ with
the following properties: there are $C , M>0, \theta \in (0,1), \theta<M$ such that for all $v \in I, n \in \mathbb{N}, \phi \in B$, 
   \begin{enumerate}
        \item Continuity: $|\phi|\le C ||\phi||$
        \item Weak norms boundedness: $|P^n_v|\le C M^n$
        \item Uniform Lasota-Yorke inequalities: $||P_v^n\phi||\le C \theta^n||\phi||+CM^n|\phi|$
        \item If $z \in \sigma(P_v)$ and $|z|> \theta$, then $z$ is not in the residual spectrum of $P_v$. Here, ``$z$ is in the residual spectrum of $P_v$" means that $z-P_v$ is injective, but it does not have a dense range.
        \item Weak perturbations: $\tau_v:=|||P_o-P_v|||\to 0$ as $v \to o$.
    \end{enumerate}

Fix $\delta>0$ and $r\in(\theta, M)$. Let $\eta:=\frac{\log r/\theta}{\log M/\theta} \in (0,1)$. Define $V_{\delta, r}:=\{z\in \mathbb{C}: |z|< r \text{ or } \dist(z, \sigma(P_o))< \delta\}$. Then there are constants $\epsilon_{\delta, r}, b_{\delta,r}$ and $a_{r}>0$ ($a_r$ does not depend on $\delta$), such that for any $\dist(v,o)\le \epsilon_{\delta, r}, \phi\in B$, \begin{gather}\label{3}
    \sup_{z \notin V_{\delta, r}}||(z-P_v)^{-1}\phi||\le a_r ||\phi||+b_{\delta,r}|\phi|, \quad \sup_{z \notin V_{\delta, r}}|||(z-P_v)^{-1}-(z-P_o)^{-1}|||\precsim_{\delta,r} \tau_v^{1/2}+\tau_{v}^{\frac{\eta}{2(1-\eta)}}
\end{gather} where the constant in ``$\precsim_{\delta,r}$" does not depend on $v$. 

If $\lambda$ is an isolated eigenvalue of $P_o$ with $|\lambda|> \theta$, choose any $\delta< a_r^{-1}$ such that $B_{\delta}(\lambda)\bigcap \sigma(P_o)=\{\lambda\}$ and define a projection $\Proj_{v}^{\lambda,\delta}:=\frac{1}{2\pi i}\int_{\partial B_{\delta}(\lambda)}(z-P_v)^{-1}dz$. Then there is a constant $\epsilon_{\delta, r}'>0$ such that for any $\dist(v,o)\le \epsilon_{\delta, r}'$, $$rank(\Proj_{v}^{\lambda,\delta})=rank(\Proj_{o}^{\lambda,\delta}), \quad ||\Proj_{v}^{\lambda,\delta}||\le \delta(a_r+b_{\delta,r}),$$ $$|||\Proj_{v}^{\lambda,\delta}-\Proj_{o}^{\lambda,\delta}|||\precsim_{\delta,r} \tau_v^{1/2}+\tau_{v}^{\frac{\eta}{2(1-\eta)}}.$$ 

Suppose that $\partial B_{r}(0)\bigcap \sigma(P_o)=\emptyset$. Define another projection $\Proj_{v}^{r}:=\frac{1}{2\pi i}\int_{\partial B_{r}(0)}(z-P_v)^{-1}dz$. Then,  for any $\dist(v,o)\le \epsilon_{\delta,r}$, $||P_v^n\circ \Proj_{v}^{r}||\le r^{n+1}(a_r+b_{\delta,r})$. 
\end{proposition}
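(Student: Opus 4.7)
The plan is to adapt the Keller--Liverani perturbation argument \cite{liveranikeller1} to the present setting, where the essential spectral radius is $\theta$ and the uniform Lasota--Yorke inequalities have exponential factor $M^n$ on the weak norm (rather than $1$, as in the original Keller--Liverani). The starting observation is that, because the residual spectrum is empty outside $\{|z|\le\theta\}$ by hypothesis (4), the resolvent $(z-P_o)^{-1}$ is bounded in operator norm on any compact set $K\subseteq V_{\delta,r}^c$, with bound depending only on $\delta,r$. I would first prove the weak-norm version: using the resolvent identity $(z-P_v)^{-1}=(z-P_o)^{-1}+(z-P_o)^{-1}(P_v-P_o)(z-P_v)^{-1}$, iterated with $|||P_v-P_o|||=\tau_v$ small, one gets $|||(z-P_v)^{-1}|||\le 2|||(z-P_o)^{-1}|||$ uniformly on $V_{\delta,r}^c$ once $\dist(v,o)\le\epsilon_{\delta,r}$.

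Next I would bootstrap from weak to strong norm. The identity
\begin{equation*}
(z-P_v)^{-1}=\sum_{k=0}^{N-1}z^{-k-1}P_v^k+z^{-N}P_v^N(z-P_v)^{-1}
\end{equation*}
combined with the Lasota--Yorke inequality applied to $P_v^N$ and to each $P_v^k$ gives, for $|z|\ge r$,
\begin{equation*}
\|(z-P_v)^{-1}\phi\|\le C(\theta/r)^N\|(z-P_v)^{-1}\phi\|+C(M/r)^N|(z-P_v)^{-1}\phi|+\sum_{k<N}r^{-k-1}(C\theta^k\|\phi\|+CM^k|\phi|).
\end{equation*}
Choose $N=N_r$ so that $C(\theta/r)^N\le 1/2$; then $\frac12\|(z-P_v)^{-1}\phi\|$ absorbs the first term, the third sum is bounded by $a_r\|\phi\|+b_r|\phi|$ (with $a_r$ independent of $\delta$), and the second term uses the weak-norm bound on the resolvent, contributing the $b_{r,\delta}|\phi|$ piece. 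This is the announced bound in \eqref{3}. The optimization of $N$ vs.\ $\tau_v$ enters when one applies the same identity inside the difference
\begin{equation*}
(z-P_v)^{-1}-(z-P_o)^{-1}=(z-P_v)^{-1}(P_v-P_o)(z-P_o)^{-1}
\end{equation*}
and trades strong norm for weak norm via $|\cdot|\le C\|\cdot\|$: this produces a term of order $\tau_v$ multiplied by $(M/r)^N$, balanced against $(\theta/r)^N$, and the optimum $N\sim -\log\tau_v/\log(M/\theta)$ yields the exponent $\eta/(2(1-\eta))$. The square-root loss $\tau_v^{1/2}$ is the standard Keller--Liverani loss arising when one closes the identity against itself to get a bound uniform in the stronger norm.

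For the projection statement, I would plug the resolvent difference estimate into
\begin{equation*}
\Proj_v^{\lambda,\delta}-\Proj_o^{\lambda,\delta}=\frac{1}{2\pi i}\int_{\partial B_\delta(\lambda)}[(z-P_v)^{-1}-(z-P_o)^{-1}]\,dz
\end{equation*}
and take $|||\cdot|||$ on both sides; the integrand is uniformly small, and $|\partial B_\delta(\lambda)|=2\pi\delta$ absorbs into the $\precsim_{\delta,r}$ constant. Rank preservation follows from the classical argument: once $|||\Proj_v^{\lambda,\delta}-\Proj_o^{\lambda,\delta}|||<1/\|\Proj_o^{\lambda,\delta}\|_{B\to B}$ (which can be enforced for $\dist(v,o)\le\epsilon_{\delta,r}'$ because $\delta<a_r^{-1}$ forces the strong-norm bound on $\Proj_o^{\lambda,\delta}$ to be controlled), the map $\Proj_v^{\lambda,\delta}|_{\mathrm{Range}(\Proj_o^{\lambda,\delta})}$ is injective with a left inverse, and symmetry gives equality of ranks. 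Finally, for the last claim about $\Proj_v^r$, I would use the Dunford--Taylor calculus
\begin{equation*}
P_v^n\circ\Proj_v^r=\frac{1}{2\pi i}\int_{\partial B_r(0)}z^n(z-P_v)^{-1}\,dz,
\end{equation*}
bound the integrand in strong norm by $r^n(a_r\|\phi\|+b_{\delta,r}|\phi|)$ using the first estimate of \eqref{3}, and integrate over a circle of length $2\pi r$ to obtain the stated $r^{n+1}(a_r+b_{\delta,r})/(2\pi)$.

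The main obstacle is bookkeeping: the bound $a_r$ must be kept independent of $\delta$ (used later in Lemma \ref{localspectrumpic} to choose $\delta<1-\sqrt{\bar\theta}$ without shrinking the resolvent bound), while $b_{r,\delta}$ is naturally $\delta$-dependent because it inherits the weak-norm resolvent bound on $\partial B_\delta(\lambda)\subseteq V_{\delta,r}^c$. Concretely, one must verify that in the bootstrap step the $\|\phi\|$-coefficient arises only from the explicit partial sum $\sum_{k<N_r}r^{-k-1}C\theta^k$ (depending on $r$ alone), while the $|\phi|$-coefficient collects both the $(M/r)^{N_r}$ term times the weak resolvent bound (which is $\delta$-dependent on $\partial B_\delta(\lambda)$) and the explicit $\sum_{k<N_r}r^{-k-1}CM^k$. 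The rest is routine verification, which I would carry out paralleling the Banach-lattice proof in \cite{liveranikeller1} with the new $M^n$ factor tracked throughout.
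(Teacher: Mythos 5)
Your first step — deriving $|||(z-P_v)^{-1}|||\le 2|||(z-P_o)^{-1}|||$ from the iterated resolvent identity — has a genuine gap, and the gap propagates through the rest of the proposal. Two things fail. First, the identity $(z-P_v)^{-1}=(z-P_o)^{-1}+(z-P_o)^{-1}(P_v-P_o)(z-P_v)^{-1}$ presupposes that $(z-P_v)^{-1}$ exists on $V_{\delta,r}^c$, which is precisely what must be proved: for $v\neq o$ one has no a priori control of $\sigma(P_v)$ outside $\{|z|\le\theta\}$. Second, even granting existence, the Neumann iteration does not close because the norms do not match: $\tau_v$ controls $P_v-P_o$ only as a map $(B,\|\cdot\|)\to(B,|\cdot|)$, whereas $(z-P_o)^{-1}$ is bounded only as a map $(B,\|\cdot\|)\to(B,\|\cdot\|)$. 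The composition $(z-P_o)^{-1}(P_v-P_o)$ is therefore not small in any operator norm at your disposal, and this is exactly the reason Keller--Liverani perturbation theory cannot proceed by a geometric-series resolvent bound. Your bootstrap identity $(z-P_v)^{-1}=\sum_{k<N}z^{-k-1}P_v^k+z^{-N}P_v^N(z-P_v)^{-1}$ similarly presupposes invertibility. The paper avoids this by fixing an arbitrary $g$ with $(z-P_v)g=h$ and deriving an a priori bound $\|g\|\le a_r\|h\|+b_{r,\delta}|h|$ \emph{without} assuming invertibility: the strong-norm piece comes from the telescoping identity $z^ng=(z^n-P_v^n)g+P_v^ng$ and Lasota--Yorke; the weak-norm piece $|g|$ is controlled by writing $g=(z-P_o)^{-1}(z-P_o)g$ (using only the resolvent of $P_o$, which exists), expanding by the same partial-sum identity, and trading $(z-P_o)g$ for $h$ at cost $\tau_v\|g\|$; existence of $(z-P_v)^{-1}$ is then \emph{concluded} from the a priori bound together with hypothesis~(4) on the residual spectrum — a hypothesis your sketch never invokes, which is a signal that the existence question was skipped.

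The rank-preservation step as you state it also conflates norms: the condition $|||\Proj_v^{\lambda,\delta}-\Proj_o^{\lambda,\delta}|||<1/\|\Proj_o^{\lambda,\delta}\|_{B\to B}$ compares a strong-to-weak bound with a strong-to-strong one and does not by itself force injectivity of $\Proj_o^{\lambda,\delta}$ on $\mathrm{Range}(\Proj_v^{\lambda,\delta})$. The correct mechanism is that $\delta<a_r^{-1}$ forces $\|\phi\|\precsim_{\delta,r}|\phi|$ on $\mathrm{Range}(\Proj_v^{\lambda,\delta})$, after which the weak-norm difference estimate gives $|\phi-\Proj_o^{\lambda,\delta}\phi|\le\tfrac12|\phi|$ and hence injectivity. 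Finally, your closing remark about ``paralleling the Banach-lattice proof in \cite{liveranikeller1}'' misses that this proposition deliberately removes the lattice/positivity structure (replaced by hypothesis~(4)) and the monotonicity and continuity assumptions on $\tau_v$ — this is precisely what is being generalized. Your treatment of $\Proj_v^r$ and the $P_v^n\circ\Proj_v^r$ bound is correct.
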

\begin{remark}
     In other words, the structure of the discrete spectrum and of the essential spectrum of $P_v$ is similar to that of $P_{o}$, when $v \approx o$. We generalize the Keller-Liverani perturbation results \cite{liveranikeller1} by relaxing the range of the index set $I$ and the conditions of weak perturbations. Their theory \cite{liveranikeller1} additionally requires monotonicity and continuity in $\tau_v$.
\end{remark}

\begin{proof}[Proof of Proposition \ref{extendliverani1}] We will follow the schemes of \cite{liveranikeller1} but describe the differences. First of all we address the first half of the first claim (\ref{3}). 

By Lasota-Yorke inequalities, we have $||P^n_v||\le 2C M^n$. The first half of (\ref{3}) is true when $|z|> 4CM$ because $||(z-P_v)^{-1}||\le (2CM)^{-1} $. So we assume that $|z|\le 4CM$ and $z \notin V_{\delta, r}$, (which implies that $(z-P_o)^{-1}$ exists), and suppose that $(z-P_v)g=h$. Since $|z|>r$ and $z^ng=(z^n-P_v^n)g+P_v^ng$, then Lasota-Yorke inequalities imply that\begin{align*}
    r^n||g||\le |z|^n||g||&\le ||(z^n-P_v^n)g||+||P_v^ng||\\
    &\precsim ||\sum_{i \le n-1}z^iP_v^{n-1-i}(z-P_v)g||+\theta^n||g||+M^n|g|\\
    &\precsim \sum_{i \le n-1}|z|^iM^{n-1-i}||h||+\theta^n||g||+M^n|g|.
\end{align*}

Hence we have $||g||\le C'/r\sum_{i \le n-1}|z/r|^i|M/r|^{n-1-i}||h||+C'(\theta/r)^n||g||+C'(M/r)^n|g|$ for some $C'>0$ depending on $C$ only. Choose $n=n_{C',\theta,r}$ such that $C'(\theta/r)^n<1$, then  \begin{gather}\label{1}
    ||g||\precsim_{r}||h||+|g|\precsim_{r}||(z-P_v)g||+|g|.
\end{gather}

Next, we need to estimate $|g|$. Since $\sup_{z \notin V_{\delta, r}}||(z-P_o)^{-1}|| \precsim_{\delta,r}1$ and $g=(z-P_o)^{-1}(z-P_o)g=\sum_{n=0}^{N-1}\frac{P_o^n}{z^{n+1}}(z-P_o)g+(z-P_o)^{-1}\frac{P_o^N}{z^{N+1}}(z-P_o)g$ for any $N \in \mathbb{N}$, then by the conditions of weak norms boundedness, continuity, and Lasota-Yorke inequalities we have, \begin{align}
    |g|&\precsim\sum_{i=0}^{N-1}\frac{M^i}{r^{i+1}}|(z-P_o)g|+||(z-P_o)^{-1}||\Big|\Big|\frac{P_o^N}{z^{N+1}}(z-P_o)g\Big|\Big|\nonumber\\
    &\precsim_{\delta,r} (M/r)^N|(z-P_o)g|+(\theta/r)^N||(z-P_o)g||  \nonumber\\
    &\precsim_{\delta,r} (M/r)^N|(z-P_o)g|+(\theta/r)^N(|z|+2CM)||g||\nonumber\\
    &\precsim_{\delta,r} (M/r)^N|(z-P_v)g|+\big[(M/r)^N\tau_v+(\theta/r)^N(|z|+2CM)\big]||g||\label{2}
\end{align}where the last line is due to the weak perturbations, i.e., $|(z-P_o)g|\le |(z-P_v)g|+|(P_v-P_o)g|\le |(z-P_v)g|+\tau_{v}||g||$. 

Therefore, by (\ref{1}) and (\ref{2}), when $|z|\le 4CM$, there are constants $C_r, C_{\delta,r}>0$ such that  
\begin{align}\label{21}
    ||g||\le  C_r ||(z-P_v)g||+C_{\delta,r}(M/r)^N|(z-P_v)g|+C_{\delta,r}\big[(M/r)^N\tau_v+(\theta/r)^N\big]||g||
\end{align}for any $N\in \mathbb{N}$. Now we choose $N=N_{\delta,r}>0$ so that $C_{\delta,r}(\theta/r)^{N}\le 1/4$. Choose a small $\epsilon_{\delta,r}>0$ such that $C_{\delta,r}(M/r)^{N_{\delta,r}}\tau_v \le 1/4$ when $\dist(v,o)\le \epsilon_{\delta,r}$. Hence, by (\ref{21}), there are such constants $a_r, b_{\delta,r}>0$, 
\[||g||\le a_r||(z-P_v)g||+b_{\delta,r}|(z-P_v)g|\]holds when $\dist(v,o)\le \epsilon_{\delta,r}$. On the other hand, $z$ is not in the residual spectrum. All these imply that when $\dist(v,o)\le \epsilon_{\delta,r}$, the operator $z-P_v$ is injective and has a dense range. So $(z-P_v)^{-1}$ exists, is bounded for any $z\notin V_{\delta, r}$, and $\sup_{ z\notin V_{\delta, r}}||(z-P_v)^{-1}\phi||\le a_r ||\phi||+b_{\delta,r}|\phi|$ for any $\phi \in B$, in particular, $\sup_{ z\notin V_{\delta, r}}||(z-P_v)^{-1}||\le a_r+b_{\delta,r}$ for any $\dist(v,o)\le \epsilon_{\delta,r}$. So we proved the first half of (\ref{3}).

Next we address the second half of (\ref{3}). Choose a different $N$ such that $(\theta/r)^N\le \tau_{v}^{\frac{\eta}{2(1-\eta)}}\le (\theta/r)^{N-1} $. This implies $(M/r)^{N}\le \tau_{v}^{-1/2}$ and $(M/r)^N\tau_v+(\theta/r)^N\le \tau_v^{1/2}+\tau_{v}^{\frac{\eta}{2(1-\eta)}}$. (\ref{2}) implies that 
 there is a constant $B_{\delta,r}>0$ such that 
\begin{gather}\label{29}
|g|\le B_{\delta,r} (M/r)^N|(z-P_v)g|+B_{\delta,r} \big[(M/r)^N\tau_v+(\theta/r)^N(|z|+1)\big]||g||.
\end{gather}

Now for any $\phi \in B$, $h:=(P_v-P_o)(z-P_o)^{-1}\phi$, and for any $\dist(v,o)\le \epsilon_{\delta,r}$, it follows from (\ref{29}) and $\sup_{ z\notin V_{\delta, r}}||(z-P_v)^{-1}||\le a_r+b_{\delta,r}$ that, when $|z|\le 4CM$, \begin{align*}
\big|\big[(z-P_v)^{-1}-&(z-P_o)^{-1}\big]\phi\big|=|(z-P_v)^{-1}h|\\
&\precsim_{\delta,r}\tau_v^{-1/2}|h|+(\tau_v^{1/2}+\tau_{v}^{\frac{\eta}{2(1-\eta)}})||(z-P_v)^{-1}h||\\
        & \precsim_{\delta,r} \tau_v^{-1/2}\tau_v||(z-P_o)^{-1}||||\phi||\\
        &\quad  +(\tau_v^{1/2}+\tau_{v}^{\frac{\eta}{2(1-\eta)}})||(z-P_v)^{-1}||||P_v-P_o||||(z-P_o)^{-1}||||\phi||\\
        & \precsim_{\delta,r} (\tau_v^{1/2}+\tau_{v}^{\frac{\eta}{2(1-\eta)}})||\phi||,
    \end{align*} and when $|z|> 4CM$,
   \begin{align*}
&\big|\big[(z-P_v)^{-1}-(z-P_o)^{-1}\big]\phi\big|=|(z-P_v)^{-1}h|\\
&\precsim_{\delta,r}\tau_v^{-1/2}|h|+(\tau_v^{1/2}+\tau_{v}^{\frac{\eta}{2(1-\eta)}})||(z-P_v)^{-1}h||+\tau_{v}^{\frac{\eta}{2(1-\eta)}} |z| ||(z-P_v)^{-1}h||\\
        & \precsim_{\delta,r} (\tau_v^{1/2}+\tau_{v}^{\frac{\eta}{2(1-\eta)}})||\phi|| +2\tau_{v}^{\frac{\eta}{2(1-\eta)}} ||P_v-P_o||||(z-P_o)^{-1}||||\phi|| \\
        & \precsim_{\delta,r} (\tau_v^{1/2}+\tau_{v}^{\frac{\eta}{2(1-\eta)}})||\phi||
    \end{align*} where the second ``$\precsim_{\delta,r}$" is due to $|z| ||(z-P_v)^{-1}||\le 2$.
    
    Thus, $\sup_{z\notin V_{\delta, r}}|||(z-P_v)^{-1}-(z-P_o)^{-1}|||\precsim_{\delta,r} \tau_v^{1/2}+\tau_{v}^{\frac{\eta}{2(1-\eta)}}$ for any $\dist(v,o)\le \epsilon_{\delta,r}$. Hence, the second half of (\ref{3}) holds.

    Now we address the second claim. At first, we prove $rank(\Proj_{v}^{\lambda,\delta})\le rank(\Proj_{o}^{\lambda,\delta})$. Suppose that $0\neq \phi \in \Proj_v^{\lambda, \delta}(B)$, then $\Proj_v^{\lambda, \delta}\phi=\phi$. It follows from (\ref{3}) that for any $\dist(v,o)\le \epsilon_{\delta,r}$, $||\phi||=||\Proj_{v}^{\lambda,\delta}\phi||\le \frac{1}{2\pi}\int_{\partial B_{\delta}(\lambda)}||(z-P_v)^{-1}\phi|||dz|\le \delta a_r||\phi||+\delta b_{\delta,r}|\phi|$. So when $\delta< 1/a_r$, $||\phi||\precsim_{\delta, r}|\phi|$. On the other hand, it follows from (\ref{3}) that \begin{align*}
        |\phi-\Proj_{o}^{\lambda,\delta}\phi|&=|\Proj_{v}^{\lambda,\delta}\phi-\Proj_{o}^{\lambda,\delta}\phi|\\
        &\le \frac{||\phi||}{2\pi }\int_{\partial B_{\delta}(\lambda)}|||(z-P_v)^{-1}-(z-P_o)^{-1}||||dz|\\
        &\precsim_{\delta,r}\delta(\tau_v^{1/2}+\tau_{v}^{\frac{\eta}{2(1-\eta)}})||\phi||\precsim_{\delta,r}\delta(\tau_v^{1/2}+\tau_{v}^{\frac{\eta}{2(1-\eta)}})|\phi|.
    \end{align*} So there is small $\epsilon_{\delta,r}'\in (0, \epsilon_{\delta,r})$ such that for any $\dist(v,o)\le \epsilon_{\delta,r}', |\phi-\Proj_{o}^{\lambda,\delta}\phi|\le 0.5|\phi|$, i.e., $|\phi|\le 2|\Proj_{o}^{\lambda,\delta}\phi|$. If $rank(\Proj_{v}^{\lambda,\delta})> rank(\Proj_{o}^{\lambda,\delta})=:m$ then $\Proj_{o}^{\lambda,\delta}$ maps $\Proj_{v}^{\lambda,\delta}B$ (higher than $m$ dimensions) to the $m$-dimensional $\Proj_{o}^{\lambda,\delta}B$. So there must have $0\neq \phi\in \Proj_{v}^{\lambda,\delta}B$ such that $\Proj_{o}^{\lambda,\delta}\phi=0$, then $|\phi|\le 2|\Proj_o^{\lambda,\delta}\phi|=0$, i.e., $\phi=0$, which leads us to a contradiction!
    
    The same arguments give $rank(\Proj_{v}^{\lambda,\delta})\ge rank(\Proj_{o}^{\lambda,\delta})$. Therefore, $rank(\Proj_{v}^{\lambda,\delta})= rank(\Proj_{o}^{\lambda,\delta})$ for any $\dist(v,o)\le \epsilon_{\delta,r}'$. $|||\Proj_{v}^{\lambda,\delta}-\Proj_{o}^{\lambda,\delta}|||\precsim_{\delta,r} \tau_v^{1/2}+\tau_{v}^{\frac{\eta}{2(1-\eta)}}$ and $||\Proj_{v}^{\lambda,\delta}||\le \delta(a_r+b_{\delta,r})$ directly follow from (\ref{3}) and the definitions of $\Proj_{v}^{\lambda,\delta}, \Proj_{o}^{\lambda,\delta}$.

    Now we address the last claim. Suppose that $\partial B_{r}(0)\bigcap \sigma(P_o)=\emptyset$, when $\dist(v,o)\le \epsilon_{\delta,r}$. Then  we have \[||P_v^n\circ \Proj_{v}^{r}||=\Big|\Big|\frac{1}{2\pi i}\int_{\partial B_{r}(0)}z^n(z-P_v)^{-1}dz\Big|\Big|\le  r^{n+1}(a_r+b_{\delta,r})\] by using (\ref{3}). The proof is concluded.
\end{proof}

\bibliography{bibtext}

\begin{thebibliography}{33}
\providecommand{\natexlab}[1]{#1}
\providecommand{\url}[1]{\texttt{#1}}
\expandafter\ifx\csname urlstyle\endcsname\relax
  \providecommand{\doi}[1]{doi: #1}\else
  \providecommand{\doi}{doi: \begingroup \urlstyle{rm}\Url}\fi

\bibitem[Aaronson and Denker(2001)]{denker}
J.~Aaronson and M.~Denker.
\newblock Local limit theorems for partial sums of stationary sequences generated by gibbs–markov maps.
\newblock \emph{Stochastics and Dynamics}, 01\penalty0 (02):\penalty0 193--237, 2001.
\newblock \doi{10.1142/S0219493701000114}.
\newblock URL \url{https://doi.org/10.1142/S0219493701000114}.

\bibitem[Bolding and Bunimovich(2019)]{boldingbun}
M.~Bolding and L.~A. Bunimovich.
\newblock Where and when orbits of chaotic systems prefer to go.
\newblock \emph{Nonlinearity}, 32\penalty0 (5):\penalty0 1731--1771, 2019.
\newblock ISSN 0951-7715,1361-6544.
\newblock \doi{10.1088/1361-6544/ab0c34}.
\newblock URL \url{https://doi.org/10.1088/1361-6544/ab0c34}.

\bibitem[BRUIN et~al.(2010)BRUIN, DEMERS, and MELBOURNE]{BRUIN_DEMERS_MELBOURNE_2010}
H.~BRUIN, M.~DEMERS, and I.~MELBOURNE.
\newblock Existence and convergence properties of physical measures for certain dynamical systems with holes.
\newblock \emph{Ergodic Theory and Dynamical Systems}, 30\penalty0 (3):\penalty0 687–728, 2010.
\newblock \doi{10.1017/S0143385709000200}.

\bibitem[Bruin et~al.(2018)Bruin, Demers, and Todd]{Demersexp}
H.~Bruin, M.~F. Demers, and M.~Todd.
\newblock Hitting and escaping statistics: mixing, targets and holes.
\newblock \emph{Adv. Math.}, 328:\penalty0 1263--1298, 2018.
\newblock ISSN 0001-8708,1090-2082.
\newblock \doi{10.1016/j.aim.2017.12.020}.
\newblock URL \url{https://doi.org/10.1016/j.aim.2017.12.020}.

\bibitem[Bunimovich(2012)]{bunimovich2012fair}
L.~Bunimovich.
\newblock Fair dice-like hyperbolic systems.
\newblock \emph{Dynamical Systems and Group Actions}, 467:\penalty0 79--87, 2012.

\bibitem[{Bunimovich} and {Su}(2022)]{Subbb}
L.~{Bunimovich} and Y.~{Su}.
\newblock {Back to Boundaries in Billiards}.
\newblock \emph{To appear in Communications in Mathematical Physics}, art. arXiv:2203.00785, Mar. 2022.

\bibitem[Bunimovich and Dettmann(2007)]{Bunimovich_2007}
L.~A. Bunimovich and C.~P. Dettmann.
\newblock Peeping at chaos: Nondestructive monitoring of chaotic systems by measuring long-time escape rates.
\newblock \emph{Europhysics Letters}, 80\penalty0 (4):\penalty0 40001, oct 2007.
\newblock \doi{10.1209/0295-5075/80/40001}.
\newblock URL \url{https://dx.doi.org/10.1209/0295-5075/80/40001}.

\bibitem[Bunimovich and Su(2022)]{Sucmp}
L.~A. Bunimovich and Y.~Su.
\newblock Poisson approximations and convergence rates for hyperbolic dynamical systems.
\newblock \emph{Comm. Math. Phys.}, 390\penalty0 (1):\penalty0 113--168, 2022.
\newblock ISSN 0010-3616.
\newblock \doi{10.1007/s00220-022-04309-w}.
\newblock URL \url{https://doi.org/10.1007/s00220-022-04309-w}.

\bibitem[Bunimovich and Su(2023)]{mldp}
L.~A. Bunimovich and Y.~Su.
\newblock Maximal large deviations and slow recurrences in weakly chaotic systems.
\newblock \emph{Adv. Math.}, 432:\penalty0 Paper No. 109267, 58, 2023.
\newblock ISSN 0001-8708,1090-2082.
\newblock \doi{10.1016/j.aim.2023.109267}.
\newblock URL \url{https://doi.org/10.1016/j.aim.2023.109267}.

\bibitem[Bunimovich and Yurchenko(2011)]{bunimovichijm}
L.~A. Bunimovich and A.~Yurchenko.
\newblock Where to place a hole to achieve a maximal escape rate.
\newblock \emph{Israel J. Math.}, 182:\penalty0 229--252, 2011.
\newblock ISSN 0021-2172,1565-8511.
\newblock \doi{10.1007/s11856-011-0030-8}.
\newblock URL \url{https://doi.org/10.1007/s11856-011-0030-8}.

\bibitem[Coates et~al.(2023)Coates, Luzzatto, and Muhammad]{luzzato}
D.~Coates, S.~Luzzatto, and M.~Muhammad.
\newblock Doubly intermittent full branch maps with critical points and singularities.
\newblock \emph{Comm. Math. Phys.}, 402\penalty0 (2):\penalty0 1845--1878, 2023.
\newblock ISSN 0010-3616,1432-0916.
\newblock \doi{10.1007/s00220-023-04766-x}.
\newblock URL \url{https://doi.org/10.1007/s00220-023-04766-x}.

\bibitem[Demers(2005{\natexlab{a}})]{demers1}
M.~F. Demers.
\newblock Markov extensions for dynamical systems with holes: an application to expanding maps of the interval.
\newblock \emph{Israel J. Math.}, 146:\penalty0 189--221, 2005{\natexlab{a}}.
\newblock ISSN 0021-2172,1565-8511.
\newblock \doi{10.1007/BF02773533}.
\newblock URL \url{https://doi.org/10.1007/BF02773533}.

\bibitem[Demers(2005{\natexlab{b}})]{demers2}
M.~F. Demers.
\newblock Markov extensions and conditionally invariant measures for certain logistic maps with small holes.
\newblock \emph{Ergodic Theory Dynam. Systems}, 25\penalty0 (4):\penalty0 1139--1171, 2005{\natexlab{b}}.
\newblock ISSN 0143-3857,1469-4417.
\newblock \doi{10.1017/S0143385704000963}.
\newblock URL \url{https://doi.org/10.1017/S0143385704000963}.

\bibitem[Demers and Fernandez(2016)]{Demers}
M.~F. Demers and B.~Fernandez.
\newblock Escape rates and singular limiting distributions for intermittent maps with holes.
\newblock \emph{Trans. Amer. Math. Soc.}, 368\penalty0 (7):\penalty0 4907--4932, 2016.
\newblock ISSN 0002-9947,1088-6850.
\newblock \doi{10.1090/tran/6481}.
\newblock URL \url{https://doi.org/10.1090/tran/6481}.

\bibitem[Demers and Young(2005)]{DemersLSY}
M.~F. Demers and L.-S. Young.
\newblock Escape rates and conditionally invariant measures.
\newblock \emph{Nonlinearity}, 19\penalty0 (2):\penalty0 377, dec 2005.
\newblock \doi{10.1088/0951-7715/19/2/008}.
\newblock URL \url{https://dx.doi.org/10.1088/0951-7715/19/2/008}.

\bibitem[Ferguson and Pollicott(2010)]{pollicot}
A.~Ferguson and M.~Pollicott.
\newblock Escape rates for gibbs measures.
\newblock \emph{Ergodic Theory and Dynamical Systems}, 32:\penalty0 961 -- 988, 2010.
\newblock URL \url{https://doi.org/10.1017/S0143385711000058}.

\bibitem[Freitas et~al.(2015)Freitas, Freitas, and Todd]{fresta}
A.~C.~M. Freitas, J.~M. Freitas, and M.~Todd.
\newblock Speed of convergence for laws of rare events and escape rates.
\newblock \emph{Stochastic Process. Appl.}, 125\penalty0 (4):\penalty0 1653--1687, 2015.
\newblock ISSN 0304-4149,1879-209X.
\newblock \doi{10.1016/j.spa.2014.11.011}.
\newblock URL \url{https://doi.org/10.1016/j.spa.2014.11.011}.

\bibitem[Friedman et~al.(2001)Friedman, Kaplan, Carasso, and Davidson]{Davidson}
N.~Friedman, A.~Kaplan, D.~Carasso, and N.~Davidson.
\newblock Observation of chaotic and regular dynamics in atom-optics billiards.
\newblock \emph{Physical review letters}, 86\penalty0 (8):\penalty0 1518, 2001.

\bibitem[Gou\"{e}zel(2004{\natexlab{a}})]{gouezel}
S.~Gou\"{e}zel.
\newblock Sharp polynomial estimates for the decay of correlations.
\newblock \emph{Israel J. Math.}, 139:\penalty0 29--65, 2004{\natexlab{a}}.
\newblock ISSN 0021-2172,1565-8511.
\newblock \doi{10.1007/BF02787541}.
\newblock URL \url{https://doi.org/10.1007/BF02787541}.

\bibitem[Gou\"{e}zel(2004{\natexlab{b}})]{gouezelnonclt}
S.~Gou\"{e}zel.
\newblock Central limit theorem and stable laws for intermittent maps.
\newblock \emph{Probab. Theory Related Fields}, 128\penalty0 (1):\penalty0 82--122, 2004{\natexlab{b}}.
\newblock ISSN 0178-8051.
\newblock \doi{10.1007/s00440-003-0300-4}.
\newblock URL \url{https://doi.org/10.1007/s00440-003-0300-4}.

\bibitem[Haydn and Yang(2020)]{haydn}
N.~Haydn and F.~Yang.
\newblock Local escape rates for {$\phi$}-mixing dynamical systems.
\newblock \emph{Ergodic Theory Dynam. Systems}, 40\penalty0 (10):\penalty0 2854--2880, 2020.
\newblock ISSN 0143-3857,1469-4417.
\newblock \doi{10.1017/etds.2019.21}.
\newblock URL \url{https://doi.org/10.1017/etds.2019.21}.

\bibitem[Keller and Liverani(1999)]{liveranikeller1}
G.~Keller and C.~Liverani.
\newblock Stability of the spectrum for transfer operators.
\newblock \emph{Annali della Scuola Normale Superiore di Pisa - Classe di Scienze}, Ser. 4, 28\penalty0 (1):\penalty0 141--152, 1999.
\newblock URL \url{http://www.numdam.org/item/ASNSP_1999_4_28_1_141_0/}.

\bibitem[Keller and Liverani(2009)]{liveranikeller}
G.~Keller and C.~Liverani.
\newblock Rare events, escape rates and quasistationarity: some exact formulae.
\newblock \emph{J. Stat. Phys.}, 135\penalty0 (3):\penalty0 519--534, 2009.
\newblock ISSN 0022-4715,1572-9613.
\newblock \doi{10.1007/s10955-009-9747-8}.
\newblock URL \url{https://doi.org/10.1007/s10955-009-9747-8}.

\bibitem[Kesseb\"{o}hmer et~al.(2012)Kesseb\"{o}hmer, Munday, and Stratmann]{farey}
M.~Kesseb\"{o}hmer, S.~Munday, and B.~O. Stratmann.
\newblock Strong renewal theorems and {L}yapunov spectra for {$\alpha$}-{F}arey and {$\alpha$}-{L}\"{u}roth systems.
\newblock \emph{Ergodic Theory Dynam. Systems}, 32\penalty0 (3):\penalty0 989--1017, 2012.
\newblock ISSN 0143-3857,1469-4417.
\newblock \doi{10.1017/S0143385711000186}.
\newblock URL \url{https://doi.org/10.1017/S0143385711000186}.

\bibitem[Liverani et~al.(1999)Liverani, Saussol, and Vaienti]{lsv}
C.~Liverani, B.~Saussol, and S.~Vaienti.
\newblock A probabilistic approach to intermittency.
\newblock \emph{Ergodic Theory Dynam. Systems}, 19\penalty0 (3):\penalty0 671--685, 1999.
\newblock ISSN 0143-3857,1469-4417.
\newblock \doi{10.1017/S0143385799133856}.
\newblock URL \url{https://doi.org/10.1017/S0143385799133856}.

\bibitem[Melbourne and Terhesiu(2012)]{melbourneinvention}
I.~Melbourne and D.~Terhesiu.
\newblock Operator renewal theory and mixing rates for dynamical systems with infinite measure.
\newblock \emph{Invent. Math.}, 189\penalty0 (1):\penalty0 61--110, 2012.
\newblock ISSN 0020-9910,1432-1297.
\newblock \doi{10.1007/s00222-011-0361-4}.
\newblock URL \url{https://doi.org/10.1007/s00222-011-0361-4}.

\bibitem[Milner et~al.(2001)Milner, Hanssen, Campbell, and Raizen]{Raizen}
V.~Milner, J.~Hanssen, W.~Campbell, and M.~Raizen.
\newblock Optical billiards for atoms.
\newblock \emph{Physical Review Letters}, 86\penalty0 (8):\penalty0 1514, 2001.

\bibitem[Sarig(2002)]{sarig}
O.~Sarig.
\newblock Subexponential decay of correlations.
\newblock \emph{Invent. Math.}, 150\penalty0 (3):\penalty0 629--653, 2002.
\newblock ISSN 0020-9910,1432-1297.
\newblock \doi{10.1007/s00222-002-0248-5}.
\newblock URL \url{https://doi.org/10.1007/s00222-002-0248-5}.

\bibitem[Su(2019)]{Sudcds}
Y.~Su.
\newblock Almost surely invariance principle for non-stationary and random intermittent dynamical systems.
\newblock \emph{Discrete Contin. Dyn. Syst.}, 39\penalty0 (11):\penalty0 6585--6597, 2019.
\newblock ISSN 1078-0947.
\newblock \doi{10.3934/dcds.2019286}.
\newblock URL \url{https://doi.org/10.3934/dcds.2019286}.

\bibitem[{Su}(2022)]{sutams}
Y.~{Su}.
\newblock {Vector-valued Almost Sure Invariance Principle For Non-stationary Dynamical Systems}.
\newblock \emph{accepted by Trans. AMS}, 2022.
\newblock \doi{10.1090/tran/8609}.
\newblock URL \url{https://doi.org/10.1090/tran/8609}.

\bibitem[Young(1999)]{Young}
L.-S. Young.
\newblock Recurrence times and rates of mixing.
\newblock \emph{Israel J. Math.}, 110:\penalty0 153--188, 1999.
\newblock ISSN 0021-2172,1565-8511.
\newblock \doi{10.1007/BF02808180}.
\newblock URL \url{https://doi.org/10.1007/BF02808180}.

\bibitem[ZWEIMÜLLER(2000)]{Roland2}
R.~ZWEIMÜLLER.
\newblock Ergodic properties of infinite measure-preserving interval maps with indifferent fixed points.
\newblock \emph{Ergodic Theory and Dynamical Systems}, 20\penalty0 (5):\penalty0 1519–1549, 2000.
\newblock \doi{10.1017/S0143385700000821}.

\bibitem[Zweimüller(1998)]{Roland1}
R.~Zweimüller.
\newblock Ergodic structure and invariant densities of non-markovian interval maps with indifferent fixed points.
\newblock \emph{Nonlinearity}, 11\penalty0 (5):\penalty0 1263, sep 1998.
\newblock \doi{10.1088/0951-7715/11/5/005}.
\newblock URL \url{https://dx.doi.org/10.1088/0951-7715/11/5/005}.

\end{thebibliography}

\end{document}